\definecolor{darkgreen}{rgb}{0,0.45,0}
  \setlist[enumerate,1]
        {%
            leftmargin = 2.5\parindent, 
            labelsep = 0.5\parindent 
        }
  \setlist[itemize]
        {%
            leftmargin = 2.5\parindent, 
            labelsep = 0.75\parindent 
        }
\def\@cite#1#2{[{#1\if@tempswa ,~#2\fi}]}
\DeclareMathAlphabet{\mathbf}{OT1}{cmr}{b}{n}
\def\matrixobject@{%
  \edef \next@{={\DirectionfromtheDirection@ }}%
  \expandafter \toks@ \next@ \plainxy@
  \let\xy@@ix@=\xyq@@toksix@
  \xyFN@ \OBJECT@}
\let\xy@entry@@norm=\entry@@norm
\def\entry@@norm@patched{%
  \let\object@=\matrixobject@
  \xy@entry@@norm }
\newcommand{\twocong}[2][0.5]{\ar@{}[#2] \save ?(#1)*{\cong}\restore}
\newcommand{\twoeq}[2][0.5]{\ar@{}[#2] \save ?(#1)*{=}\restore}
\newcommand{\rtwocell}[3][0.5]{\ar@{}[#2] \ar@{=>}?(#1)+/l 0.2cm/;?(#1)+/r 0.2cm/^{#3}}
\newcommand{\ltwocell}[3][0.5]{\ar@{}[#2] \ar@{=>}?(#1)+/r 0.2cm/;?(#1)+/l 0.2cm/^{#3}}
\newcommand{\ltwocello}[3][0.5]{\ar@{}[#2] \ar@{=>}?(#1)+/r 0.2cm/;?(#1)+/l 0.2cm/_{#3}}
\newcommand{\dtwocell}[3][0.5]{\ar@{}[#2] \ar@{=>}?(#1)+/u  0.2cm/;?(#1)+/d 0.2cm/^{#3}}
\newcommand{\dltwocell}[3][0.5]{\ar@{}[#2] \ar@{=>}?(#1)+/ur  0.2cm/;?(#1)+/dl 0.2cm/^{#3}}
\newcommand{\drtwocell}[3][0.5]{\ar@{}[#2] \ar@{=>}?(#1)+/ul  0.2cm/;?(#1)+/dr 0.2cm/^{#3}}
\newcommand{\dthreecell}[3][0.5]{\ar@{}[#2] \ar@3{->}?(#1)+/u  0.2cm/;?(#1)+/d 0.2cm/^{#3}}
\newcommand{\utwocell}[3][0.5]{\ar@{}[#2] \ar@{=>}?(#1)+/d 0.2cm/;?(#1)+/u 0.2cm/_{#3}}
\newcommand{\dtwocelltarg}[3][0.5]{\ar@{}#2 \ar@{=>}?(#1)+/u  0.2cm/;?(#1)+/d 0.2cm/^{#3}}
\newcommand{\utwocelltarg}[3][0.5]{\ar@{}#2 \ar@{=>}?(#1)+/d  0.2cm/;?(#1)+/u 0.2cm/_{#3}}
\newcommand{\sh}[2]{**{!/#1 -#2/}}
\DeclareMathOperator{\Sym}{Sym}
\newcommand{\cat}[1]{\mathsf{#1}}
\newcommand{\thg}{{\mathord{\text{--}}}}
\newcommand{\dbr}[1]{\llbracket{#1}\rrbracket}
\newcommand{\res}[2]{\left.{#1}\right|_{#2}}
\newcommand{\spn}[1]{{\langle{#1}\rangle}}
\newcommand{\defeq}{\mathrel{\mathop:}=}
\newcommand{\cd}[2][]{\vcenter{\hbox{\xymatrix#1{#2}}}}
\renewcommand{\phi}{\varphi}
\newcommand{\C}{{\mathcal C}}
\newcommand{\D}{{\mathcal D}}
\newcommand{\E}{{\mathcal E}}
\let\sec=\S
\renewcommand{\S}{{\mathcal S}}
\newcommand{\V}{{\mathcal V}}
\newcommand{\xtor}[1]{\cdl[@1]{{} \ar[r]|-{\object@{|}}^{#1} & {}}}
\def\hookleftarrowfill@{\arrowfill@\leftarrow\relbar{\relbar\joinrel\rhook}}
\def\twoheadleftarrowfill@{\arrowfill@\twoheadleftarrow\relbar\relbar}
\def\leftbararrowfill@{\arrowdoublefill@{\leftarrow\mkern-5mu}\relbar\mapstochar\relbar\relbar}
\def\Leftbararrowfill@{\arrowdoublefill@{\Leftarrow\mkern-2mu}\Relbar\Mapstochar\Relbar\Relbar}
\def\leftringarrowfill@{\arrowdoublefill@{\leftarrow\mkern-3mu}\relbar{\mkern-3mu\circ\mkern-2mu}\relbar\relbar}
\def\lefttriarrowfill@{\arrowfill@{\mathrel\triangleleft\mkern0.5mu\joinrel\relbar}\relbar\relbar}
\def\Lefttriarrowfill@{\arrowfill@{\mathrel\triangleleft\mkern1mu\joinrel\Relbar}\Relbar\Relbar}
\def\hookrightarrowfill@{\arrowfill@{\lhook\joinrel\relbar}\relbar\rightarrow}
\def\twoheadrightarrowfill@{\arrowfill@\relbar\relbar\twoheadrightarrow}
\def\rightbararrowfill@{\arrowdoublefill@{\relbar\mkern-0.5mu}\relbar\mapstochar\relbar\rightarrow}
\def\Rightbararrowfill@{\arrowdoublefill@{\Relbar\mkern-2mu}\Relbar\Mapstochar\Relbar\Rightarrow}
\def\rightringarrowfill@{\arrowdoublefill@\relbar\relbar{\mkern-2mu\circ\mkern-3mu}\relbar{\mkern-3mu\rightarrow}}
\def\righttriarrowfill@{\arrowfill@\relbar\relbar{\relbar\joinrel\mkern0.5mu\mathrel\triangleright}}
\def\Righttriarrowfill@{\arrowfill@\Relbar\Relbar{\Relbar\joinrel\mkern1mu\mathrel\triangleright}}
\def\leftrightarrowfill@{\arrowfill@\leftarrow\relbar\rightarrow}
\def\mapstofill@{\arrowfill@{\mapstochar\relbar}\relbar\rightarrow}
\renewcommand*\xleftarrow[2][]{\ext@arrow 20{20}0\leftarrowfill@{#1}{#2}}
\providecommand*\xLeftarrow[2][]{\ext@arrow 60{22}0{\Leftarrowfill@}{#1}{#2}}
\providecommand*\xhookleftarrow[2][]{\ext@arrow 10{20}0\hookleftarrowfill@{#1}{#2}}
\providecommand*\xtwoheadleftarrow[2][]{\ext@arrow 60{20}0\twoheadleftarrowfill@{#1}{#2}}
\providecommand*\xleftbararrow[2][]{\ext@arrow 10{22}0\leftbararrowfill@{#1}{#2}}
\providecommand*\xLeftbararrow[2][]{\ext@arrow 50{24}0\Leftbararrowfill@{#1}{#2}}
\providecommand*\xleftringarrow[2][]{\ext@arrow 10{26}0\leftringarrowfill@{#1}{#2}}
\providecommand*\xlefttriarrow[2][]{\ext@arrow 80{24}0\lefttriarrowfill@{#1}{#2}}
\providecommand*\xLefttriarrow[2][]{\ext@arrow 80{24}0\Lefttriarrowfill@{#1}{#2}}
\renewcommand*\xrightarrow[2][]{\ext@arrow 01{20}0\rightarrowfill@{#1}{#2}}
\providecommand*\xRightarrow[2][]{\ext@arrow 04{22}0{\Rightarrowfill@}{#1}{#2}}
\providecommand*\xhookrightarrow[2][]{\ext@arrow 00{20}0\hookrightarrowfill@{#1}{#2}}
\providecommand*\xtwoheadrightarrow[2][]{\ext@arrow 03{20}0\twoheadrightarrowfill@{#1}{#2}}
\providecommand*\xrightbararrow[2][]{\ext@arrow 01{22}0\rightbararrowfill@{#1}{#2}}
\providecommand*\xRightbararrow[2][]{\ext@arrow 04{24}0\Rightbararrowfill@{#1}{#2}}
\providecommand*\xrightringarrow[2][]{\ext@arrow 01{26}0\rightringarrowfill@{#1}{#2}}
\providecommand*\xrighttriarrow[2][]{\ext@arrow 07{24}0\righttriarrowfill@{#1}{#2}}
\providecommand*\xRighttriarrow[2][]{\ext@arrow 07{24}0\Righttriarrowfill@{#1}{#2}}
\providecommand*\xmapsto[2][]{\ext@arrow 01{20}0\mapstofill@{#1}{#2}}
\providecommand*\xleftrightarrow[2][]{\ext@arrow 10{22}0\leftrightarrowfill@{#1}{#2}}
\providecommand*\xLeftrightarrow[2][]{\ext@arrow 10{27}0{\Leftrightarrowfill@}{#1}{#2}}
\numberwithin{equation}{section}
\theoremstyle{plain}
\newtheorem{Thm}{Theorem}
\newtheorem*{Thm*}{Theorem}
\newtheorem{Prop}[Thm]{Proposition}
\newtheorem{Lemma}[Thm]{Lemma}
\theoremstyle{definition}
\newtheorem{Defn}[Thm]{Definition}
\newtheorem{Ex}[Thm]{Example}
\newtheorem{Rk}[Thm]{Remark}
\newcommand{\SET}{\cat{SET}}
\newcommand{\kmod}{k\text-\cat{Mod}}
\newcommand{\kvec}{k\text-\cat{Vect}}
\renewcommand{\vec}[1]{\boldsymbol{#1}}
\newcommand{\act}[1][X]{\cat{Act}[#1]}
\newcommand{\ca}[1][X]{\cat{CAct}[#1]}
\renewcommand{\mod}[1][SX]{\cat{Mod}[#1]}
\newcommand{\REL}{\cat{REL}}
\newcommand{\SP}{k\text-\cat{Sp}}
\newcommand{\SVEC}{k\text-\cat{SVect}}
\newcommand{\ocf}{{\oc^\partial}}
\title{Free differential modalities}
\author{Richard Garner} 
\address{School of Mathematical and Physical Sciences, Macquarie University, NSW 2109, Australia} 
\email{richard.garner@mq.edu.au}
\author{Jean-Simon Pacaud Lemay} 
\address{School of Mathematical and Physical Sciences, Macquarie University, NSW 2109, Australia} 
\email{js.lemay@mq.edu.au}
\subjclass[2000]{Primary: }
\date{\today}
\thanks{The support of Australian Research Council grants DP190102432
  and DE230100303 is gratefully acknowledged. The second named author is supported by the AFOSR under award number FA9550-24-1-0008.}
\begin{document}
\maketitle
\begin{abstract}
  Categorical models of the exponential modality of linear logic will
  often, but not always, support an operation of
  \emph{differentiation}. When they do, we speak of a \emph{monoidal
    differential modality}; when they do not, we have merely a
  \emph{monoidal coalgebra modality}. More generally, there are
  notions of \emph{differential modality} and \emph{coalgebra
    modality} which stand in the same relation, but which move outside
  the realm of linear logic; they model important structures such as
  smooth differentiation on Euclidean space.

  In this paper, we show that, in a suitably well-behaved $k$-linear
  symmetric monoidal category, each (monoidal) coalgebra modality can
  be freely completed to a (monoidal) differential modality. In
  particular, we prove the existence of an \emph{initial} monoidal
  differential modality, which, even in simple examples such as the
  category of sets and relations, yields new models of differential
  linear logic.

  Key to our proofs is the concept of an
  \emph{algebraically-free commutative monoid} in a symmetric monoidal
  category. A commutative monoid $M$ is said to be algebraically-free
  on an object $X$ if actions by the monoid $M$ correspond to
  self-commuting actions by the mere object $X$. Along the way, we
  study the theory of algebraically-free commutative monoids and
  self-commuting actions, and their interplay with differential
  modality structure.
\end{abstract}
\setcounter{tocdepth}{1} 
\tableofcontents
\section{Introduction}
\label{sec:introduction}

\emph{Linear logic}~\cite{Girard1995Linear} refines intuitionistic
logic by its more nuanced view on context manipulation. Careful
attention is paid to the duplication and discard of variables, such
that the basic intuitionistic connectives of ``and'' and ``or'' are
split into two de Morgan dual pairs, $(\with, \oplus)$ and
$(\otimes, \parr)$---the \emph{additive} and \emph{multiplicative}
connectives respectively. Via the Curry--Howard correspondence, linear
logic also gives us a more nuanced view on variable use in
computation, and indeed, many semantic models of linear logic---i.e.,
symmetric monoidal categories endowed with suitable further
structure---draw on constructions from areas such as stable domain
theory, game semantics, and sequential algorithms;
see~\cite{Hyland2003Glueing} for a comprehensive overview.

However, what really gives linear logic its distinctive character is
the structure linking the linear and non-linear worlds, the 
\emph{exponential modality} $\oc$, which is so named because it relates the
additives and multiplicatives via the equation
$\oc(A\oplus B) \cong \oc A \otimes \oc B$. The sense in which the
exponential relates linear to non-linear is made precise by the Girard
translation of classical logic into linear logic; when reified in a
computational model, this allows us to construct programming language
semantics in a cartesian closed category wherein maps carry
intensional information about resource usage.

In this paper, it is another aspect of the exponential modality which
will concern us. In semantic models, the exponential is not
infrequently implemented as some kind of power-series construction.
This perhaps reminds us of the place in mathematics that one first
meets the idea of relating linear and non-linear maps, namely, in the
differential calculus---and indeed, it turns out that many exponential
modalities $\oc$ support an abstract notion of \emph{differentiation}.

This was first observed by Ehrhard~\cite{Ehrhard2002On-Kothe} for a
particular semantic model coming from linear algebra; and the insight
was then incorporated into the proof theory of linear logic via
Ehrhard and Regnier's \emph{differential
  $\lambda$-calculus}~\cite{Ehrhard2003The-differential}. This is
closely related to the earlier \emph{resource $\lambda$-calculus}
of~\cite{Boudol1993Lambda-calculus} and indeed, from a computational
view, it is often helpful to think of the differentiation associated
to an exponential modality as providing direct syntactic access to
information about resource usage. Of course, with the recent explosion
of interest in programming via gradient-based optimisation methods, it
may be equally helpful to think of the differentiation as actual
differentiation.

On the side of the categorical semantics, the appropriate
formalisation of differential structure on an exponential was given
in~\cite{Blute2006Differential}---though we prefer to direct the
reader to the subsequent~\cite{Blute2019Differential}, which cleans up
the theory and ties up all known loose ends. It is important here to
note that the exponential modality of a given model may not necessarily support
differentiation. One basic obstruction is that, to even express the
axioms that the differential structure must satisfy, in particular the famous Leibniz rule, 
the symmetric monoidal category in which one is working must bear additive, or
better, $k$-linear structure on the hom-sets\footnote{Though
  see~\cite{Ehrhard2023Coherent} for a generalisation which partially
  relaxes this requirement.}. Yet even when such structure exists, it
is still a non-trivial requirement that a given exponential should
support differentiation: see, for
example,~\cite[Theorem~7]{Blute2019Differential}, or our
Lemma~\ref{lem:28} below.

The objective of this paper is to show that, in any reasonably
well-behaved category---in particular, in any cocomplete, symmetric
monoidal closed $k$-linear category $\C$---it is always possible to
\emph{freely adjoin} differential structure to any exponential $\oc$
supported by $\C$ and so obtain a new exponential $\oc^\partial$. This
immediately yields many new models of differential linear logic---and
ones which are new even \emph{qua} model of multiplicative exponential
linear logic. Furthermore, we will show that there is always an
\emph{initial} exponential $P$ on any such category $\C$---so that,
consequently, we obtain an \emph{initial}
exponential-with-differentiation $P^\partial$. Even in such a basic
setting as the relational model of linear logic, this $P^\partial$
appears to be new.

To say more about our main results, we need to say a little more about
the categorical models we are working with. To start with, a model of
multiplicative intuitionistic linear logic is simply a symmetric
monoidal closed category $(\C, \otimes, I, \multimap)$; in fact, for
our purposes, the internal hom $\multimap$ will play no role and we do
not assume it for our main results.

On top of this basic structure, a model of the exponential modality
is, first of all, a \emph{monoidal comonad} $\oc$ on $\C$; this is an
endofunctor $\oc \colon \C \rightarrow \C$ endowed with maps:
\begin{align}
    \varepsilon_X \colon \oc X &\rightarrow X \qquad &
    \delta_X \colon \oc X &\rightarrow \oc \oc X \qquad \label{eq:71} \\
    m_\otimes \colon \oc X \otimes \oc Y &\rightarrow \oc(X \otimes Y)
    \qquad & m_I \colon I &\rightarrow \oc I\rlap{ ,} \label{eq:73}
\end{align}
which, when subject to suitable axioms, are the appropriate
proof-relevant formulation of the familiar S4 rules for a modal
operator. Moreover, this $\oc$ should be a \emph{monoidal coalgebra
  modality}\footnote{Also called a \emph{linear exponential comonad}.}
on $\C$: meaning that it comes equipped with further maps
\begin{equation}
  \label{eq:72}
  \Delta_X \colon \oc X \rightarrow \oc X \otimes \oc X \qquad \text{and} \qquad 
  \mathsf{e}_X \colon \oc X \rightarrow I
\end{equation}
equipping each $\oc X$ with the structure of a cocommutative comonoid,
suitably compatible with the remaining structure. These maps get
to the heart of what an exponential is; they express that
$\oc X$ is a resource derived from $X$ that can be freely duplicated
and discarded. Thus, in the Girard translation, a classical
proof of $X \Rightarrow Y$---or equally, a program of type
$X \Rightarrow Y$---is encoded as a linear term of type
$\oc X \multimap Y$.

Finally, to express that $\oc$ is a model of differentiation we must
first, as advertised above, assume that $\C$ has $k$-linear structure
on each hom-set, which is preserved by composition and $\otimes$ in
each variable. Here, $k$ can be a field, but also a ring, or even a
\emph{rig}: that is, a ring without negatives. In particular, when
$k = \mathbb{N}$, the $k$-linear structure on the homs is the same as
commutative monoid structure.

At this point, we may describe the additional structure on $\oc$
required for it to support differentiation in two ways. Most
frequently in the linear logic literature, this is done via a
\emph{codereliction}: a family of maps
\begin{equation}
  \label{eq:69}
  \eta_X \colon X \rightarrow \oc X
\end{equation}
satisfying suitable axioms.
The intuition is that precomposing a non-linear map of type
$\oc X \multimap Y$ with $\eta_X$ yields a map $X \multimap Y$ which
is its best linear approximation at $0$ (where the meaning of ``$0$''
comes from the $k$-linear structure.)

However, this same structure can alternatively be encoded via a
\emph{deriving transformation}: a family of maps
\begin{equation}
  \label{eq:68}
  \mathsf{d}_X \colon \oc X \otimes X \rightarrow \oc X
\end{equation}
satisfying suitable axioms. This time, precomposing a non-linear map
$f \colon {\oc X \multimap Y}$ with $\eta_X$ yields a map
$\mathsf{D}f \colon \oc X \otimes X \multimap Y$, which we think of as
a map $X \times X \Rightarrow Y$ linear in its second argument, whose
interpretation is as giving the directional derivative of $f$ at the
point specified by its first argument, in the direction specified by
the second argument. These two formulations are entirely equivalent
(\cite[Theorems~2 \& 4]{Blute2019Differential}), and in either case,
we may call $\oc$ a \emph{monoidal differential modality}.

With these definitions under our belt, we can state what will be the
second main theorem of our paper (we will return to the first later):

\begin{Thm*}\looseness=-1
  If $\C$ is $k$-linear symmetric monoidal with finite biproducts and
  algebraically-free commutative monoids, then every monoidal
  coalgebra modality on $\C$ can be completed freely to a monoidal
  differential modality; i.e., the forgetful functor from monoidal
  differential modalities on $\C$ to monoidal coalgebra modalities has
  a left adjoint.
\end{Thm*}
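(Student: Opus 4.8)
The plan is to reduce the statement to \emph{freely adjoining a module structure}, with the algebraically-free hypothesis carrying the essential weight. First I would pass to the \emph{deriving-transformation} presentation of a monoidal differential modality (\cite{Blute2019Differential}): on top of a monoidal coalgebra modality $\oc$ the extra data is a natural family $\mathsf{d}_X \colon \oc X \otimes X \to \oc X$ subject to a handful of axioms, one of which---the independence of mixed partials---asserts that $\mathsf{d}_X \circ (\mathsf{d}_X \otimes 1_X)$ is invariant under the symmetry exchanging the two copies of $X$. This axiom says precisely that $\mathsf{d}_X$ equips $\oc X$ with a \emph{self-commuting action} by the object $X$; since $\C$ has algebraically-free commutative monoids, such an action is the same thing as a right $SX$-module structure on $\oc X$, where $SX$ denotes the algebraically-free commutative monoid on $X$ (which, by the results established earlier, is itself a monoidal differential modality). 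Unwinding the remaining deriving-transformation axioms in these terms, they become exactly the statements that $\varepsilon_X$, $\delta_X$, $\Delta_X$, $\mathsf{e}_X$ (and the monoidal data $m_\otimes, m_I$) are $SX$-linear for canonically-induced module structures on their (co)domains---for instance the Leibniz rule becomes the linearity of $\Delta_X \colon \oc X \to \oc X \otimes \oc X$ when $SX$ acts diagonally through its own comultiplication. Thus $U \colon \MD(\C) \to \MC(\C)$ is, up to this translation, the functor forgetting such a natural, structure-compatible family of $SX$-module structures.

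Given this, I would build the left adjoint by forming free modules. For a monoidal coalgebra modality $\oc$, set $\oc^\partial X := \oc X \otimes SX$, the free right $SX$-module on $\oc X$, with deriving transformation $\mathsf{d}^\partial_X = 1_{\oc X} \otimes \mathsf{d}^S_X$ (multiplication on the $SX$-tensorand). The coalgebra-modality structure on $\oc^\partial$ is assembled from that of $\oc$ and of $S$: the cocommutative comonoid structure on $\oc^\partial X$ is the tensor of $(\Delta_X,\mathsf{e}_X)$ with $(\Delta^S_X,\mathsf{e}^S_X)$; the monoidal data are $m_\otimes \otimes m^S_\otimes$ and $m_I \otimes m^S_I$ precomposed with the evident shuffle isomorphisms; the counit is the sum $\varepsilon^\partial_X = (\varepsilon_X \otimes \mathsf{e}^S_X) + (\mathsf{e}_X \otimes \varepsilon^S_X)$, combining the two ``linear parts''; and $\delta^\partial_X$ is built from $\delta_X$ and $\delta^S_X$ using the canonical points $u_X \colon I \to \oc X$, $u^S_X \colon I \to SX$ coming from the finite biproducts. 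The unit of the sought adjunction is $\iota_X \colon \oc X \cong \oc X \otimes I \xrightarrow{1 \otimes u^S_X} \oc^\partial X$, which is at once a morphism of monoidal coalgebra modalities.

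The bulk of the proof is then to verify that $(\oc^\partial, \mathsf{d}^\partial)$ really is a monoidal differential modality. The deriving-transformation axioms for $\mathsf{d}^\partial$ translate, by the first paragraph, into $SX$-linearity of the structure maps of $\oc^\partial$, and these propagate from $S$---which \emph{is} differential---to $\oc X \otimes SX$ since tensoring with the fixed comonoid $\oc X$ preserves $SX$-linearity. The genuinely delicate part, and the one I expect to be the main obstacle, is the comonad axioms for $\oc^\partial$---coassociativity of $\delta^\partial$ and its compatibility with $\mathsf{d}^\partial$---because the twice-iterated functor $\oc^\partial \oc^\partial X = \oc(\oc X \otimes SX) \otimes S(\oc X \otimes SX)$ does not decompose via the Seely isomorphisms ($\oc$ does not carry $\otimes$ to $\otimes$), so the ``tensor the structures'' bookkeeping is unavailable at the second layer. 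This is exactly where algebraic-freeness, rather than mere freeness, is indispensable: it lets one present the second-layer module data as a self-commuting action---a single equation on a morphism, manifestly stable under applying $\oc$ and forming tensors---rather than as further structure carrying its own coherence, so that the comonad identities can be checked one $SX$-module morphism at a time. I would isolate this as a transport lemma: an object equipped with a self-commuting, modality-compatible $X$-action induces such actions on its image under $\oc$ and on tensor products, functorially and coherently; the comonad structure of $\oc^\partial$ and all of its axioms then follow by applying this lemma to $\oc X$ and $SX$.

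Finally, for the universal property: given any monoidal differential modality $\oc'$ and a morphism $g \colon \oc \to \oc'$ of monoidal coalgebra modalities, $\oc' X$ is a right $SX$-module via $\mathsf{d}'_X$ (again by the first paragraph), with all of its structure maps $SX$-linear. Hence the composite $\oc^\partial X = \oc X \otimes SX \xrightarrow{g_X \otimes 1} \oc' X \otimes SX \xrightarrow{\ \mathrm{act}\ } \oc' X$ is a morphism of monoidal coalgebra modalities compatible with the deriving transformations, i.e.\ a morphism $\oc^\partial \to \oc'$ in $\MD(\C)$; it is the unique one restricting along $\iota$ to $g$, because $\oc^\partial X$ is by construction the \emph{free} $SX$-module on $\oc X$, so a module-compatible map out of it is pinned down by its restriction to $\oc X$. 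This exhibits $\oc \mapsto \oc^\partial$ as a left adjoint to $U \colon \MD(\C) \to \MC(\C)$, as required.
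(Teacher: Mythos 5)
Your overall strategy coincides with the paper's: the same object $\ocf X = \oc X \otimes SX$ with $\mathsf{d}^\partial_X = 1 \otimes \mathsf{d}^S_X$ and unit $1 \otimes \mathsf{u}^S_X$; the same use of algebraic-freeness to pin each structure map of $\ocf$ down as the unique map of commuting $X$-actions (equivalently $SX$-modules) restricting correctly along the unit; the same translation of the deriving-transformation axioms into ``such-and-such a map is a map of commuting actions''; and your ``transport lemma'' is precisely the paper's lifting of a differential modality to the category of commuting $X$-actions, which is exactly how the paper defines $\delta^\partial$ and verifies the comonad and chain-rule identities. The explicit formulas you give for the comonoid structure and for $\varepsilon^\partial_X$ as a sum of two linear parts also match the paper's.

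There is, however, a genuine gap in the one place where this theorem goes beyond the non-monoidal version: your treatment of the monoidal constraints. You posit a lax monoidal structure $m^S_\otimes \colon SX \otimes SY \to S(X \otimes Y)$ on the free commutative monoid functor and set $m^\partial_\otimes = (m_\otimes \otimes m^S_\otimes)\circ(\text{shuffle})$. No such natural transformation exists: $S$ is the dual of a monoidal coalgebra modality, so its canonical constraints are \emph{op}lax, going $S(X \otimes Y) \to SX \otimes SY$ and $SI \to I$ (already for $\mathrm{Sym}$ on vector spaces there is no sensible natural map $\mathrm{Sym}(V)\otimes\mathrm{Sym}(W)\to\mathrm{Sym}(V\otimes W)$, since a degree-$(n,m)$ element with $n \neq m$ has nowhere to go). Indeed, in the relational model the binary constraint of the initial monoidal differential modality visibly mixes the $\oc$-component of one argument with the $S$-component of the other, so it cannot be a tensor product of separate constraints. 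The fix, and the route the paper takes, is to not construct $m^\partial_\otimes$ directly at all: being a monoidal coalgebra modality is a mere \emph{property} of a coalgebra modality, detected (in the presence of biproducts) by invertibility of the Seely/storage maps $\ocf(X\oplus Y) \to \ocf X \otimes \ocf Y$, and \emph{these} do decompose as $\chi_{XY}\otimes\chi^S_{XY}$ up to a shuffle, hence are invertible because the storage maps of $\oc$ and the (oplax) storage maps of $S$ are. Combined with the fact that the monoidal categories of modalities embed fully into the non-monoidal ones, this upgrades the free differential modality to the free monoidal one without ever writing down $m^\partial_\otimes$. Your universal-property argument in the final paragraph is then fine as stated.
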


We should explain what we mean here by \emph{algebraically-free
  commutative monoids}. In~\cite{Kelly1980A-unified}, Kelly draws the
distinction between \emph{free} and \emph{algebraically-free } monoids
in a monoidal category $\C$. The free monoid on $X \in \C$ is, of
course, a monoid $TX$ endowed with a map
$\eta \colon X \rightarrow TX$ which is universal among maps from $X$
to a monoid. Such a monoid is said to be \emph{algebraically-free} if,
in addition, the forgetful functor
\begin{equation*}
  \eta^\ast \colon \mod[TX] \rightarrow \act
\end{equation*}
from $TX$-modules to \emph{$X$-actions} is an isomorphism of
categories. Here, an $X$-action is simply an object $A$ endowed with a
map $A \otimes X \rightarrow A$; whereas a $TX$-module is an
$TX$-action satisfying the usual associativity and unit laws. Free
monoids need not be algebraically-free but are, for example, in the
category of $\mathbb{R}$-vector spaces; this expresses, among other
things, the well-known fact that $\mathbb{R}[x]$-modules are the same
as vector spaces with a linear endomorphism.

In a symmetric monoidal category $\C$ we will, by analogy, say that
the free commutative monoid $SX$ on some $X \in \C$ is
\emph{algebraically-free} if the forgetful functor
\begin{equation*}
  \eta^\ast \colon \mod[SX] \rightarrow \ca
\end{equation*}
is an isomorphism of categories. Here, $\ca$ is the category of
\emph{commuting $X$-actions}, that is, actions
$\alpha \colon A \otimes X \rightarrow A$ for which the composite
morphism
${\alpha \circ (A \otimes \alpha) \colon A \otimes X \otimes X
  \rightarrow A}$ is unchanged on precomposing by the symmetry of $X$.
Again, free commutative monoids need not be algebraically-free, but
are in many categories of practical interest.

The relevance of this to our situation is as follows: for any monoidal
differential modality, the deriving transformation~\eqref{eq:68}
endows each $\oc X$ with an $X$-action, and the so-called
\emph{interchange} axiom for a deriving transformation states
precisely that this action is \emph{commuting}. Thus, perhaps naively,
we may think that, in order to pass from a monoidal coalgebra modality
$\oc$ to the free monoidal differential modality $\oc^\partial$
thereon, all we need do is freely adjoin commuting action structure.
Since, in the presence of algebraically-free commutative monoids,
the free commuting $X$-action on an object $A$ is given by
$A \otimes SX$, this suggests the formula:
\begin{equation}
  \label{eq:70}
  \oc^\partial X = \oc X \otimes SX\rlap{ .}
\end{equation}

Remarkably, this apparently naive approach works: the above formula
\emph{does} describe the free monoidal differential modality on the
monoidal coalgebra modality $\oc$. However, things are far from
straightforward: for as well as~\eqref{eq:70}, we must also figure out
the structure maps~\eqref{eq:71},~\eqref{eq:73}, \eqref{eq:72}
and~\eqref{eq:68} associated to $\oc^\partial$, and this is rather
subtle. To get a sense of this, note that the mere existence of free
commutative monoids is sufficient to equip each $\oc^\partial X$ with
a commuting $X$-action; however, just the universal property of a free
commutative monoid is \emph{not} sufficient to build the remaining
structure maps of $\oc^\partial$---we need the full strength of
algebraic-freeness.

Let us now return to the \emph{first} main theorem of our paper. This
is much the same as the second, but is concerned not with
\emph{monoidal coalgebra modalities} and \emph{monoidal differential
  modalities}, but with \emph{coalgebra modalities} and
\emph{differential modalities}. A coalgebra modality is like a
monoidal coalgebra modality, but comes equipped only with the
structure maps~\eqref{eq:71} and~\eqref{eq:72}, and not those
of~\eqref{eq:73}. A differential modality is a coalgebra modality
equipped with a deriving transformation~\eqref{eq:68} satisfying the
same axioms as before.

Note that coalgebra modalities and differential modalities are not
part of a linear logic picture, but are important in their own right.
For example, the \emph{$C^\infty$-ring
  modality}~\cite{Cruttwell2021Integral} on the opposite of the
category of vector spaces is a differential modality which captures
smooth differentiation---but is \emph{not} a monoidal differential
modality.

Happily, the story we have told above makes equal sense in the
non-monoidal context. Thus, our first main result is as follows; the
proof uses exactly the same construction~\eqref{eq:70} as before.
\begin{Thm*}\looseness=-1
  If $\C$ is $k$-linear symmetric monoidal with finite biproducts and
  algebraically-free commutative monoids, then every coalgebra
  modality on $\C$ can be completed freely to a differential modality;
  i.e., the forgetful functor from differential modalities on $\C$ to
  coalgebra modalities has a left adjoint.
\end{Thm*}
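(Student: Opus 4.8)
The plan is to mirror the proof strategy signalled in the introduction and use the formula $\oc^\partial X = \oc X \otimes SX$, where $SX$ is the algebraically-free commutative monoid on $X$. First I would package the data involved more conceptually. The deriving transformation on a differential modality makes each $\oc X$ into a \emph{commuting} $X$-action, and conversely the interchange axiom is exactly this commutativity; so a differential modality structure on $\oc$ amounts to a compatible choice of commuting action on each $\oc X$, natural in $X$ and interacting correctly with $\varepsilon$, $\delta$ and the comonoid maps $\Delta, \mathsf{e}$. The key categorical input is that, by algebraic-freeness, $\mod[SX] \cong \ca$, so a commuting $X$-action on an object $A$ is the \emph{same thing} as an $SX$-module structure; in particular $A \otimes SX$ (with its evident $SX$-module structure) is the free commuting $X$-action on $A$. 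Applying this with $A = \oc X$ gives $\oc^\partial X = \oc X \otimes SX$ as the candidate underlying functor, with deriving transformation built from the free action $\oc X \otimes SX \otimes X \to \oc X \otimes SX$, which uses the monoid multiplication $SX \otimes X \to SX \otimes SX \to SX$ coming from $\eta \colon X \to SX$.

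Next I would construct the remaining structure maps of $\oc^\partial$ and verify the coalgebra-modality axioms. This is where algebraic-freeness, not merely the universal property of $SX$, is essential: to define e.g. the counit $\varepsilon^\partial_X \colon \oc X \otimes SX \to X$, the comultiplication $\delta^\partial_X$, and the comonoid maps, I would exhibit the relevant domains and codomains as carrying commuting $X$-actions (equivalently $SX$-modules) and then define the maps \emph{as module morphisms}, using the isomorphism $\mod[SX] \cong \ca$ to reduce each definition to specifying an action-preserving map out of a tensor-with-$SX$; concretely, a map out of $A \otimes SX$ that respects the $SX$-action is determined by its restriction along $A \cong A \otimes I \hookrightarrow A \otimes SX$ together with a commuting-action condition on the target. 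For instance $\varepsilon^\partial$ is forced to be $\varepsilon_X \otimes (\text{augmentation } SX \to I)$; $\delta^\partial_X \colon \oc X \otimes SX \to \oc(\oc X \otimes SX) \otimes S(\oc X \otimes SX)$ is the subtler one, assembled from $\delta_X$, the monoidal structure maps $m_\otimes, m_I$ of $\oc$, and a canonical comparison $SX \to \oc X \otimes SX$-related map built using that $S$ is itself a kind of exponential (indeed the introduction flags $P$, the initial coalgebra modality, and $SX$ should be closely tied to free commutative monoid/symmetric algebra structure). I would then check the comonad laws, the coalgebra-modality compatibilities, and the deriving-transformation axioms (including interchange, which holds by construction) — these are the routine but lengthy verifications I would not grind through here.

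Finally, I would establish the universal property making this a left adjoint to the forgetful functor $\mathsf{DiffMod}(\C) \to \mathsf{CoalgMod}(\C)$. Given a differential modality $(\oc', \mathsf{d}')$ together with a coalgebra-modality map $\phi \colon \oc \to \oc'$, I must produce a unique differential-modality map $\bar\phi \colon \oc^\partial \to \oc'$ extending $\phi$ along the coalgebra-modality unit $\oc \to \oc^\partial$ (which is $\oc X \cong \oc X \otimes I \hookrightarrow \oc X \otimes SX$). Since $\oc' X$ carries a commuting $X$-action via $\mathsf{d}'$, it is an $SX$-module, so the composite $\oc X \xrightarrow{\phi_X} \oc' X$ extends uniquely to an $SX$-module map $\bar\phi_X \colon \oc X \otimes SX \to \oc' X$ by freeness; naturality and compatibility with $\varepsilon, \delta, \Delta, \mathsf{e}$ follow by uniqueness in the universal property, and compatibility with the deriving transformations is exactly the statement that $\bar\phi_X$ is an $SX$-module (equivalently commuting-action) map, which it is by construction. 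The main obstacle I anticipate is \emph{not} the universal property but the construction of $\delta^\partial$ and the verification that $(\oc^\partial, \delta^\partial, \varepsilon^\partial, \Delta^\partial, \mathsf{e}^\partial)$ really is a coalgebra modality: getting all the coherence between the $\oc$-structure and the $SX$-structure to line up requires carefully exploiting algebraic-freeness to define maps on the nose rather than up to the universal property, and this is precisely the subtlety the introduction warns about.
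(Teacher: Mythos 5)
Your overall architecture coincides with the paper's: the formula $\ocf X = \oc X \otimes SX$, the deriving transformation as the free commuting $X$-action, all remaining structure maps pinned down by freeness of $(\ocf X, \mathsf{d}^\partial_X)$ over $\oc X$, and the adjunction verified by extending $\psi$ uniquely to an action map. However, there are two concrete gaps at exactly the points you flag as delicate. First, your formula for the counit is wrong: $\varepsilon^\partial_X$ is \emph{not} $\varepsilon_X \otimes \mathsf{e}^S_X$ (the augmentation term), but the sum $\mathsf{e}_X \otimes \varepsilon^S_X + \varepsilon_X \otimes \mathsf{e}^S_X$. Your candidate violates the linear rule: since $\mathsf{e}^S_X \circ \mathsf{d}^S_X = 0$, it would give $\varepsilon^\partial_X \circ \mathsf{d}^\partial_X = 0$ rather than $\mathsf{e}^\partial_X \otimes X$. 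The underlying methodological problem is that your recipe ``specify the restriction along $A \hookrightarrow A \otimes SX$ plus a commuting action on the target'' does not apply to $\varepsilon^\partial_X \colon \ocf X \to X$ as stated, because $X$ carries no commuting $X$-action making $\varepsilon^\partial_X$ an action map. One must instead pair it with $\mathsf{e}^\partial_X$ and target the free \emph{nilsquare} action on $I \oplus X$; this is precisely how the linear rule is encoded as an action-map condition, and it is what produces the extra summand.

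Second, your construction of $\delta^\partial_X$ invokes the monoidal constraints $m_\otimes$ and $m_I$ of $\oc$, which do not exist here: the theorem is about mere coalgebra modalities, not monoidal ones. More fundamentally, to define $\delta^\partial_X$ by freeness you need a specified commuting $X$-action on $\ocf\ocf X$ serving as the codomain, and you have not produced one. The necessary ingredient is a lifting of the endofunctor $\ocf$ itself to the category $\ca$ — built from the coderiving transformation $\mathsf{b}^\partial_X = (\ocf X \otimes \varepsilon^\partial_X)\circ\Delta^\partial_X$ via $\alpha \mapsto \mathsf{d}^\partial \circ (1 \otimes \alpha) \circ (\mathsf{b}^\partial \otimes 1)$ — together with the observation that the chain rule is equivalent to $\delta^\partial_X$ being an action map into this lifted image of $(\ocf X, \mathsf{d}^\partial_X)$. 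Without that lifting (and the verification that it really lands in \emph{commuting} actions, which is a nontrivial calculation), there is no closed-form expression for $\delta^\partial$ and no way to define it by the universal property; so this step of your argument would not go through as written.
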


The third main result of our paper is an application of the second
one: we use it to construct the \emph{initial monoidal differential
  modality} on a suitable category $\C$. In order to do this, we need
only construct the initial monoidal coalgebra modality, and then apply
our free construction. (Note that the corresponding question of
finding the initial differential modality has a trivial answer: it is
the comonad constant at the initial object.)

It turns out that an initial monoidal coalgebra modality $P$
will exist whenever our base category $\C$ admits infinite coproducts
of its unit object $I$, which are preserved by tensor in each
variable; in this situation we can take:
\begin{equation*}
  PX = \bigoplus_{x \colon I \rightarrow X} I\rlap{ .}
\end{equation*}
Combining this with~\eqref{eq:70}, we conclude that the initial
monoidal differential modality on a base category satisfying the
necessary hypotheses is:
\begin{equation}
  \label{eq:59}
  P^\partial X = \bigoplus_{x \colon I \rightarrow X} SX\rlap{ .}
\end{equation}
Even in simple models of linear logic such as $\REL$, the category of
sets and relations, this gives a new exponential, and a new model of
differential linear logic. For a general $\C$, it seems like an
interesting problem---and one we hope to explore in future work---to
understand the co-Kleisli category and the co-Eilenberg--Moore
category of the comonad $P^\partial$. The co-Kleisli category is a
cartesian closed category and cartesian differential
category~\cite{Blute2009Cartesian}; the co-Eilenberg--Moore category
is, under mild assumptions on $\C$, a tangent
category---see~\cite{Cockett2020Tangent}.

We conclude this introduction with a brief overview of the contents of
the paper. In Section~\ref{sec:mono-coalg-mono}, we recall in detail
the notions of (monoidal) coalgebra modality and (monoidal)
differential modality which are our main concern. We also recall two
additional reformulations of monoidal coalgebra modalities in terms of
storage modalities\footnote{Also known as \emph{new-Seely
    categories}.} and additive bialgebra modalities.

In Section~\ref{sec:algebr-free-comm}, we introduce and study
algebraically-free commutative monoids. In particular, we see that
free monoids need \emph{not} be algebraically-free, but describe
broadly applicable conditions under which they will be---namely, when
they can be constructed via the familiar exponential formula
$X \mapsto \sum_{n} X^{\otimes n} / \mathfrak{S}_n$.

At this point, we could jump straight into the construction of the
free differential modality on a coalgebra modality; however, if we did
so, the arguments involved would seem rather opaque. So instead, we
first prove some results which make transparent \emph{why} the
construction has to be the way it is---while also being of interest in
their own right.

Section~\ref{sec:lift-mono-diff} gives the first of these, which
explains how a differential modality $\oc$ on a base category $\C$ can
be lifted to a differential modality $\oc^X$ on the category $\ca$ of
commuting $X$-actions over any $X \in \C$. Interestingly, this lifted
differential modality need not be monoidal, even if the one on the
base is so; thus, this result yields an ample new supply of
differential modalities which are not monoidal.

Section~\ref{sec:diff-modal-axioms} builds on the results of
Section~\ref{sec:lift-mono-diff} by explaining how each of the axioms
for a differential modality may be explained in terms of commuting
$X$-actions. We already noted above that the \emph{interchange} axiom
states precisely that the deriving transformation~\eqref{eq:68} is a
commuting $X$-action; but to give a further example, the \emph{chain
  rule} axiom turns out to be equivalent to the assertion that the
component of the comonad counit
$\delta \colon \oc \Rightarrow \oc \oc$ at $X$ underlies a map of
commuting $X$-actions
$(\oc X, \mathsf{d}_X) \rightarrow \oc^X (\oc X,
\mathsf{d}_X)$---where here we use the results of
Section~\ref{sec:lift-mono-diff} to lift $\oc$ to a modality $\oc^X$
on the category of commuting $X$-actions.

In Section~\ref{sec:constr-free-mono}, we are finally ready to give
our first main result, by constructing the free differential modality
$\oc^\partial$ on a given coalgebra modality $\oc$. As explained
above, we assume the existence of algebraically-free commutative
monoids, and take $\oc^\partial X = \oc X \otimes SX$. The unit
exhibiting this as free will be given by
$\xi_X = \oc X \otimes \eta_X \colon \oc X \rightarrow \oc X \otimes
SX$, where $\eta_X \colon I \rightarrow SX$ is the unit of the
algebraically-free commutative monoid. As for the remaining structure
of $\oc^\partial$, this is now completely determined by the
characterising conditions from Section~\ref{sec:diff-modal-axioms}
together with the fact that $\oc^\partial X$ is the free commuting
$X$-action on $SX$, and the fact that $\xi$ is to be a map of
coalgebra modalities. Thus, the constructions involved---which
otherwise would seem somewhat contrived---are in fact natural, and
completely forced upon us.

Section~\ref{sec:free-mono-diff} builds upon
Section~\ref{sec:constr-free-mono} to give our second main result,
describing the free \emph{monoidal} differential modality on a
\emph{monoidal} coalgebra modality: thus, the construction which turns
a model of multiplicative exponential linear logic (MELL) into a model
of \emph{differential} MELL. As noted above, the construction is the
same construction, and thus it is simply a matter of showing that
monoidality is preserved.

In Section~\ref{sec:init-mono-diff}, we turn to our third and final
main result, which constructs the initial monoidal differential
modality on any suitable $k$-linear symmetric monoidal category; and
finally, in Section~\ref{sec:examples}, we work this out in a number
of familiar contexts, including the category of sets and relations,
the category of $k$-modules over a rig $k$, the category of linear
species, and the category of super vector spaces.

\section{Modalities for differentiation}
\label{sec:mono-coalg-mono}

In this section, we recall the key players that we will be concerned
with in the rest of the paper. First, we define the notions of
\emph{coalgebra modality} and \emph{monoidal coalgebra modality} on a
symmetric monoidal category; as explained in the introduction, the
latter is exactly what is needed to model the $\oc$ modality of linear
logic. Then we describe how these notions can be enhanced with a
differential operation to yield the notions of \emph{differential
  modality} and \emph{monoidal differential modality}.

Along the way, we discuss various reformulations of the above notions
which are available in categories with finite products or with
$k$-linear structure. Much of this is background, but there are new
results---Propositions~\ref{prop:1} and~\ref{prop:9}---and attention
is paid to \emph{morphisms} between these modalities, which has
not been a focus in previous work.

\subsection{Coalgebra modalities and monoidal coalgebra modalities}
\label{sec:mono-coalg-modal}

The modalities we consider will be \emph{comonads} on a symmetric
monoidal category $(\C, \otimes, I)$. Following the notation
of~\cite{Blute2019Differential}, we will write $\oc$ for such a
comonad, and $\varepsilon \colon \oc \Rightarrow \mathrm{id}_\C$ and
$\delta \colon \oc \Rightarrow \oc\oc$ for its counit and
comultiplication. The most fundamental additional structure we will
consider on such a comonad is:
\begin{Defn}[Coalgebra modality]
  \label{def:5}
  A comonad $(\oc, \varepsilon, \delta)$ on a symmetric monoidal
  $(\C, \otimes, I)$ is a \emph{(cocommutative) coalgebra modality} if
  it comes equipped with maps of the following form, natural in 
  $X \in \C$:
  \begin{equation}\label{eq:14}
    \mathsf{e}_X \colon \oc X \rightarrow I \qquad \text{and}
    \qquad \Delta_X \colon \oc X \rightarrow \oc X \otimes \oc X\rlap{ ,}
  \end{equation}
  which endow each $\oc X$ with cocommutative comonoid structure in
  such a way that each $\delta_X$ is a map of comonoids
  $(\oc X, \Delta_X, \mathsf{e}_X) \rightarrow (\oc\oc X, \Delta_{\oc
    X}, \mathsf{e}_{\oc X})$.
\end{Defn}
\begin{Rk}
  \label{rk:3}
  To make a comonad $\oc$ into a coalgebra
  modality is equally to give a comparison functor
  \begin{equation}
    \label{eq:47}
    \cd[@!C@C-2.5em@-0.2em]{
      {\cat{Coalg}(\oc)} \ar@{-->}[rr]^-{\smash{K}} \ar[dr]_-{U^\oc} & &
      {\cat{Comon}(\C)} \ar[dl]^-{U^\cat{Comon}} \\ &
      {\C}
    }
  \end{equation}
  from the category of Eilenberg--Moore coalgebras for $\oc$ to the
  category of cocommutative comonoids in $\C$. Indeed, since each
  $\oc$-coalgebra is a $U^\oc$-absolute equaliser of cofree
  $\oc$-coalgebras, such a factorisation is uniquely determined by its
  restriction to the cofree coalgebras---and the data of such a
  restriction amounts exactly to comonoid structures~\eqref{eq:14}
  making each $\delta_X$ a homomorphism. For a general coalgebra $(X, \xi
  \colon X \rightarrow \oc X)$, its image
  under $K$ is then forced to be $X$ endowed with the comonoid structure maps
  \begin{equation}
    \label{eq:48}
    X \xrightarrow{\xi} \oc X \xrightarrow{\Delta_X} \oc X\otimes \oc X \xrightarrow{\varepsilon_X \otimes \varepsilon_X} X \otimes X \ \ \text{and} \ \  X \xrightarrow{\xi} \oc X \xrightarrow{\mathsf{e}_X} I\rlap{ .}
  \end{equation}
\end{Rk}

\begin{Ex}[Cofree comonoids as a coalgebra modality]
  \label{ex:5}
  If the forgetful functor $\cat{Comon}(\C) \rightarrow \C$ has a
  right adjoint, then it will be strictly comonadic, so that the
  category of coalgebras for the induced \emph{cofree cocommutative
    comonoid comonad} $\oc^C$ is isomorphic to $\cat{Comon}(\C)$ over
  $\C$. In particular, this isomorphism witnesses $\oc^C$ as a
  coalgebra modality.
\end{Ex}

The comonad $\oc^C$ is not just a coalgebra modality: it is in fact a
model of the exponential modality of linear logic. The additional
ingredient needed for this is:

\begin{Defn}[Monoidal comonad]
  \label{def:15}
  A comonad $(\oc, \varepsilon, \delta)$ a symmetric monoidal
  $(\C, \otimes, I)$ is \emph{(symmetric) monoidal} if it comes
  equipped with maps
  \begin{equation}\label{eq:15}
    m_I \colon I \rightarrow \oc I
    \qquad \text{and} \qquad m_\otimes \colon \oc X \otimes \oc Y \rightarrow \oc(X \otimes Y) \text{ for all $X,Y \in \C$}
  \end{equation}
  making $\oc$ into a symmetric monoidal functor, and $\varepsilon$
  and $\delta$ into monoidal natural transformations; see, for
  example,~\cite[\sec 7]{Moerdijk2002Monads} for the conditions
  involved.
\end{Defn}

\begin{Rk}
  \label{rk:8}
  Giving the structure in~\eqref{eq:15} on a comonad $\oc$ is
  equivalent to giving a lifting of the symmetric monoidal structure
  of $\C$ to the category of Eilenberg--Moore coalgebras
  $\cat{Coalg}(\oc)$. On the one hand, monoidal structure on
  $\oc$ yields a lifted monoidal structure on $\cat{Coalg}(\oc)$,
  with unit $\hat I$ and binary tensor $\hat \otimes$ given by:
  \begin{equation}
    \label{eq:13}
    \begin{aligned}
      \hat I &= (m_I \colon I \rightarrow \oc I) \\
      \text{and }\ \ (X \xrightarrow{x} \oc X) \mathbin{\hat\otimes}
      (Y \xrightarrow{y} \oc Y) &= \bigl(X \otimes Y \xrightarrow{x
        \otimes y} \oc X \otimes \oc Y \xrightarrow{m_\otimes} \oc(X
      \otimes Y)\bigr)\rlap{ .}
    \end{aligned}
  \end{equation}
  
  On the other hand, from a lifted monoidal structure, we re-find
  $m_I$ and $m_\otimes$ as the unique maps of $\oc$-coalgebras
  $\hat I \rightarrow (\oc I, \delta_I)$ and
  $(\oc X, \delta_X) \mathbin{\hat \otimes} (\oc Y, \delta_Y)
  \rightarrow (\oc(X \otimes Y), \delta_{X \otimes Y})$ which render
  commutative the respective diagrams
  \begin{equation*}
    \cd[@!C]{
      I \ar[r]^-{m_I} \ar@{=}[dr]^-{} & \oc I \ar[d]^-{\varepsilon_I} & \oc X \otimes \oc Y \ar[dr]_-{\varepsilon_X \otimes \varepsilon_Y} \ar[r]^-{m_\otimes} & \oc (X \otimes Y) \ar[d]^-{\varepsilon_{X \otimes Y}} \\
      & I & & X \otimes Y\rlap{ .}
    }
  \end{equation*}
\end{Rk}

\begin{Ex}[Cofree comonoids as a monoidal comonad]
  \label{ex:2}
  The symmetric monoidal structure of $\C$ \emph{always} lifts to
  $\cat{Comon}(\C)$; the unit of the lifted monoidal structure is the
  cocommutative comonoid
  \begin{equation*}
    \tilde I = (I,\, I \xrightarrow{\mathrm{id}} I,\, I \xrightarrow{\cong} I \otimes I)
  \end{equation*}
  and the binary tensor of cocommutative comonoids
  $(X^1, \mathsf{e}^1, \Delta^1) \mathbin{\tilde{\otimes}} (X^2,
  \mathsf{e}^2, \Delta^2)$ is given by $X^1 \otimes X^2$ endowed with the
  counit and comultiplication maps
  \begin{equation}
    \label{eq:46}
    \begin{gathered}
      X^1 \otimes X^2 \xrightarrow{\mathsf{e}^1 \otimes \mathsf{e}^2} I \ \ \ \text{ and } \\[3pt]
      \smash{X^1 \otimes X^2 \xrightarrow{\Delta^1 \otimes
        \Delta^2} X^1 \otimes X^1 \otimes X^2 \otimes X^2
      \xrightarrow{1 \otimes \sigma \otimes 1} X^1 \otimes X^2 \otimes
      X^1 \otimes X^2}\text{ .}
    \end{gathered}
  \end{equation}
  Thus, when cofree cocommutative comonoids exist, the cofree
  cocommutative comonoid comonad $\oc^C$ is a monoidal comonad.
\end{Ex}
\begin{Rk}
  \label{rk:5}
  We observe the important fact that the lifted monoidal structure on
  $\mathsf{Comon}(\C)$ is in fact cartesian; this dualises the
  well-known fact that the tensor product of commutative rings is also
  their coproduct. Here, to be clear, we say that a monoidal structure
  is \emph{cartesian} if its unit object $I$ is terminal, and for each
  pair of objects $X,Y$ the following span is a product cone:
  \begin{equation}
    \label{eq:45}
    X \xleftarrow{\cong} X \otimes I \xleftarrow{X \otimes \,\text{unique}} X \otimes Y \xrightarrow{\text{unique}\, \otimes Y} I \otimes Y \xrightarrow{\cong} Y\rlap{ .}
  \end{equation}
\end{Rk}

A comonad is a model of the exponential modality of linear logic when
it is is both a coalgebra modality and a monoidal comonad in a
compatible way; we speak in this situation of a \emph{monoidal
  coalgebra modality}.

\begin{Defn}[Monoidal coalgebra modality]
  A \emph{monoidal coalgebra modality} on a symmetric monoidal
  $(\C, \otimes, I)$ is a comonad $\oc $ endowed with coalgebra
  modality structure and monoidal comonad structure, in such a way
  that the maps in~\eqref{eq:15} are comonoid homomorphisms
  \begin{equation*}
    \tilde I \xrightarrow{\!m_I\!} (\oc I, \Delta_I, \mathsf{e}_I) \ \ \text{and} \  \  (\oc X, \Delta_X, \mathsf{e}_X) \mathbin{\tilde \otimes} (\oc Y,
    \Delta_Y, \mathsf{e}_Y) \xrightarrow{\!m_{XY}\!} (\oc(X \otimes Y),
    \Delta_{X \otimes Y}, \mathsf{e}_{X \otimes Y})
  \end{equation*}
  and the maps in~\eqref{eq:14} are maps of $\oc $-coalgebras
  \begin{equation*}
    (\oc X, \delta_X) \xrightarrow{\mathsf{e}_X} \hat I \ \ \text{and
    } \ \  (\oc X, \delta_X) \xrightarrow{\Delta_X} (\oc X, \delta_X) \mathbin{\hat \otimes} (\oc X,
    \delta_X) \text{.} 
  \end{equation*}
\end{Defn}
It may be helpful to take a semantic perspective on the above
conditions.
\begin{Prop}
  \label{prop:1}
  Let $\oc$ be a coalgebra modality and a monoidal comonad on the
  symmetric monoidal $\C$; so we have a
  comparison functor $K \colon {\cat{Coalg}(\oc) \rightarrow \cat{Comon}(\C)}$ as
  in Remark~\ref{rk:3} and a lifted monoidal structure
  $\tilde \otimes$ on $\cat{Coalg}(\oc)$ as in Remark~\ref{rk:8}.
  These data comprise a monoidal coalgebra modality on $\C$ if and
  only if:
  \begin{enumerate}[(i)]
  \item $K$ is a strict monoidal functor
    $(\cat{Coalg}(\oc), \tilde \otimes) \rightarrow (\cat{Comon}(\C),
    \hat\otimes)$; and 
  \item The lifted monoidal structure $\smash{\tilde \otimes}$ on
    $\cat{Coalg}(\oc)$ is cartesian.
  \end{enumerate}
\end{Prop}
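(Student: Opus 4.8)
The plan is to prove the two implications of the biconditional separately. Write (1) and (2) for the two conditions in the definition asking that $m_I$ and $m_\otimes$ be comonoid homomorphisms, and (3) and (4) for the two asking that each $\mathsf{e}_X$ and each $\Delta_X$ be an $\oc$-coalgebra map. Throughout I will use: the description~\eqref{eq:48} of $K$ on a general coalgebra, together with the fact that $K$ carries $(\oc X,\delta_X)$ to $(\oc X,\Delta_X,\mathsf{e}_X)$ (Remark~\ref{rk:3}); the formula~\eqref{eq:13} for $\hat\otimes$ and~\eqref{eq:46} for the comonoid underlying $\tilde\otimes$; the comonad identities $\varepsilon_A\circ\xi = \mathrm{id}$, $\varepsilon_{\oc X}\circ\delta_X = \mathrm{id}$ and $\oc\varepsilon_X\circ\delta_X = \mathrm{id}$, which make $\xi$ and $\delta_X$ split monomorphisms and $\oc\varepsilon_X$ a split epimorphism; naturality of $\varepsilon$, $\mathsf{e}$, $\Delta$ and $m_\otimes$; and the standing axioms that each $\delta_X$ is a comonoid homomorphism and that $\varepsilon$, $\delta$ are monoidal.

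Suppose first that $\oc$ is a monoidal coalgebra modality, so (1)--(4) hold. For~(i): since $K$ is the identity on underlying morphisms and both liftings inherit their associativity and unit constraints from $\C$, strictness of $K$ amounts to the equalities of comonoids $K\hat I = \tilde I$ and $K\bigl((\oc X,\delta_X)\mathbin{\hat\otimes}(\oc Y,\delta_Y)\bigr) = (\oc X,\Delta_X,\mathsf{e}_X)\mathbin{\tilde\otimes}(\oc Y,\Delta_Y,\mathsf{e}_Y)$, and more generally $K(a\mathbin{\hat\otimes}b) = Ka\mathbin{\tilde\otimes}Kb$ for all coalgebras $a=(A,\xi)$, $b=(B,\eta)$. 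Expanding the left-hand sides by~\eqref{eq:48} and~\eqref{eq:13} and the right-hand sides by~\eqref{eq:46}, each such equality is obtained from~(1),~(2) by substituting the counit and comultiplication identities listed above. For~(ii): a short computation with the coalgebra axiom $\oc\xi\circ\xi = \delta_A\circ\xi$ and~(3) shows $\mathsf{e}_A\circ\xi$ is a coalgebra map $(A,\xi)\rightarrow\hat I$, and applying the (now strict monoidal) $K$ --- which sends $\hat I$ to $\tilde I$, terminal in $\cat{Comon}(\C)$ by Remark~\ref{rk:5} --- together with the fact that $K$ is the identity on morphisms, shows it is the unique such; so $\hat I$ is terminal. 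For binary products, given coalgebra maps $f\colon(C,\zeta)\rightarrow(A,\xi)$ and $g\colon(C,\zeta)\rightarrow(B,\eta)$, the composite of the comultiplication $C\rightarrow C\otimes C$ of the comonoid $K(C,\zeta)$ with $f\otimes g$ is a coalgebra map $(C,\zeta)\rightarrow(A,\xi)\mathbin{\hat\otimes}(B,\eta)$; this uses naturality of $m_\otimes$ and the fact that this comultiplication is itself a coalgebra map $(C,\zeta)\rightarrow(C,\zeta)\mathbin{\hat\otimes}(C,\zeta)$, which follows from~(4) at $C$ after cancelling the split monomorphism $\oc(\zeta\otimes\zeta)$. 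Composing with the projections~\eqref{eq:45} returns $f$ and $g$, and uniqueness follows from~(i) and the fact that the corresponding span in $\cat{Comon}(\C)$ is a product cone (Remark~\ref{rk:5}).

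Conversely, assume~(i) and~(ii). By~(ii) there is a unique coalgebra map $t\colon(\oc X,\delta_X)\rightarrow\hat I$; since $K$ is strict monoidal, $K\hat I = \tilde I$, so $Kt$ is the unique comonoid map $(\oc X,\Delta_X,\mathsf{e}_X)\rightarrow\tilde I$, namely $\mathsf{e}_X$ (Remark~\ref{rk:5}); as $K$ is the identity on morphisms, $t=\mathsf{e}_X$, so $\mathsf{e}_X$ is a coalgebra map --- this is~(3). Likewise the diagonal $(\oc X,\delta_X)\rightarrow(\oc X,\delta_X)\mathbin{\hat\otimes}(\oc X,\delta_X)$ provided by~(ii) is carried by $K$ to the diagonal of $(\oc X,\Delta_X,\mathsf{e}_X)$ in $\cat{Comon}(\C)$, which is its comultiplication $\Delta_X$; so $\Delta_X$ is a coalgebra map --- this is~(4). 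Finally, strictness of $K$ gives the equalities of comonoids $K\hat I = \tilde I$ and $K\bigl((\oc X,\delta_X)\mathbin{\hat\otimes}(\oc Y,\delta_Y)\bigr) = (\oc X,\Delta_X,\mathsf{e}_X)\mathbin{\tilde\otimes}(\oc Y,\Delta_Y,\mathsf{e}_Y)$; reading off counits and comultiplications via~\eqref{eq:48},~\eqref{eq:13} and~\eqref{eq:46}, and then cancelling the split monomorphisms $\delta_X$ and $\delta_X\otimes\delta_Y$, respectively the split epimorphisms $\oc\varepsilon_X$ and $\oc\varepsilon_X\otimes\oc\varepsilon_Y$ --- using naturality and the now-available~(3),~(4) --- yields the comonoid-homomorphism identities for $m_I$ and $m_\otimes$, which are~(1),~(2).

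The step I expect to require the most care, in both directions, is precisely this passage between ``$K$ is strict monoidal'' and conditions~(1),~(2). Because $K$ records the comonoid structure of a coalgebra $(A,\xi)$ in the $\varepsilon$-laden form~\eqref{eq:48}, matching it against the bare comonoid-homomorphism conditions means inserting and then cancelling the canonical split monomorphisms ($\xi$, $\delta_X$, $\delta_X\otimes\delta_Y$) and split epimorphisms ($\oc\varepsilon_X$, $\oc\varepsilon_X\otimes\oc\varepsilon_Y$) coming from the comonad identities, interleaved with the naturality and monoidality clauses. The same device --- now using that each coalgebra structure map $\zeta\colon(C,\zeta)\rightarrow(\oc C,\delta_C)$ is split monic --- is what promotes condition~(4), which only constrains cofree coalgebras, to the statement, needed for the product construction in~(ii), that the comultiplication of every comonoid $K(C,\zeta)$ is a coalgebra map. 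None of these steps is conceptually deep, but the order in which the cancellations are performed is where the care lies.
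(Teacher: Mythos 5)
Your proof is correct in outline and, for most of the argument, follows the same route as the paper: you match strictness of $K$ against the comonoid-homomorphism conditions on $m_I$ and $m_\otimes$ by expanding~\eqref{eq:48}, \eqref{eq:13} and~\eqref{eq:46}, and you obtain the $\oc$-coalgebra conditions on $\mathsf{e}_X$ and $\Delta_X$ by identifying them with the unique map to the terminal object and with the diagonal. The one genuinely different component is condition~(ii) in the direction starting from a monoidal coalgebra modality: the paper simply cites \cite[Lemma~4]{Benton1995A-mixed} for cartesianness of the lifted tensor, whereas you sketch a direct proof (terminal object $\hat I$ via $\mathsf{e}_A\circ\xi$, binary products via $(f\otimes g)$ composed with the diagonal of $K(C,\zeta)$, uniqueness transported along the strict monoidal $K$ into the cartesian $\cat{Comon}(\C)$). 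That buys self-containedness at the cost of re-proving Benton's lemma; the paper's citation is the shorter path. Conversely, in the direction assuming~(i) and~(ii), the paper derives the comonoid-homomorphism property of $m_I$ and $m_\otimes$ in one line, by observing that these are coalgebra morphisms (Remark~\ref{rk:8}) and applying the strict monoidal functor $K$ to them; your route of extracting the identities from the object-level equalities $K(a\mathbin{\hat\otimes}b)=Ka\mathbin{\tilde\otimes}Kb$ at cofree coalgebras also works, but is noticeably more laborious.

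Two places where your sketch is thinner than it acknowledges. First, the ``cancellation'' yielding the comultiplication half of the $m_\otimes$ condition is not just a matter of splitting $\delta_X\otimes\delta_Y$ against $\oc\varepsilon_X\otimes\oc\varepsilon_Y$: the two sides of the strictness equality land in $(\oc X\otimes\oc Y)^{\otimes 2}$ while the desired identity lands in $\oc(X\otimes Y)^{\otimes 2}$, so one must postcompose with $m_{XY}\otimes m_{XY}$ and then invoke naturality of $\Delta$ and $\varepsilon$ together with the \emph{monoidality of $\delta$} (the axiom $\delta_{X\otimes Y}\circ m_{XY}=\oc m_{XY}\circ m_{\oc X,\oc Y}\circ(\delta_X\otimes\delta_Y)$), which your list of tools does not mention. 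Second, promoting condition~(4) from cofree coalgebras to the statement that the comultiplication of every $K(C,\zeta)$ is a coalgebra map $(C,\zeta)\rightarrow(C,\zeta)\mathbin{\hat\otimes}(C,\zeta)$ needs more than cancelling $\oc(\zeta\otimes\zeta)$; the standard argument presents $(C,\zeta)$ as a split equaliser of cofree coalgebras (as in Remark~\ref{rk:3}) and uses that $\hat\otimes$ preserves such absolute limits. Both gaps are fillable and do not affect the correctness of the strategy, but they are exactly the points you would need to write out in full.
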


\begin{proof}
  We first deal with the ``if'' direction. The maps in~\eqref{eq:15}
  are, by Remark~\ref{rk:8}, maps of $\oc$-coalgebras
  $\hat I \rightarrow (\oc I, \delta_I)$ and
  $(\oc X, \delta_X) \mathbin{\hat \otimes} (\oc Y, \delta_Y)
  \rightarrow (\oc(X \otimes Y), \delta_{X \otimes Y})$, whence their
  images under the strict monoidal $K$ are comonoid homomorphisms of
  the required form. On the other hand, we have in the cartesian
  $(\cat{Coalg}(\oc), \hat \otimes)$, a unique map
  $(\oc X, \delta_X) \rightarrow \hat I$ for each $X$. Because $K$ is
  strict monoidal, this map is also a comonoid map
  $(\oc X, \Delta_X, \mathsf{e}_X) \rightarrow \tilde I$; but (by
  cartesianness of $\tilde \otimes$) the unique such is
  $\mathsf{e}_X$, and so $\mathsf{e}_X$ is a $\oc$-coalgebra
  homomorphism. Similarly, the product cone~\eqref{eq:45} in
  $\cat{Coalg}(\oc)$ is sent by $K$ to the corresponding product cone
  in $\cat{Comon}(\C)$; whence the diagonal map
  $(\oc X, \delta_X) \rightarrow (\oc X, \delta_X) \hat \otimes (\oc
  X, \delta_X)$ in $\cat{Coalg}(!)$ is also the diagonal map
  $(\oc X, \Delta_X, \mathsf{e}_X) \rightarrow (\oc X, \Delta_X,
  \mathsf{e}_X) \tilde \otimes (\oc X, \Delta_X, \mathsf{e}_X)$ in
  $\cat{Comon}(\C)$---and so must equal $\Delta_X$, which is thus a
  $!$-coalgebra homomorphism.

  We now prove the ``only if'' direction. First, we show that $K(\hat
  I) = \tilde I$. From~\eqref{eq:13} and~\eqref{eq:48}, $K(\hat I)$ 
  has the comonoid structure
  \begin{equation*}
    I \xrightarrow{m_I} \oc I \xrightarrow{\Delta_I} \oc I\otimes \oc I \xrightarrow{\varepsilon_I \otimes \varepsilon_I} I \otimes I \ \ \text{and} \ \  I \xrightarrow{m_I} \oc I \xrightarrow{\mathsf{e}_I} I\rlap{ .}
  \end{equation*}
  Because $m_I \colon \tilde I \to (\oc I, \Delta_I, \mathsf{e}_I)$,
  the right-hand composite is the counit $1_I \colon I \rightarrow I$
  of $\tilde I$; while the left-hand composite is equally the map
  \begin{equation*}
    I \xrightarrow{\Delta^{\tilde I}} I \otimes I \xrightarrow{m_I \otimes m_I} \oc I \otimes \oc I \xrightarrow{\varepsilon_I \otimes \varepsilon_I} I \otimes I
  \end{equation*}
  which equals $\Delta^{\tilde I}$ due to the monoidal comonad axiom
  $\varepsilon_I \circ m_I = 1_I$.

  We now show 
  $K\bigl((X, \xi) \hat \otimes (Y, \gamma)\bigr) = K(X, \xi) \tilde
  \otimes K(Y, \gamma)$. The counit maps of the two sides are, 
  by~\eqref{eq:48},~\eqref{eq:13} and~\eqref{eq:46}, given by
  \begin{equation*}
    X \otimes Y \xrightarrow{\!\xi \otimes \gamma\!} \oc X \otimes \oc Y \xrightarrow{m_{XY}} \oc(X \otimes Y) \xrightarrow{\!\mathsf{e}_{X \otimes Y}\!} I \ \text{and} \ 
    X \otimes Y \xrightarrow{\!\xi \otimes \gamma\!} \oc X \otimes \oc Y \xrightarrow{\!\mathsf{e}_{X} \otimes \mathsf{e}_Y\!} I\rlap{ ,}
\end{equation*}
which are equal since $m_{XY}$ is a comonoid homomorphism. On the
other hand, the comultiplications are given by the outside composites
in:
\begin{equation*}
  \cd[@C-0.5em]{
    \sh{r}{1em}X\otimes Y \ar[r]^-{\xi \otimes  \gamma} &
    \oc X \otimes  \oc Y \ar[d]_-{\Delta_X \otimes  \Delta_Y} \ar[r]^-{m_{XY}} &
    \oc(X \otimes Y) \ar[r]^-{\Delta_{X\otimes  Y}} &
    \oc(X \otimes Y)\otimes \oc(X \otimes Y) \ar[d]^-{\varepsilon_{X\otimes Y}\otimes  \varepsilon_{X\otimes Y}} \\
    & \sh{l}{1.2em} \oc X\otimes  \oc X\otimes  \oc Y \otimes  \oc Y \ar[r]_-{1\otimes \sigma\otimes 1} &
    \oc X \otimes \oc Y\otimes \oc X\otimes \oc Y \ar[ur]|-{m_{XY}\otimes m_{XY}} \ar[r]_-{\varepsilon_X\otimes \varepsilon_Y\otimes \varepsilon_X\otimes \varepsilon_Y}&
    \sh{r}{1em}X\otimes Y\otimes X\otimes Y\rlap{ .}
  }
\end{equation*}
Herein, the left-hand region commutes since $m_{XY}$ is a comonoid
homomorphism, and the right-hand region commutes by monoidality of
$\varepsilon$.
Thus $K$ is strict monoidal, and it remains to show that the lifted
monoidal structure $\tilde \otimes$ is cartesian. This is done, for
example, in~\cite[Lemma~4]{Benton1995A-mixed}.
\end{proof}

\begin{Ex}[Cofree comonoids as a monoidal coalgebra modality]
  \label{ex:3}
  For the cofree cocommutative comonoid comonad $\oc^C$, made monoidal
  as in Example~\ref{ex:2}, the map $K$ in (i) is strict monoidal by
  construction. The lifted monoidal structure on
  $\mathsf{Coalg}(\oc^C)$ is cartesian by Remark~\ref{rk:5}, and so
  $\oc^C$ is a monoidal coalgebra modality. In particular, it models
  the exponential modality of linear logic; a model of linear logic
  whose exponential is modelled by $\oc^C$ is often referred to as a
  \emph{Lafont category}.
\end{Ex}

From the preceding proposition we obtain the following remarkable
result. While the first part is well-known, the second appears to be
new---though for the uniqueness claim, see also
Proposition~12 of the second author's recent~\cite{Lemay2025Additive}.
\begin{Prop}
  \label{prop:9}
  Let $\oc$ be a comonad on the symmetric monoidal $\C$.
  \begin{enumerate}[(i), itemsep=0.25\baselineskip]
  \item If $\oc$ is a monoidal comonad, then there is at most one
    coalgebra modality structure on $\oc$ making it into a monoidal
    coalgebra modality---which exists precisely when the lifted
    monoidal structure $\tilde \otimes$ on $\cat{Coalg}(\oc)$ is cartesian.
  \item If $\oc$ is a coalgebra modality, then there is at most one
    monoidal comonad structure on $\oc$ making it into a monoidal
    coalgebra modality---which exists precisely when
    $K \colon \cat{Coalg}(\oc) \rightarrow \cat{Comon}(\C)$ creates
    finite products.
  \end{enumerate}
\end{Prop}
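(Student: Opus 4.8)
The plan is to deduce both statements from Proposition~\ref{prop:1}, together with the dictionaries of Remarks~\ref{rk:3} and~\ref{rk:8}: a coalgebra modality structure on $\oc$ is the same as a comparison functor $K \colon \cat{Coalg}(\oc) \to \cat{Comon}(\C)$ over $\C$, pinned down by the comonoid structures $(\mathsf{e}_X, \Delta_X)$ on the cofree coalgebras; a monoidal comonad structure on $\oc$ is the same as a lifting $\tilde\otimes$ of the symmetric monoidal structure of $\C$ to $\cat{Coalg}(\oc)$ (one making $U^\oc$ strict monoidal); and, by Proposition~\ref{prop:1}, together these give a monoidal coalgebra modality precisely when $K$ is strict monoidal and $\tilde\otimes$ is cartesian.

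For~(i), fix the monoidal comonad, hence the lifted $\tilde\otimes$. If $\tilde\otimes$ is not cartesian, Proposition~\ref{prop:1} rules out any compatible coalgebra modality structure. If it is cartesian, then $\hat I$ is terminal in $\cat{Coalg}(\oc)$; for any monoidal coalgebra modality structure on $\oc$, the maps $\mathsf{e}_X$ and $\Delta_X$ must underlie $\oc$-coalgebra morphisms $(\oc X, \delta_X) \to \hat I$ and $(\oc X, \delta_X) \to (\oc X, \delta_X) \mathbin{\tilde\otimes} (\oc X, \delta_X)$; since $\hat I$ is terminal the first is forced, and since the product projections are built from $\mathsf{e}_X$, the comonoid counit laws force the second to be the diagonal. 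This gives uniqueness. Conversely, \emph{define} $\mathsf{e}_X$ and $\Delta_X$ as the underlying maps of this unique map and this diagonal. One then checks routinely that $(\mathsf{e}_X, \Delta_X)$ is a natural cocommutative comonoid structure with each $\delta_X$ a homomorphism --- all of which holds because these are underlying maps of the canonical comonoid structure on $(\oc X, \delta_X)$ in the cartesian $\cat{Coalg}(\oc)$, $U^\oc$ is strict monoidal, and any morphism of a cartesian monoidal category commutes with diagonals and with the maps to the terminal object --- and that the resulting $K$ is strict monoidal; the latter verification is exactly the computation in the proof of Proposition~\ref{prop:1} run in reverse, using naturality of the symmetry and the monoidal-comonad axioms $\varepsilon_I \circ m_I = 1_I$ and $\varepsilon_{X \otimes Y}\circ m_\otimes = \varepsilon_X \otimes \varepsilon_Y$. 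By Proposition~\ref{prop:1}, $\oc$ is then a monoidal coalgebra modality.

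For~(ii), fix the coalgebra modality, hence $K$. By the dictionary above, a monoidal comonad structure making $\oc$ a monoidal coalgebra modality is precisely a lifting $\tilde\otimes$ of the symmetric monoidal structure of $\C$ that is cartesian and strictly preserved by $K$. Now $\cat{Comon}(\C)$ always carries the cartesian monoidal structure $\hat\otimes$ of Example~\ref{ex:2} and Remark~\ref{rk:5}, with $U^{\cat{Comon}}$ strict monoidal; so $\cat{Comon}(\C)$ always has finite products, and the issue is whether they lift along $K$. If $K$ creates finite products, then $\cat{Coalg}(\oc)$ acquires finite products, $K$ preserves them, each lifted product has underlying object computed by $U^\oc = U^{\cat{Comon}}K$, and --- because $U^{\cat{Comon}}$ is strict monoidal --- $U^\oc$ carries the coherence isomorphisms of this cartesian structure to those of $\C$; hence it is a lifting in the sense of Remark~\ref{rk:8}, strictly preserved by $K$, and by Proposition~\ref{prop:1} it makes $\oc$ a monoidal coalgebra modality. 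Conversely, if $\oc$ is already a monoidal coalgebra modality, then $\tilde\otimes$ is cartesian and $K$ is strict monoidal, and to see that $K$ creates finite products one takes a finite discrete diagram $D$ in $\cat{Coalg}(\oc)$ and a product cone over $KD$ in $\cat{Comon}(\C)$, compares it through the unique comparison isomorphism with the image under the product-preserving $K$ of the $\tilde\otimes$-product of $D$, transports the coalgebra structure along this isomorphism, and checks --- using faithfulness of $U^\oc$ and $U^{\cat{Comon}}$ and the uniqueness half of the universal property of the product in $\cat{Comon}(\C)$ --- that this produces the unique lift and that it is again a product.

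Uniqueness in~(ii) then follows similarly: given two such monoidal comonad structures, Proposition~\ref{prop:1} makes both $\tilde\otimes$'s cartesian with $K$ strict monoidal, so the comparison isomorphism between the two terminal objects --- and between the two binary products $(\oc X, \delta_X) \mathbin{\tilde\otimes} (\oc Y, \delta_Y)$ --- is sent by $K$ to an automorphism of a terminal object (resp.\ of a product, compatible with projections) in $\cat{Comon}(\C)$, hence to an identity; so its underlying $\C$-map is an identity, and recovering $m_I$ and $m_\otimes$ via the cofree-coalgebra adjunction as in Remark~\ref{rk:8} shows the two structures agree (compare Proposition~12 of~\cite{Lemay2025Additive}). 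I expect the main obstacle to be the ``only if'' half of~(ii): deducing that $K$ \emph{creates} finite products from the bare existence of a monoidal coalgebra modality structure. Unlike $U^\oc$ or $U^{\cat{Comon}}$, the comparison $K$ is not evidently well behaved enough --- for instance, injective on objects --- for this to be automatic, so the transport-of-structure argument must be carried out with care; the remainder of the proof is essentially bookkeeping built on Proposition~\ref{prop:1}.
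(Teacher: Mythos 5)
Your overall strategy coincides with the paper's: both parts are reduced to Proposition~\ref{prop:1}, and your part~(i) is a hands-on unfolding of the paper's one-line appeal to Fox's theorem (that $U^{\cat{Comon}} \colon \cat{Comon}(\C) \rightarrow \C$ is terminal among cartesian monoidal categories strictly monoidal over $\C$); defining $\mathsf{e}_X$ and $\Delta_X$ as the terminal map and the diagonal of the cartesian $\tilde\otimes$ and checking that the resulting $K$ is strict monoidal is a correct, if longer, route to the same conclusion.

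The problem is in part~(ii), where you have located the difficulty in the wrong place. The paper defines ``$K$ creates finite products'' weakly: $\cat{Coalg}(\oc)$ has finite products and $K$ preserves and reflects them. With this reading, the ``only if'' direction you flag as the main obstacle is immediate: $\cat{Coalg}(\oc)$ has products realised by the cartesian $\hat\otimes$, the strict monoidal $K$ sends the cone~\eqref{eq:45} to the corresponding product cone in $\cat{Comon}(\C)$, and $K$ is conservative (since $U^{\cat{Comon}} K = U^\oc$ with both forgetful functors conservative), hence reflects products. The genuine work is in the ``if'' direction, which your sketch elides. From mere existence of finite products preserved and reflected by $K$, a product of $(X,\xi)$ and $(Y,\gamma)$ is determined only up to isomorphism; its underlying object is merely \emph{isomorphic} to $X \otimes Y$, and $K$ of it is merely isomorphic to $K(X,\xi) \mathbin{\hat\otimes} K(Y,\gamma)$. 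So it does not automatically constitute a lifting of $\otimes$ in the sense of Remark~\ref{rk:8}, which demands that the unit be carried by $I$ itself, the binary operation by $X \otimes Y$ itself, and the coherences be those of $\C$, with $K$ strict. One must transport the $\oc$-coalgebra structure of an arbitrary terminal object (respectively, of an arbitrary product) along the unique comparison isomorphism whose image under $K$ lands on $\tilde I$ (respectively, on the cone~\eqref{eq:45} in $\cat{Comon}(\C)$), show that the result is the \emph{unique} such choice, and then supply the action of $\hat\otimes$ on morphisms and the coherence cells via faithfulness of $U^\oc$. This transport-and-uniqueness argument is the bulk of the paper's proof of~(ii); your text asserts its conclusion (``each lifted product has underlying object computed by $U^\oc$'') without carrying it out. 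Your closing uniqueness argument is essentially correct, and is in fact the same mechanism---$K$ sends comparison isomorphisms to identities---that makes the transported structure unique; it just needs to be deployed in the existence direction rather than reserved for the converse.
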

Here, recall that if $\E$ is a category with finite products, then a
functor $F \colon \D \rightarrow \E$ \emph{creates finite products} if
$\D$ has finite products, and $F$ preserves and reflects them.
\begin{proof}
  For (i), it is certainly necessary for the existence that the lifted
  monoidal structure $\tilde \otimes$ on $\cat{Coalg}(\oc)$ be
  cartesian. But then, by the main result of~\cite{Fox1976Coalgebras},
  there is a \emph{unique} strict monoidal functor
  $K \colon \cat{Coalg}(\oc) \rightarrow \cat{Comon}(\C)$
  rendering~\eqref{eq:47} commutative, which provides the unique
  compatible coalgebra modality structure.

  For (ii), note that the given condition is again necessary. Indeed,
  if $\oc$ is a monoidal coalgebra modality, then $\cat{Coalg}(\oc)$
  will certainly have finite products; and since $K$ is strict
  monoidal, it will preserve them. Moreover, since $K$ fits into a
  triangle~\eqref{eq:47} wherein the functors into $\C$ are
  conservative, it is itself conservative, and so also reflects
  products.

  Now, under this condition, we show there is a unique way of lifting
  $\otimes$ to a cartesian monoidal structure $\hat \otimes$ on
  $\cat{Coalg}(!)$ which is preserved strictly by $K$. Firstly, for
  the unit, let $(T, \tau)$ be any terminal object of
  $\cat{Coalg}(\oc)$. Since $K$ preserves finite products,
  $K(T, \tau)$ is terminal in $\cat{Comon}(\C)$, and so there is a
  unique isomorphism $\theta \colon K(T, \tau) \rightarrow \tilde I$.
  We can transport the $\oc$-coalgebra structure of $T$ across this
  isomorphism to obtain
  $\hat I \defeq (I, \oc \theta \circ \tau \circ \theta^{-1})$ and an
  isomorphism $\theta \colon (T, \tau) \rightarrow \hat I$. In
  particular, $\hat I$ is terminal and is mapped by $K$ to $\tilde I$.
  Suppose now that $J \in \cat{Coalg}(\oc)$ is also terminal and
  mapped by $K$ to $\tilde I$. By uniqueness of terminal objects,
  there is a unique isomorphism $\psi \colon J \rightarrow \hat I$;
  but then $K\psi \colon \tilde I \rightarrow \tilde I$ must be the
  identity, whence $\psi = 1_I$ and so $J = \hat I$.

  Secondly, for the binary multiplication, note that each
  diagram~\eqref{eq:45} in $\cat{Coalg}(\oc)$ must be a product cone
  sitting over the corresponding product in $\cat{Comon}(\C)$.
  Thus, for $(X, \xi), (Y, \gamma)$ in $\cat{Coalg}(\oc)$, let
  $\pi_1 \colon (X, \xi) \leftarrow(X, \xi) \times (Y, \gamma)
  \rightarrow (Y, \gamma) \colon \pi_2$ be \emph{any} product cone;
  since $K$ preserves it, we have a unique isomorphism rendering
  commutative the diagram
  \begin{equation*}
    \cd[@R-0.2em@C+2em]{
      & K((X, \xi) \times (Y, \gamma)) \ar[dl]_-{K\pi_1} \ar[dr]^-{K\pi_2} \ar[d]^-{\theta} \\
      K(X, \xi) & K(X, \xi) \tilde \otimes K(Y, \gamma) \ar[l]^-{1 \otimes \,\text{unique}} \ar[r]_-{\text{unique}\, \otimes 1} & K(Y, \gamma)\rlap{ .} \\
    }
  \end{equation*}
  Transporting the $\oc$-coalgebra structure of
  $(X, \xi) \times (Y, \gamma)$ along this isomorphism $\theta$, and
  arguing as before, we obtain a unique object
  $(X, \xi) \hat \otimes (Y, \gamma)$ for which the cone~\eqref{eq:45}
  is a product which is mapped to the corresponding cone in
  $\cat{Comon}(\C)$. To conclude the proof, it remains only to define
  $\hat \otimes$ on morphisms and provide the coherence cells for the
  monoidal structure. The choices are forced by faithfulness of
  $U^\oc \colon \cat{Coalg}(\oc) \rightarrow \C$, and are well-defined
  via the universality of finite products.
\end{proof}

Thus, being a monoidal coalgebra modality is a mere property of either
a coalgebra modality, or of a monoidal comonad. We can strengthen this
statement by considering the categories formed by our various
kinds of comonad. Of course, if $\oc^1$ and $\oc^2$ are two comonads
on $\C$, then a \emph{comonad morphism}
$\varphi \colon \oc^1 \rightarrow \oc^2$ is a natural transformation
such that we have commutativity in diagrams of the form:
\begin{equation*}
  \cd[@C+0.5em@-0.3em]{
    {\oc ^1 X} \ar[rr]^-{\varphi_X} \ar[dr]_-{\varepsilon^1_X} & &
    {\oc ^2 X} \ar[dl]^-{\varepsilon^2_X} & \oc ^1 X \ar[r]^-{\varphi_X} \ar[d]_-{\delta^1_X}  & \oc ^2 X \ar[d]^-{\delta^2_X}\\ &
    {X} & & \oc ^1 \oc ^1 X \ar[r]^-{(\varphi\varphi)_X} & \oc ^2 \oc ^2 X \rlap{ .}
  }
\end{equation*}
\begin{Defn}[Morphisms of coalgebra modalities, monoidal
  comonads, and monoidal coalgebra modalities]
  \label{def:8}
  Let $\varphi \colon \oc^1 \rightarrow \oc^2$ be a morphism between
  comonads on the symmetric monoidal $\C$.
  \begin{enumerate}[(i)]
  \item If $\oc^1$ and $\oc^2$ are coalgebra modalities, then
    $\varphi$ is a \emph{coalgebra modality morphism} if it renders
    commutative diagrams of the form:
    \begin{equation*}
      \cd[@!C@C-2em@-0.3em]{
        {\oc ^1 X} \ar[rr]^-{\varphi_X} \ar[dr]_-{\mathsf{e}^1_X} & &
        {\oc ^2 X} \ar[dl]^-{\mathsf{e}^2_X} & \oc ^1 X \ar[rr]^-{\varphi_X} \ar[d]_-{\Delta^1_X}  & & \oc ^2 X \ar[d]^-{\Delta^2_X}\\ &
        {I} & & \oc^1 X \otimes \oc^1 X \ar[rr]^-{\varphi_{X} \otimes \varphi_X} & & \oc^2 X \otimes \oc^2 X\rlap{ ,}
      }
    \end{equation*}
    i.e., if each $\varphi_X$ is a comonoid homomorphism. \vskip0.25\baselineskip
  \item If $\oc^1$ and $\oc^2$ are monoidal comonads, then $\varphi$
    is a \emph{monoidal comonad morphism} if its underlying natural
    transformation is monoidal, i.e., renders commutative diagrams of
    the form:
    \begin{equation*}
      \cd[@!C@C-2em@-0.3em]{
        & {I} \ar[dl]_-{m^1_I} \ar[dr]^-{m^2_I} & & 
        {\oc^1 X \otimes \oc^1 Y} \ar[d]_-{m^1_\otimes} \ar[rr]^-{\varphi_X \otimes \varphi_Y} &&
        {\oc^2 X \otimes \oc^2 Y} \ar[d]_-{m^2_\otimes}\\
        {\oc ^1 I} \ar[rr]_-{\varphi_I} & &
        {\oc ^2 I} &
        {\oc^1(X \otimes Y)} \ar[rr]^-{\varphi_{X \otimes Y}} && \oc^2(X\otimes Y)\rlap{ .}
      }
    \end{equation*}
  \item If $\oc^1$ and $\oc^2$ are monoidal coalgebra modalities, then
    $\varphi$ is a \emph{monoidal coalgebra modality morphism} if it
    is both a coalgebra modality morphism and a monoidal comonad morphism.
  \end{enumerate}
  We write $\cat{Comonad}(\C)$, $\cat{CoalgMod}(\C)$, $\cat{MonComonad}(\C)$
  and $\cat{MonCoalgMod}(\C)$ for the categories of comonads,
  coalgebra modalities, monoidal comonads and monoidal coalgebra
  modalities on $\C$ respectively.
\end{Defn}
Recall the semantic characterisation of comonad maps
$\varphi \colon \oc^1 \rightarrow \oc^2$; they correspond functorially
to maps between the categories of coalgebras:
\begin{equation}
  \label{eq:49}
  \cd[@-1em]{
    {\cat{Coalg}(\oc^1)} \ar[rr]^-{\varphi_\ast} \ar[dr]_-{U^{\oc^1}} & &
    {\cat{Coalg}(\oc^2)}\rlap{ ,} \ar[dl]^-{U^{\oc^2}} \\ &
    {\C}
  }
\end{equation}
where the $\varphi_\ast$ associated to $\varphi$ sends a
$\oc^1$-coalgebra $(X, \xi)$ to the $\oc^2$-coalgebra
$(X, \varphi_X \circ \xi)$. (We re-find $\varphi$ from $\varphi_\ast$
by looking at its action on cofree $\oc^1$-coalgebras.) We can extend
this semantic characterisation to maps of coalgebra modalities 
and monoidal comonads; the proof is routine and left to the reader.
\begin{Lemma}
  \label{lem:19}
  Let $(\C, \otimes, I)$ be a symmetric monoidal category, and let
  $\varphi \colon \oc^1 \rightarrow \oc^2$ be a morphism between
  comonads on $\C$.
  \begin{enumerate}[(i)]
  \item \label{item:1} If $\oc^1$ and $\oc^2$ are coalgebra modalities, then
    $\varphi$ is a coalgebra modality morphism if and only the
    $\varphi_\ast$ of~\eqref{eq:49} commutes with the comparison
    functors to $\cat{Comon}(\C)$:
    \begin{equation}
      \label{eq:50}
      \cd[@-1em@C-1em]{
        {\cat{Coalg}(\oc^1)} \ar[rr]^-{\varphi_\ast} \ar[dr]_-{K^{\oc^1}} & &
        {\cat{Coalg}(\oc^2)}\rlap{ .} \ar[dl]^-{K^{\oc^2}} \\ &
        {\cat{Comon}(\C)}}
    \end{equation}
  \item \label{item:2} If $\oc^1$ and $\oc^2$ are monoidal comonads, then $\varphi$
    is a monoidal comonad morphism if and only if $\varphi_\ast$ is
    strict monoidal for the lifted monoidal structures on
    $\cat{Coalg}(\oc^1)$ and $\cat{Coalg}(\oc^2)$.
  \end{enumerate}
\end{Lemma}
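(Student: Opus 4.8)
The plan is to pass through the explicit descriptions already recorded in the excerpt: for part~\ref{item:1}, the formula~\eqref{eq:48} for the comparison functors $K^{\oc^i}$ from Remark~\ref{rk:3}, and for part~\ref{item:2}, the formulas~\eqref{eq:13} for the lifted monoidal structures from Remark~\ref{rk:8}. In each part, the ``if'' direction is a direct diagram chase showing that the two functors (resp.\ the two pieces of monoidal data) built out of $\varphi$ coincide, using naturality of $\varphi$, $\Delta^i$, $m^i_\otimes$, $\varepsilon^i$ together with the comonad axioms and the coalgebra-modality axiom that each $\delta^i$ is a comonoid homomorphism; and the ``only if'' direction is obtained by evaluating the assumed equality of functors (resp.\ of monoidal structures) at the \emph{cofree} coalgebras $(\oc^i X,\delta^i_X)$ and massaging the resulting identities back into the defining conditions of Definition~\ref{def:8}.

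For part~\ref{item:1}: for ``if'', fix a $\oc^1$-coalgebra $(X,\xi)$. By~\eqref{eq:48}, $K^{\oc^1}(X,\xi)$ is $X$ with counit $\mathsf{e}^1_X\circ\xi$ and comultiplication $(\varepsilon^1_X\otimes\varepsilon^1_X)\circ\Delta^1_X\circ\xi$, while $K^{\oc^2}\varphi_\ast(X,\xi)=K^{\oc^2}(X,\varphi_X\circ\xi)$ is $X$ with counit $\mathsf{e}^2_X\circ\varphi_X\circ\xi$ and comultiplication $(\varepsilon^2_X\otimes\varepsilon^2_X)\circ\Delta^2_X\circ\varphi_X\circ\xi$. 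Because $\varphi$ is a coalgebra modality morphism, $\mathsf{e}^2_X\circ\varphi_X=\mathsf{e}^1_X$ and $\Delta^2_X\circ\varphi_X=(\varphi_X\otimes\varphi_X)\circ\Delta^1_X$; because it is a comonad morphism, $\varepsilon^2_X\circ\varphi_X=\varepsilon^1_X$; these three identities make the two comonoids equal, and equality on morphisms then follows from faithfulness of $\cat{Comon}(\C)\to\C$, so~\eqref{eq:50} commutes. For ``only if'', assume~\eqref{eq:50} commutes and evaluate at $(\oc^1 X,\delta^1_X)$: since $K^{\oc^1}(\oc^1 X,\delta^1_X)=(\oc^1 X,\Delta^1_X,\mathsf{e}^1_X)$ by Remark~\ref{rk:3}, this says $K^{\oc^2}\varphi_\ast(\oc^1 X,\delta^1_X)=(\oc^1 X,\Delta^1_X,\mathsf{e}^1_X)$. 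On the other hand, the comonad-morphism axiom makes $\varphi_X$ a morphism of $\oc^2$-coalgebras $\varphi_\ast(\oc^1 X,\delta^1_X)=(\oc^1 X,\varphi_{\oc^1 X}\circ\delta^1_X)\to(\oc^2 X,\delta^2_X)$ (one checks $\delta^2_X\circ\varphi_X=\oc^2\varphi_X\circ\varphi_{\oc^1 X}\circ\delta^1_X$ directly); applying $K^{\oc^2}$ and using $K^{\oc^2}(\oc^2 X,\delta^2_X)=(\oc^2 X,\Delta^2_X,\mathsf{e}^2_X)$ exhibits $\varphi_X$ as a comonoid homomorphism $(\oc^1 X,\Delta^1_X,\mathsf{e}^1_X)\to(\oc^2 X,\Delta^2_X,\mathsf{e}^2_X)$ for every $X$, which is exactly the condition defining a coalgebra modality morphism in Definition~\ref{def:8}.

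For part~\ref{item:2}: for ``if'', $\varphi_\ast$ certainly commutes with the forgetful functors to $\C$, and using monoidality of $\varphi$ we compute $\varphi_\ast(\hat I^1)=(I,\varphi_I\circ m^1_I)=(I,m^2_I)=\hat I^2$ together with
\[
  \varphi_\ast\bigl((X,x)\mathbin{\hat\otimes^1}(Y,y)\bigr)=\bigl(X\otimes Y,\ \varphi_{X\otimes Y}\circ m^1_\otimes\circ(x\otimes y)\bigr)=\bigl(X\otimes Y,\ m^2_\otimes\circ(\varphi_X\otimes\varphi_Y)\circ(x\otimes y)\bigr)=\varphi_\ast(X,x)\mathbin{\hat\otimes^2}\varphi_\ast(Y,y)\rlap{ ,}
\]
the associativity and unit constraints agreeing because those on $\cat{Coalg}(\oc^i)$ are lifted from $\C$; hence $\varphi_\ast$ is strict monoidal. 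For ``only if'', strictness on the unit gives $(I,\varphi_I\circ m^1_I)=\varphi_\ast(\hat I^1)=\hat I^2=(I,m^2_I)$, i.e.\ $\varphi_I\circ m^1_I=m^2_I$; and strictness on the binary tensor, evaluated at the cofree pair $(\oc^1 A,\delta^1_A)$, $(\oc^1 B,\delta^1_B)$ and compared on $\oc^2$-coalgebra structure maps, gives
\[
  \varphi_{\oc^1 A\otimes\oc^1 B}\circ m^1_\otimes\circ(\delta^1_A\otimes\delta^1_B)=m^2_\otimes\circ(\varphi_{\oc^1 A}\otimes\varphi_{\oc^1 B})\circ(\delta^1_A\otimes\delta^1_B)\rlap{ .}
\]
Postcomposing both sides with $\oc^2(\varepsilon^1_A\otimes\varepsilon^1_B)$ and pushing that map inward by naturality of $\varphi$, $m^1_\otimes$ and $m^2_\otimes$, then collapsing $\oc^1\varepsilon^1_A\circ\delta^1_A=\mathrm{id}$ on the left and $\oc^2\varepsilon^1_A\circ\varphi_{\oc^1 A}\circ\delta^1_A=\varphi_A$ (again by naturality of $\varphi$ and a comonad triangle identity) on the right, turns this into $\varphi_{A\otimes B}\circ m^1_\otimes=m^2_\otimes\circ(\varphi_A\otimes\varphi_B)$. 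Together with the unit identity this says $\varphi$ is a monoidal comonad morphism.

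Essentially everything above is bookkeeping of naturality squares; the one step that is not pure unwinding is this last one, where one has to recognise that testing strict monoidality of $\varphi_\ast$ on cofree coalgebras, then postcomposing with $\oc^2(\varepsilon^1\otimes\varepsilon^1)$ and invoking the comonad triangle identities, is precisely what strips off the $\delta$'s and recovers the unconstrained monoidality equations for $\varphi$. I expect that point — really the only non-routine one — to be the main (and mild) obstacle.
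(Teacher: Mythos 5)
The paper gives no proof of this lemma (it is declared ``routine and left to the reader''), and your argument is exactly the intended routine verification: unwinding the explicit formulas~\eqref{eq:48} and~\eqref{eq:13}, testing the assumed equalities on cofree coalgebras, and stripping off the $\delta$'s via naturality and the comonad triangle identities. I checked the computations in both directions of both parts and they are correct.
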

Using this, we can strengthen the sense in which being a monoidal
coalgebra modality is a mere property of a
coalgebra modality or a monoidal comonad: 
\begin{Prop}
  \label{prop:4}
  For any symmetric monoidal $\C$, the forgetful functors
  \begin{equation}
    \label{eq:54}
    \cd[@-1em]{
      & \cat{MonCoalgMod}(\C) \ar[dl]_-{} \ar[dr]^-{} \\
      \cat{CoalgMod}(\C) & & \cat{MonComonad}(\C)
    }
  \end{equation}
  are full and faithful, and injective on objects, with the objects in
  their images characterised as in Proposition~\ref{prop:9}.
\end{Prop}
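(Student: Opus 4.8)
The plan is to verify the three claims---full, faithful, injective on objects---for each of the two forgetful functors, reusing the semantic dictionaries of Lemma~\ref{lem:19} and the uniqueness results of Proposition~\ref{prop:9}. Injectivity on objects is immediate: a monoidal coalgebra modality is, by definition, a comonad together with \emph{both} coalgebra modality structure and monoidal comonad structure, so forgetting either one still retains the underlying comonad plus the other structure, and by Proposition~\ref{prop:9} that remaining structure determines the one that was forgotten. Hence two monoidal coalgebra modalities with the same image under either forgetful functor must coincide. The characterisation of the essential image is then exactly the content of Proposition~\ref{prop:9}: for the functor to $\cat{CoalgMod}(\C)$, a coalgebra modality $\oc$ lies in the image precisely when $K \colon \cat{Coalg}(\oc) \to \cat{Comon}(\C)$ creates finite products; for the functor to $\cat{MonComonad}(\C)$, a monoidal comonad $\oc$ lies in the image precisely when the lifted monoidal structure $\tilde\otimes$ on $\cat{Coalg}(\oc)$ is cartesian.

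For fullness and faithfulness, recall that the three categories in play have the same objects-data-plus-extra-properties relationship, and that a morphism of monoidal coalgebra modalities is \emph{by definition} (Definition~\ref{def:8}(iii)) a comonad morphism which is simultaneously a coalgebra modality morphism and a monoidal comonad morphism. Faithfulness of both functors is then trivial, since in all cases a morphism is a natural transformation and the functors act as the identity on the underlying natural transformation. For fullness, I must show: (a) if $\oc^1, \oc^2$ are monoidal coalgebra modalities and $\varphi \colon \oc^1 \to \oc^2$ is a coalgebra modality morphism, then $\varphi$ is automatically a monoidal comonad morphism; and dually (b) if $\varphi$ is a monoidal comonad morphism, then it is automatically a coalgebra modality morphism. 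By Lemma~\ref{lem:19}, (a) amounts to: if $\varphi_\ast \colon \cat{Coalg}(\oc^1) \to \cat{Coalg}(\oc^2)$ commutes with the comparison functors $K^{\oc^i}$ to $\cat{Comon}(\C)$, then $\varphi_\ast$ is strict monoidal for the lifted structures; and (b) amounts to the converse implication.

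The key observation making both implications work is that, for a monoidal coalgebra modality, the lifted monoidal structure $\hat\otimes$ on $\cat{Coalg}(\oc)$ \emph{is} the cartesian structure, i.e.\ it is the finite-product structure, and $K^\oc$ is strict monoidal (Proposition~\ref{prop:1}), hence in particular preserves finite products; moreover $K^\oc$ reflects products since it is conservative (as in the proof of Proposition~\ref{prop:9}). So for (a): if $\varphi_\ast$ commutes with the $K^{\oc^i}$, and $K^{\oc^2}$ reflects finite products while $K^{\oc^1}$ preserves them, then $\varphi_\ast$ preserves finite products, i.e.\ preserves the cartesian---hence the lifted monoidal---structure; being a functor between cartesian monoidal categories that preserves products, it is automatically strict monoidal (up to the canonical comparison, which here is an identity because $\varphi_\ast$ commutes with the forgetful functors to $\C$ and products in $\cat{Coalg}(\oc^i)$ are computed compatibly with $\C$). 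For (b): if $\varphi_\ast$ is strict monoidal for $\hat\otimes$, then since on both sides $\hat\otimes$ is the cartesian structure and $K^{\oc^i}$ are strict monoidal, the triangle~\eqref{eq:50} commutes because both composites $K^{\oc^2}\varphi_\ast$ and $K^{\oc^1}$ are strict monoidal functors into $\cat{Comon}(\C)$ lying over the identity of $\C$, and by the rigidity of such liftings (Fox's theorem, as invoked in Proposition~\ref{prop:9}(i)) there is at most one such, so they agree. The main obstacle is bookkeeping: making precise that ``strict monoidal for $\hat\otimes$'' and ``preserves finite products'' are genuinely interchangeable here---this needs the fact that $\hat\otimes$ is not merely monoidal but cartesian on each side, together with the uniqueness of product-preserving liftings over $\C$, all of which are already in hand from Proposition~\ref{prop:9} and the cited~\cite{Fox1976Coalgebras}.
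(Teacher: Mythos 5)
Your proposal is correct and takes essentially the same route as the paper: it reduces fullness to the equivalence of conditions (i) and (ii) of Lemma~\ref{lem:19}, proving one direction via Fox's theorem (terminality of $\cat{Comon}(\C)$ over $\C$ among cartesian monoidal categories and strict monoidal functors) and the other via preservation and reflection of finite products by the comparison functors $K^{\oc^i}$. The only difference is a cosmetic one in how strictness (rather than mere strongness) of $\varphi_\ast$ is justified in the second direction, and both justifications work.
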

\begin{proof}
  The novel content of this statement is that a comonad map
  $\varphi \colon \oc^1 \rightarrow \oc^2$ between monoidal coalgebra
  modalities is a monoidal comonad map if and only if it is a
  coalgebra modality map. For this, we show that
  the induced $\varphi_\ast$ of~\eqref{eq:49} satisfies
  condition~\ref{item:1} of Lemma~\ref{lem:19} if and only if it
  satisfies~\ref{item:2}.

  For~\ref{item:1} $\Rightarrow$~\ref{item:2}, suppose $\varphi_\ast$
  is strict monoidal. By~\cite{Fox1976Coalgebras},
  $U^{\cat{Comon}} \colon \cat{Comon}(\C) \rightarrow \C$ is terminal
  in the category $\cat{CartMon}/\C$ of cartesian monoidal categories
  over $\C$ and strict monoidal functors. Since $\varphi_\ast$ is
  strict monoidal,~\eqref{eq:50} underlies a diagram of maps in this category
  over the terminal object, and thus commutes.
 
  For~\ref{item:2} $\Rightarrow$~\ref{item:1}, suppose~\eqref{eq:50}
  commutes. Since by Proposition~\ref{prop:9} both $K^1$ and $K^2$
  preserve and reflect finite products, so too does $\varphi_\ast$.
  As both $\cat{Coalg}(\oc^1)$ and $\cat{Coalg}(\oc^2)$ are
  cartesian, it follows that $\varphi_\ast$ is strong monoidal and
  that~\eqref{eq:50} commutes in the category of monoidal categories
  and strong monoidal functors. But since $K^2$ therein reflects
  identities and $K^1$ is strict monoidal, $\varphi_\ast$ must also be
  strict monoidal.
\end{proof}

\subsection{Storage modalities and additive bialgebra modalities}
\label{sec:stor-modal-addit}

In this section, we recall two additional ways of formulating the
notion of monoidal coalgebra modality when the
base symmetric monoidal category $\C$ has additional structure.

The first of these concerns the case where $\C$
has finite products. As motivation for this, note that if $\oc $ is a
monoidal coalgebra modality on such a $\C$, then its finite products are
preserved by the (right adjoint) cofree functor
$\C \rightarrow \cat{Coalg}(\oc )$. Since finite products in
$\cat{Coalg}(\oc )$ are realised by the lifted monoidal structure of
$\C$, this preservation says that the following maps, known as the
\emph{Seely maps}, or the \emph{storage maps}, are isomorphisms:
\begin{equation}
  \label{eq:11}
  \begin{gathered}
    \chi_\top \defeq \oc \top \xrightarrow{\mathsf{e}_\top} I \qquad \text{and} \\[0.4em]
    \smash{\chi_{XY} \defeq \oc(X \times Y) \xrightarrow{\Delta_{X
          \times Y}} \oc(X \times Y) \otimes \oc(X \times Y)
      \xrightarrow{\oc\pi_0 \otimes \oc\pi_1} \oc X \otimes \oc Y\rlap{
        .}}
  \end{gathered}
\end{equation}

The property that these maps are invertible provides an alternative
characterisation of the coalgebra modalities that underlie monoidal
coalgebra modalities. Indeed, \cite[Theorem~3.1.6]{Blute2015Cartesian}
shows that, if a coalgebra modality has invertible storage
maps~\eqref{eq:11}, then it is monoidal via the maps $m_I$ and
$m_\otimes$ found as:
\begin{equation}
  \label{eq:42}
  \begin{gathered}
    I \xrightarrow{\chi_1^{-1}} \oc\top \xrightarrow{\delta_\top}
    \oc\oc\top \xrightarrow{\oc\chi_\top} \oc I \qquad \text{and} \\[0.4em]
    \!\smash{\oc X \otimes \oc Y \xrightarrow{\!\chi_{XY}^{-1}\!} \oc(X \times
      Y) \xrightarrow{\!\delta\!} \oc\oc(X \times Y)
      \xrightarrow{\oc\chi_{XY}} \oc(\oc X \otimes \oc Y)
      \xrightarrow{\!\oc(\varepsilon \otimes \varepsilon)\!} \oc(X
      \otimes Y)}\rlap{ .}
  \end{gathered}
\end{equation}
A coalgebra modality with invertible storage maps is sometimes
referred to as a \emph{storage modality}; thus, we may summarise the
preceding discussion as follows:
\begin{Prop}
  \label{prop:6}
  If $\oc $ is a coalgebra modality on a symmetric monoidal category with finite
  products, then it is a monoidal coalgebra modality if and
  only if it is a storage modality, i.e., if and only if the maps~\eqref{eq:11} are invertible.
\end{Prop}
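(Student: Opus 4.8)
The plan is to prove the two halves of the biconditional separately, reusing the motivating discussion for the ``only if'' direction and quoting \cite[Theorem~3.1.6]{Blute2015Cartesian}, as recalled just above, for the ``if'' direction. The one point requiring care is that, in order to read off the storage maps~\eqref{eq:11} as concrete instances of abstract comparison morphisms, I need to know that the lifted monoidal structure $\tilde\otimes$ on $\cat{Coalg}(\oc)$ computes its finite products---and this is exactly what Proposition~\ref{prop:1} delivers.

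For ``only if'', suppose $\oc$ is a monoidal coalgebra modality. By Proposition~\ref{prop:1}, $\tilde\otimes$ on $\cat{Coalg}(\oc)$ is cartesian and the comparison functor $K\colon\cat{Coalg}(\oc)\to\cat{Comon}(\C)$ is strict monoidal; hence $\cat{Coalg}(\oc)$ has finite products, computed by $\tilde\otimes$, and $K$ preserves them. Since the cofree functor $Z\mapsto(\oc Z,\delta_Z)$ is right adjoint to $U^\oc$, it too preserves the finite products of $\C$: thus $(\oc\top,\delta_\top)$ is terminal in $\cat{Coalg}(\oc)$, and $(\oc(X\times Y),\delta)$ is the product of $(\oc X,\delta_X)$ and $(\oc Y,\delta_Y)$ with projections $\oc\pi_0,\oc\pi_1$. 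Applying the product-preserving $K$, and recalling (Remark~\ref{rk:5}) that finite products in $\cat{Comon}(\C)$ are given by the cartesian tensor $\tilde\otimes$ of~\eqref{eq:46}, the induced comparison morphisms $K(\oc\top,\delta_\top)\to\tilde I$ and $K(\oc(X\times Y),\delta)\to K(\oc X,\delta_X)\mathbin{\tilde\otimes}K(\oc Y,\delta_Y)$ are isomorphisms of comonoids. A short diagram chase with the formulas~\eqref{eq:46} and~\eqref{eq:48} then identifies their underlying maps with $\chi_\top=\mathsf{e}_\top$ and $\chi_{XY}=(\oc\pi_0\otimes\oc\pi_1)\circ\Delta_{X\times Y}$ of~\eqref{eq:11}; so the storage maps are invertible. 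This chase is the only computation on this side, and it is routine.

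For ``if'', suppose the storage maps~\eqref{eq:11} are invertible. Then \cite[Theorem~3.1.6]{Blute2015Cartesian} equips $\oc$ with the monoidal comonad structure~\eqref{eq:42} and shows that, together with the given coalgebra modality structure, this makes $\oc$ a monoidal coalgebra modality, completing the proof. I expect this step to be where the real work lies: it is \emph{not} a formal consequence of the product-preservation bookkeeping above, since one must genuinely build~\eqref{eq:42} from the inverse storage maps and verify the monoidal-comonad and comonoid-compatibility axioms---in particular the coherence relating $m_I$, $m_\otimes$ and the comultiplication $\delta$. Should a self-contained argument be preferred, one could instead show that invertibility of the storage maps forces $K$ to create finite products---the comparisons above become isomorphisms, giving preservation of the terminal object and of products of \emph{cofree} coalgebras, which one extends to all finite products via the description of an arbitrary $\oc$-coalgebra as a $U^\oc$-absolute equaliser of cofree ones (Remark~\ref{rk:3})---and then invoke Proposition~\ref{prop:9}(ii); but simply quoting \cite{Blute2015Cartesian} is the economical route.
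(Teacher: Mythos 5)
Your proposal is correct and follows essentially the same route as the paper, which states this proposition as a summary of the preceding discussion: the ``only if'' direction comes from the right-adjoint cofree functor preserving finite products (which in $\cat{Coalg}(\oc)$ are realised by the lifted tensor, so the canonical comparisons are exactly the storage maps~\eqref{eq:11}), and the ``if'' direction is delegated to \cite[Theorem~3.1.6]{Blute2015Cartesian}. Your additional observation that one could alternatively route the converse through Proposition~\ref{prop:9}(ii) is a reasonable aside, but the paper, like you, takes the citation as the economical path.
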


The second reformulation of the notion of monoidal coalgebra modality
concerns the following situation. Let $k$ be a commutative \emph{rig}
(= ring without negatives), and suppose that our base symmetric
monoidal category $(\C, \otimes, I)$ is \emph{$k$-linear}, meaning
that each hom-set is equipped with $k$-module structure such that both
composition and tensor product of morphisms is $k$-linear in each
variable. 

Again, we motivate things by considering the extra structure borne by a monoidal coalgebra modality $\oc$ on such a
$\C$. To begin with, observe that for each $X \in \C$ and
$(Y, \gamma) \in \cat{Coalg}(\oc)$, the $k$-module structure on the
left-hand side of the following adjointness
isomorphism
\begin{equation}
  \label{eq:53}
  \C(Y, X) \cong \cat{Coalg}(\oc )\bigl((Y, \gamma), (\oc X, \delta_X) \bigr)
\end{equation}
can be transported to yield a $k$-module structure on the right-hand
side. Since this happens naturally in $(Y, \gamma)$, it can equally be
expressed as a lifting of the hom-functor
\begin{equation*}
  \cat{Coalg}(\oc )\bigl(\,\thg, \,(\oc X, \delta_X)\,\bigr) \colon
  \cat{Coalg}(\oc )^\mathrm{op} \rightarrow \cat{Set}
\end{equation*}
through the category of $k$-modules; which, by the Yoneda lemma, is
the same as equipping $(\oc X, \delta_X)$ with the structure of an
internal $k$-module in $\cat{Coalg}(\oc )$ with respect to the
cartesian product $\hat \otimes$.

Since an internal $k$-module is a fortiori an internal commutative
monoid, we conclude that $\oc X$ is not just a cocommutative
coalgebra, but a bicommutative \emph{bialgebra} in
$\C$. By the Yoneda lemma, the additional structure maps
$\mathsf{u}_X \colon I \rightarrow \oc X $ and
$\nabla_X \colon \oc X \otimes \oc X \rightarrow \oc X$ involved can
be found as the zero map in the transported $k$-module structure on
$\cat{Coalg}(\oc )(\hat I, (\oc X, \delta_X) )$, and as the sum of the
projections $\mathsf{e}_X \otimes 1$ and $1 \otimes \mathsf{e}_X$ in
the transported $k$-module structure on
$\cat{Coalg}(\oc )\bigl((\oc X, \delta_X) \hat \otimes (\oc X,
\delta_X), (\oc X, \delta_X)\bigr)$. Working this out, we see that
they are the unique $\oc$-coalgebra maps
\begin{equation}
  \label{eq:51}
  \hat I \rightarrow (\oc X, \delta_X) \ \ \text{ and
  } \ \ (\oc X, \delta_X) \mathbin{\hat \otimes} (\oc X, \delta_X)
  \rightarrow (\oc X, \delta_X)
\end{equation}
which render commutative the diagram
\begin{equation}
  \label{eq:16}
  \cd[@!C]{
    \sh{l}{0.5em}\oc X \otimes \oc X \ar[r]^-{\nabla_X} \ar[dr]_-{\mathsf{e}_X \otimes \varepsilon_X + \varepsilon_X \otimes \mathsf{e}_X\ \ } & \oc X \ar[d]|-{\varepsilon_X} & I\rlap{ .} \ar[dl]^-{0} \ar[l]_-{\mathsf{u}_X} \\ & X
  }
\end{equation}

Note that these characterising conditions involve the lifted monoidal
structure $\hat \otimes$ on $\oc$-coalgebras, and hence the monoidal
comonad structure of $\oc$. However, it is possible to give
alternative characterisations of $\mathsf{u}_X$ and $\nabla _X$ which
do not mention this. The following result is a consequence
of~\cite[Theorem~1]{Blute2019Differential}, and we omit the proof,
since we will not need it.
\begin{Lemma}
  \label{lem:20}
  If $\oc$ is a monoidal coalgebra modality on a $k$-linear $\C$, then
  the maps $\mathsf{u}_X$ and $\nabla_X$ of~\eqref{eq:51} are the
  unique maps which render commutative the diagrams in~\eqref{eq:16}
  and the ``convolution'' diagrams:
  \begin{equation}
    \label{eq:52}
    \cd{
      {\oc Y} \ar[r]^-{\oc 0} \ar[d]_{\mathsf{e}_Y} &
      {\oc X} \ar@{<-}[d]^{\mathsf{u}_X} & &
      {\oc Y} \ar[r]^-{\oc(f+g)} \ar[d]_{\Delta_Y} &
      {\oc X} \ar@{<-}[d]^{\nabla_X} \\
      {I} \ar@{=}[r] &
      {I} &&
      {\oc Y \otimes \oc Y} \ar[r]^-{\oc f \otimes \oc g} &
      {\oc X \otimes \oc X} 
    }
  \end{equation}
\end{Lemma}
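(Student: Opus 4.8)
The plan is to work through the cofree adjunction $U^\oc \dashv F$, $FZ = (\oc Z, \delta_Z)$, which presents $\oc = U^\oc F$, together with the observation recalled above that $\mathsf{u}_X$ and $\nabla_X$ are precisely the maps making $(\oc X, \delta_X)$ into an internal commutative monoid in the cartesian category $(\cat{Coalg}(\oc), \hat\otimes)$ whose induced commutative monoid structure on each hom-set $\cat{Coalg}(\oc)\bigl((Y,\gamma),(\oc X,\delta_X)\bigr)$ is the one transported along the adjunction isomorphism from the $k$-module $\C(Y,X)$.

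\emph{Existence of the convolution identities~\eqref{eq:52}.} I would specialise to the \emph{cofree} coalgebra $(Y,\gamma) = (\oc Y, \delta_Y)$. The assignment $h \mapsto h\circ\varepsilon_Y$ is an additive map $\C(Y,X) \to \C(\oc Y, X)$, and composing with the adjunction isomorphism $\C(\oc Y, X) \cong \cat{Coalg}(\oc)\bigl((\oc Y,\delta_Y),(\oc X,\delta_X)\bigr)$ — which carries $h\circ\varepsilon_Y$ to its transpose $\oc(h\circ\varepsilon_Y)\circ\delta_Y = \oc h$, by the comonad identity $\oc\varepsilon_Y\circ\delta_Y = 1$ — exhibits $h\mapsto\oc h$ as additive for the \emph{convolution} commutative monoid structure on coalgebra maps into $(\oc X,\delta_X)$ determined by the internal monoid $(\oc X, \delta_X)$. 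Its zero is the composite $(\oc Y,\delta_Y)\xrightarrow{\mathsf{e}_Y}\hat I\xrightarrow{\mathsf{u}_X}(\oc X,\delta_X)$, since $\hat I$ is terminal and $\mathsf{e}_Y$ is a coalgebra map (Proposition~\ref{prop:1}); and its binary sum is $(g_1,g_2)\mapsto\nabla_X\circ(g_1\otimes g_2)\circ\Delta_Y$, since $\Delta_Y$ underlies the diagonal of $(\oc Y,\delta_Y)$. Reading off the images of $0_{Y,X}$ and of $f + g$ then gives exactly the two squares of~\eqref{eq:52}.

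\emph{Uniqueness.} Suppose $\mathsf{u}'_X$ and $\nabla'_X$ also render~\eqref{eq:16} and~\eqref{eq:52} commutative. For $\mathsf{u}'_X$, take $Y = I$ in the left square of~\eqref{eq:52} and precompose with $m_I$: since $\mathsf{e}_I\circ m_I = 1_I$ ($m_I$ being a comonoid homomorphism into $(\oc I,\Delta_I,\mathsf{e}_I)$), this forces $\mathsf{u}'_X = \oc(0_{I,X})\circ m_I = \mathsf{u}_X$. For $\nabla'_X$, I would produce a split epimorphism out of the right square of~\eqref{eq:52}: take $Y = \oc X\otimes\oc X$, $f = \rho_X\circ(\varepsilon_X\otimes\mathsf{e}_X)$ and $g = \lambda_X\circ(\mathsf{e}_X\otimes\varepsilon_X)$, so $f + g = \varepsilon_X\otimes\mathsf{e}_X + \mathsf{e}_X\otimes\varepsilon_X$; then~\eqref{eq:52} reads $\oc(f+g) = \nabla'_X\circ P$ with $P := (\oc f\otimes\oc g)\circ\Delta_{\oc X\otimes\oc X}$. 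Precomposing with the coalgebra structure map $s := m_\otimes\circ(\delta_X\otimes\delta_X)$ of $(\oc X,\delta_X)\hat\otimes(\oc X,\delta_X)$, the key claim is $P\circ s = 1_{\oc X\otimes\oc X}$, which then forces $\nabla'_X = \oc(f+g)\circ s = \nabla_X$. To prove the claim: $s$ is a coalgebra map into the cofree coalgebra $(\oc(\oc X\otimes\oc X),\delta)$, hence a comonoid homomorphism for the corresponding objects of $\cat{Comon}(\C)$; since $K$ is strict monoidal (Proposition~\ref{prop:1}), the comonoid comultiplication of $(\oc X\otimes\oc X, s) = (\oc X,\delta_X)\hat\otimes(\oc X,\delta_X)$ is the map $(1\otimes\sigma\otimes 1)\circ(\Delta_X\otimes\Delta_X)$ from~\eqref{eq:46}, whence $\Delta_{\oc X\otimes\oc X}\circ s = (s\otimes s)\circ(1\otimes\sigma\otimes 1)\circ(\Delta_X\otimes\Delta_X)$. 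A short computation using naturality of $m_\otimes$, $\oc\varepsilon_X\circ\delta_X = 1$, $\oc\mathsf{e}_X\circ\delta_X = m_I\circ\mathsf{e}_X$ (from $\mathsf{e}_X$ being a coalgebra map to $\hat I$) and the unit coherence of the monoidal functor $\oc$ gives $\oc f\circ s = \rho_{\oc X}\circ(\oc X\otimes\mathsf{e}_X)$ and $\oc g\circ s = \lambda_{\oc X}\circ(\mathsf{e}_X\otimes\oc X)$; substituting these and collapsing via the comonoid counit laws yields $P\circ s = 1$.

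The step I expect to be the real obstacle is the uniqueness of $\nabla_X$: whereas $\mathsf{u}_X$ is pinned down by a single substitution, $\nabla_X$ is not, and one must both find the section $s$ and identify $\Delta_{\oc X\otimes\oc X}\circ s$ in terms of the lifted comonoid structure on $\cat{Coalg}(\oc)$. This is the one point where the monoidal-comonad apparatus — in particular the cartesianness of $\hat\otimes$ and strictness of $K$ from Proposition~\ref{prop:1} — is genuinely needed; everything else is routine bookkeeping with the cofree adjunction and the comonad identities.
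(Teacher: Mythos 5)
Your proposal is correct. Note that the paper does not actually prove this lemma: it is stated as a consequence of Theorem~1 of Blute--Cockett--Lemay--Seely, \emph{Differential categories revisited}, and the proof is explicitly omitted. So there is no in-paper argument to compare against; what you have written is a self-contained proof assembled from the machinery of Proposition~\ref{prop:1} and Remarks~\ref{rk:3} and~\ref{rk:8}, and every step checks out. The existence half is clean: additivity of $h \mapsto h \circ \varepsilon_Y$ composed with the (by definition additive) adjunction isomorphism shows $h \mapsto \oc h$ is a monoid map into the convolution monoid on $\cat{Coalg}(\oc)\bigl((\oc Y,\delta_Y),(\oc X,\delta_X)\bigr)$, whose zero and sum are represented by $\mathsf{u}_X \circ \mathsf{e}_Y$ and $\nabla_X \circ ({-}\otimes{-}) \circ \Delta_Y$ because $\mathsf{e}_Y$ and $\Delta_Y$ are the terminal map and diagonal of the cofree coalgebra in the cartesian $(\cat{Coalg}(\oc),\hat\otimes)$; this gives exactly the two squares of~\eqref{eq:52}. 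The uniqueness half is the genuinely nontrivial part, and your identification of the section is right: $s = m_\otimes \circ (\delta_X \otimes \delta_X)$ is the structure map of $(\oc X,\delta_X)\hat\otimes(\oc X,\delta_X)$, hence a coalgebra map, hence (via strict monoidality of $K$ and~\eqref{eq:46}) satisfies $\Delta_{\oc X\otimes\oc X}\circ s = (s\otimes s)\circ(1\otimes\sigma\otimes 1)\circ(\Delta_X\otimes\Delta_X)$; together with $\oc\varepsilon_X\circ\delta_X=1$, $\oc\mathsf{e}_X\circ\delta_X=m_I\circ\mathsf{e}_X$ and the unit coherence of $\oc$ this gives $P\circ s = 1$ by the comonoid counit laws, forcing $\nabla'_X = \oc(f+g)\circ s$. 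A small bonus of your argument worth recording: the uniqueness uses only~\eqref{eq:52}, so the convolution diagrams alone already pin down $\mathsf{u}_X$ and $\nabla_X$, with~\eqref{eq:16} needed only for the existence direction. You also recover in passing the standard closed formulas $\mathsf{u}_X = \oc(0)\circ m_I$ and $\nabla_X = \oc(\varepsilon_X\otimes\mathsf{e}_X + \mathsf{e}_X\otimes\varepsilon_X)\circ m_\otimes\circ(\delta_X\otimes\delta_X)$, which agree with the cited reference.
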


In particular, this result implies that if $\oc$ is a monoidal coalgebra
modality on a $k$-linear symmetric monoidal category, then its
underlying coalgebra modality supports a unique structure of
\emph{additive bialgebra modality}:

\begin{Defn}[Additive bialgebra modality]
  \label{def:9}
  An \emph{additive bialgebra modality} on a $k$-linear symmetric
  monoidal category $(\C, \otimes, I)$ is a coalgebra modality $\oc$
  endowed with further maps
  \begin{equation}\label{eq:10}
    \mathsf{u}_X \colon I \rightarrow \oc X \qquad \text{and}
    \qquad \nabla_X \colon \oc X \otimes \oc X \rightarrow \oc X
  \end{equation}
  natural in $X$, such that each
  $(\oc X, \mathsf{e}_X, \Delta_X, \mathsf{u}_X, \nabla_X)$ is a
  commutative and cocommutative bialgebra, and such that the diagrams
  in~\eqref{eq:16} and~\eqref{eq:52}  commute.
\end{Defn}
In fact, additive bialgebra modalities are the same as monoidal
coalgebra modalities; this is the full form
of~\cite[Theorem~1]{Blute2019Differential}:
\begin{Prop}
  \label{prop:10}
  If $\oc$ is a coalgebra modality on the $k$-linear symmetric monoidal
   $\C$, then there is at most one additive bialgebra
  modality structure on it---which exists precisely when
  $\oc$ underlies a monoidal coalgebra modality.
\end{Prop}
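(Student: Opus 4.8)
The plan is to reduce the statement to the characterisations already established, so that only the ``uniqueness'' and the precise equivalence need genuine work. Recall that by Proposition~\ref{prop:9}(ii), a coalgebra modality $\oc$ underlies a monoidal coalgebra modality precisely when the comparison $K \colon \cat{Coalg}(\oc) \rightarrow \cat{Comon}(\C)$ creates finite products; and that by Lemma~\ref{lem:20}, when $\oc$ \emph{is} a monoidal coalgebra modality on a $k$-linear $\C$, the maps $\mathsf{u}_X$, $\nabla_X$ of~\eqref{eq:51} furnish an additive bialgebra modality structure, characterised by~\eqref{eq:16} and~\eqref{eq:52}. So for the forward implication of the ``if and only if'', and for uniqueness of the structure when it exists, I would simply invoke Lemma~\ref{lem:20}: any additive bialgebra modality structure has its $\mathsf{u}_X$ and $\nabla_X$ subject to the diagrams~\eqref{eq:16} and~\eqref{eq:52}, and Lemma~\ref{lem:20} says these determine the maps uniquely (given that $\oc$ is a monoidal coalgebra modality). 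Thus, once I know existence forces monoidal-coalgebra-modality structure, uniqueness is automatic.

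The substantive direction is the converse: from an additive bialgebra modality structure $(\oc, \mathsf{e}, \Delta, \mathsf{u}, \nabla)$ on a $k$-linear $\C$, I must manufacture a monoidal comonad structure on $\oc$ compatible with the coalgebra modality, i.e.\ exhibit $\oc$ as a monoidal coalgebra modality. By Proposition~\ref{prop:9}(ii) it suffices to show that $K \colon \cat{Coalg}(\oc) \rightarrow \cat{Comon}(\C)$ creates finite products; equivalently, by Proposition~\ref{prop:6} when $\C$ has finite products---but we are not assuming that---so I should work directly. The route I would take is the semantic one used throughout this section: use the bialgebra structure to equip each cofree coalgebra $(\oc X, \delta_X)$ with an internal-$k$-module structure in $\cat{Coalg}(\oc)$ with respect to \emph{some} candidate monoidal product, and then run Fox's theorem. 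Concretely, the commutative monoid part $(\mathsf{u}_X, \nabla_X)$ together with the cocommutative comonoid part $(\mathsf{e}_X, \Delta_X)$ and the bialgebra compatibility say that each $\oc X$ is an internal commutative monoid in the category of cocommutative comonoids $\cat{Comon}(\C)$; since $(\cat{Comon}(\C), \tilde\otimes)$ is cartesian (Remark~\ref{rk:5}), internal commutative monoids there are the same as internal commutative monoids for the cartesian structure, and in fact the additional maps $\mathsf{u}_X, \nabla_X$ upgrade this to a $k$-module (using $k$-linearity of $\C$ to define scalar multiplication by $r \in k$ via $\oc(r \cdot 1_X)$ and checking the module axioms against~\eqref{eq:52}). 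This is the key mechanism: the convolution diagrams~\eqref{eq:52} are exactly what make $X \mapsto (\oc X, \mathsf{e}_X, \Delta_X, \mathsf{u}_X, \nabla_X)$ into a lift of $\oc$ through the category of $k$-modules internal to $\cat{Comon}(\C)$.

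Having that lift, I would argue as follows. The Eilenberg--Moore category $\cat{Coalg}(\oc)$ is equivalent, via descent along the cofree adjunction, to the category of coalgebras for the induced comonad on $\cat{Comon}(\C)$, and the internal-$k$-module (hence internal-commutative-monoid) structures on the cofree objects make this comonad into one on the category $\cat{Comon}(\C)^{\cat{CMon}}$ of internal commutative monoids; since the forgetful $\cat{Comon}(\C)^{\cat{CMon}} \to \cat{Comon}(\C) \to \C$ is product-creating and comonadic-friendly, $\cat{Coalg}(\oc)$ acquires finite products created by $K$. Equivalently and more cheaply: one checks that the candidate Seely-type maps assemble $\oc(\,\text{--}\,)$ into a strict monoidal comparison into $(\cat{Comon}(\C), \tilde\otimes)$, whose cartesianness (Remark~\ref{rk:5}) then gives via Fox's theorem the required monoidal comonad structure. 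The main obstacle---and where I would spend most of the effort---is verifying that the internal commutative monoid structure built from $\mathsf{u}, \nabla$ is genuinely a structure \emph{in $\cat{Comon}(\C)$} with its cartesian product, i.e.\ that $\mathsf{u}_X$ and $\nabla_X$ are comonoid homomorphisms and interact correctly with $\delta_X$; this is a diagram chase feeding on the bialgebra axioms and~\eqref{eq:16}, and it is precisely the content that Lemma~\ref{lem:20} and~\cite[Theorem~1]{Blute2019Differential} package up, so in the write-up I would lean on that citation rather than re-derive it, confining the new argument to the passage through Proposition~\ref{prop:9}(ii).
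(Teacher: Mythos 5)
Your forward direction and uniqueness argument is fine and matches what the paper intends: granted the converse, any additive bialgebra modality structure forces $\oc$ to underlie a monoidal coalgebra modality, and then Lemma~\ref{lem:20} pins down $\mathsf{u}_X$ and $\nabla_X$ as the \emph{unique} maps satisfying~\eqref{eq:16} and~\eqref{eq:52}, which any additive bialgebra modality structure must satisfy by definition.

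The converse is where there is a genuine gap: your scaffolding is circular. You propose to equip each cofree coalgebra $(\oc X, \delta_X)$ with internal commutative monoid (indeed internal $k$-module) structure ``with respect to some candidate monoidal product'' on $\cat{Coalg}(\oc)$, and then to deduce via Proposition~\ref{prop:9}(ii) and Fox's theorem that $K$ creates finite products. But there is no monoidal product on $\cat{Coalg}(\oc)$ until one has the monoidal comonad structure $m_\otimes$, and producing $m_\otimes$ from the bialgebra data is precisely the content of the hard direction: without it, $\oc X \otimes \oc Y$ does not visibly carry any $\oc$-coalgebra structure, so there is no candidate binary product in $\cat{Coalg}(\oc)$ and no way even to formulate the claim that $\nabla_X$ is a map of $\oc$-coalgebras as in~\eqref{eq:51}. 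The bialgebra axioms do make each $K(\oc X, \delta_X)$ an internal commutative monoid in the cartesian $(\cat{Comon}(\C), \tilde\otimes)$, but that structure lives downstairs, and transporting it up to $\cat{Coalg}(\oc)$ is the whole difficulty --- not something the forgetful functors hand you. The paper is explicit that the general proof is involved and simply cites~\cite[Theorem~1]{Blute2019Differential} for it; leaning on that citation is legitimate, but then the internal-monoid machinery you wrap around it does no work and should be cut. You also omit the one self-contained argument the paper does supply, which is the case actually used in the rest of the paper (where finite biproducts are assumed): when $\C$ has finite products, Proposition~\ref{prop:6} reduces the converse to invertibility of the storage maps~\eqref{eq:11}, and the additive bialgebra structure provides explicit inverses, namely the costorage maps~\eqref{eq:17} built from $\mathsf{u}_0$ and $\nabla$. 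That is a short, checkable computation and is worth including in any write-up.
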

The general proof is quite involved; however, when $\C$ has finite
products, it is much more straightforward. Indeed, in this context, it
suffices by Proposition~\ref{prop:6} to show that if $\oc$ is an
additive bialgebra modality, then its underlying coalgebra modality
has invertible storage maps~\eqref{eq:11}. Note that, due to
$k$-linearity, the finite products of $\C$ are in fact finite
\textit{bi}products, which in the usual way we notate using $\oplus$
and $0$. It is now not hard to show that the storage maps of an
additive bialgebra modality are invertible, with inverses given by the
``costorage'' maps:
  \begin{equation}
    \label{eq:17}
    \begin{gathered}
      \chi_1^{\ast} \defeq I \xrightarrow{\mathsf{u}_0} \oc 0 \qquad \text{and} \\[0.4em]
      \smash{\chi_{XY}^\ast \defeq \oc X \otimes \oc Y \xrightarrow{!\iota_0 \otimes !\iota_1} \oc(X \oplus Y) \otimes \oc(X \oplus Y) \xrightarrow{\nabla_{X
            \times Y}} \oc(X \oplus Y)\rlap{ .}}
    \end{gathered}
  \end{equation}

\subsection{Differential modalities}
\label{sec:diff-modal-1}
We now extend the notions of the preceding section to deal with
differential structure; in the monoidal case, this allows us to get
from classical linear logic to the \emph{differential linear logic}
of~\cite{Ehrhard2018An-introduction}.

The following definition was first given
in~\cite{Blute2006Differential}, but was stated there without the
interchange rule; the necessity of this rule for the theory was first
recognised in~\cite[Lemma 3.2.3]{Blute2009Cartesian}, and it has since
become an accepted part of the definition.

\begin{Defn}[Differential modality, monoidal differential modality]
  \label{def:19} A \emph{differential modality} $\oc $ on a $k$-linear
  symmetric
  monoidal category $(\C, \otimes, I)$ is a coalgebra
  modality $\oc $ endowed with a \emph{deriving transformation}: a family
  of maps
  \begin{equation*}
    \mathsf{d}_X \colon \oc X \otimes X \rightarrow \oc X\rlap{ ,}
  \end{equation*}
  natural in $X \in \C$, subject to the following five axioms:
  \begin{itemize}
  \item The \emph{constant rule} $\mathsf{e}_X \circ \mathsf{d}_X = 0$;\vskip0.25\baselineskip
  \item The \emph{product rule} expressed by commutativity of the
    diagram:
    \begin{equation*}
      \cd[@!C@C-2em]{ \oc X \otimes X \ar[d]_-{\mathsf{d}}
      \ar[r]^-{\Delta \otimes 1} & \oc X \otimes \oc X \otimes X
      \ar[d]^-{(1 \otimes \mathsf{d}) + (\mathsf{d} \otimes 1)(1
        \otimes \sigma)}  \\
      \oc X \ar[r]^-{\Delta} & \oc X \otimes \oc X\rlap{ ;}}
    \end{equation*}
  \item The \emph{linear rule} expressed by commutativity of the
    diagram:
    \begin{equation*}
      \cd[@!C@C-2em]{
      {\oc X \otimes X} \ar[rr]^-{\mathsf{d}} \ar[dr]_-{e \otimes 1} &
      &
      {\oc X} \ar[dl]^-{\varepsilon}\rlap{ ;}\\
      & {X}
    }
    \end{equation*}
  \item The \emph{chain rule} expressed by commutativity of the
    diagram:
    \begin{equation*}
          \cd[@C-0.88em]{ \oc X \otimes X \ar[d]_-{\Delta \otimes 1}
      \ar[rr]^-{\mathsf{d}} & & \oc X \ar[d]^-{\delta} \\
      \oc X \otimes \oc X \otimes X \ar[r]^-{\delta \otimes \mathsf{d}}
      & \oc\oc X \otimes \oc X \ar[r]^-{\mathsf{d}} & \oc\oc X\rlap{ ;}} 
    \end{equation*}
  \item The \emph{interchange rule} expressed by commutativity of the diagram:
    \begin{equation*}
          \cd[@C-0.88em]{ \oc X \otimes X \otimes X \ar[r]^-{1 \otimes
        \sigma} \ar[d]_-{\mathsf{d} \otimes 1} & \oc X \otimes X
      \otimes X \ar[r]^-{\mathsf{d} \otimes 1} & \oc X
      \otimes X \ar[d]^-{\mathsf{d}} \\
      \oc X \otimes X \ar[rr]^-{\mathsf{d}} && \oc X\rlap{ .} }
    \end{equation*}
  \end{itemize}
  A \emph{monoidal differential modality} is a differential modality
  whose underlying coalgebra modality is
  monoidal.
\end{Defn}

Note that we could drop the constant rule from the above list of
axioms, since it is derivable from the other ones as
in~\cite[Lemma~4.2]{Blute2019Differential}; however, it is convenient
to include it anyway, since we will need to refer to it later on.

\begin{Ex}[Cofree comonoids as a monoidal differential modality]
  \label{ex:6}
  By an argument due to~\cite{Blute2016Derivations}, if $\C$ is
  $k$-linear symmetric monoidal with biproducts, then the cofree
  coalgebra modality $\oc^C$, when it exists, underlies a monoidal
  differential modality. For this, note that if
  $(C, \Delta, \mathsf{e})$ is a cocommutative comonoid in $\C$, and
  $\gamma \colon H \rightarrow C \otimes H$ is a comodule, then the
  object $C \oplus H$ has a cocommutative comonoid structure given by
  \begin{equation*}
    C \!\oplus\! H \xrightarrow{
      \biggl(\begin{smallmatrix}
        \Delta & 0 \\ 0 & \gamma \\
        0 & \sigma \gamma \\
        0 & 0
      \end{smallmatrix}\biggr)
    } (C \otimes C) \!\oplus\! (C \otimes H) \!\oplus\! (H \otimes C) \!\oplus\! (H \otimes H) \cong (C \!\oplus\! H)^{\otimes 2}\rlap{ .}
  \end{equation*}
  In particular, for any $X$ we have the cofree cocommutative comonoid
  $\oc^C X$, and cofree $\oc^C X$-comodule $\oc^C X \otimes X$; thus, we
  obtain a cocommutative comonoid structure on
  $\oc^C X \oplus (\oc^C X \otimes X)$. We now consider the map
  \begin{equation*}
    \varphi \defeq (\varepsilon_X, \mathsf{e}_X \otimes X) \colon \oc^C X \oplus (\oc^C X \otimes X) \rightarrow X
  \end{equation*}
  and let $\varphi^\sharp \colon \oc^C X \oplus (\oc^C X \otimes X)
  \rightarrow \oc^C X$ be the unique comonoid homomorphism that lifts
  $\varphi$ through $\varepsilon_X$. The first component of
  $\varphi^\sharp$ is easily seen to be the identity; the second
  component is the desired deriving transformation $\mathsf{d}_X
  \colon \oc^C X \otimes X \rightarrow \oc^C X$.
\end{Ex}

As noted in the introduction, monoidal differential modalities are
often axiomatised in the linear logic literature as monoidal coalgebra
modalities equipped with a \emph{codereliction}; this is a natural
transformation with components
\begin{equation*} \eta_X \colon X \to \oc X
\end{equation*}
satisfying the following rules; the first two are analogues of the linear rule
and the chain rule, while the third is the so-called ``monoidal rule''\footnote{Also sometimes called the ``strengthen rule''.}
(c.f.~\cite{Blute2019Differential,Blute2006Differential,
  Ehrhard2018An-introduction, Fiore2007Differential}); note that these
rules involve the bialgebra structure maps of $\oc X$ found as
in~\eqref{eq:16}.
\begin{gather*}
  \cd[@!C@C-2em]{
    X \ar@{=}[dr] \ar[rr]^-{\eta} & & \oc X
    \ar[dl]^-{\varepsilon} \\ & X
  } \qquad \cd[@C-0.35em]{
    X
    \ar[rr]^-{\eta} \ar[d]_-{\mathsf{u} \otimes \eta} &&
    \oc X \ar[d]^-{\delta} \\
    \oc X \otimes\oc X \ar[r]^-{\delta \otimes \eta} & \oc\oc X
    \otimes \oc\oc X \ar[r]^-{\nabla} & \oc\oc X
  } \qquad \cd[@C-0.35em]{
    X \otimes \oc Y
    \ar[r]^-{\eta \otimes 1} \ar[d]_-{1 \otimes \varepsilon} &
    \oc X \otimes \oc Y \ar[d]^-{m} \\
    X \otimes Y \ar[r]^-{\eta} & \oc(X \otimes Y)\rlap{ .}
  }
\end{gather*}

There is also an analogue of the constant rule, which we omit to write
since, like before, it turns out to be a consequence of the other
axioms; see~\cite[Lemma~6]{Blute2019Differential}. As shown
in~\cite[Theorem 4]{Blute2019Differential}, the presentations of
monoidal differential modalities in terms of coderelictions and
deriving transformations are equivalent, where the deriving
transformation corresponding to a codereliction
$\eta\colon X \to \oc X$ is given by:
\begin{equation} \oc X \otimes X \xrightarrow{1 \otimes \eta} \oc X
  \label{eq:57}
  \otimes \oc X \xrightarrow{\nabla} \oc X\rlap{ ;}
\end{equation}
and the codereliction associated to a deriving transformation
$\mathsf{d}\colon \oc X \otimes X \to \oc X$ is:
\begin{equation}
  \label{eq:56}
  X \xrightarrow{\mathsf{u} \otimes 1} \oc X \otimes X \xrightarrow{\mathsf{d}} \oc X\rlap{ .}
\end{equation}

There is an analogue of the ``monoidal rule'' for the presentation of
monoidal differential modalities via deriving transformations, but
this, along with a further rule known as the ``$\nabla$ rule'' turn
out to be derivable from the axioms listed above;
see~\cite{Blute2019Differential} for a comprehensive treatment.

In this paper we will mostly work with deriving transformations and
(monoidal) coalgebra modalities, because these make the calculations
most straightforward; however, we will be sure to work out the
remaining structure maps in examples.

\begin{Ex}[Codereliction for cofree comonoids]
  \label{ex:4}
  We saw in Example~\ref{ex:6} that, if $\C$ is $k$-linear symmetric
  monoidal with biproducts, then the cofree coalgebra modality $\oc^C$
  is always a monoidal differential modality. In this case, the
  codereliction $\eta_X \colon X \rightarrow \oc^C X$ is obtained from
  the $\mathsf{d}_X$ of Example~\ref{ex:6} via the
  formula~\eqref{eq:56}, but in fact we can do better than this.

  Indeed, in Example~\ref{ex:6}, we used the cofree comonoid $\oc^C X$
  and the cofree comodule $\oc^C X \otimes X$ over it to produce a
  cocommutative comonoid structure on
  $\oc^C X \oplus (\oc^C X \otimes X)$. But we can also consider the
  trivial comonoid $\tilde I$ and its unique coaction on $X$, yielding
  a cocommutative comonoid structure on $I \oplus X$. Since
  $\mathsf{u}_X \colon I \rightarrow \oc^C X$ is a map of comonoids,
  it follows easily that the left-hand map in
  \begin{equation*}
    I \oplus X \xrightarrow{\mathsf{u}_X \oplus (\mathsf{u}_X \otimes X)} \oc^C X \oplus (\oc^C X \otimes X) \xrightarrow{(\mathrm{id}, \mathsf{d}_X)} \oc^C X
  \end{equation*}
  is a map of comonoids. But the right-hand map is, by
  Example~\ref{ex:6}, the unique map of cocommutative comonoids which
  yields
  $(\varepsilon_X, \mathsf{e}_X \otimes X) \colon \oc^C X \oplus
  (\oc^C X \otimes X) \rightarrow X$ on postcomposition with
  $\varepsilon_X$; whence the composite above, which is
  $(\mathsf{u}_X, \eta_X) \colon I \oplus X \rightarrow \oc^C X$, is
  the unique map of comonoids which postcomposes with $\varepsilon_X$
  to yield $\pi_1 \colon I \oplus X \rightarrow X$. Thus,
  $\eta_X \colon X \rightarrow \oc^C X$ is given by the second
  component of the unique map of comonoids $I\oplus X \to \oc^C X$
  which lifts $\pi_1 \colon I \oplus X \rightarrow X$ through
  $\varepsilon_X$. See also~\cite[Theorem~21]{Lemay2021Coderelictions}
  for a direct verification of this fact.
\end{Ex}

To conclude this section, we describe the appropriate notion of morphisms
between differential modalities. 

\begin{Defn}[Morphisms of differential modalities]
  \label{def:4}
  Let $(\C, \otimes, I)$ be a $k$-linear symmetric monoidal category. A
  (monoidal) coalgebra modality morphism
  $\varphi \colon \oc^1 \rightarrow \oc^2$ between (monoidal)
  differential modalities on $\C$ is a (monoidal) \emph{differential
    modality morphism} if it renders commutative diagrams of the form:
  \begin{equation}
    \label{eq:18}
    \cd[@!C@-0.3em]{
      {\oc ^1} X \otimes X \ar[d]_-{\mathsf{d}^1} \ar[r]^-{\varphi \otimes 1} &
      {\oc ^2} X \otimes X \ar[d]^-{\mathsf{d}^2} \\
      {\oc ^1} X \ar[r]^-{\varphi} & {\oc ^2} X\rlap{ .}
    }
  \end{equation}
  We write $\cat{DiffMod}(\C)$ and  $\cat{MonDiffMod}(\C)$
  for the categories of differential modalities and monoidal differential
  modalities on $\C$ respectively.
\end{Defn}

\begin{Rk}
  \label{rk:6}
Once again, for monoidal differential modalities, we could recast this definition in terms of the
coderelictions, replacing the commutativity of~\eqref{eq:18} with that
of triangles of the form:
\begin{equation*}
  \cd[@-0.7em]{
    & {X} \ar[dl]_-{\eta^1} \ar[dr]^-{\eta^2} \\
    {{\oc ^1} X} \ar[rr]^-{\varphi} & &
    {{\oc ^2} X}\rlap{ .}
  }
\end{equation*}
\end{Rk}

In light of Definition~\ref{def:4}, for a $k$-linear symmetric monoidal category $\C$ we can
extend~\eqref{eq:54} to a diagram 
\begin{equation*}
  \cd[@-1em]{
    & \cat{MonDiffMod}(\C) \ar[dl]_-{} \ar[d]_-{}\\
    \cat{DiffMod}(\C) \ar[d]_-{} & \cat{MonCoalgMod}(\C) \ar[dl]_-{} \ar[dr]^-{} \\
    \cat{CoalgMod}(\C) & & \cat{MonComonad}(\C)
  }
\end{equation*}
of forgetful functors, wherein each of the diagonal arrows is full and
faithful. Our objective in the rest of the paper is to show that,
under suitable assumptions on $\C$, the two vertical arrows admit left
adjoints.

\section{Algebraically-free commutative monoids}
\label{sec:algebr-free-comm}

In this section, we introduce \emph{algebraically-free commutative
  monoids}, which are the key piece of structure needed for allow the
construction of the free (monoidal) differential modality on a
(monoidal) coalgebra modality. Algebraically-free commutative monoids
are, in particular, free commutative monoids; the following
establishes our preferred notation for these.

\begin{Defn}[Free commutative monoids]
  \label{def:7}
  \looseness=-1
  A symmetric monoidal category $(\C, \otimes, I)$ \emph{is endowed
    with free commutative monoids}, if for all $X \in \C$, there is
  given a commutative monoid
  $(SX, \mathsf{u}_X \colon I \rightarrow SX, \nabla_X \colon SX \otimes
  SX \rightarrow SX)$ and map $\eta_X \colon X \rightarrow SX$ 
  exhibiting $(SX, \mathsf{u}_X, \nabla_X)$ as the free commutative
  monoid on $X$.
\end{Defn}

However, as discussed in the introduction, the universal property of a
free commutative monoid is insufficient for our purposes; and for the
appropriate strengthening, we look to~\cite{Kelly1980A-unified} for
inspiration. Section 23 of \emph{op.~cit.~}draws the distinction
between free and \emph{algebraically-free} monoids in a
monoidal category: a free monoid $FX$ on an object $X$ being called
\emph{algebraically-free} if monoid actions by $FX$ correspond to
actions by the mere object~$X$. We now extend this distinction to
the commutative~case.

\subsection{The definition}
\label{sec:defin-relat-free}
A free commutative monoid $SX$ will be algebraically-free in our sense
when actions by the monoid $SX$ correspond to \emph{commuting actions}
by the mere object $X$, in the following sense:

\begin{Defn}[Commuting $X$-actions]
  \label{def:1}
  Given $X \in \C$, a commuting $X$-action on an object $A \in \C$ is
  a map $\alpha \colon A \otimes X \rightarrow A$ making the square
  \begin{equation}
    \label{eq:7}
    \cd[@-0.3em]{
      A \otimes X \otimes X \ar[d]^-{A \otimes \sigma} \ar[r]^-{A \otimes \alpha} &
      A \otimes X \ar[r]^-{\alpha} &
      A \ar@{=}[d]_-{} \\
      A \otimes X \otimes X \ar[r]^-{A \otimes \alpha} &
      A \otimes X \ar[r]^-{\alpha} &
      A \rlap{ .}
    }
  \end{equation}
  commute. A map of commuting $X$-actions $(A, \alpha) \rightarrow (B, \beta)$
  is a map $f \colon A \rightarrow B$ making the square
  \begin{equation*}
    \cd{
      A \otimes X \ar[r]^-{\alpha} \ar[d]_-{A \otimes f} & A \ar[d]^-{f} \\
      B \otimes X \ar[r]^-{\beta} & B
    }
  \end{equation*}
  commute. We write $\ca$ for the
  category of commuting $X$-actions and their maps, and $U \colon \ca \rightarrow \C$
  for the forgetful functor.
\end{Defn}

Now, if $M$ is a commutative monoid in $\C$, and
$f \colon X \rightarrow M$ is any map, then each right $M$-module
structure $\rho \colon A \otimes M \rightarrow A$ on an object
$A \in \C$ gives rise to a commuting $X$-action on $A$ via
\begin{equation*}
  A \otimes X \xrightarrow{1 \otimes f} A \otimes M \xrightarrow{\rho} A\rlap{ ,}
\end{equation*}
and this induces a functor
$f^\ast \colon \mod[M] \rightarrow \ca$ from the category of
right $M$-modules to the category of commuting $X$-actions. Using
this, we can now give our notion of ``algebraically-free commutative
monoid''.

\begin{Defn}[Algebraically-free commutative monoids]
  \label{def:6}
  A symmetric monoidal category $(\C, \otimes, I)$ \emph{is endowed
    with algebraically-free commutative monoids}, if it is endowed
  with free commutative monoids and, for all $X \in \C$, the functor
  $\eta_X^\ast \colon \mod \rightarrow \ca$ is an
  isomorphism of categories.
\end{Defn}

Let us now unpack what it means for $(\C, \otimes, I)$ to be endowed
with algebraically-free commutative monoids. For any $A \in \C$, the
free $SX$-module on $A$ is given by
$(A \otimes SX, A \otimes \nabla_X)$, with freeness witnessed by the
map $A \otimes \mathsf{u}_X \colon A \rightarrow A \otimes SX$; and
since, by our assumption of algebraic-freeness, the functor
$\eta_X^\ast \colon \mod \rightarrow \ca$ is an isomorphism, it must
send this free $SX$-module to the free commuting $X$-action on
$A \in \C$. Thus, writing
\begin{equation}
  \label{eq:3}
  \mathsf{d}_X \defeq SX \otimes X \xrightarrow{SX \otimes \eta_X} SX \otimes SX \xrightarrow{\nabla_X} SX\rlap{ ,}
\end{equation}
we find that the free commuting $X$-action on $A \in \C$ is given by
$(A \otimes SX, A \otimes \mathsf{d}_X)$, with freeness witnessed by
the map $A \otimes \mathsf{u}_X \colon A \rightarrow A \otimes SX$.
More concretely, this freeness asserts that:
{\setlength{\abovedisplayskip}{12pt}
  \setlength{\belowdisplayskip}{12pt}
\begin{equation}
  \label{eq:2}
  \begin{gathered}
    \begin{minipage}{0.9\linewidth}
      for any commuting $X$-action $(B, \beta)$ and any map
      $f \colon A \rightarrow B$, there is a unique map of
      commuting $X$-actions
      $\bar f \colon (A \otimes SX, A \otimes \mathsf{d}_X)
      \rightarrow (B, \beta)$ which restricts back to $f$ along
      $A \otimes \mathsf{u}_X$---or in other words, renders commutative the diagram 
    \end{minipage}\\
    \cd[@!C@C-2em@-0.5em]{
      & A \ar[dl]_-{A \otimes \mathsf{u}_X} \ar[dr]^-{f} \\
      A \otimes SX \ar@{-->}[rr]^-{\exists!\, \bar f} && B \\
      A \otimes SX \otimes X \ar[u]^-{A \otimes \mathsf{d}_X}
      \ar[rr]^-{\bar f \otimes X} && B \otimes X \rlap{ .}
      \ar[u]_-{\beta} }
  \end{gathered}
\end{equation}
}

\subsection{Algebraically-free implies free}
\label{sec:algebr-free-impl}
The above consequence of the existence of algebraically-free
commutative monoids is what we will use in practice; but in fact, it
is not just a consequence, but also a characterisation, as the
following result shows.

\begin{Prop}
  \label{prop:3}
  Let $(\C, \otimes, I)$ be a symmetric monoidal category. If each $X \in \C$ comes equipped
  with a commuting $X$-action $(SX, \mathsf{d}_X)$ and a map
  $\mathsf{u}_X \colon I \rightarrow SX$ satisfying the universal
  property~\eqref{eq:2}, then $\C$ can be endowed with
  algebraically-free commutative monoids. More specifically, let us define for
  each $X \in \C$ the map
\begin{equation}
    \label{eq:19}
    \eta_X \defeq X \xrightarrow{\mathsf{u}_X \otimes X} SX \otimes X \xrightarrow{\mathsf{d}_X} SX\rlap{ ,}
  \end{equation}
  and define $\nabla_X \colon SX \otimes SX \rightarrow SX$ to be the
  unique map of free commuting $X$-actions which precomposes with
  $SX \otimes \mathsf{u}_X$ to give the identity, i.e., the unique map
  rendering commutative
  \begin{equation}
    \label{eq:21}
    \cd[@!C@C-2em]{
      & SX \ar[dl]_-{SX \otimes \mathsf{u}_X} \ar[dr]^-{1_{SX}} \\
      SX \otimes SX \ar@{-->}[rr]^-{\nabla_X} && SX \\
      SX \otimes SX \otimes X \ar[u]^-{SX \otimes \mathsf{d}_X} \ar[rr]^-{\nabla_X \otimes X} && SX \otimes X \rlap{ .} \ar[u]_-{\mathsf{d}_X}  
    }
  \end{equation}
  Then for each $X \in \C$, the map $\eta_X$ exhibits
  $(SX, \mathsf{u}_X, \nabla_X)$ as the algebraically-free commutative
  monoid on $X$.
\end{Prop}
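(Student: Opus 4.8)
The plan is to deduce everything from the single universal property~\eqref{eq:2}: for every $A$, the pair $(A \otimes SX,\, A \otimes \mathsf{d}_X)$ is the \emph{free} commuting $X$-action on $A$, with unit $A \otimes \mathsf{u}_X \colon A \rightarrow A \otimes SX$. The working principle throughout will be that two maps out of such a free object into a commuting $X$-action $(B,\beta)$ agree as soon as both are maps of commuting $X$-actions and both restrict to the same map along $A \otimes \mathsf{u}_X$; essentially every identity below is an instance of this, so the bulk of the labour is choosing the right $A$, the right target action, and keeping track of which tensor factor $X$ acts upon. Two preliminary observations are worth making first: \eqref{eq:21} already records the right unit law $\nabla_X \circ (SX \otimes \mathsf{u}_X) = 1_{SX}$ together with the fact that $\nabla_X$ is a map of commuting $X$-actions $(SX \otimes SX,\, SX \otimes \mathsf{d}_X) \rightarrow (SX, \mathsf{d}_X)$; and the formula~\eqref{eq:3}, namely $\mathsf{d}_X = \nabla_X \circ (SX \otimes \eta_X)$, follows by expanding $\eta_X$ via~\eqref{eq:19} and using~\eqref{eq:21}.

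First I would check that $(SX, \mathsf{u}_X, \nabla_X)$ is a commutative monoid. The left unit law and associativity follow by the principle above, comparing $\nabla_X \circ (\mathsf{u}_X \otimes SX)$ with $1_{SX}$, and the two bracketings $\nabla_X \circ (\nabla_X \otimes SX)$ and $\nabla_X \circ (SX \otimes \nabla_X)$ with each other, viewed as maps out of the free commuting $X$-action on $SX$ (resp.\ $SX \otimes SX$); that the relevant composites are action maps is immediate from~\eqref{eq:21} and the fact that $\mathsf{d}_X$ and $\nabla_X$ act on disjoint tensor factors, and the restrictions along the relevant unit all collapse by the right unit law. Commutativity of $\nabla_X$ is the one point where the \emph{commuting}-action axiom~\eqref{eq:7} for $(SX, \mathsf{d}_X)$ is genuinely used, rather than merely the weaker notion of an action, and this is the step I expect to be the main obstacle. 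My plan for it is to first establish the ``centrality'' identity $\nabla_X \circ (\eta_X \otimes SX) = \mathsf{d}_X \circ \sigma_{X, SX} \colon X \otimes SX \rightarrow SX$: both sides are maps of commuting $X$-actions $(X \otimes SX,\, X \otimes \mathsf{d}_X) \rightarrow (SX, \mathsf{d}_X)$ --- on the right-hand side this condition unwinds to \emph{exactly} the axiom~\eqref{eq:7} for $(SX, \mathsf{d}_X)$ --- and both restrict to $\eta_X$ along $X \otimes \mathsf{u}_X$, so they agree by~\eqref{eq:2}. Centrality, combined with~\eqref{eq:3} and associativity, then shows that $\nabla_X \circ \sigma$ is again a map of commuting $X$-actions $(SX \otimes SX,\, SX \otimes \mathsf{d}_X) \rightarrow (SX, \mathsf{d}_X)$; since it and $\nabla_X$ both restrict to $1_{SX}$ along $SX \otimes \mathsf{u}_X$, they coincide.

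Next I would show that $\eta_X$ exhibits $(SX, \mathsf{u}_X, \nabla_X)$ as the \emph{free} commutative monoid on $X$. Given a commutative monoid $N$ and a map $g \colon X \rightarrow N$, I equip $N$ with the commuting $X$-action $\gamma \defeq \nabla_N \circ (1 \otimes g)$ --- which satisfies~\eqref{eq:7} precisely because $N$ is commutative --- and let $\bar{g} \colon SX \rightarrow N$ be the map of commuting $X$-actions $(SX, \mathsf{d}_X) \rightarrow (N, \gamma)$ with $\bar{g} \circ \mathsf{u}_X = \mathsf{u}_N$ furnished by~\eqref{eq:2} with $A = I$. A short computation using the unit law of $N$ gives $\bar{g} \circ \eta_X = g$; that $\bar{g}$ is a monoid homomorphism is proven by comparing $\bar{g} \circ \nabla_X$ with $\nabla_N \circ (\bar{g} \otimes \bar{g})$ as maps out of the free commuting $X$-action on $SX$ (both are action maps, using associativity of $\nabla_N$, and both restrict to $\bar{g}$); and uniqueness holds because, by~\eqref{eq:3}, any monoid homomorphism $h \colon SX \rightarrow N$ with $h \circ \eta_X = g$ is automatically a map $(SX, \mathsf{d}_X) \rightarrow (N, \gamma)$ of commuting $X$-actions restricting to $\mathsf{u}_N$, hence equal to $\bar{g}$.

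Finally I would show that $\eta_X^\ast \colon \mod[SX] \rightarrow \ca$ is an isomorphism of categories. It is faithful since it acts as the identity on underlying morphisms. It is bijective on objects because, given a commuting $X$-action $(A, \alpha)$, the map $\rho \colon A \otimes SX \rightarrow A$ obtained by applying~\eqref{eq:2} to $1_A \colon A \to A$ is --- once its module axioms are verified, again by the free-object principle together with~\eqref{eq:3} --- the unique $SX$-module structure on $A$ with $\rho \circ (A \otimes \eta_X) = \alpha$ (here one uses the commutativity of $\nabla_X$ to see that $\eta_X^\ast$ really does land in $\ca$). And it is full by the same argument applied to a map of commuting $X$-actions $(A,\alpha) \to (B,\beta)$. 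Assembling these facts --- commutative monoid structure, freeness, and the isomorphism of categories --- gives the claim, the only non-mechanical ingredient being the centrality identity of the second paragraph.
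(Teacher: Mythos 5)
Your proof is correct, and its mathematical core is the same as the paper's: everything is deduced from the universal property~\eqref{eq:2}, and the commuting-action axiom~\eqref{eq:7} for $(SX,\mathsf{d}_X)$ enters exactly once, to establish that the twisted multiplication is again an action map and hence that $\nabla_X$ is commutative. Your ``centrality identity'' $\nabla_X \circ (\eta_X \otimes SX) = \mathsf{d}_X \circ \sigma_{X,SX}$ is a clean repackaging of the diagram the paper uses for the same purpose. Where you genuinely diverge is in the bookkeeping: the paper observes that the hypotheses furnish a left adjoint to $U \colon \ca \rightarrow \C$, identifies the induced monad as $({\thg}) \otimes SX$ with multiplication $({\thg}) \otimes \nabla_X$, and thereby gets associativity and the unit laws for free from the monad axioms; it then obtains the isomorphism $\eta_X^\ast \colon \mod \rightarrow \ca$ by checking strict monadicity (Beck) and showing the comparison functor is inverse to $\eta_X^\ast$, and it derives ordinary freeness of $SX$ from algebraic freeness by citing the last paragraph of Kelly's Theorem~23.1. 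You instead verify the monoid axioms, the freeness of $SX$ as a commutative monoid, and the bijectivity of $\eta_X^\ast$ all by hand from the free-object principle. Your route is more elementary and self-contained (in particular it avoids the external citation for freeness), at the cost of more case-by-case verification; the paper's route is shorter on the page because the monad formalism absorbs several of your checks. Both are sound.
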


This result adapts~\cite[Theorem~23.1]{Kelly1980A-unified} to the
commutative case.

\begin{proof}
  The assumptions of the proposition amount to the provision of a left
  adjoint to the forgetful functor $U \colon \ca \rightarrow \C$.
  Consider the monad $\mathsf{T}$ induced by this adjunction. Its
  underlying endofunctor is $(\thg) \otimes SX$; its unit is
  $(\thg) \otimes \mathsf{u}_X$; and its multiplication has
  $A$-component given by the unique map of free commuting $X$-actions
  $A \otimes SX \otimes SX \rightarrow A \otimes SX$ which precomposes
  with $A \otimes SX \otimes \mathsf{u}_X$ to give the identity. The
  map $A \otimes \nabla_X$ (with $\nabla_X$ as in~\eqref{eq:21}) is
  easily seen to be such a map, so that $\mathsf{T}$ has
  multiplication given by $(\thg) \otimes \nabla_X$.

  From this form of $\mathsf{T}$ it follows that
  $(SX, \mathsf{u}_X, \nabla_X)$ is a monoid, and
  $\mathsf{T}\text-\cat{Alg}$ is the category of right $SX$-modules.
  We claim that this monoid is commutative, and the algebraically-free
  commutative monoid on $X$ when equipped with the unit
  map~\eqref{eq:19}.
  
  We first show that $(SX, \mathsf{u}_X, \nabla_X)$ is commutative. To
  this end, note that $\nabla_X$ and
  $\nabla_X \circ \sigma \colon SX \otimes SX \rightarrow SX$ both
  precompose with $SX \otimes \mathsf{u}_X$ to give the identity;
  thus, if both are maps between free commuting $X$-actions then by
  freeness they coincide. Now, $\nabla_X$ is a map of 
  $X$-actions by definition; as for $\nabla_X \circ \sigma$, 
  consider the diagram:
  \begin{equation*}
    \cd{
      SX \otimes X \otimes SX \ar[d]_-{\mathsf{d}_X \otimes 1} \ar[r]^-{\sigma \otimes 1} & X \otimes SX \otimes SX \ar[r]^-{1 \otimes \nabla_X} & X \otimes SX \ar[d]^-{\mathsf{d}_X \circ \sigma} \\
      SX \otimes SX \ar[rr]^-{\nabla_X} & &
      SX\rlap{ .}
    }
  \end{equation*}
  The top, left, and bottom edges therein are maps of free commuting
  $X$-actions; but so too is the right by virtue of the
  axiom~\eqref{eq:7} for the commuting action $(SX,\mathsf{d}_X)$.
  Since precomposing both paths by $SX \otimes X \otimes \mathsf{u}_X$
  yields the same map $\mathsf{d}_X$, we conclude by freeness that the
  diagram commutes. Precomposing by
  $\sigma_{SX, SX \otimes X} \colon SX \otimes SX \otimes X
  \rightarrow SX \otimes X \otimes SX$, we obtain precisely the
  diagram expressing that $\nabla_X \circ \sigma$ is a map of
  commuting $X$-actions
  $(SX \otimes SX, SX \otimes \mathsf{d}_X) \rightarrow (SX,
  \mathsf{d}_X)$ as required.

  It remains to show that the map $\eta_X$ in~\eqref{eq:19} exhibits
  $(SX, \nabla_X, \mathsf{u}_X)$ as algebraically-free on $X$. This
  means showing that it is free in the usual sense on $X$, and that
  the functor $\eta_X^\ast \colon \mod \rightarrow \ca$ is
  an isomorphism; but in fact, by the argument in the last paragraph
  of the proof of~\cite[Theorem~23.1]{Kelly1980A-unified}, if we can
  show the latter of these two facts, it will imply the former.

  To this end, note that the forgetful functor
  $U \colon \ca \rightarrow \C$ easily satisfies the Beck conditions
  to be strictly monadic, so that the comparison functor
  $K \colon \ca \rightarrow \mod$ into the category of
  $\mathsf{T}$-algebras is invertible; we claim it is inverse to
  $\eta_X^\ast$. Indeed, $K$ sends $(A, \alpha) \in \ca$ to
  $(A, \beta) \in \mod$, where $\beta$ is the unique map of commuting
  $X$-actions
  $(A \otimes SX, A \otimes \mathsf{d}_X) \rightarrow (A, \alpha)$
  that precomposes with $A \otimes \mathsf{u}_X$ to give the identity.
  So $\eta_X^\ast(K(A, \alpha)) = (A, \bar \alpha)$ where
  \begin{equation*}
    \bar \alpha = \beta \circ (A \otimes \eta_X) = \beta \circ (A \otimes \mathsf{d}_X) \circ (A \otimes \mathsf{u}_X \otimes X) = \alpha \circ (\beta \otimes X) \circ (A \otimes \mathsf{\mathsf{u}_X} \otimes X) = \alpha\text{ .}
  \end{equation*}
  So the invertible $K$ is a one-sided inverse, whence an inverse, for
  $\eta_X^\ast$.
\end{proof}

\begin{Rk}\label{rk:4}
  If all algebraically-free commutative monoids exist, then there is a
  unique way of making the assignment $X \mapsto SX$ functorial
  such that the maps $\mathsf{u}_X$ and $\mathsf{d}_X$ (and hence also
  $\eta_X$ and $\nabla_X$) become natural in $X$. Indeed, given
  $f \colon X \rightarrow Y$, we can restrict \emph{any} commuting $Y$-action
  $(B, \beta)$ along $f$ to a commuting $X$-action
  \begin{equation*}
    f^\ast(B, \beta) \defeq \bigl(B, \beta \circ (X \otimes f)\bigr)\rlap{ .}
  \end{equation*}
  In particular, we can do this for the
  $Y$-action $(SY, \mathsf{d}_Y)$, and so can define the map
  $Sf \colon SX \rightarrow SY$ as the unique map of commuting
  $X$-actions
  \begin{equation*}
    Sf \colon (SX, \mathsf{d}_X) \rightarrow f^\ast(SY, \mathsf{d}_Y)
  \end{equation*}
  which precomposes with $\mathsf{u}_X \colon I \rightarrow SX$ to
  yield $\mathsf{u}_Y \colon I \rightarrow SY$. Explicitly, this means
  that the following diagram (which express precisely the naturality
  of $\mathsf{u}_X$ and $\mathsf{d}_X$) is commutative:
  \begin{equation}
    \label{eq:32}
    \cd[@!@C-1em@-2.5em@R-1em]{
      & I \ar[dl]_-{\mathsf{u}_X} \ar[dr]^-{\mathsf{u}_Y} \\
      SX \ar[rr]^-{Sf} & & SY \rlap{ .} \\
      SX \otimes X \ar[u]^-{\mathsf{d}_X} \ar[rr]^-{Sf \otimes f} & &
      SY \ar[u]_-{\mathsf{d}_Y}
    }
  \end{equation}
  
\end{Rk}

\subsection{The additive case}
\label{sec:additive-case}

We now specialise the above to the case of a \emph{$k$-linear}
symmetric monoidal category $\C$. If such a $\C$ is endowed with
algebraically-free commutative monoids, then $X \mapsto SX$ is, by the
dual of Example~\ref{ex:3}, the action on objects of an opmonoidal
algebra modality on $\C$. Thus, dualising the argument of
Section~\ref{sec:diff-modal-1}, each $SX$ is a bialgebra whose
coalgebra maps are the unique commutative monoid homomorphisms
$\mathsf{e}_X \colon (SX, \mathsf{u}_X, \nabla_X) \rightarrow (I, 1_I,
1_I)$ and
$\Delta_X \colon (SX, \mathsf{u}_X, \nabla_X) \rightarrow (SX,
\mathsf{u}_X, \nabla_X) \tilde \otimes (SX, \mathsf{u}_X, \nabla_X)$
that render commutative the following triangles:
\begin{equation}
  \label{eq:8}
  \cd[@!C@C+2em]{
    & X \ar[d]^-{\eta_{X}}  \ar[dl]_-{0} \ar[dr]^-{\ \ \ \ \ \eta_X \otimes \mathsf{u}_X + \mathsf{u}_X \otimes \eta_X} \\
    I & \ar@{-->}[l]^-{\exists!\,\mathsf{e}_X}
    SX  \ar@{-->}[r]_-{\exists!\,\Delta_X} & SX \otimes SX\rlap{ .}
  }
\end{equation}

This bialgebra structure allows us to lift the symmetric monoidal
structure of $\C$ to the category $\mod$ of right $SX$-modules. The
lifted unit object is $(I, \mathsf{e}_X)$, and the tensor product
$(A, \alpha) \boxtimes (B, \beta)$ of right $SX$-modules is given by
$A\otimes B$ with the action
\begin{equation*}
  A \otimes B \otimes SX \xrightarrow{A \otimes B \otimes \Delta_X} A \otimes B \otimes SX \otimes SX \xrightarrow{1 \otimes \sigma \otimes 1} A \otimes SX \otimes B \otimes SX \xrightarrow{\alpha \otimes \beta} A \otimes B\rlap{ .}
\end{equation*}

There is not much more we can say about this monoidal structure in
general; however, when free commutative monoids are
algebraically-free, it transfers along the isomorphism of categories
$\eta_X^\ast \colon \mod \rightarrow \ca$ to yield a
symmetric monoidal structure on $\ca$. From~\eqref{eq:8} and the
definition of the tensor product on $\mod$, we see that
this monoidal structure is given as follows:
\begin{Defn}[Lifted monoidal structure on commuting $X$-actions]
  \label{def:3}
  The \emph{lifted monoidal structure} on $\ca$ has:
  \begin{itemize}[itemsep=0.25\baselineskip]
  \item Unit object given by the commuting $X$-action
    $(I, 0 \colon X \rightarrow I)$;
  \item The tensor product $(A, \alpha) \boxtimes (B, \beta)$ of
    commuting $X$-actions given by the commuting $X$-action
    $(A \otimes B, \alpha \boxtimes \beta)$, where
    $\alpha \boxtimes \beta$ is the sum
    \begin{equation}
      \label{eq:20}
      \alpha \boxtimes \beta = {}
      \cd[@C-0.5em@R-1.8em]{
        A \otimes B \otimes X \ar[rr]^-{A \otimes \beta} & & A \otimes B \\
        & + \\
        A \otimes B \otimes X \ar[r]^-{A \otimes \sigma}_{\phantom{A \otimes \beta}} & A \otimes X \otimes B \ar[r]^-{\alpha \otimes B} & A \otimes B\rlap{ .}
      }
    \end{equation}
  \item Coherence constraints being those of $\C$.
  \end{itemize}
\end{Defn}

We will use this monoidal structure, and the definition~\eqref{eq:20},
extensively in what follows. We note in passing that, even when
algebraically-free commutative monoids do not exist, the preceding
definition still yields a symmetric monoidal structure on $\ca$, but
work is required to ensure that the definitions are well-posed.
However, in our case, this work can be omitted, since the structure was
obtained by transporting a known monoidal structure across an
isomorphism of categories.

\subsection{Existence}
\label{sec:existence}
We now consider the \emph{existence} of algebraically-free commutative
monoids in categories of practical interest. Of course, the existence
of free commutative monoids has been studied extensively, but these
need not always be algebraically-free, as the following example shows:

\begin{Ex}
  \label{ex:1}
  Consider $\cat{Vect}_\mathbb{C}^\mathrm{op}$, the opposite of the
  category of $\mathbb{C}$-vector spaces with its usual tensor
  product. In this case, it is more natural to work in
  $\cat{Vect}_\mathbb{C}$ and speak of cocommutative comonoids,
  commuting coactions, and so on. It is well known that cofree
  cocommutative comonoids (= cocommutative $\mathbb{C}$-coalgebras)
  exist; we show that these are not coalgebraically-cofree.

  Indeed, by~\cite{Sweedler1969Hopf, Murfet2015On-Sweedlers} (see also Section~\ref{sec:k-modules} below),
  the cofree cocommutative $\mathbb{C}$-coalgebra
  $\oc^C \mathbb{C}$ on $\mathbb{C}$ is given by the direct sum of
  coalgebras $\bigoplus_{c \in \mathbb{C}} \mathbb{C}[x]$ where the
  vector space of polynomials $\mathbb{C}[x]$ is equipped with its
  usual coalgebra structure, and where the couniversal map
  $\varepsilon \colon \bigoplus_{c \in \mathbb{C}} \mathbb{C}[x]
  \rightarrow \mathbb{C}$ takes degree-1 coefficients in each summand.

  We now show that the cofree $\oc^C \mathbb{C}$ is not
  coalgebraically-cofree.
  Indeed, note that a $\mathbb{C}$-coaction on a complex vector space
  $A$ is just a linear map
  $\alpha \colon A \rightarrow A \otimes \mathbb{C}$, and that the
  cocommutativity condition in this case always holds. Thus, via the
  isomorphism $A \otimes \mathbb{C} \cong A$, we can identify the
  category of cocommuting coactions of $\mathbb{C}$ with the category
  of vector spaces equipped with a linear
  endomorphism. 

  Now, if $\oc^C \mathbb{C}$ were coalgebraically-cofree, then by the
  argument preceding Proposition~\ref{prop:3}, the cofree cocommuting
  $\mathbb{C}$-coaction on $\mathbb{C}$ itself would be given by
  $\mathbb{C} \otimes \oc^C \mathbb{C} \cong \bigoplus_{c \in \mathbb{C}} \mathbb{C}[x]$
  endowed with the linear endomorphism
  \begin{equation*}
    \oc^C \mathbb{C} \xrightarrow{\Delta} \oc^C \mathbb{C} \otimes \oc^C \mathbb{C} \xrightarrow{\oc^C \mathbb{C} \otimes \varepsilon} \oc^C \mathbb{C} \otimes \mathbb{C} \cong \oc^C \mathbb{C}\rlap{ ,}
  \end{equation*}
  which, if we work it through, is the endomorphism $\partial$ of
  $ \bigoplus_{c \in \mathbb{C}} \mathbb C[x]$ which maps each direct summand
  to itself via the usual differential operator. But $(\bigoplus_{c \in \mathbb{C}} 
  \mathbb{C}[x], \partial)$
  \emph{cannot} be cofree on $\mathbb{C}$: indeed, if it were, then cofreeness
  would, in particular, imply a bijection between linear maps
  $\mathbb{C} \rightarrow \mathbb{C}$ and maps of cocommuting coactions
  $(\mathbb{C}, \mathrm{id}_\mathbb{C}) \rightarrow (\bigoplus_c \mathbb{C}[x], \partial)$; but since every element of
  $\bigoplus_{c \in \mathbb{C}} \mathbb{C}[x]$ is annihilated by some power of
  $\partial$, the only possible map
  $(\mathbb{C}, \mathrm{id}_\mathbb{C}) \rightarrow (\bigoplus_c \mathbb{C}[x], \partial)$ of cocommuting coactions is the zero
  map.
\end{Ex}

The question of when exactly free commutative monoids are
algebraically-free is a delicate one; however, for our purposes, it
will suffice to give a set of \emph{sufficient} conditions which cover
all examples of practical interest.

\begin{Defn}[Symmetric algebra construction]
  \label{def:2}
  Let $\mathbb{B}$ denote the category of finite cardinals and
  bijections; so objects of $\mathbb{B}$ are natural numbers, and the
  homset $\mathbb{B}(n,m)$ is empty if $n \neq m$ and is the symmetric
  group $\mathfrak{S}_n$ if $n = m$.

  For any symmetric monoidal category $(\C, \otimes, I)$ and any
  $X \in \C$, we have a functor
  $X^{\otimes (\thg)} \colon \mathbb{B} \rightarrow \C$ sending the
  object $n$ to the tensor power $X^{\otimes n}$, and the map
  $\sigma \in \mathfrak{S}_n$ to the map
  $X^{\otimes \sigma} \colon X^{\otimes n} \rightarrow X^{\otimes n}$
  that permutes the tensor factors according to $\sigma$. We say $\C$
  \emph{admits the symmetric algebra construction} if each
  $X^{\otimes(\thg)} \colon \mathbb{B} \rightarrow \C$ has a colimit
  which is preserved by the tensor product in each variable.
\end{Defn}

For example, the category of vector spaces over any fixed field admits the free symmetric
algebra construction, with the colimit of $X^{\otimes(\thg)}$ being
found as the direct sum of the symmetrised tensor powers
$X^{\otimes n} / \mathfrak{S}_n$. More generally, we have:

\begin{Lemma}
  \label{lem:2}
  Let $(\C, \otimes, I)$ be a symmetric monoidal category.
  \begin{enumerate}[(i),itemsep=0.25\baselineskip]
  \item If $\C$ is cocomplete and monoidal closed, then it admits the
    symmetric algebra construction.
  \item If $\C$ admits the symmetric algebra construction, and
    $\mathsf{T}$ is a symmetric monoidal monad on $\C$, then the
    Kleisli category $\cat{Kl}(\mathsf{T})$ admits the symmetric
    algebra construction under the induced monoidal
    structure.
  \end{enumerate}
\end{Lemma}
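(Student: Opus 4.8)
The plan is to treat the two parts separately: part (i) is immediate from standard facts about colimits in a closed category, while part (ii) reduces to it via the Kleisli adjunction. For part (i), note that $\mathbb{B}$ is small, so cocompleteness of $\C$ supplies the colimit of $X^{\otimes(\thg)} \colon \mathbb{B} \to \C$ for each $X$ (concretely it is $\bigoplus_n X^{\otimes n}/\mathfrak{S}_n$, but we shall not need this). For the preservation clause, I would observe that in a symmetric monoidal closed $\C$ the functor $(\thg) \otimes Y$ has a right adjoint---the internal hom out of $Y$---and hence preserves all colimits; and $Y \otimes (\thg)$ is naturally isomorphic to $(\thg) \otimes Y$ via the symmetry, so it too preserves all colimits. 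In particular both preserve $\colim_{\mathbb{B}} X^{\otimes(\thg)}$, which is exactly the assertion of (i).

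For part (ii), recall (see~\cite{Moerdijk2002Monads}) that a symmetric monoidal monad $\mathsf{T}$ on $\C$, with underlying endofunctor $T$ and binary lax structure map $m_{A,B} \colon TA \otimes TB \to T(A \otimes B)$, equips the Kleisli category $\Kl{T}$ with a symmetric monoidal structure---tensor of objects as in $\C$, and tensor of Kleisli maps $f \colon A \to TB$, $g \colon C \to TD$ given by $m_{B,D} \circ (f \otimes g) \colon A \otimes C \to T(B \otimes D)$---for which the free functor $F \colon \C \to \Kl{T}$ is strict symmetric monoidal. Two consequences would be needed. First, since $F$ is the identity on objects and strict monoidal, the functor $X^{\otimes(\thg)} \colon \mathbb{B} \to \Kl{T}$ is literally $F$ composed with its namesake $X^{\otimes(\thg)} \colon \mathbb{B} \to \C$. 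Second, for each object $Y$ there is an equality of functors $\bigl((\thg) \otimes_{\Kl{T}} Y\bigr) \circ F = F \circ \bigl((\thg) \otimes_\C Y\bigr) \colon \C \to \Kl{T}$; on objects this is strictness of $F$, and on a morphism $f \colon A \to B$ of $\C$ it is the computation
\[
  F(f) \otimes_{\Kl{T}} \mathrm{id}_Y = m_{B,Y} \circ \bigl((\eta_B \circ f) \otimes \eta_Y\bigr) = \eta_{B \otimes Y} \circ (f \otimes \mathrm{id}_Y) = F(f \otimes_\C \mathrm{id}_Y)\rlap{ ,}
\]
whose middle step is the monoidality of the unit $\eta$ of $\mathsf{T}$; and symmetrically for $Y \otimes_{\Kl{T}} (\thg)$.

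Granting these, the remaining argument is formal. Being a left adjoint, $F$ preserves the colimit $\colim_{\mathbb{B}} X^{\otimes(\thg)}$ in $\C$; hence $F\bigl(\colim_{\mathbb{B}} X^{\otimes(\thg)}\bigr)$, equipped with the image under $F$ of the colimiting cocone in $\C$, is a colimit of $X^{\otimes(\thg)} \colon \mathbb{B} \to \Kl{T}$. Applying $(\thg) \otimes_{\Kl{T}} Y$ to this cocone yields, by the functor identity above, the image under $F$ of the result of applying $(\thg) \otimes_\C Y$ to the colimiting cocone in $\C$; the latter is itself colimiting because $\C$ admits the symmetric algebra construction, and $F$ preserves colimits, so the outcome is a colimiting cocone for $X^{\otimes(\thg)} \otimes_{\Kl{T}} Y$. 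The same argument on the other side gives preservation by $Y \otimes_{\Kl{T}} (\thg)$, completing (ii). The only step that requires genuine work is the displayed functor identity, which is where the monoidal-monad axioms---specifically the monoidality of $\eta$---actually enter; everything else is an instance of the slogan that left adjoints preserve colimits.
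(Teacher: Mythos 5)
Your proposal is correct and takes essentially the same approach as the paper: part (i) via the internal-hom adjunction and the symmetry, and part (ii) by exploiting that the free functor $F_\mathsf{T}$ is identity-on-objects, strict symmetric monoidal, and colimit-preserving, so that it carries the colimiting cocones witnessing the construction in $\C$ to the required ones in $\cat{Kl}(\mathsf{T})$. The only difference is that you verify the strict monoidality of $F_\mathsf{T}$ on morphisms explicitly (via monoidality of $\eta$), which the paper simply builds into its description of the induced monoidal structure.
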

In part (ii), the ``induced monoidal structure'' on
$\cat{Kl}(\mathsf{T})$ is the one which on objects is given as in
$\C$; on morphisms, takes
$(f \colon A \rightarrow TB) \otimes (g \colon C \rightarrow TD)$ to
\begin{equation*}
  A \otimes C \xrightarrow{f \otimes g} TB \otimes TD \xrightarrow{\varphi_{B,D}} T(B \otimes D)
\end{equation*}
where $\varphi$ is the monoidality constraint of $\mathsf{T}$; and has
remaining data determined by the requirement that the left adjoint
identity-on-objects functor
$F_\mathsf{T} \colon \C \rightarrow \cat{Kl}(\mathsf{T})$ sending
$f \colon A \rightarrow B$ to $\eta_B \circ f \colon A \rightarrow TB$ (where $\eta$ is the unit of $\mathsf{T}$) should be \emph{strict} symmetric monoidal.
\begin{proof}
  Part (i) is clear. As for (ii), since
  $F_\mathsf{T} \colon \C \rightarrow \cat{Kl}(\mathsf{T})$ is
  identity-on-objects strict monoidal, we have for any $A, X \in \C$
  that the following triangle commutes:
  \begin{equation*}
    \cd[@!C@C-2em]{
      & {\mathbb{B}} \ar[dl]_-{A \otimes X^{\otimes(\thg)}} \ar[dr]^-{F_\mathsf{T}A \otimes (F_\mathsf{T}X)^{\otimes(\thg)}} \\
      {\C} \ar[rr]_-{F_\mathsf{T}} & &
      {\cat{Kl}(\mathsf{T})}\rlap{ .}
    }
  \end{equation*}
  Thus, if $\iota_n \colon X^{\otimes n} \rightarrow SX$ is a
  colimiting cocone for $X^{\otimes(\thg)}$ in $\C$, then the
  colimiting cocone $A \otimes \iota_n$ for
  $A \otimes X^{\otimes(\thg)}$ is sent by the colimit-preserving
  $F_\mathsf{T}$ to a colimiting cocone
  $F_\mathsf{T} A \otimes F_\mathsf{T} \iota_n$ for
  $F_\mathsf{T}A \otimes (F_\mathsf{T}X)^{\otimes (\thg)}$. Since
  $F_\mathsf{T}$ is bijective on objects, this verifies all the
  conditions required for $\cat{Kl}(\mathsf{T})$ to admit the
  symmetric algebra construction.
\end{proof}

In particular, part (i) of this proposition implies that the
categories of sets; of vector spaces over a field $k$; of modules over
any \emph{rig} $k$; of join-semilattices; and of complete
join-semilattices---all admit the symmetric algebra construction. Part
(ii) implies, morever, that the categories of sets and relations, or
more generally, the category of sets and ``$Q$-relations'' for $Q$ a
quantale will admit the symmetric algebra construction.

The point of the symmetric algebra construction is that it offers a
straightforward way to produce algebraically-free commutative monoids:

\begin{Prop}
  \label{prop:2}
  If $(\C, \otimes, I)$ admits the symmetric algebra construction,
  then $\C$ admits algebraically-free commutative monoids.
\end{Prop}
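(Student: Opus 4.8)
The plan is to verify the hypotheses of Proposition~\ref{prop:3}: for every $X \in \C$ we must produce a commuting $X$-action $(SX, \mathsf{d}_X)$ together with a map $\mathsf{u}_X \colon I \rightarrow SX$ satisfying the universal property~\eqref{eq:2}, after which Proposition~\ref{prop:3} supplies all of the algebraically-free structure. By hypothesis, for each $X$ the functor $X^{\otimes(\thg)} \colon \mathbb{B} \rightarrow \C$ of Definition~\ref{def:2} has a colimit; write it $SX$, with colimiting cocone $\iota_n \colon X^{\otimes n} \rightarrow SX$, and set $\mathsf{u}_X \defeq \iota_0 \colon I \rightarrow SX$. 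Since $\otimes$ preserves this colimit in each variable, for any $A \in \C$ the maps $A \otimes \iota_n$ exhibit $A \otimes SX$ as $\colim_{\mathbb{B}}(A \otimes X^{\otimes(\thg)})$; in particular the maps $\iota_n \otimes X$ exhibit $SX \otimes X$ as $\colim_{\mathbb{B}}(X^{\otimes(\thg)} \otimes X)$.

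First I would define $\mathsf{d}_X \colon SX \otimes X \rightarrow SX$. Modulo the coherence isomorphisms $X^{\otimes n} \otimes X \cong X^{\otimes(n+1)}$, a bijection $\sigma \in \mathfrak{S}_n$ acts on $X^{\otimes n} \otimes X$ as the permutation $\sigma \oplus \mathrm{id}_1 \in \mathfrak{S}_{n+1}$ of $X^{\otimes(n+1)}$; since $\iota$ is a cocone over $X^{\otimes(\thg)}$, the map $\iota_{n+1}$ is fixed by every element of $\mathfrak{S}_{n+1}$, so the family $(\iota_{n+1})_n$ is a cocone over $X^{\otimes(\thg)} \otimes X$ and induces a unique $\mathsf{d}_X$ with $\mathsf{d}_X \circ (\iota_n \otimes X) = \iota_{n+1}$ for all $n$. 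That $(SX, \mathsf{d}_X)$ is a commuting $X$-action then follows because the two sides of~\eqref{eq:7}, precomposed with the jointly epic cocone legs $\iota_n \otimes X \otimes X$, both become $\iota_{n+2}$ up to a transposition of two of its $X$-coordinates, under which $\iota_{n+2}$ is invariant.

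It remains to verify~\eqref{eq:2}. For $n \ge 0$ write $\beta^{(n)} \colon B \otimes X^{\otimes n} \rightarrow B$ for the iterated action with $\beta^{(0)} = \mathrm{id}_B$ and $\beta^{(n+1)} = \beta \circ (\beta^{(n)} \otimes X)$. Granting the key lemma that each $\beta^{(n)}$ is $\mathfrak{S}_n$-equivariant, the maps $\beta^{(n)} \circ (f \otimes X^{\otimes n})$ form a cocone over $A \otimes X^{\otimes(\thg)}$ and so induce a map $\bar f \colon A \otimes SX \rightarrow B$. Then $\bar f \circ (A \otimes \mathsf{u}_X) = \beta^{(0)} \circ (f \otimes X^{\otimes 0}) = f$; and $\bar f$ is a map of commuting $X$-actions, because precomposing each of $\bar f \circ (A \otimes \mathsf{d}_X)$ and $\beta \circ (\bar f \otimes X)$ with the cocone leg $A \otimes \iota_n \otimes X$ yields $\beta^{(n+1)} \circ (f \otimes X^{\otimes(n+1)})$ --- on the left using $\mathsf{d}_X \circ (\iota_n \otimes X) = \iota_{n+1}$, on the right using $\beta^{(n+1)} = \beta \circ (\beta^{(n)} \otimes X)$. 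Finally, any map of commuting $X$-actions $g \colon (A \otimes SX, A \otimes \mathsf{d}_X) \rightarrow (B,\beta)$ in $\ca$ with $g \circ (A \otimes \mathsf{u}_X) = f$ satisfies $g \circ (A \otimes \iota_n) = \beta^{(n)} \circ (f \otimes X^{\otimes n})$ for all $n$, by induction on $n$ (the step using $A \otimes \iota_{n+1} = (A \otimes \mathsf{d}_X) \circ (A \otimes \iota_n \otimes X)$ together with the fact that $g$ is an action map); hence $g = \bar f$, since the cocone legs $A \otimes \iota_n$ are jointly epic. This establishes~\eqref{eq:2}, so Proposition~\ref{prop:3} endows $\C$ with algebraically-free commutative monoids.

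I expect the one genuinely non-formal step to be the key lemma that $\beta^{(n)} \circ (B \otimes X^{\otimes \pi}) = \beta^{(n)}$ for all $\pi \in \mathfrak{S}_n$, which is the commutative counterpart of the corresponding step in~\cite{Kelly1980A-unified}. The permutations fixing $\beta^{(n)}$ in this sense form a subgroup, so it suffices to treat the adjacent transpositions; and using the identity $\beta^{(n)} = \beta^{(n-m)} \circ (\beta^{(m)} \otimes X^{\otimes(n-m)})$ (itself proved by an easy induction on $m$) one localises an adjacent transposition of factors $i, i{+}1$ to the innermost two factors of a $\beta^{(2)}$-block, where it is killed by the single instance $\beta \circ (\beta \otimes X) \circ (B \otimes \sigma_{X,X}) = \beta \circ (\beta \otimes X)$ of the commuting axiom~\eqref{eq:7}. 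Every other part of the argument is routine cocone-chasing, made possible precisely by the assumption that $\otimes$ preserves the defining colimits of the symmetric algebras.
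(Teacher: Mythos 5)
Your proposal is correct and follows essentially the same route as the paper: construct $SX$ as the colimit of $X^{\otimes(\thg)}$, define $\mathsf{d}_X$ by $\mathsf{d}_X\circ(\iota_n\otimes X)=\iota_{n+1}$, verify the universal property~\eqref{eq:2} via the iterated-action cocone (your $\beta^{(n)}\circ(f\otimes X^{\otimes n})$ is the paper's $p_n$), and invoke Proposition~\ref{prop:3}. The only cosmetic difference is in the equivariance lemma, where you reduce to all adjacent transpositions and localise each into a $\beta^{(2)}$-block, while the paper uses the two generators $(1\,2\,\dots\,n{-}1)$ and $(n{-}1\ n)$ of $\mathfrak{S}_n$; both reductions terminate at the same single instance of the commuting axiom~\eqref{eq:7}.
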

\begin{proof}
  For any $X \in \C$, let $SX$ be the colimit of
  $X^{\otimes(\thg)} \colon \mathbb{B} \rightarrow \C$ with
  coprojections $\iota_n \colon X^{\otimes n} \rightarrow SX$. As this
  colimit is preserved by tensor, the maps $\iota_n \otimes X$ exhibit
  $SX \otimes X$ as the colimit of the diagram
  $X^{\otimes (\thg)} \otimes X \colon \mathbb{B} \rightarrow \C$. The
  family of maps
  \begin{equation*}
    X^{\otimes n} \otimes X = X^{\otimes(n+1)} \xrightarrow{\iota_{n+1}} SX
  \end{equation*}
  is easily seen to constitute a cocone under this same diagram
  $X^{\otimes(\thg)} \otimes X$, and so we induce a unique map
  $\mathsf{d}_X \colon SX \otimes X \rightarrow SX$ such that
  $\mathsf{d}_X \circ (\iota_n \otimes X) = \iota_{n+1}$. Using the
  fact that $SX \otimes X \otimes X$ is a colimit of
  $X^{\otimes(\thg)} \otimes X \otimes X$, along with the fact that
  $\iota_{n+2}$ is fixed under permutation of the last two tensor
  factors, we easily conclude that $\mathsf{d}_X$ endows $SX$ with a
  commuting $X$-action.

  We now show that, for any $A \in \C$, the object
  $(A \otimes SX, A \otimes \mathsf{d}_X)$ is a free commuting
  $X$-action on $A$, as witnessed by
  $A \otimes \iota_0 \colon A \rightarrow A \otimes SX$. This will, by
  Proposition~\ref{prop:3}, complete the proof. So suppose
  $(B, \beta)$ is a commuting $X$-action and
  $f \colon A \rightarrow B$; we must exhibit a unique
  $\bar f \colon A \otimes SX \rightarrow B$ making~\eqref{eq:3}
  commute.

  Since the maps $A \otimes \iota_n$ exhibit $A \otimes SX$ as the
  colimit of
  $A \otimes X^{\otimes(\thg)} \colon \mathbb{B} \rightarrow \C$, such
  a map $\bar f$ is determined by the composites
  $p_n \defeq \bar f \circ (A \otimes \iota_n) \colon A \otimes
  X^{\otimes n} \rightarrow B$. For the top triangle in~\eqref{eq:2}
  to commute, we must have $p_0 = f \colon A \rightarrow B$. On the
  other hand, if the bottom square is to commute, then because
  \begin{equation}
    \label{eq:5}
    \begin{aligned}
      \bar f \circ (A \otimes \mathsf{d}_X) \circ (A \otimes \iota_n
      \otimes X) &= \bar f \circ (A \otimes \iota_{n+1}) = p_{n+1} \\
      \text{and} \ \ \beta \otimes (\bar f \otimes X) \otimes (A
      \otimes \iota_n \otimes X) &= \beta \circ (p_n \otimes X)
    \end{aligned}
  \end{equation}
  we must have that
  $p_{n+1} = \beta \circ (p_n \otimes X) \colon A \otimes X^{\otimes
    n} \otimes X \rightarrow B$ for all $n$. This proves that
  $\bar f \colon A \otimes SX \rightarrow B$ is unique if it exists.

  To show existence, we must show firstly that the family of maps
  $p_n \colon A \otimes X^{\otimes n} \rightarrow B$ defined
  recursively as above, i.e., $p_0 = f$ and
  $p_{n+1} = \beta \circ (p_n \otimes X)$, comprises a cocone under
  $A \otimes X^{\otimes(\thg)}$; and secondly, that the induced
  $\bar f$ renders~\eqref{eq:3} commutative. For the first of these,
  we must show that each $p_n$ is fixed on precomposition by each
  $A \otimes X^{\otimes \sigma}$. This is clear for $n = 0,1$. For
  $n \geqslant 2$, note that
  \begin{equation*}
    p_n = A \otimes X^{\otimes(n-2)} \otimes X \otimes X \xrightarrow{p_{n-2} \otimes X \otimes X} B \otimes X \otimes X \xrightarrow{\beta \otimes X} B \otimes X \xrightarrow{\beta} B\rlap{ ,}
  \end{equation*}
  which, since $\beta$ is a commuting $X$-action, is fixed under
  precomposition by $A \otimes X^{\otimes(n-1\,\, n)}$. Moreover, as
  $p_n = \beta \circ (p_{n-1} \otimes X)$, we see inductively that
  $p_n$ is fixed under precomposition by
  $A \otimes X^{\otimes(1\,\,2\,\,\dots \,\,n-1)}$. But
  $(1\,\,2\,\,\dots\,\,n-1)$ and $(n-1\,\,n)$ generate
  $\mathfrak{S}_n$; so $p_n$ is fixed by precomposition by any
  $A \otimes X^{\otimes \sigma}$.

  So there is a unique map $\bar f \colon A \otimes SX \rightarrow B$
  such that $\bar f \circ (A \otimes \iota_n) = p_n$ for all $n$, and
  it remains to show that this map renders commutative~\eqref{eq:3}.
  For the top triangle this is clear since
  $\bar f \circ (A \otimes \iota_0) = p_0 = f$. On the other hand, for
  any $n$ we have using the equalities in~\eqref{eq:5} that
  $\bar f \circ (A \otimes \mathsf{d}_X) \circ (A \otimes \iota_n
  \otimes X) = \beta \otimes (\bar f \otimes X) \otimes (A \otimes
  \iota_n \otimes X)$ which, since the maps
  $A \otimes \iota_n \otimes X$ constitute a colimiting cocone,
  implies the commutativity of the bottom diagram in~\eqref{eq:3}.
\end{proof}

There are other conditions under which a category will admit
algebraically-free commutative monoids. A refinement of the proof of
the last part of~\cite[Proposition~23.2]{Kelly1980A-unified} shows
that in a symmetric monoidal closed category with pullbacks and
equalisers, any free commutative monoid is necessarily
algebraically-free. This refinement is not entirely trivial, as it
uses the construction of \emph{centralisers} and \emph{centres} in
arbitrary monoidal categories; and since Proposition~\ref{prop:2}
covers all the examples of interest to us, we do not pursue this
further here.

\section{Lifted differential modalities on categories of actions}
\label{sec:lift-mono-diff}

We have now described the two key ingredients required for our 
results---the notion of (monoidal) differential modality, and the
notion of (free) commuting $X$-action. However, before proceeding, it
will be helpful to do some preparatory work to clarify the
interactions between these two notions.

In this section, we show that a differential modality $\oc$ on a
$k$-linear symmetric monoidal $(\C, \otimes, I)$ can be lifted to the
category $\ca$ of commuting $X$-actions for any $X \in \C$. This
result is interesting in its own right as a source of new examples of
differential modalities; however, it will also play a crucial role in
our main results in Sections~\ref{sec:constr-free-mono}
and~\ref{sec:free-mono-diff}. With an eye to this, we will be
rather careful about which parts of the structure of a differential
modality are used in establishing our various results.

\subsection{Lifting the endofunctor $\oc$}
\label{sec:lift-endof-oc}

First of all, let us recall the definition of the \emph{coderiving
  transformation} $\mathsf{b}$ associated to a differential modality
$\oc$; this is the natural transformation with components
\begin{equation}
  \label{eq:35}
  \mathsf{b}_X \defeq \oc X \xrightarrow{\Delta_X} \oc X \otimes \oc X \xrightarrow{\oc X \otimes \varepsilon_X} \oc X \otimes X\rlap{ .}
\end{equation}

Coderiving transformations may in fact be defined more generally for
any coalgebra modality---see~\cite[\sec2.2]{Cockett2019Integral}---and
part (i) of the following lemma is then part 4 of Proposition 2.1 of
\emph{op.~cit.}

\begin{Lemma}
  \label{lem:11}
  The coderiving transformation of a differential modality $\oc$
  satisfies the identities:
  \begin{enumerate}[(i)]
    \item $(\mathsf{b}_X \otimes X) \circ \mathsf{b}_X = (\oc X
      \otimes \sigma) \circ (\mathsf{b}_X \otimes X) \circ
      \mathsf{b}_X$;
  \item
    $\mathsf{b}_X \circ \mathsf{d}_X = \mathrm{id}_{\oc X
      \otimes X} + ((\mathsf{d}_X
    \otimes X) \circ (\oc X \otimes \sigma) \circ (\mathsf{b}_X
    \otimes X))$.
  \end{enumerate}
\end{Lemma}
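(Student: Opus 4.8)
The plan is to obtain both identities by direct diagram chasing, using only the coalgebra-modality axioms (co-associativity, co-commutativity and the counit law for the cocommutative comonoid $(\oc X, \Delta_X, \mathsf{e}_X)$) together with two of the axioms for the deriving transformation, namely the product rule and the linear rule.

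For~(i), only the comonoid structure is involved. I would unfold $\mathsf{b}_X = (\oc X \otimes \varepsilon_X) \circ \Delta_X$ in the left-hand side $(\mathsf{b}_X \otimes X) \circ \mathsf{b}_X$: since $\mathsf{b}_X \otimes X$ and $\oc X \otimes \varepsilon_X$ act on disjoint tensor factors, the composite becomes $(\mathsf{b}_X \otimes \varepsilon_X) \circ \Delta_X$, and expanding $\mathsf{b}_X$ once more this is $(\oc X \otimes \varepsilon_X \otimes \varepsilon_X) \circ (\Delta_X \otimes \oc X) \circ \Delta_X$. Co-associativity rewrites $(\Delta_X \otimes \oc X) \circ \Delta_X$ as $(\oc X \otimes \Delta_X) \circ \Delta_X$, so the two trailing $X$-factors are now produced by $(\varepsilon_X \otimes \varepsilon_X) \circ \Delta_X$ applied to a single copy of $\oc X$. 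Precomposing with the symmetry $\sigma$ of those two factors and sliding it through $\varepsilon_X \otimes \varepsilon_X$ by naturality turns it into the symmetry of the two copies of $\oc X$, which is absorbed by co-commutativity; this is exactly~(i). (Alternatively, this identity is part~4 of~\cite[Proposition~2.1]{Cockett2019Integral} and may simply be quoted.)

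For~(ii), I would start from $\mathsf{b}_X \circ \mathsf{d}_X = (\oc X \otimes \varepsilon_X) \circ \Delta_X \circ \mathsf{d}_X$ and apply the product rule to $\Delta_X \circ \mathsf{d}_X$, obtaining a sum of two terms over which $(\oc X \otimes \varepsilon_X)$ distributes. In the first term, $(\oc X \otimes \varepsilon_X) \circ (\oc X \otimes \mathsf{d}_X) \circ (\Delta_X \otimes X) = \bigl(\oc X \otimes (\varepsilon_X \circ \mathsf{d}_X)\bigr) \circ (\Delta_X \otimes X)$; the linear rule identifies $\varepsilon_X \circ \mathsf{d}_X$ with $\mathsf{e}_X \otimes X$ (up to the unit isomorphism), and then the counit law $(\oc X \otimes \mathsf{e}_X) \circ \Delta_X = \mathrm{id}_{\oc X}$ collapses this term to $\mathrm{id}_{\oc X \otimes X}$. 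In the second term, $(\oc X \otimes \varepsilon_X) \circ (\mathsf{d}_X \otimes \oc X)$ rewrites as $\mathsf{d}_X \otimes \varepsilon_X$, leaving $(\mathsf{d}_X \otimes \varepsilon_X) \circ (\oc X \otimes \sigma) \circ (\Delta_X \otimes X)$. Separately, expanding $\mathsf{b}_X$ inside the claimed right-hand summand $(\mathsf{d}_X \otimes X) \circ (\oc X \otimes \sigma) \circ (\mathsf{b}_X \otimes X)$ and using naturality of the symmetry to pass $\varepsilon_X$ through $\sigma$ puts that summand in exactly the same form; adding the two terms yields the stated identity.

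These are routine coherence manipulations, and I do not anticipate any real obstacle; the one thing requiring care is the bookkeeping of which copies of $X$ and of $\oc X$ the various symmetries act on, so that the $\sigma$ coming from the product rule is correctly matched with the $\sigma$ appearing in the statement of~(ii).
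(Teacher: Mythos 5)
Your proposal is correct and follows essentially the same route as the paper: (i) from coassociativity/cocommutativity of $\Delta$ after unfolding $\mathsf{b}_X = (\oc X \otimes \varepsilon_X)\circ\Delta_X$, and (ii) by postcomposing the product rule with $\oc X \otimes \varepsilon_X$ and then applying the linear rule and a counit identity. The paper's proof is just a terser version of the same computation (and likewise defers (i) to \cite[Proposition~2.1]{Cockett2019Integral}).
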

\begin{proof}
  (i) is immediate from the cocommutativity of $\Delta$. (ii) follows
  from the product rule on postcomposing by $\oc X \otimes
  \varepsilon_X$ and applying the linear rule and one of the
  counitality identities for $\Delta$.
\end{proof}

Given $\mathsf{b}$ and  $\mathsf{d}$ satisfying the conditions of this
lemma---even if these data are not known to come from a differential
or coalgebra modality---we can define, for each $X \in \C$, a lifting
of the functor $\oc$ and the natural transformations $\mathsf{b}$ and
$\mathsf{d}$ to the category of commuting $X$-actions. Recall in what
follows that $\boxtimes$ is the monoidal structure on $\ca$ as
described in Definition~\ref{def:3}.

\begin{Lemma}
  \label{lem:15}
  Let $\oc$ be an endofunctor on a $k$-linear symmetric monoidal category
  $(\C, \otimes, I)$, and let
  $\mathsf{b}_X \colon \oc X \rightarrow \oc X \otimes X$ and
  $\mathsf{d}_X \colon \oc X \otimes X \rightarrow \oc X$ be natural
  families of maps satisfying the interchange law for $\mathsf{d}$ and
  the two axioms of Lemma~\ref{lem:11}.
  \begin{enumerate}[(i),itemsep=0.25\baselineskip]
  \item For any $X \in \C$ and any
  commuting $X$-action $(A, \alpha)$, the map
  \begin{equation}
    \label{eq:23}
    \oc^X \alpha \defeq \oc A \otimes X \xrightarrow{\mathsf{b}_A \otimes X} \oc A \otimes A \otimes X \xrightarrow{\oc A \otimes \alpha} \oc A \otimes A \xrightarrow{\mathsf{d}_A} \oc A
  \end{equation}
  provides the structure of a commuting $X$-action
  $\oc^{X}(A, \alpha) = (\oc A, \oc^X \alpha)$;
\item If $f \colon (A, \alpha) \rightarrow (B, \beta)$ is a morphism
  of commuting $X$-actions, then so too is
  $\oc f \colon \oc^X(A, \alpha) \rightarrow \oc^X(B, \beta)$;
\item For any commuting $X$-action $(A, \alpha)$, the following are
  maps of commuting $X$-actions:
  \begin{equation*}
    \mathsf{b}_A \colon \oc^X(A, \alpha) \rightarrow \oc^X(A, \alpha) \boxtimes (A, \alpha) \ \  \text{and} \ \ 
    \mathsf{d}_A \colon \oc^X(A, \alpha) \boxtimes (A, \alpha) \rightarrow \oc^X(A, \alpha) \rlap{ .}
  \end{equation*}
\end{enumerate}
Hence, the structure $(\oc, \mathsf{b}, \mathsf{d})$ on $(\C, \otimes, I)$ lifts to
a corresponding
structure $(\oc^X, \mathsf{b}, \mathsf{d})$ on $\left(\ca, \boxtimes, (I,0) \right)$.
\end{Lemma}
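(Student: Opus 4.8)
The lemma is a string of diagram chases, all fuelled by the same ingredients: naturality of $\mathsf{b}$ and $\mathsf{d}$, the interchange law for $\mathsf{d}$, parts~(i) and~(ii) of Lemma~\ref{lem:11}, the square~\eqref{eq:7} for the given action $\alpha$, and the explicit formula~\eqref{eq:20} for $\boxtimes$. The plan is to dispatch the four assertions in order and then deduce the closing sentence formally.

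For~(i), I would expand, using the definition~\eqref{eq:23}, the two-fold action $\oc^X\alpha \circ (\oc^X\alpha \otimes X) \colon \oc A \otimes X \otimes X \to \oc A$ and show it is invariant under the symmetry $\oc A \otimes \sigma$ of the two $X$-factors, which is exactly the commuting condition~\eqref{eq:7} for $(\oc A, \oc^X\alpha)$. In the middle of the resulting composite there sits a factor $\mathsf{b}_A \circ \mathsf{d}_A$, which by Lemma~\ref{lem:11}(ii) is $\mathrm{id}_{\oc A \otimes A}$ plus a ``twisted'' term; so the two-fold action becomes a sum $T_1 + T_2$, and it suffices to show each summand is symmetric in the two $X$'s. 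In $T_1$ the two occurrences of $\alpha$ collapse to the two-fold $\alpha$-action, which is symmetric by~\eqref{eq:7}. In $T_2$, after commuting disjoint tensor factors past one another, one uncovers the double coderiving $(\mathsf{b}_A \otimes A) \circ \mathsf{b}_A$ feeding two copies of $A$ into two successive applications of $\mathsf{d}_A$; by Lemma~\ref{lem:11}(i) the two $A$-outputs may be swapped, and by the interchange law the order of the two $\mathsf{d}_A$-feeds is immaterial, so $T_2$ is symmetric as well.

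Parts~(ii) and~(iii) are comparatively routine. For~(ii) one expands $\oc f \circ \oc^X\alpha$ and pushes $\oc f$ through $\mathsf{d}_A$ and $\mathsf{b}_A$ by their naturality in the subscript, applied to $f \colon A \to B$, using that $f$ intertwines $\alpha$ and $\beta$; this rearranges into $\oc^X\beta \circ (\oc f \otimes X)$. For~(iii), the action on $\oc^X(A,\alpha) \boxtimes (A,\alpha)$ is the two-term sum~\eqref{eq:20} with the roles of the two actions played by $\oc^X\alpha$ and $\alpha$, so checking that $\mathsf{b}_A$, resp.\ $\mathsf{d}_A$, is a map of commuting $X$-actions reduces to matching a two-term expression against $\mathsf{b}_A \circ \oc^X\alpha$, resp.\ $\oc^X\alpha \circ (\mathsf{d}_A \otimes X)$ --- again after expanding the occurrence of $\mathsf{b}_A \circ \mathsf{d}_A$ by Lemma~\ref{lem:11}(ii). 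In each case the first summands agree immediately and the second summands agree after invoking Lemma~\ref{lem:11}(i) (in the $\mathsf{b}_A$ case) or the interchange law (in the $\mathsf{d}_A$ case), having first slid disjoint factors past one another.

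For the final sentence I would argue abstractly: $U \colon \ca \to \C$ is faithful and strict symmetric monoidal (its coherence and symmetry cells are those of $\C$), and by construction it sends $\oc^X, \mathsf{b}, \mathsf{d}$ to $\oc, \mathsf{b}, \mathsf{d}$; hence any equation between morphisms of $\ca$ assembled from this data holds if and only if its $U$-image holds in $\C$, and those $U$-images are precisely the interchange law and the two identities of Lemma~\ref{lem:11}, which hold by hypothesis. The one genuine obstacle is the bookkeeping in~(i): after the split into $T_1 + T_2$ one is manipulating a chain of six or seven tensor-factor-wise maps, and the labour is in tracking which copy of $A$ each coderiving produces and which $\mathsf{d}_A$ consumes it --- the string-diagram picture makes the two symmetrisations obvious, but converting it into a sequence of commuting squares is fiddly.
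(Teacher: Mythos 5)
Your proof is correct, but it organises the work differently from the paper in two places. First, for part (i) you verify the commuting condition for $\oc^X\alpha$ head-on: expanding the two-fold action, splitting it as $T_1 + T_2$ via Lemma~\ref{lem:11}(ii), and symmetrising each summand (using~\eqref{eq:7} for $T_1$, and Lemma~\ref{lem:11}(i) together with the interchange law for $T_2$) --- I checked that this chase does close up. The paper avoids that computation entirely: it observes that the formula~\eqref{eq:20} for $\boxtimes$ makes sense for arbitrary, not necessarily commuting, $X$-actions; that an action $(B,\beta)$ is commuting precisely when $\beta \colon (B,\beta)\boxtimes X \to (B,\beta)$ is itself a map of $X$-actions; and that $\oc^X\alpha$ is then a composite of three maps of $X$-actions, the outer two being exactly the maps of part (iii). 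So the paper derives (i) from (iii), whereas you prove both independently. Second, for the $\mathsf{d}_A$ half of (iii) the paper does not redo the diagram chase: it notes that passing to $\C^{\mathrm{op}}$ interchanges $\mathsf{b}$ and $\mathsf{d}$, trades Lemma~\ref{lem:11}(i) for the interchange law, and leaves Lemma~\ref{lem:11}(ii) fixed, so the $\mathsf{d}_A$ statement is literally the $\mathsf{b}_A$ statement read in the opposite category. Your direct verification (matching second summands via the interchange law after sliding disjoint tensor factors) is a valid substitute. Net effect: your route costs one substantial extra computation (the $T_2$ symmetrisation) but is entirely self-contained; the paper's buys economy at the price of the two structural observations above. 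Your closing appeal to faithfulness of $U$ for the final sentence is fine and matches what the paper leaves implicit.
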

\begin{proof}
  For (i), the only issue is showing commutativity of the action. We
  begin with three preliminary remarks:
  \begin{itemize}[itemsep=0.25\baselineskip]
  \item The definition of
    the tensor product $\boxtimes$ on $\ca$ given in
    Definition~\ref{def:3} makes sense also for $X$-actions which do
    \emph{not} necessarily commute, thus giving a monoidal structure on
    the category of $X$-actions.
  \item If we abusively write $C$ for the $X$-action $(C, 0)$, then
    for any $X$-action $(B, \beta)$, the tensor product
    $(B, \beta) \boxtimes C$ is given by $B \otimes C$ with the action
    \begin{equation*}
      B \otimes C \otimes X \xrightarrow{1 \otimes \sigma} B \otimes X \otimes C \xrightarrow{\beta \otimes C} B \otimes C\rlap{ .}
    \end{equation*}
  \item In these terms, requiring $(B, \beta)$ to be a commuting
    action is equivalent to requiring
    $\beta \colon (B, \beta) \boxtimes X \rightarrow (B, \beta)$ to be
    a map of $X$-actions.
  \end{itemize}
  By virtue of these remarks, to show that $\oc^X \alpha$ is a
  commuting action, it suffices to show that the following is a
  composite of maps of $X$-actions:
  \begin{equation*}
    \oc^X(A, \alpha) \boxtimes X \xrightarrow{\!\mathsf{b}_A \otimes X\!}
    \oc^X(A, \alpha) \boxtimes (A, \alpha) \boxtimes X \xrightarrow{\!\oc A \otimes \alpha\!} 
    \oc^X(A, \alpha) \boxtimes (A, \alpha) \xrightarrow{\!\mathsf{d}_A\!} (\oc A, \oc^X \alpha) \rlap{.}
  \end{equation*}
  For the second map, this is immediate, since $(A, \alpha)$ is a
  commuting $X$-action and so
  $\alpha \colon (A, \alpha) \boxtimes X \rightarrow (A, \alpha)$. For
  the first and third maps we must do some work---which is exactly the
  same work as required for (iii), which we may thus dispatch
  simultaneously; and since (ii) is trivial, this will conclude the proof.

  To start with, let us show that
  $\mathsf{b}_A \colon (\oc A, \oc^X \alpha) \rightarrow (\oc A, \oc^X
  \alpha) \boxtimes (A, \alpha)$. Consider the following diagram,
  where here, and henceforth, we may for clarity's sake write $.$ in
  place of $\otimes$ where this causes no confusion:
  \begin{equation*}
    \cd{
      \oc A .  X \ar[r]^-{\mathsf{b}_A . 1} \ar[d]_-{\mathsf{b}_A . 1} & \oc A .  A .  X \ar[r]^-{1 . \sigma} \ar[dr]^-{\mathsf{b}_A . 1 . 1} & \oc A .  X .  A \ar[dr]^-{\mathsf{b}_A . 1 . 1} \\
      \oc A .  A .  X \ar[r]^-{\mathsf{b}_A . 1 . 1} \ar[d]_-{1 . \alpha} & \oc A .  A .  A .  X \ar[d]_-{1 . 1 . \alpha} \ar[r]^-{1 . \sigma . 1} & \oc A  .  A  .  A  .  X \ar[r]^-{1 . 1 . \sigma} & \oc A  .  A  .  X  .  A \ar[dl]^-{1 . \alpha . 1} \\
      \oc A  .  A \ar[r]^-{\mathsf{b}_A . 1} & \oc A  .  A  .  A \ar[r]^-{1 . \sigma} & \oc A  .  A  .  A\rlap{ .}
    }
  \end{equation*}
  The top-left region commutes by Lemma~\ref{lem:11}(i), and
  the remaining regions commute by axioms of a symmetric monoidal
  category. We thus have that:
  \begin{align*}
    & \ \ \ \ \ \mathsf{b}_A \circ \mathsf{d}_A \circ (\oc A . \alpha) \circ (\mathsf{b}_A . X)\\
    & = [(\oc A . \alpha) \circ (\mathsf{b}_A . X)] + [(\mathsf{d}_A
    . A) \circ (\oc A . \sigma) \circ (\mathsf{b}_A
    . A) \circ (\oc A . \alpha) \circ (\mathsf{b}_A . X)] \\
    & = [(\oc A . \alpha) \circ (\mathsf{b}_A . X)] + ((\mathsf{d}_A
    . A) \circ (\oc A . \alpha . A) \circ (\mathsf{b}_A . X . A) \circ (\oc A . \sigma) \circ (\mathsf{b}_A . X)) \\
    & = [(\oc A . \alpha) + ((\mathsf{d}_A . A) \circ (\oc A . \alpha . A) \circ (\mathsf{b}_A . X . A)\circ (\oc A . \sigma_{AX}))] \circ (\mathsf{b}_A . X)\\
    &= (\oc^{X} \alpha \boxtimes \alpha) \circ (\mathsf{b}_A . X)
  \end{align*}
  using, in succession: Lemma~\ref{lem:11}(i); the preceding
  observation; distributivity of $+$ over $\circ$; and the definition of
  $\oc^{X} \alpha \boxtimes \alpha$. This proves we have a morphism of
  $X$-actions $\mathsf{b}_A \colon (\oc A, \oc^X \alpha) \rightarrow (\oc A,
  \oc^X \alpha) \boxtimes (A, \alpha)$ as desired.

  It remains only to prove that we have
  $\mathsf{d}_A \colon (\oc A, \oc^X \alpha) \boxtimes (A, \alpha)
  \rightarrow (\oc A, \oc^X \alpha)$. For this, note that on passing
  from $\C$ to $\C^\mathrm{op}$, we obtain another instance of the same
  structure, with the roles of $\mathsf{b}$ and $\mathsf{d}$ reversed,
  with Lemma~\ref{lem:11}(i) trading places with the interchange law,
  and with Lemma~\ref{lem:11}(ii) being self dual. Thus, the preceding
  argument in $\C^\mathrm{op}$ shows
  $\mathsf{d}_A \colon (\oc A, \oc^X \alpha) \boxtimes (A, \alpha)
  \rightarrow (\oc A, \oc^X \alpha)$ as desired.
\end{proof}

\subsection{Lifting the comonoid structure}
\label{sec:lift-comon-struct}

We now describe conditions under which the comonoid maps $\Delta_A$
and $\mathsf{e}_A$ of a differential modality lift from $\C$ to $\ca$.
Again, we isolate the equational properties of the coderiving
transformation which are necessary to do this.

\begin{Lemma}
  \label{lem:12}
  The coderiving transformation of a differential modality $\oc$
  satisfies the identities:
  \begin{enumerate}[(i)]
  \item $(\Delta_X \otimes X) \circ \mathsf{b}_X = (\oc
    X \otimes \mathsf{b}_X) \circ \Delta_X$;
  \item $(\oc X \otimes \mathsf{b}_X) \circ \Delta_X = (\oc X \otimes
    \sigma_{X, \oc X})\circ (\mathsf{b}_X \otimes \oc X) \circ \Delta_X$.
  \end{enumerate}
\end{Lemma}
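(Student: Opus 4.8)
The two identities in Lemma~\ref{lem:12} concern only the comonoid comultiplication $\Delta$ and the coderiving transformation $\mathsf{b}$, so the plan is to reduce everything to properties of $\Delta$ together with the product rule and the counit/coassociativity laws. Recall $\mathsf{b}_X = (\oc X \otimes \varepsilon_X) \circ \Delta_X$, so both sides of (i) and (ii) are expressed purely in terms of $\Delta_X$, $\varepsilon_X$, and the symmetry.

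First I would prove (ii), which is the easier of the two. Expanding, the left-hand side is $(\oc X \otimes \Delta_X) \circ (\oc X \otimes \varepsilon_X) \circ \Delta_X$ applied appropriately; more carefully, $(\oc X \otimes \mathsf{b}_X) \circ \Delta_X = (\oc X \otimes \oc X \otimes \varepsilon_X) \circ (\oc X \otimes \Delta_X) \circ \Delta_X$. By coassociativity of $\Delta_X$ this equals $(\oc X \otimes \oc X \otimes \varepsilon_X) \circ (\Delta_X \otimes \oc X) \circ \Delta_X$. Now cocommutativity of $\Delta_X$ lets me swap the two $\oc X$ factors produced by the inner $\Delta_X$ on the right; tracking the symmetry through $(\oc X \otimes \oc X \otimes \varepsilon_X)$ gives exactly $(\oc X \otimes \sigma_{X, \oc X}) \circ (\mathsf{b}_X \otimes \oc X) \circ \Delta_X$. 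So (ii) is just coassociativity plus cocommutativity of the comonoid structure on $\oc X$, with no use of the deriving transformation at all.

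For (i) I would similarly rewrite both sides in terms of $\Delta_X$: the left side is $(\oc X \otimes \oc X \otimes \varepsilon_X) \circ (\Delta_X \otimes \oc X) \circ \Delta_X$ (using $\Delta_X \otimes X$ precomposed with $\mathsf{b}_X = (\oc X \otimes \varepsilon_X)\Delta_X$, then coassociativity to move the $\Delta_X$ inside), while the right side is $(\oc X \otimes \oc X \otimes \varepsilon_X) \circ (\oc X \otimes \Delta_X) \circ \Delta_X$. These two composites agree precisely by coassociativity of $\Delta_X$ — so again (i) reduces to the comonoid axioms. It is worth double-checking the bookkeeping of which copy of $\oc X$ absorbs the $\varepsilon_X$, since naturality of $\Delta$ is not what is at play here; the statement is a pure consequence of the cocommutative comonoid structure on each $\oc X$, filtered through the definition \eqref{eq:35} of $\mathsf{b}_X$.

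The main obstacle is purely notational: keeping track of tensor-factor positions and symmetries when $\varepsilon_X$ collapses one of the three $\oc X$'s to $X$. There is no genuine mathematical difficulty — neither identity invokes the deriving transformation $\mathsf{d}$ or any differential axiom, only coassociativity and cocommutativity of $\Delta$, so in fact the lemma holds for any (cocommutative) coalgebra modality, exactly as with the earlier Lemma~\ref{lem:11}(i). I would present both verifications as short diagram chases, being careful to state at each step whether I am invoking coassociativity, cocommutativity, or naturality of the symmetry.
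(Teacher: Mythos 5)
Your proposal is correct and follows essentially the same route as the paper, which disposes of both identities as immediate consequences of the coassociativity and cocommutativity of $\Delta$ (together with naturality of the symmetry), after unfolding $\mathsf{b}_X = (\oc X \otimes \varepsilon_X)\circ\Delta_X$; your observation that neither identity uses the deriving transformation, so the lemma holds for any cocommutative coalgebra modality, is exactly right. The only slip is the passing mention of the product rule in your opening sentence, which your own argument correctly never uses.
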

\begin{proof}
  Immediate from the coassociativity and cocommutativity of $\Delta$;
  see also~\cite[Proposition~2.1, parts 3 and 4]{Cockett2019Integral} for a
  string-diagrammatic proof.
\end{proof}
\begin{Lemma}
  \label{lem:14}
  In the situation of Lemma~\ref{lem:15}, suppose given further
  natural transformations $\mathsf{e}_X \colon \oc X \rightarrow I$
  and $\Delta_X \colon \oc X \rightarrow \oc X \otimes \oc X$
  satisfying the constant and the product rule with respect to
  $\mathsf{d}$ and the identities of Lemma~\ref{lem:12}. For any
  $(A, \alpha) \in \ca$, the maps below are maps of commuting
  $X$-actions, so that $\mathsf{e}$ and $\Delta$ lift to corresponding
  structure on the symmetric monoidal category $\left(\ca, \boxtimes, (I,0) \right)$.
  \begin{enumerate}[(i)]
  \item $\mathsf{e}_A \colon \oc^X (A, \alpha) \rightarrow (I, 0)$;
  \item $\Delta_A \colon \oc^X(A, \alpha) \rightarrow \oc^X(A, \alpha) \boxtimes
    \oc^X(A, \alpha)$.
  \end{enumerate}
\end{Lemma}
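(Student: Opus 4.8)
The plan is to check the two morphism conditions of Definition~\ref{def:1} directly, using the hypothesised equations together with $k$-linearity of composition and the coherence axioms of a symmetric monoidal category; the assertion that $\mathsf{e}$ and $\Delta$ give ``corresponding structure'' on $\ca$ will then follow almost formally. Part (i) is immediate: unwinding Definition~\ref{def:1}, for $\mathsf{e}_A$ to be a map $\oc^X(A,\alpha) \to (I,0)$ is exactly the equation $\mathsf{e}_A \circ (\oc^X\alpha) = 0$, and since by~\eqref{eq:23} the map $\oc^X\alpha$ has $\mathsf{d}_A$ as its outermost factor, this follows at once from the constant rule $\mathsf{e}_A \circ \mathsf{d}_A = 0$. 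Nothing else is needed for (i).

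For (ii), the condition to verify is $\Delta_A \circ (\oc^X\alpha) = ((\oc^X\alpha) \boxtimes (\oc^X\alpha)) \circ (\Delta_A \otimes X)$, with the right-hand action read off from~\eqref{eq:20}. Starting from the left, I would substitute~\eqref{eq:23}, apply the product rule of Definition~\ref{def:19} to the leading composite $\Delta_A \circ \mathsf{d}_A$, slide the resulting $\Delta_A$ past $\mathsf{b}_A$ using Lemma~\ref{lem:12}(i), and then distribute composition over the ``$+$''. This yields two summands. The first simplifies, by naturality of the coherence maps alone, to $(\oc A \otimes (\oc^X\alpha)) \circ (\Delta_A \otimes X)$, which is exactly the ``$A \otimes \beta$'' half of the $\boxtimes$-action in~\eqref{eq:20}. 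The second summand still carries the symmetry produced by the product rule; to identify it with the ``$(\alpha \otimes B)\circ(A \otimes \sigma)$'' half of~\eqref{eq:20}, I rewrite $(\oc A \otimes \mathsf{b}_A)\circ\Delta_A$ using the cocommutativity identity Lemma~\ref{lem:12}(ii), whose net effect is to interchange the two legs emerging from $\Delta_A$, and then tidy up with the coherence isomorphisms. Adding the two summands recovers the desired right-hand side.

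Once (i) and (ii) are established, the final claim is essentially bookkeeping: the forgetful functor $U \colon (\ca, \boxtimes, (I,0)) \to (\C, \otimes, I)$ is strict monoidal and faithful, so every equation between composites of the structure maps that holds in $\C$ --- naturality, the constant and product rules, and the identities of Lemmas~\ref{lem:11} and~\ref{lem:12} --- is inherited by the lifted data; together with Lemma~\ref{lem:15} this exhibits $(\oc^X, \mathsf{b}, \mathsf{d}, \mathsf{e}, \Delta)$ on $\ca$ as the corresponding structure. I expect the one genuinely fiddly point to be the second summand in (ii): it is transparent on string diagrams --- both sides amount to ``apply $\Delta_A$, feed the $A$-output of $\mathsf{b}_A$ on one leg together with the external $X$ into $\alpha$, and push the result back through $\mathsf{d}_A$'' --- but rendering the equality as a chain of pasted naturality squares, with Lemma~\ref{lem:12}(ii) inserted at precisely the right place, takes some care. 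Everything else is routine.
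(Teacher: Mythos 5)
Your proposal is correct and follows essentially the same route as the paper's proof: part (i) is immediate from the constant rule, and for part (ii) the paper likewise applies the product rule to $\Delta_A \circ \mathsf{d}_A$, slides $\Delta_A$ past $\mathsf{b}_A$ via Lemma~\ref{lem:12}(i), distributes over the sum, identifies the first summand with the $(\oc A \otimes \oc^X\alpha)$ half of~\eqref{eq:20}, and uses Lemma~\ref{lem:12}(ii) plus symmetry coherence to rewrite the second summand as the other half.
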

\begin{proof}
  For (i), observe that we have
  \begin{equation*}
    \mathsf{e}_A \circ \oc^X \alpha = \mathsf{e}_A \circ \mathsf{d}_A \circ (\oc A \otimes \alpha) \circ (\mathsf{b}_A \otimes X)
  \end{equation*}
  which is the zero map by the constant rule, and so equal to the
  composite $0 \circ (\mathsf{e}_A \otimes X)$ as required. For (ii),
  we have that:
  \begin{align*}
    \Delta_A \circ \oc^X \alpha
    & = \Delta_A \circ \mathsf{d}_A \circ (\oc A . \alpha) \circ (\mathsf{b}_A . X)\\
    & = [(\oc A . \mathsf{d}_A) 
    \,+\, (\mathsf{d}_A . \oc A) \circ (\oc A . \sigma_{\oc A,A})] \circ (\Delta_A . \alpha) \circ (\mathsf{b}_A . X)
\\
    & = [(\oc A . \mathsf{d}_A) 
    \,+\, (\mathsf{d}_A . \oc A) \circ (\oc A . \sigma_{\oc A,A})] \circ (\oc A. \oc A . \alpha) \circ (\oc A.\mathsf{b}_A.X) \circ (\Delta_A . X)
  \end{align*}
  using the definition of $\oc^X \alpha$; the product rule; and Lemma~\ref{lem:12}(i).
  Concentrating on the second summand, we have 
  \begin{align*}
    & \ \ \ \ (\mathsf{d}_A . \oc A) \circ (\oc A . \sigma_{\oc A,A}) \circ (\oc A. \oc A . \alpha) \circ (\oc A.\mathsf{b}_A.X) \circ (\Delta_A . X) \\
    & = (\mathsf{d}_A . \oc A) \circ (\oc A . \sigma_{\oc A,A}) \circ (\oc A. \oc A . \alpha) \circ (\oc A . \sigma_{A, \oc A}. X)\circ (\mathsf{b}_A.\oc A.X) \circ (\Delta_A . X)\\
    & = (\mathsf{d}_A . \oc A) \circ (\oc A . \alpha . \oc A) \circ (\mathsf{b}_A . X . \oc A)\circ (\oc A . \sigma_{\oc A, X}) \circ  (\Delta_A . X)
  \end{align*}
  using Lemma~\ref{lem:12}(ii) and axioms for the symmetry $\sigma$.
  Thus we have
  \begin{align*}
    & \ \ \ \ \ \Delta_A \circ \oc^X \alpha\\
    & = [(\oc A . \mathsf{d}_A) 
    \,+\, (\mathsf{d}_A . \oc A) \circ (\oc A . \sigma_{\oc A,A})] \circ (\oc A. \oc A . \alpha) \circ (\oc A.\mathsf{b}_A.X) \circ (\Delta_A . X)\\
    & = [(\oc A . \mathsf{d}_A) \!\circ\! (\oc A. \oc A . \alpha) \!\circ\! (\oc A.\mathsf{b}_A.X) 
    + (\mathsf{d}_A . \oc A) \!\circ\! (\oc A . \sigma_{\oc A,A}) \!\circ\! (\oc A. \oc A . \alpha) \!\circ\! (\oc A.\mathsf{b}_A.X)] \circ (\Delta_A . X)\\
    & = [(\oc A . \mathsf{d}_A) \!\circ\! (\oc A. \oc A . \alpha) \!\circ\! (\oc A.\mathsf{b}_A.X) 
    + (\mathsf{d}_A . \oc A) \!\circ\! (\oc A . \alpha . \oc A) \!\circ\! (\mathsf{b}_A . X . \oc A)\!\circ\! (\oc A . \sigma_{\oc A, X})] \circ (\Delta_A . X)\\
    & = (\oc^X \alpha \boxtimes \oc^X \alpha) \circ (\Delta_A . X)
  \end{align*}
  as desired, using the first calculation, distributivity of $+$ over
  $\circ$, the second calculation, and definition of
  $\oc^X \alpha \boxtimes \oc^X \alpha$.
\end{proof}

\subsection{Lifting the comonad structure}
\label{sec:lift-comon-struct-1}

Finally, we determine what is needed to lift the comonad structure of
a differential modality from $\C$ to $\ca$.

\begin{Lemma}
  \label{lem:16}
  The coderiving transformation of a differential modality $\oc$
  satisfies the identities:
  \begin{enumerate}[(i)]
  \item $(\mathsf{e}_X \otimes X) \circ \mathsf{b}_X = \varepsilon_X$;
  \item $(\delta_X \otimes \oc X) \circ \Delta_X = \mathsf{b}_{\oc
    X} \circ \delta_X$.
  \end{enumerate}
\end{Lemma}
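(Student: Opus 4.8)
The plan is to verify each of (i) and (ii) directly from the definition~\eqref{eq:35} of the coderiving transformation, using only the comonoid axioms and comonad axioms that are part of a coalgebra modality; in particular, neither identity involves the deriving transformation $\mathsf{d}$, so both hold already for an arbitrary coalgebra modality.

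For (i), I would unfold $\mathsf{b}_X = (\oc X \otimes \varepsilon_X) \circ \Delta_X$ and use functoriality of $\otimes$ to rewrite
\begin{equation*}
  (\mathsf{e}_X \otimes X) \circ \mathsf{b}_X = (\mathsf{e}_X \otimes \varepsilon_X) \circ \Delta_X = (I \otimes \varepsilon_X) \circ (\mathsf{e}_X \otimes \oc X) \circ \Delta_X\rlap{ .}
\end{equation*}
The left counit law for the cocommutative comonoid $(\oc X, \Delta_X, \mathsf{e}_X)$ says that $(\mathsf{e}_X \otimes \oc X) \circ \Delta_X$ is the inverse left unitor $\oc X \rightarrow I \otimes \oc X$; composing with $I \otimes \varepsilon_X$ and using naturality and coherence of the unitors identifies the result with $\varepsilon_X$, up to the canonical isomorphism $I \otimes X \cong X$, as claimed.

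For (ii), the key input is that, in a coalgebra modality, $\delta_X$ is a comonoid homomorphism $(\oc X, \Delta_X, \mathsf{e}_X) \rightarrow (\oc\oc X, \Delta_{\oc X}, \mathsf{e}_{\oc X})$, so in particular $\Delta_{\oc X} \circ \delta_X = (\delta_X \otimes \delta_X) \circ \Delta_X$. Unfolding $\mathsf{b}_{\oc X} = (\oc\oc X \otimes \varepsilon_{\oc X}) \circ \Delta_{\oc X}$ — where this $\varepsilon_{\oc X}$ and $\Delta_{\oc X}$ are the counit and comultiplication at the object $\oc X$ — I would then compute
\begin{equation*}
  \mathsf{b}_{\oc X} \circ \delta_X = (\oc\oc X \otimes \varepsilon_{\oc X}) \circ \Delta_{\oc X} \circ \delta_X = (\oc\oc X \otimes \varepsilon_{\oc X}) \circ (\delta_X \otimes \delta_X) \circ \Delta_X = \bigl(\delta_X \otimes (\varepsilon_{\oc X} \circ \delta_X)\bigr) \circ \Delta_X\rlap{ ,}
\end{equation*}
and finish with the comonad triangle identity $\varepsilon_{\oc X} \circ \delta_X = \mathrm{id}_{\oc X}$, which turns the right-hand side into $(\delta_X \otimes \oc X) \circ \Delta_X$, i.e.\ the left-hand side of (ii).

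There is no real obstacle: both parts are short diagram chases. The only point requiring a little care, in (ii), is to keep straight which comonoid structure $\mathsf{b}_{\oc X}$ refers to — namely $\oc$ applied to the object $\oc X$, rather than $\oc\oc$ applied to $X$ — so that it is precisely the comonoid-homomorphism property of $\delta$, and not, say, naturality of $\Delta$, that is being invoked. (As with the earlier coderiving identities of Lemmas~\ref{lem:11} and~\ref{lem:12}, one may alternatively read off both equalities from a string-diagram calculation in the style of~\cite{Cockett2019Integral}.)
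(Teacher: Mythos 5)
Your proof is correct and follows exactly the paper's route: (i) from counitality of $\Delta$, and (ii) from the comonoid-homomorphism property of $\delta_X$ together with the counit law $\varepsilon_{\oc X} \circ \delta_X = \mathrm{id}_{\oc X}$. The paper merely cites these facts without writing out the diagram chase, so your version just makes the same argument explicit.
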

\begin{proof}
  (i) follows from counitality of $\Delta$; (ii) from the fact that
  $\delta_X$ is a comonoid morphism and the comonad unit law
  $\varepsilon \oc \circ \delta = 1$.
  See~\cite[Proposition~2.1, part 1]{Cockett2019Integral} for a
  string-diagrammatic proof of (i); the corresponding proof for (ii)
  is not given directly in \emph{op.~cit.}, but is contained in the
  proof of part 7 of Proposition~2.1.
\end{proof}

\begin{Lemma}
  \label{lem:9}
  In the situation of Lemma~\ref{lem:14}, suppose given further
  natural transformations $\varepsilon_X \colon \oc X \rightarrow \oc
  \oc X$
  and $\delta_X \colon \oc X \rightarrow \oc \oc X$.
  \begin{enumerate}[(i),itemsep=0.25\baselineskip]
  \item Assuming $\varepsilon$ satisfies Lemma~\ref{lem:16}(i), each
    $\varepsilon_A$ 
    lifts to a map of commuting $X$-actions $\varepsilon_A \colon
    !^X(A, \alpha) \rightarrow (A, \alpha)$;
  \item Assuming $\delta$ satisfies Lemma~\ref{lem:16}(ii) and the chain rule
    with respect to $\mathsf{d}$, each $\delta_A$ lifts to a map
    of commuting $X$-actions
    $\delta_A \colon !^X(A, \alpha) \rightarrow !^X !^X(A, \alpha)$.
  \end{enumerate}
\end{Lemma}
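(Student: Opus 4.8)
The plan is to fix a commuting $X$-action $(A,\alpha)$ and check, in each of (i) and (ii), that the relevant naturality square of $X$-actions commutes; that the lifted families assemble into natural transformations on $\ca$ is then automatic, since $U\colon\ca\to\C$ is faithful and $\varepsilon,\delta$ are natural in $\C$.

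For (i) I would avoid computation entirely. Lemma~\ref{lem:16}(i) says precisely that $\varepsilon_A$ is the composite $\oc A \xrightarrow{\mathsf{b}_A} \oc A\otimes A \xrightarrow{\mathsf{e}_A\otimes A} I\otimes A \cong A$. Reading this in $\ca$: the first arrow is a map of commuting $X$-actions $\oc^X(A,\alpha)\to\oc^X(A,\alpha)\boxtimes(A,\alpha)$ by Lemma~\ref{lem:15}(iii); tensoring the map $\mathsf{e}_A\colon\oc^X(A,\alpha)\to(I,0)$ of Lemma~\ref{lem:14}(i) with the identity on $(A,\alpha)$ gives a map $\oc^X(A,\alpha)\boxtimes(A,\alpha)\to(I,0)\boxtimes(A,\alpha)$; and since $(I,0)$ is the unit of $\boxtimes$ with coherence isos inherited from $\C$, the left unitor $(I,0)\boxtimes(A,\alpha)\cong(A,\alpha)$ is a map of $X$-actions. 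So $\varepsilon_A$, a composite of maps of commuting $X$-actions, is one. (If a direct check is preferred: expand $\oc^X\alpha$, substitute the same factorisation of $\varepsilon_A$, apply Lemma~\ref{lem:11}(ii), and observe the correction term dies by the constant rule $\mathsf{e}_A\circ\mathsf{d}_A=0$, available in the situation of Lemma~\ref{lem:14}.)

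For (ii) I would argue directly, since there seems to be no comparably slick route. Unwinding~\eqref{eq:23} twice, the action on $\oc^X\oc^X(A,\alpha)$ is $\oc^X(\oc^X\alpha)=\mathsf{d}_{\oc A}\circ(\oc\oc A\otimes\oc^X\alpha)\circ(\mathsf{b}_{\oc A}\otimes X)$, so the square to verify is $\delta_A\circ\oc^X\alpha=\oc^X(\oc^X\alpha)\circ(\delta_A\otimes X)$. On the right-hand side I would rewrite $\mathsf{b}_{\oc A}\circ\delta_A$ as $(\delta_A\otimes\oc A)\circ\Delta_A$ by Lemma~\ref{lem:16}(ii), commute $\delta_A$ past $\oc^X\alpha$ using bifunctoriality of $\otimes$, and expand the remaining $\oc^X\alpha$, landing at $\mathsf{d}_{\oc A}\circ(\delta_A\otimes\mathsf{d}_A)\circ(\oc A\otimes\oc A\otimes\alpha)\circ(\oc A\otimes\mathsf{b}_A\otimes X)\circ(\Delta_A\otimes X)$. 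On the left-hand side I would expand $\oc^X\alpha$, apply the chain rule to $\delta_A\circ\mathsf{d}_A$, then replace $(\Delta_A\otimes A)\circ\mathsf{b}_A$ by $(\oc A\otimes\mathsf{b}_A)\circ\Delta_A$ via Lemma~\ref{lem:12}(i); the two expressions then coincide on the nose after a couple more applications of bifunctoriality.

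The closing assertion of the lemma --- that the whole package $(\oc,\mathsf{b},\mathsf{d},\mathsf{e},\Delta,\varepsilon,\delta)$ lifts to $(\oc^X,\mathsf{b},\mathsf{d},\mathsf{e},\Delta,\varepsilon,\delta)$ on $\left(\ca,\boxtimes,(I,0)\right)$ --- then needs nothing further: parts (i) and (ii), together with Lemmas~\ref{lem:15} and~\ref{lem:14}, show that every structure map of $\oc$ on $\C$ underlies a morphism of commuting $X$-actions of the appropriate $\boxtimes$-type, and all the equations tying these together are inherited from $\C$ since $U\colon\ca\to\C$ is faithful and strict monoidal. The only real friction I anticipate is the bookkeeping in (ii): tracking which tensor factor each arrow acts on when invoking bifunctoriality, and applying the chain rule and Lemma~\ref{lem:12}(i) at exactly the right spot. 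There is no conceptual obstacle --- Lemmas~\ref{lem:11},~\ref{lem:12} and~\ref{lem:16} were extracted in precisely the form this argument consumes.
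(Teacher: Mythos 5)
Your proposal is correct and follows essentially the same route as the paper: part (i) is the same observation that Lemma~\ref{lem:16}(i) expresses $\varepsilon_A$ as a composite of maps already known to lift, and part (ii) is the paper's diagram chase (rewriting $\mathsf{b}_{\oc A}\circ\delta_A$ via Lemma~\ref{lem:16}(ii), using Lemma~\ref{lem:12}(i) and the chain rule, with the rest bifunctoriality) written out equationally rather than as a single commuting diagram. Both sides of your computation do meet at $\mathsf{d}_{\oc A}\circ(\delta_A\otimes\mathsf{d}_A)\circ(\oc A\otimes\oc A\otimes\alpha)\circ(\oc A\otimes\mathsf{b}_A\otimes X)\circ(\Delta_A\otimes X)$, so the argument is complete.
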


\begin{proof}
  For (i), $\varepsilon_A$ will lift since Lemma~\ref{lem:16}(i)
  defines it in terms of maps which are already known to lift. For
  (ii), we consider the diagram:
  \begin{equation*}
    \cd{
      &
      \oc A . X
       \ar[r]^-{\mathsf{b}_A . 1} \ar[dl]_-{\delta_A . 1} \ar[d]|-{\Delta_A . 1} \ar@{}[dr]|-{\text{(2)}}&
      \oc A.A.X
       \ar[r]^-{1.\alpha} \ar[d]|-{\Delta_A.1.1} &
      \oc A . A
       \ar[r]^-{\mathsf{d}_A} \ar[d]_-{\Delta_A . 1} \ar@{}[dr]|-{\text{(3)}}&
      \oc A
      \ar[dr]^-{\delta_A} \\
      \oc \oc A . X
      \ar[dr]_-{\mathsf{b}_{\oc A} . 1} \ar@{}[r]|-{\text{(1)}}&
      \oc A . \oc A . X
      \ar[d]|-{\delta_A . 1 . 1} \ar[r]^-{1.\mathsf{b}_A.1}&
      \oc A . \oc A . A . X
      \ar[r]^-{1.1.\alpha} \ar[d]|-{\delta_A.1.1.1} &
      \oc A . \oc A . A
      \ar[r]^-{\delta_A . \mathsf{d}_A} \ar[dr]^-{\delta_A . 1 . 1} &
      \oc \oc A . \oc A \ar[r]^-{\mathsf{d}_{\oc A}} &
      \oc \oc A \rlap{ .}\\
      &
      \oc \oc A . \oc A . X
      \ar[r]^-{1.\mathsf{b}_A . 1} &
      \oc \oc A . \oc A . A . X
      \ar[rr]^-{1.1.\alpha} &&
      \oc \oc A. \oc A. A
      \ar[ur]_-{1.\mathsf{d}_A}
    }
  \end{equation*}
  The upper and lower composites are the maps
  $\delta_A \circ \oc^X \alpha$ and
  $\oc^X \oc^X \alpha \circ (\delta_A \otimes X)$ which are to be
  shown equal. The regions (1), (2) and (3) commute by, respectively,
  Lemma~\ref{lem:16}(i), Lemma~\ref{lem:12}(i), and the chain rule,
  and all other regions commute by bifunctoriality of $\otimes$.
\end{proof}

\subsection{Putting it all together}
\label{sec:putting-it-all}

From the above lemmas in this section, we have:
\begin{Prop}
  \label{prop:5}
  If $\oc$ is a differential modality on a $k$-linear symmetric
  monoidal category $(\C, \otimes, I)$, then it lifts to a differential modality
  $\oc^X$ on $\left(\ca, \boxtimes, (I,0) \right)$, where the functor $\oc^X$ is defined
  as in Lemma~\ref{lem:15}, and the structural natural transformations
  are lifts of those of $\oc$.
\end{Prop}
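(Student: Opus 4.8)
The plan is to assemble the lifted structure supplied by Lemmas~\ref{lem:15}, \ref{lem:14} and~\ref{lem:9}, and then observe that every axiom transfers from $\C$ to $\ca$ because the forgetful functor $U \colon \ca \to \C$ is faithful and strict symmetric monoidal, preserving the $k$-linear structure on hom-sets.

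First I would pin down the ambient structure on $\ca$. Since $\C$ is $k$-linear and both $\circ$ and $\otimes$ are $k$-bilinear, each hom-set $\ca\bigl((A,\alpha),(B,\beta)\bigr)$ is a sub-$k$-module of $\C(A,B)$ (closure under sums and scalars is immediate), so $\ca$ is again $k$-linear in such a way that $U$ preserves sums and zeros; this is what will let us transfer the axioms that mention $+$ and $0$. By Definition~\ref{def:3}, the symmetric monoidal structure $(\ca,\boxtimes,(I,0))$ has unit object, coherence and symmetry constraints inherited from $\C$, and sends $f\boxtimes g$ to $Uf\otimes Ug$ on underlying maps; hence $U$ is strict symmetric monoidal. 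As $U$ is the identity on morphisms, it is faithful. Consequently, any equation between parallel morphisms of commuting $X$-actions---even one built from composites, $\boxtimes$-tensors of maps, coherence isomorphisms, sums and zeros---holds precisely when the corresponding equation between underlying maps holds in $\C$.

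Next I would collect the structure maps of $\oc^X$: the endofunctor and its action on morphisms come from Lemma~\ref{lem:15}(i)--(ii), with functoriality inherited from $\oc$ via faithfulness of $U$; the transformations $\mathsf{b}$ and $\mathsf{d}$ lift by Lemma~\ref{lem:15}(iii); the comonoid maps $\mathsf{e}$ and $\Delta$ by Lemma~\ref{lem:14}; and the comonad maps $\varepsilon$ and $\delta$ by Lemma~\ref{lem:9}. The hypotheses of these three lemmas are exactly the differential-modality axioms together with Lemmas~\ref{lem:11}, \ref{lem:12} and~\ref{lem:16}, and so are available. In particular, the deriving transformation of $\oc^X$ is the lift of $\mathsf{d}$ itself, read off from Lemma~\ref{lem:15}(iii) as a map $\oc^X(A,\alpha)\boxtimes(A,\alpha)\to\oc^X(A,\alpha)$. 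Naturality of each lifted transformation in $(A,\alpha)$ follows because $U$ carries its naturality square in $\ca$ to the (commuting) naturality square of the original transformation in $\C$. Finally, being a comonad, being a cocommutative coalgebra modality, and carrying a deriving transformation obeying the five axioms are each expressed by equations between composites, $\boxtimes$-tensors and coherence cells of these morphisms; by the previous paragraph every such equation in $\ca$ reduces to the corresponding one in $\C$, which holds since $\oc$ is a differential modality. (For example, the product rule for $\oc^X$ at $(A,\alpha)$ is an equality of two maps $\oc^X(A,\alpha)\boxtimes(A,\alpha)\to\oc^X(A,\alpha)\boxtimes\oc^X(A,\alpha)$, which $U$ sends back to the product rule for $\oc$ at $A$.) The genuinely substantial work---showing that the twisted action $\oc^X\alpha$ is commuting and that $\mathsf{b},\mathsf{d},\mathsf{e},\Delta,\varepsilon,\delta$ all restrict to maps of such actions---has already been carried out in Lemmas~\ref{lem:15}, \ref{lem:14} and~\ref{lem:9}; so the only potential snag remaining is bookkeeping, namely checking that the deriving transformation is correctly identified as $\mathsf{d}$ relative to $\boxtimes$ and that the monoidal structure appearing in each axiom is the transported one of Definition~\ref{def:3}.
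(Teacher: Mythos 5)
Your proposal is correct and follows essentially the same route as the paper, which states Proposition~\ref{prop:5} as an immediate consequence of Lemmas~\ref{lem:15}, \ref{lem:14} and~\ref{lem:9} without further argument. Your additional observations---that the hom-sets of $\ca$ are sub-$k$-modules of those of $\C$, that $U$ is faithful and strict symmetric monoidal, and that every axiom is therefore an equation of underlying maps already known to hold in $\C$---are exactly the (routine) justification the paper leaves implicit.
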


It would be natural to think that there is an extension of this
result: if $\oc$ is a \emph{monoidal} differential modality on $\C$,
then $\oc^X$ is a monoidal differential modality on $\ca$. However,
this need not be the case:
\begin{Lemma}
  \label{lem:1}
  Let $\oc$ be a monoidal differential modality on a $k$-linear
  symmetric monoidal $(\C, \otimes, I)$. The lifted differential modality
  $\oc^X$ on $\ca$ need not be monoidal.
\end{Lemma}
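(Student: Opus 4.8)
Since the statement claims only that monoidality can fail, the plan is to argue by exhibiting a counterexample: first to reduce, for \emph{any} choice of data, the monoidality of $\oc^X$ to a single verifiable condition, and then to produce concrete data making that condition fail.

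For the reduction, recall that by Proposition~\ref{prop:5} the lifted $\oc^X$ is a differential modality on $\bigl(\ca, \boxtimes, (I,0)\bigr)$ whose coalgebra modality structure maps are literally the $\Delta_A$, $\mathsf{e}_A$ of $\oc$, and whose comonad structure maps are those of $\oc$. The forgetful functor $U \colon \bigl(\ca, \boxtimes, (I,0)\bigr) \rightarrow (\C, \otimes, I)$ is strict monoidal and faithful, and carries the comonad and coalgebra modality structure of $\oc^X$ to the original structure of $\oc$. Hence, if $\oc^X$ carried any monoidal coalgebra modality structure $(\bar m_I, \bar m_\otimes)$, then $(U\bar m_I, U\bar m_\otimes)$ would be a monoidal coalgebra modality structure on $\oc$ itself; by Proposition~\ref{prop:9}(ii) the latter is unique, so $U\bar m_I = m_I$ and $U\bar m_\otimes = m_\otimes$, and then, by faithfulness of $U$, $\bar m_I$ and $\bar m_\otimes$ are the \emph{unique} lifts of $m_I$ and $m_\otimes$ to morphisms of commuting $X$-actions. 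One checks at once that $m_I$ always lifts: since $\oc^X 0$ is the zero map (the zero morphism is absorbed by $\otimes$), we have $\oc^X(I,0) = (\oc I, 0)$, and $m_I \colon (I,0) \rightarrow (\oc I,0)$ is trivially a morphism of $X$-actions. So $\oc^X$ is monoidal if and only if, for all commuting $X$-actions $(A,\alpha)$ and $(B,\beta)$, the map $m_\otimes \colon \oc A \otimes \oc B \rightarrow \oc(A\otimes B)$ underlies a morphism of commuting $X$-actions $\oc^X(A,\alpha) \boxtimes \oc^X(B,\beta) \rightarrow \oc^X\bigl((A,\alpha) \boxtimes (B,\beta)\bigr)$.

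Next I would write this condition out. Using Definition~\ref{def:3} for the actions on $\oc A \otimes \oc B$ and on $A \otimes B$, and~\eqref{eq:23} for $\oc^X$ of an action, the right-hand composite $m_\otimes \circ \bigl((\oc^X\alpha) \boxtimes (\oc^X\beta)\bigr)$ marks and acts on either an element of $A$ drawn from the $\oc A$-factor, or an element of $B$ drawn from the $\oc B$-factor; whereas the left-hand composite begins with $\mathsf{b}_{A\otimes B} \circ m_\otimes$, which, since $m_\otimes$ is a comonoid homomorphism and $\varepsilon_{A\otimes B}\circ m_\otimes = \varepsilon_A \otimes \varepsilon_B$, marks a \emph{single pair} of $A \otimes B$ whose two coordinates are produced, correlatedly, from the two tensor factors. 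The equality of the two composites is not a formal consequence of the differential-modality and monoidal-comonad axioms: the Leibniz-type sum defining $\boxtimes$ simply does not commute past $m_\otimes$ in the way monoidality would demand.

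The remaining step — and the only genuinely non-formal one, hence the hard part — is to instantiate this and verify that the two composites really do differ. I would take $\C$ to be a standard model bearing a monoidal differential modality, for instance $\kvec$ with the cofree coalgebra modality $\oc^C$ (a monoidal differential modality by Examples~\ref{ex:6} and~\ref{ex:3}), or the relational model $\REL$ with the finite-multiset modality; and I would choose $X$ together with commuting $X$-actions $(A,\alpha)$, $(B,\beta)$ conservatively — taking $X = I$, so that $\ca$ is just the category of objects equipped with an endomorphism, keeps the bookkeeping light. The subtlety to watch is that the most degenerate choices can accidentally succeed (e.g.\ in $\REL$ with $X = A = B = I$ and $\alpha = \beta$ the identity or the zero endomorphism, both composites agree), so one must allow enough room — say $A = B$ a two-element set with $\alpha = \beta$ a non-zero, non-invertible relation, or the analogous choice in $\kvec$ — and then display an element (respectively, a matrix entry) lying in one composite but not the other. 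Choosing the example so that the composites actually separate, and then pushing both sides through the explicit formulas for $\mathsf{b}$, $\mathsf{d}$, $m_\otimes$ and the $\boxtimes$-action, is the substance of the argument; everything else is the formal reduction above.
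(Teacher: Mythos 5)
Your formal reduction is sound: by Proposition~\ref{prop:9}(ii) and the faithfulness of the strict monoidal forgetful functor $\ca \rightarrow \C$, any monoidal coalgebra modality structure on $\oc^X$ must consist of the unique lifts of $m_I$ and $m_\otimes$ to maps of commuting $X$-actions, and $m_I$ does always lift because $\oc^X(I,0)$ carries the zero action. This is a legitimate cousin of the paper's own reduction, which instead invokes Proposition~\ref{prop:10} to reduce to the lifting of the bialgebra unit $\mathsf{u}_A \colon I \rightarrow \oc A$. The genuine gap is that you never produce the counterexample. For a statement whose entire content is the existence of one, deferring ``the substance of the argument'' to an unperformed computation, while conceding that ``the most degenerate choices can accidentally succeed,'' leaves nothing proved; the heuristic that the Leibniz sum defining $\boxtimes$ ``does not commute past $m_\otimes$'' is an intuition, not an argument.

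Worse, the venues you propose to search provably contain no counterexample of the kind you describe. In $\kvec$, in $\REL$, or in any base with finite biproducts, $m_\otimes$ is determined by the storage isomorphisms via~\eqref{eq:42} as the composite $\oc(\varepsilon \otimes \varepsilon) \circ \oc\chi_{AB} \circ \delta \circ \chi_{AB}^{-1}$, and every factor underlies a map of commuting $X$-actions: $\chi_{AB} = (\oc\pi_0 \otimes \oc\pi_1)\circ\Delta$ lifts by Lemmas~\ref{lem:14} and~\ref{lem:15}(ii) (and the inverse of an invertible action map is again an action map, the biproducts of $\ca$ being created by the forgetful functor), $\delta$ lifts by Lemma~\ref{lem:9}(ii), and the outer factors lift by Lemma~\ref{lem:15}(ii). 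So $m_\otimes$ always lifts in those settings and your search would terminate empty-handed. The paper takes a different route, working in $\cat{Vect}_{\mathbb{R}}^{\mathrm{op}}$ with the symmetric-algebra modality at $X = \mathbb{R}$ and testing whether $\mathsf{u}_{\mathbb{R}} = \mathrm{ev}_0$ commutes with the lifted action on $\mathbb{R}[x]$; but note that the same biproduct argument applies to $\mathsf{u}_A = \oc(0)\circ\chi_0^{-1}$, so that locating a genuine obstruction is substantially more delicate than either your sketch or a quick calculation suggests. Whatever example one settles on, the action $\oc^X\alpha$ of~\eqref{eq:23} must be computed exactly (in $\cat{Vect}_{\mathbb{R}}^{\mathrm{op}}$ at $(\mathbb{R},\mathrm{id})$ one must track carefully whether $\mathsf{b}$ contributes a degree-raising multiplication or a mere scalar), and the two sides of the relevant square must be exhibited as unequal; without that, the lemma is not established.
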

\begin{proof}
  Consider the case where $\C = \cat{Vect}_\mathbb{R}^\mathrm{op}$,
  the opposite of the category of real vector spaces, and where $\oc$
  is the free symmetric algebra construction; by Example~\ref{ex:6}
  this equips $\cat{Vect}_\mathbb{R}^\mathrm{op}$ with the structure
  of a monoidal differential modality. We show that the lifting of
  $\oc$ to $\ca$, where $X = \mathbb{R}$ is the one-dimensional real
  vector space, is not monoidal. By Proposition~\ref{prop:10}, it
  suffices to show that the maps
  $\mathsf{u}_A \colon I \rightarrow \oc A$ do not lift to maps
  $\mathsf{u}_{(A, \alpha)} \colon (I,0) \rightarrow \oc^X(A,
  \alpha)$.

  Now, when $X = \mathbb{R}$, much as in Example~\ref{ex:1}, any
  $X$-action in $\cat{Vect}^\mathrm{op}$ is automatically commutative,
  and $\ca$ is isomorphic to $(\cat{End}_\mathbb{R})^\mathrm{op}$,
  where $\cat{End}_\mathbb{R}$ is the category of real vector spaces
  $V$ equipped with a linear endomorphism.
  Consider, in particular, the object
  $(\mathbb{R}, \mathrm{id}) \in \ca$. In this case:
  \begin{itemize}
  \item $\oc \mathbb{R}$
    is the vector space of single-variable polynomials $\mathbb{R}[x]$;
  \item $\mathsf{u}_\mathbb{R} \colon I \rightarrow \oc \mathbb{R}$ is
    the algebra map $\mathbb{R} \leftarrow \mathbb{R}[x]$ given by
    evaluation at $0$;
  \item the object $\oc^\mathbb{R}(\mathbb{R}, \mathrm{id})$ equips
    $\mathbb{R}[x]$ with the linear endomorphism $\oc^\mathbb{R} f_k$
    defined on monomials by:
    \begin{equation*}
      \cd[@R-2em@C+0.5em]{
        \mathbb{R}[x] \ar@{<-}[r]^-{\mathsf{b}_\mathbb{R}} &
        \mathbb{R}[x] \otimes \mathbb{R} \ar@{<-}[r]^-{\mathsf{d}_\mathbb{R}} &
        \mathbb{R}[x] \\
        nx^{n-1} \ar@{<-|}[r]&
        x^{n-1} \otimes n \ar@{<-|}[r]&
        x^n\rlap{ ,}
      }
    \end{equation*}
    i.e., the usual differential operator $\partial \colon \mathbb{R}[x]
    \rightarrow \mathbb{R}[x]$.  
  \end{itemize}
  Thus, to say that $\mathsf{u}_\mathbb{R}$ lifts to a map $(I, 0)
  \rightarrow \oc^\mathbb{R}(\mathbb{R}, \mathrm{id})$ is to say that
  the following square of linear maps commutes:
  \begin{equation*}
    \cd{
      \mathbb{R} & \mathbb{R}[x] \ar[l]_-{\mathrm{ev}_0} \\
      \mathbb{R} \ar[u]^-{0} & \mathbb{R}[x] \ar[u]^-{\partial} \ar[l]_-{\mathrm{ev}_0}
    }
  \end{equation*}
  which is false, since the two composites take $x \in
  \mathbb{R}[x]$ to $0$ and $1$ respectively.
\end{proof}

\section{Differential modality axioms via commuting actions}
\label{sec:diff-modal-axioms}

In this section, we describe a second relationship between
differential modalities and commuting actions, by showing that the
axioms for a differential modality may be re-expressed in terms of
certain morphisms being maps of commuting actions. Again, this will be
important for our main theorems, and so again we will be somewhat
granular about the hypotheses needed to establish the results of this
section.

\subsection{Interchange, constant and product rules}
\label{sec:interchange-rule}

We start with two easy lemmas.

\begin{Lemma}
  \label{lem:8}
  Let $\oc$ be an endofunctor on a $k$-linear symmetric
  monoidal categoryy $(\C, \otimes, I)$. Suppose we are given maps $\mathsf{d}_X \colon
  \oc X \otimes X \rightarrow \oc X$ for each $X \in \C$. 
 The maps $\mathsf{d}_X$ satisfy the interchange axiom if, and
    only if, for each $X \in \C$ the pair
    $(\oc X, \mathsf{d}_X \colon \oc X \otimes X \rightarrow \oc X)$
    is a commuting $X$-action.
\end{Lemma}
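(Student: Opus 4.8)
The plan is simply to unwind both sides of the biconditional into equations of morphisms and observe that they coincide; no machinery beyond comparing two diagrams is involved.

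First I would record the interchange axiom of Definition~\ref{def:19} as the single equation
\[
\mathsf{d}_X \circ (\mathsf{d}_X \otimes X) \circ (\oc X \otimes \sigma_{X,X}) \;=\; \mathsf{d}_X \circ (\mathsf{d}_X \otimes X) \colon \oc X \otimes X \otimes X \longrightarrow \oc X,
\]
obtained by reading the interchange square across the top and down the right against down the left and along the bottom. Then I would specialise Definition~\ref{def:1} to the pair $(\oc X, \mathsf{d}_X)$: taking $A \defeq \oc X$ and $\alpha \defeq \mathsf{d}_X$ in~\eqref{eq:7}, the assertion that $(\oc X, \mathsf{d}_X)$ is a commuting $X$-action is exactly
\[
\mathsf{d}_X \circ (\mathsf{d}_X \otimes X) \circ (\oc X \otimes \sigma_{X,X}) \;=\; \mathsf{d}_X \circ (\mathsf{d}_X \otimes X).
\]
These are literally the same equation, so for each fixed $X$ the interchange axiom for $\mathsf{d}_X$ holds if and only if $(\oc X, \mathsf{d}_X)$ is a commuting $X$-action; quantifying over $X \in \C$ then gives the lemma. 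In a non-strict $\C$ one inserts the evident associativity constraints when identifying $(\oc X \otimes X) \otimes X$ with $\oc X \otimes (X \otimes X)$; by coherence this is harmless.

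There is no genuine obstacle here: the statement is pointwise in $X$ and mentions only the single map $\mathsf{d}_X$ together with the symmetry $\sigma_{X,X}$, so neither the other four differential-modality axioms, nor the $k$-linear structure, nor naturality of $\mathsf{d}$, nor functoriality of $\oc$ enter the argument. Accordingly I would present this as a two-line proof that merely matches the interchange square of Definition~\ref{def:19} with the square~\eqref{eq:7}.
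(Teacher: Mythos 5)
Your proof is correct and is exactly the paper's argument: the paper's proof of this lemma reads, in full, ``Immediate from the definitions,'' and your unwinding of the interchange square against the square~\eqref{eq:7} with $A = \oc X$ and $\alpha = \mathsf{d}_X$ is precisely what that remark elides. Your observation that neither the remaining differential-modality axioms nor the $k$-linear structure is needed is also consistent with how the paper states the lemma (for a bare endofunctor with a family of maps $\mathsf{d}_X$).
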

\begin{proof}
  Immediate from the definitions.
\end{proof}

For the second lemma, recall that $\boxtimes$ is the lifted symmetric
monoidal structure on commuting $X$-actions of Definition~\ref{def:3}.
\begin{Lemma}
  \label{lem:3}
  Suppose the equivalent conditions of~Lemma \ref{lem:8} are
  satisfied, and suppose given maps
  $\mathsf{e}_X \colon \oc X \rightarrow I$ and
  $\mathsf{\Delta}_X \colon \oc X \rightarrow \oc X \otimes \oc X$
  endowing each $X$ with cocommutative comonoid structure. These
  become maps of commuting $X$-actions
  \begin{equation*}
    \mathsf{e}_X \colon (\oc X, \mathsf{d}_X) \rightarrow (I, 0) \quad \text{respectively} \quad 
    \Delta_X \colon (\oc X, \mathsf{d}_X) \rightarrow (\oc X, \mathsf{d}_X) \boxtimes (\oc X, \mathsf{d}_X)
  \end{equation*}
  if, and only if, $\mathsf{d}$ satisfies the constant rule,
  respectively, the product rule.
\end{Lemma}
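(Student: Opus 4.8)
The plan is that both equivalences are pure unwindings of the definition of a morphism of commuting $X$-actions (Definition~\ref{def:1}) together with the formula~\eqref{eq:20} for $\boxtimes$; there is no content beyond matching symbols, and so I would not expect any real obstacle. Throughout, the standing hypothesis (the equivalent conditions of Lemma~\ref{lem:8}) is exactly what makes $(\oc X, \mathsf{d}_X)$ an object of $\ca$, so that the phrase ``map of commuting $X$-actions'' makes sense in the first place.

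First I would treat the counit. By Definition~\ref{def:1}, $\mathsf{e}_X$ is a map of commuting $X$-actions $(\oc X, \mathsf{d}_X) \to (I, 0)$ exactly when
\[
  \mathsf{e}_X \circ \mathsf{d}_X \;=\; 0 \circ (\mathsf{e}_X \otimes X) \colon \oc X \otimes X \to I \rlap{ ,}
\]
where $0 \colon I \otimes X \to I$ is the zero morphism of the ambient $k$-module structure. Since composition is $k$-bilinear, $0 \circ (\mathsf{e}_X \otimes X)$ is the zero morphism $\oc X \otimes X \to I$, so the displayed equation says precisely $\mathsf{e}_X \circ \mathsf{d}_X = 0$, i.e.\ the constant rule.

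Next I would treat the comultiplication. The target $(\oc X, \mathsf{d}_X) \boxtimes (\oc X, \mathsf{d}_X)$ is a legitimate object of $\ca$ by Definition~\ref{def:3}; its carrier is $\oc X \otimes \oc X$, and its action is, by~\eqref{eq:20} with $A = B = \oc X$ and $\alpha = \beta = \mathsf{d}_X$,
\[
  \mathsf{d}_X \boxtimes \mathsf{d}_X \;=\; (\oc X \otimes \mathsf{d}_X) \;+\; (\mathsf{d}_X \otimes \oc X) \circ (\oc X \otimes \sigma) \colon \oc X \otimes \oc X \otimes X \to \oc X \otimes \oc X \rlap{ .}
\]
Again by Definition~\ref{def:1}, $\Delta_X$ is a map $(\oc X, \mathsf{d}_X) \to (\oc X, \mathsf{d}_X) \boxtimes (\oc X, \mathsf{d}_X)$ exactly when
\[
  \Delta_X \circ \mathsf{d}_X \;=\; \bigl[(\oc X \otimes \mathsf{d}_X) + (\mathsf{d}_X \otimes \oc X) \circ (\oc X \otimes \sigma)\bigr] \circ (\Delta_X \otimes X) \rlap{ ,}
\]
which is verbatim the commutativity of the product-rule square of Definition~\ref{def:19}. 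Reading these two equivalences for every $X \in \C$ then gives both halves of the statement.

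As noted, there is essentially no obstacle; the only things worth checking carefully are that post-composing the zero morphism with $\mathsf{e}_X \otimes X$ again yields a zero morphism (for the counit), and that the two summands and the placement of $\sigma$ in~\eqref{eq:20} agree with those in the product-rule diagram once the objects involved are identified (for the comultiplication).
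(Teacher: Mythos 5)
Your proposal is correct and matches the paper's proof, which simply states that the lemma is immediate from the definitions; your unwinding of the action on $(I,0)$ and of the $\boxtimes$-action via~\eqref{eq:20} is exactly the intended verification.
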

\begin{proof}
  Again, this is immediate from the definitions.
\end{proof}

\subsection{Linear rule}
\label{sec:linear-rule}

The linear rule is less direct than the previous ones, and will
require a further assumption on $\C$.

\begin{Defn}[Nilsquare $X$-action]
  \label{def:12}
  An $X$-action $(A, \alpha \colon A \otimes X \rightarrow A)$ is
  called \emph{nilsquare} if $\alpha \circ (\alpha \otimes X) \colon A
  \otimes X \otimes X \rightarrow A \otimes X \rightarrow A$ is the
  zero map.
\end{Defn}

Note that, a fortiori, a nilsquare action is a commuting action.
If we assume that $\C$ admits finite biproducts, then we can
construct the free nilsquare $X$-action on $A \in \C$ as the object
$A \oplus (A \otimes X)$ equipped with the action
\begin{equation*}
  \bigl( A \oplus (A \otimes X) \bigr) \otimes X \xrightarrow{\pi_0 \otimes X} A \otimes X \xrightarrow{\iota_1} A \oplus (A \otimes X)\rlap{ .}
\end{equation*}
In particular, we have the free nilsquare $X$-action on $I$, which we
can describe as the object $I \oplus X$, equipped with the action:
\begin{equation*}
  (I \oplus X) \otimes X \xrightarrow{\pi_0 \otimes X} X \xrightarrow{\iota_1} I \oplus X \rlap{ .}
\end{equation*}
\begin{Lemma}
  \label{lem:4}
  Suppose the equivalent conditions of~Lemma \ref{lem:3} are
  satisfied, and suppose $\C$ admits
  finite biproducts. A family of maps $\varepsilon_X \colon \oc X
  \rightarrow X$ induces maps of commuting $X$-actions:
  \begin{equation*}
    (\mathsf{e}_X, \varepsilon_X) \colon (\oc X, \mathsf{d}_X) \rightarrow (I \oplus X, \iota_1 \circ (\pi_0 \otimes X))\rlap{ .}
  \end{equation*}
  if, and only if, $\mathsf{d}$ satisfies the linear rule.
\end{Lemma}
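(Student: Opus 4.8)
The plan is to unwind, for the specific morphism $(\mathsf{e}_X,\varepsilon_X)$, what it means to be a map of commuting $X$-actions into the biproduct $I\oplus X$, and to observe that the resulting single equation splits, via the universal property of the biproduct, into exactly two component equations: one that is the constant rule (already in force under the standing hypothesis that the equivalent conditions of Lemma~\ref{lem:3} hold), and one that is precisely the linear rule.

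First I would record what the map in question is. By the biproduct structure, $(\mathsf{e}_X,\varepsilon_X)\colon\oc X\rightarrow I\oplus X$ is the unique map whose composites with the projections $\pi_0,\pi_1$ are $\mathsf{e}_X$ and $\varepsilon_X$, equivalently $\iota_0\circ\mathsf{e}_X+\iota_1\circ\varepsilon_X$ in the additive hom-structure. Noting that $(I\oplus X,\iota_1\circ(\pi_0\otimes X))$ is the free nilsquare---hence commuting---$X$-action on $I$ (as in Definition~\ref{def:12} and the discussion following it), so that the statement is well-posed, the condition to be checked is commutativity of the square witnessing that $(\mathsf{e}_X,\varepsilon_X)$ is a map of commuting $X$-actions $(\oc X,\mathsf{d}_X)\rightarrow(I\oplus X,\iota_1\circ(\pi_0\otimes X))$; explicitly, the identity
\[
  (\mathsf{e}_X,\varepsilon_X)\circ\mathsf{d}_X \;=\; \iota_1\circ(\pi_0\otimes X)\circ\bigl((\mathsf{e}_X,\varepsilon_X)\otimes X\bigr)
\]
between maps $\oc X\otimes X\rightarrow I\oplus X$.

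The short computational step is to simplify the right-hand side: since $\pi_0\circ(\mathsf{e}_X,\varepsilon_X)=\mathsf{e}_X$ and $(\thg)\otimes X$ is a functor, $(\pi_0\otimes X)\circ((\mathsf{e}_X,\varepsilon_X)\otimes X)=\mathsf{e}_X\otimes X$, so the right-hand side is $\iota_1\circ(\mathsf{e}_X\otimes X)$, where $\mathsf{e}_X\otimes X$ is read as a map into $X$ via the unitor $I\otimes X\cong X$---and is exactly the map ``$\mathsf{e}\otimes 1$'' appearing in the linear rule. I would then post-compose the displayed identity with each of $\pi_0$ and $\pi_1$ to split it into equivalent component equations. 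The $\pi_0$-component reads $\mathsf{e}_X\circ\mathsf{d}_X=0$, which holds identically because we have assumed the equivalent conditions of Lemma~\ref{lem:3}, in particular the constant rule. The $\pi_1$-component reads $\varepsilon_X\circ\mathsf{d}_X=\mathsf{e}_X\otimes X$, which is verbatim the linear rule of Definition~\ref{def:19}. Hence the map-of-actions condition holds for all $X$ if and only if $\mathsf{d}$ satisfies the linear rule.

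There is essentially no obstacle here; the proof is a direct application of the biproduct's universal property together with the standing hypotheses. The only points requiring any care are recognising that the target is a genuine commuting action so the statement makes sense, and keeping the unitor identification $I\otimes X\cong X$ straight so that ``$\mathsf{e}_X\otimes X$'' is unambiguously the morphism featuring in the linear rule.
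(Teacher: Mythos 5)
Your proof is correct and follows exactly the paper's argument: postcompose the action-map condition with the two biproduct projections to split it into $\mathsf{e}_X\circ\mathsf{d}_X=0$ (the constant rule, already in force) and $\varepsilon_X\circ\mathsf{d}_X=\mathsf{e}_X\otimes X$ (the linear rule). The extra remarks on well-posedness and the unitor are fine but not needed beyond what the paper records.
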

\begin{proof}
  To say that this is a map of $X$-actions is equally, on
  postcomposing with the two product projections $I \leftarrow I
  \oplus X \rightarrow X$, to require the two equalities
  $\mathsf{e}_X \circ \mathsf{d}_X = 0$ and
  $\varepsilon_X \circ \mathsf{d}_X = \mathsf{e}_X
  \otimes X$. The first of these is again the constant rule, while the
  second is the linear rule.
\end{proof}

\subsection{Chain rule}
\label{sec:chain-rule}
Finally, we deal with the chain rule. For this, we take advantage of
the results of Section~\ref{sec:lift-mono-diff}. Indeed, if the
hypotheses of~Lemma \ref{lem:4} are verified, then (using the linear
and product rules) the two equations of Lemma~\ref{lem:11} will be
satisfied; whence, by Lemma~\ref{lem:15}, we can lift $\oc$ to an
endofunctor $\oc^X \colon \ca \rightarrow \ca$.
\begin{Lemma}
  \label{lem:7}
  Suppose the equivalent conditions of~Lemma \ref{lem:4} are
  satisfied. Given a family of maps
  $\delta_X \colon \oc X \rightarrow \oc \oc X$ which are comonoid
  morphisms for $(\Delta, \mathsf{e})$ and which satisfy the counit
  law $\varepsilon_{\oc X} \circ \delta_X = 1$, we have that the
  following is a map of commuting $X$-actions:
  \begin{equation*}
    \delta_X \colon (\oc X, \mathsf{d}_X) \rightarrow \oc^X (\oc X, \mathsf{d}_X)
  \end{equation*}
  if, and only if, $\mathsf{d}$ satisfies the chain rule.
\end{Lemma}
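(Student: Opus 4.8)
The plan is to unwind the morphism condition and match it against the chain-rule square. Since the hypotheses of Lemma~\ref{lem:4} guarantee, via the linear and product rules, that the equations of Lemma~\ref{lem:11} hold, Lemma~\ref{lem:15} applies and $\oc^X$ is available; moreover $(\oc X, \mathsf{d}_X)$ is itself a commuting $X$-action by the interchange rule, so the claim makes sense. By Lemma~\ref{lem:15}, the action on the codomain is
$$\oc^X\mathsf{d}_X = \mathsf{d}_{\oc X} \circ (\oc\oc X \otimes \mathsf{d}_X) \circ (\mathsf{b}_{\oc X} \otimes X)\rlap{ ,}$$
so that $\delta_X$ being a map of commuting $X$-actions means $\delta_X \circ \mathsf{d}_X = \mathsf{d}_{\oc X} \circ (\oc\oc X \otimes \mathsf{d}_X) \circ (\mathsf{b}_{\oc X} \otimes X) \circ (\delta_X \otimes X)$. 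The chain rule asserts instead that $\delta_X \circ \mathsf{d}_X = \mathsf{d}_{\oc X} \circ (\delta_X \otimes \mathsf{d}_X) \circ (\Delta_X \otimes X)$. Both equations have the same left-hand side $\delta_X \circ \mathsf{d}_X$, so it suffices to prove, unconditionally, that the two candidate right-hand sides agree; the ``if and only if'' then follows at once.

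The identification of the two right-hand sides rests on the single identity
$$\mathsf{b}_{\oc X} \circ \delta_X = (\delta_X \otimes \oc X) \circ \Delta_X\rlap{ ,}$$
which is Lemma~\ref{lem:16}(ii). The one point that needs care — and the only genuine subtlety in the argument — is that, although Lemma~\ref{lem:16} is stated for actual differential modalities, its proof of~(ii) uses nothing beyond the facts that $\delta_X$ is a comonoid morphism for $(\Delta, \mathsf{e})$ and that $\varepsilon_{\oc X} \circ \delta_X = 1$, both of which are assumed in the present lemma. (Concretely: expand $\mathsf{b}_{\oc X}$ via its definition~\eqref{eq:35}, rewrite $\Delta_{\oc X} \circ \delta_X$ as $(\delta_X \otimes \delta_X) \circ \Delta_X$ using that $\delta_X$ is a comonoid map, and collapse the resulting factor $\varepsilon_{\oc X} \circ \delta_X$ to the identity.)

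Feeding this identity into $\mathsf{b}_{\oc X} \otimes X$ and then rearranging by bifunctoriality of $\otimes$ — so that $(\oc\oc X \otimes \mathsf{d}_X) \circ (\delta_X \otimes \oc X \otimes X)$ becomes $\delta_X \otimes \mathsf{d}_X$ after reassociating — converts $\mathsf{d}_{\oc X} \circ (\oc\oc X \otimes \mathsf{d}_X) \circ (\mathsf{b}_{\oc X} \otimes X) \circ (\delta_X \otimes X)$ into exactly $\mathsf{d}_{\oc X} \circ (\delta_X \otimes \mathsf{d}_X) \circ (\Delta_X \otimes X)$. I would present this rearrangement as a single commuting diagram rather than a chain of equalities. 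Apart from keeping track of the tensor reassociations, nothing here is delicate; the real content of the lemma is simply the recognition that the chain rule \emph{is} the statement that $\delta_X$ respects the lifted $X$-action, and the calculation above is what makes that transparent.
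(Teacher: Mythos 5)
Your proposal is correct and follows essentially the same route as the paper: unwind the action-morphism condition via the formula for $\oc^X$, observe that the stated hypotheses on $\delta$ suffice to establish the identity $\mathsf{b}_{\oc X} \circ \delta_X = (\delta_X \otimes \oc X)\circ \Delta_X$ of Lemma~\ref{lem:16}(ii), and use it to rewrite the right-hand side into the chain-rule form. Your explicit remark that the two right-hand sides agree unconditionally, so the equivalence is immediate, is exactly the logic the paper uses.
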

\begin{proof}
  To say that $\delta_X$ is a map of $X$-actions is to say that we
  have the equality
  \begin{equation*}
    \delta_X \circ \mathsf{d}_X = \mathsf{d}_{\oc X} \circ (\oc \oc X \otimes \mathsf{d}_X) \circ (\mathsf{b}_{\oc X} \otimes X) \circ (\delta_X \otimes X) \rlap{ .}
  \end{equation*}
  The hypotheses on $\delta$ suffice to verify
  Lemma~\ref{lem:16}(ii), i.e., that
  $(\mathsf{b}_{\oc X} \otimes X) \circ (\delta_X \otimes X) =
  (\delta_{X} \otimes \oc X \otimes X) \circ (\Delta_{X} \otimes X)$;
  so asserting the above equality is equivalent to asserting
  \begin{align*}
    \delta_X \circ \mathsf{d}_X &= \mathsf{d}_{\oc X} \circ (\oc \oc X \otimes \mathsf{d}_X) \circ (\delta_{\oc X} \otimes \oc X \otimes X) \circ (\Delta_{X} \otimes X) \rlap{ ,}\\
    \text{i.e., that }\ \ \ 
    \delta_X \circ \mathsf{d}_X &= \mathsf{d}_{\oc X} \circ (\delta_{\oc X} \otimes \mathsf{d}_X) \circ (\Delta_{X} \otimes X) \rlap{ ,}
  \end{align*}
  which is precisely the chain rule.
\end{proof}

\section{The free differential modality on a 
  coalgebra modality}

In this section, we are finally ready to give the first main result of
the paper. For any $k$-linear symmetric category $(\C, \otimes, I)$
which admits algebraically-free commutative monoids and finite
biproducts, we will show that the forgetful functor
$\cat{DiffMod}(\C) \rightarrow \cat{CoalgMod}(\C)$ from differential
modalities to coalgebra modalities has a left adjoint. In other words,
we will construct the \emph{free} differential modality on a given
coalgebra modality.

\label{sec:constr-free-mono}
\subsection{The plan}

For the rest of this section, we suppose $(\C, \otimes, I)$ is a $k$-linear
symmetric monoidal category with finite biproducts that admits the
construction of algebraically-free commutative monoids. Thus, for each
$X \in \C$, we have an object $SX$ equipped with a unit map and a
universal commuting $X$-action of the form
\begin{align*}
  \mathsf{u}^S_X \colon I &\rightarrow SX & \mathsf{d}^S_X \colon SX \otimes X &\rightarrow SX\rlap{ ;}
\end{align*}
note the superscripts, which will help us to distinguish the structure
maps of $S$ from those for other modalities. Indeed, we now suppose
that $\oc$ is a coalgebra modality on $\C$, whose structure maps will
be written unadorned; and we aim to construct the free differential
modality $\oc$, written as $\ocf$ and with structure maps
superscripted by ``$\partial$''.

The freeness asserts that there is a morphism of coalgebra modalities
$\zeta \colon \oc \rightarrow \ocf$ such that, for any differential
modality $\oc'$ and map $\psi \colon \oc \rightarrow \oc'$ of
coalgebra modalities, there is a unique morphism of differential modalities
$\psi^\sharp \colon \ocf \rightarrow \oc'$ for which
$\psi^\sharp \circ \zeta = \psi$, that is, which renders commutative the diagram
\begin{equation}
  \label{eq:24}
  \cd[@C+1em@R-1em]{
    \oc \ar[d]_-{\zeta} \ar[r]^-{\psi \text{ coalg}} & \oc'\rlap{ .} \\
    \ocf \ar@{-->}[ur]_-{\exists ! \,\psi^\sharp \text{ diff}} 
  }
\end{equation}
We break our task
into several steps:
\begin{itemize}[itemsep=0.25\baselineskip]
\item \textbf{First step}: we construct the endofunctor
  $\ocf \colon \C \rightarrow \C$ and the natural families of maps
  $\mathsf{d}^\partial_X \colon \ocf X \otimes X \rightarrow \ocf X$
  and $\zeta_X \colon \oc X \rightarrow \ocf X$, and then show that
  $\mathsf{d}^\partial$ satisfies the interchange rule.
\item \textbf{Second step}: we construct the natural families of maps
  $\mathsf{e}^\partial_X \colon \ocf X \rightarrow I$ and
  $\Delta^\partial_X \colon \ocf X \rightarrow \ocf X \otimes \ocf X$, show they satisfy the constant and product rules for $\mathsf{d}^\partial$
  and also endow each $\ocf X$ with cocommutative comonoid structure.
\item \textbf{Third step}: we construct the natural family of maps
  $\varepsilon^\partial_X \colon \ocf X \rightarrow X$ and show that
  it satisfies the linear rule for $\mathsf{d}^\partial$.
\item \textbf{Fourth step}: we construct the natural family of maps
  $\delta^\partial_X \colon \ocf X \rightarrow \ocf \ocf X$.
\item \textbf{Fifth step}: we verify that each $\delta^\partial_X$ is
  a map of comonoids, and that the comonad counit axiom
  $\ocf \varepsilon^\partial \circ \delta^\partial = 1$ holds. Using
  this, we can verify that $\delta^\partial_X$ satisfies the chain
  rule for $\mathsf{d}^\partial$, and finally the remaining two comonad
  axioms. Thus, we
  conclude that $\ocf$ is a differential modality.
\item \textbf{Sixth step}: we verify that the
  $\zeta \colon \oc \rightarrow \ocf$ found in the first step exhibits
  $\ocf$ as the free differential modality on $\oc$.
\end{itemize}

\subsection{The endofunctor}
\label{sec:first-step}
To motivate the definition of $\ocf \colon \C \rightarrow \C$,
let us consider the component at some $X \in \C$ of the desired
universal extension~\eqref{eq:24}:
\begin{equation}
  \label{eq:25}
  \cd[@C+1em@R-1em]{
    \oc X \ar[d]_-{\zeta} \ar[r]^-{\psi_X} & \oc' X\rlap{ .} \\
    \ocf X \ar@{-->}[ur]_-{\psi^\sharp_X} 
  }
\end{equation}
Here, since $\oc'$ is a differential modality, the object $\oc' X$
bears, by Lemma~\ref{lem:8}(ii), a commuting $X$-action
$\mathsf{d}'_X \colon \oc' X \otimes X \rightarrow X$ . We know from
Section~\ref{sec:defin-relat-free} that the \emph{free} commuting
$X$-action on $\oc X$ is given by
$(\oc X \otimes SX, \oc X \otimes \mathsf{d}^S_X)$ with the freeness
exhibited by the unit map
$\oc X \otimes \mathsf{u}^S_X \colon \oc X \rightarrow \oc X \otimes
SX$. In particular, this means that there is a \emph{unique} map
${\bar \psi}_X \colon \oc X \otimes SX \rightarrow \oc' X$ which
commutes with the $X$-actions and renders commutative the triangle:
\begin{equation}
  \label{eq:26}
  \cd[@C+1em@R-1em]{
    \oc X \ar[d]_-{\oc X \otimes \mathsf{u}^S_X} \ar[r]^-{\psi_X} & \oc' X\rlap{ .} \\
    \oc X \otimes SX \ar@{-->}[ur]_-{\bar \psi_X} 
  }
\end{equation}
Comparing~\eqref{eq:26} with~\eqref{eq:25}, we are thus motivated to give the following definition. 
\begin{Defn}
  \label{def:13}
We define $\ocf \colon \C \rightarrow \C$ on objects
and maps by:
\begin{align}\label{eq:30}
  \ocf X \colon = \oc X \otimes S X  \qquad \text{and} \qquad \ocf f = \oc f \otimes Sf\rlap{ ,}
\end{align}
and we define the families of maps
$\mathsf{d}^{\partial}_X \colon \ocf X \otimes X \rightarrow \ocf X$ and
$\zeta_X \colon \oc X \rightarrow \ocf X$ as:
\begin{align}\label{def:!S-deriving}
  \mathsf{d}^{\partial}_X \defeq \oc X \otimes \mathsf{d}^S_X \colon & \oc X \otimes SX \otimes X \rightarrow \oc X \otimes SX\\
  \label{def:!S-zeta}
    \zeta_X \defeq \oc X \otimes \smash{\mathsf{u}^S_X} \colon & \oc X \rightarrow \oc X \otimes SX\rlap{ .}
  \end{align}
\end{Defn}
\begin{Lemma}
  \label{lem:5}
  $\ocf$ is a functor, and $\mathsf{d}^\partial$ and $\zeta$ are
  natural transformations. Moreover $\mathsf{d}^\partial$ satisfies
  the interchange rule.
\end{Lemma}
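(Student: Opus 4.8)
The plan is to dispatch each of the three claims in turn, observing that all of them reduce to facts already established for the algebraically-free commutative monoid $S$ together with Lemma~\ref{lem:8}; there is no substantive obstacle here, as the lemma is essentially bookkeeping.

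First I would check functoriality of $\ocf$. Since $\ocf X = \oc X \otimes SX$ and $\ocf f = \oc f \otimes Sf$, the identities $\ocf(\mathrm{id}_X) = \mathrm{id}_{\ocf X}$ and $\ocf(g \circ f) = \ocf g \circ \ocf f$ follow at once from functoriality of $\oc$, from the functoriality of the assignment $X \mapsto SX$ recorded in Remark~\ref{rk:4}, and from bifunctoriality of $\otimes$. Next, naturality of $\zeta$ and $\mathsf{d}^\partial$: for a map $f \colon X \rightarrow Y$, the naturality square for $\zeta$ asks that $(\oc f \otimes Sf) \circ (\oc X \otimes \mathsf{u}^S_X) = (\oc Y \otimes \mathsf{u}^S_Y) \circ \oc f$, which on factoring out the $\oc f$ occurring in the first tensor slot (using bifunctoriality of $\otimes$) is precisely the upper triangle of the naturality diagram~\eqref{eq:32} for $\mathsf{u}^S$. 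Similarly, the naturality square for $\mathsf{d}^\partial$ asks that $(\oc f \otimes Sf) \circ (\oc X \otimes \mathsf{d}^S_X) = (\oc Y \otimes \mathsf{d}^S_Y) \circ (\oc f \otimes Sf \otimes f)$, which after the same factoring is the lower square of~\eqref{eq:32}.

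Finally I would verify the interchange rule. By Lemma~\ref{lem:8}, $\mathsf{d}^\partial$ satisfies the interchange rule if and only if each pair $(\ocf X, \mathsf{d}^\partial_X)$ is a commuting $X$-action. Now $(SX, \mathsf{d}^S_X)$ is by construction a commuting $X$-action, i.e.\ it makes the square~\eqref{eq:7} commute; tensoring that square on the left with $\oc X$ shows that $(\oc X \otimes SX, \oc X \otimes \mathsf{d}^S_X) = (\ocf X, \mathsf{d}^\partial_X)$ is again a commuting $X$-action. (More generally, $C \otimes (\thg)$ carries commuting $X$-actions to commuting $X$-actions for any fixed $C \in \C$.) This completes the proof.

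The ``hard part'', such as it is, amounts to no more than invoking the correct pieces of earlier infrastructure---the functoriality and naturality statements in Remark~\ref{rk:4} and the characterisation of the interchange axiom in Lemma~\ref{lem:8}---rather than carrying out any genuine computation; the real work of the construction is deferred to the later steps, where the comonoid and comonad structure maps of $\ocf$ must be produced and shown to satisfy the remaining differential-modality axioms.
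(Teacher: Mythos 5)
Your proposal is correct and follows essentially the same route as the paper: functoriality and naturality are read off from the functorial action of $S$ and the naturality of $\mathsf{u}^S$ and $\mathsf{d}^S$ recorded in Remark~\ref{rk:4} (diagram~\eqref{eq:32}), and the interchange rule comes from Lemma~\ref{lem:8} together with the fact that $(\oc X \otimes SX, \oc X \otimes \mathsf{d}^S_X)$ is a commuting $X$-action (which the paper already has, since it is the free commuting $X$-action on $\oc X$, and which you re-derive by tensoring~\eqref{eq:7} with $\oc X$). Your write-up just makes explicit the bookkeeping that the paper leaves implicit.
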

\begin{proof}
  In~\eqref{eq:30}, we use the functorial action of $S$ from
  Remark~\ref{rk:4}, which, as noted there, makes the maps
  $\mathsf{u}^S_X$ and $\mathsf{d}^S_X$ natural in $X$. The final claim
  follows from Lemma~\ref{lem:8}.
\end{proof}
\subsection{The comonoid structure}
\label{sec:second-step}
We next construct the natural families of maps
$\mathsf{e}^\partial_X \colon \ocf X \rightarrow I$ and
$\Delta^\partial_X \colon \ocf X \rightarrow \ocf X \otimes \ocf X$.
Note that, since $\mathsf{d}^\partial$ is supposed to satisfy the
constant and product rules, these maps must by 
Lemma~\ref{lem:3} underlie maps of commuting $X$-actions
\begin{equation}
  \label{eq:28}
  \mathsf{e}^\partial_X \colon (\ocf X, \mathsf{d}^\partial_X) \rightarrow (I, 0) \quad \text{and} \quad 
  {\Delta^\partial_X} \colon (\ocf X, \mathsf{d}^\partial_X) \rightarrow (\ocf X, \mathsf{d}^\partial_X) \boxtimes (\ocf X, \mathsf{d}^\partial_X)\rlap{ .}
\end{equation}
Thus, as $\zeta_X$ exhibits $(\ocf X, \mathsf{d}^\partial_X)$ as the
free commuting $X$-action on $\oc X$, these maps must be determined by
their precomposition with $\zeta_X$. But since
$\zeta \colon \oc \rightarrow \ocf$ is to be a morphism of coalgebra
modalities, we know what these precomposites must be, since we are to
have commutativity in the diagrams:
\begin{equation}
  \label{eq:27}
  \cd{
    \oc X \ar[d]_-{\zeta_X} \ar[r]^-{\mathsf{e}_X} & I \ar@{=}[d]_-{}&&
    \oc X \ar[d]_-{\zeta_X} \ar[r]^-{ \Delta_X} &
    \oc X \otimes \oc X \ar[d]^-{ \zeta_X \otimes \zeta_X } \\
    \ocf X \ar[r]^-{\mathsf{e}^{\partial}_X} & I &&
    \ocf X \ar[r]^-{\Delta^{\partial}_X} & \ocf X \otimes \ocf X \rlap{ .}
  }
\end{equation}
Thus, we are forced to give:
\begin{Defn}
  \label{def:14}
  We define $\mathsf{e}^\partial_X
  \colon \ocf X \rightarrow I$ and $\Delta^\partial_X \colon \ocf
  X \rightarrow \ocf X \otimes \ocf X$ as the unique maps of commuting
  $X$-actions~\eqref{eq:28} which render commutative the diagrams~\eqref{eq:27}.
\end{Defn}

\begin{Lemma}\label{lem:initial-comonoid} $\mathsf{e}^{\partial}$ and
  $\Delta^{\partial}$ are natural transformations and satisfy the
  constant and product rules with respect to $\mathsf{d}^\partial$.
  Furthermore, for each object $X$,
  $(\ocf X, \Delta^{\partial}_X, \mathsf{e}^{\partial}_X)$ is a
  cocommutative comonoid.
\end{Lemma}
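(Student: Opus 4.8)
The plan is to deduce every part of the statement from the universal property~\eqref{eq:2} of the free commuting $X$-action $(\ocf X, \mathsf{d}^\partial_X)$ on $\oc X$ exhibited by $\zeta_X$, together with the fact (from~\eqref{eq:27}) that $\zeta_X$ intertwines the comonoid structure. To begin with, the constant and product rules come essentially for free: by Definition~\ref{def:14}, $\mathsf{e}^\partial_X$ is a map of commuting $X$-actions $(\ocf X, \mathsf{d}^\partial_X) \to (I,0)$ and $\Delta^\partial_X$ is a map of commuting $X$-actions $(\ocf X, \mathsf{d}^\partial_X) \to (\ocf X, \mathsf{d}^\partial_X) \boxtimes (\ocf X, \mathsf{d}^\partial_X)$, and unwinding these two assertions using~\eqref{eq:20} yields exactly the constant rule $\mathsf{e}^\partial_X \circ \mathsf{d}^\partial_X = 0$ and the product rule for $\mathsf{d}^\partial$ (cf.\ Lemma~\ref{lem:3}).

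For the cocommutative comonoid axioms, the key point is that $\boxtimes$ is a genuine \emph{symmetric} monoidal structure on $\ca$ (Definition~\ref{def:3}) whose coherence isomorphisms are those of $(\C,\otimes)$, and that~\eqref{eq:20} gives canonical identifications of commuting $X$-actions $(I,0)\boxtimes(\ocf X, \mathsf{d}^\partial_X)\cong(\ocf X, \mathsf{d}^\partial_X)\cong(\ocf X, \mathsf{d}^\partial_X)\boxtimes(I,0)$ — in each case one of the two summands of the $\boxtimes$-action vanishes because the action on $I$ is zero. Granting this, each side of each comonoid axiom for $(\ocf X, \Delta^\partial_X, \mathsf{e}^\partial_X)$ — the two counit laws, coassociativity, and cocommutativity — is, by functoriality of $\boxtimes$ and the fact that $\mathsf{e}^\partial_X$ and $\Delta^\partial_X$ are maps of commuting $X$-actions, a morphism of commuting $X$-actions between suitable $\boxtimes$-powers of $(\ocf X, \mathsf{d}^\partial_X)$. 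On the other hand, precomposing any such axiom with $\zeta_X$ and repeatedly applying~\eqref{eq:27} together with naturality of the coherence isomorphisms reduces it to the corresponding comonoid axiom for $(\oc X, \Delta_X, \mathsf{e}_X)$, which holds because $\oc$ is a coalgebra modality. Since $(\ocf X, \mathsf{d}^\partial_X)$ is free on $\oc X$ via $\zeta_X$, the uniqueness in~\eqref{eq:2} forces the two sides to agree, establishing all the comonoid axioms.

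Naturality of $\mathsf{e}^\partial$ and $\Delta^\partial$ runs along the same lines. For $f \colon X \to Y$ one first checks, exactly as in the proof of Lemma~\ref{lem:5} (which rests on the functoriality of $S$ and the naturality square~\eqref{eq:32}), that $\ocf f = \oc f \otimes Sf$ is a map of commuting $X$-actions $(\ocf X, \mathsf{d}^\partial_X) \to f^\ast(\ocf Y, \mathsf{d}^\partial_Y)$, where $f^\ast$ is the restriction functor of Remark~\ref{rk:4}; moreover $f^\ast$ is compatible with $\boxtimes$ and sends zero actions to zero actions. Then both $\mathsf{e}^\partial_Y \circ \ocf f$ and $\mathsf{e}^\partial_X$ are maps of commuting $X$-actions $(\ocf X, \mathsf{d}^\partial_X) \to (I,0)$, and both restrict along $\zeta_X$ to $\mathsf{e}_Y \circ \oc f = \mathsf{e}_X$ (using naturality of $\zeta$ and of $\mathsf{e}$), whence they agree by freeness. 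The argument for $\Delta^\partial$ is identical: both $\Delta^\partial_Y \circ \ocf f$ and $(\ocf f \otimes \ocf f)\circ\Delta^\partial_X$ are maps of commuting $X$-actions into $f^\ast\bigl((\ocf Y, \mathsf{d}^\partial_Y)\boxtimes(\ocf Y, \mathsf{d}^\partial_Y)\bigr)$, and both restrict along $\zeta_X$ to $(\zeta_Y \otimes \zeta_Y)\circ(\oc f \otimes \oc f)\circ\Delta_X$.

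The only genuine work, and the place where care is needed, is the bookkeeping flagged in the second paragraph: one must verify that the unitors, associators and symmetries of $(\C,\otimes)$ do underlie the corresponding $\boxtimes$-coherence maps on commuting $X$-actions — so that, for instance, $\sigma_{\ocf X,\ocf X}$ really is a morphism $(\ocf X, \mathsf{d}^\partial_X)\boxtimes(\ocf X, \mathsf{d}^\partial_X) \to (\ocf X, \mathsf{d}^\partial_X)\boxtimes(\ocf X, \mathsf{d}^\partial_X)$ of commuting $X$-actions — and that the two identifications $(I,0)\boxtimes(\ocf X, \mathsf{d}^\partial_X)\cong(\ocf X, \mathsf{d}^\partial_X)$ and $(\ocf X, \mathsf{d}^\partial_X)\boxtimes(I,0)\cong(\ocf X, \mathsf{d}^\partial_X)$ are the expected unit isomorphisms. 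Both facts are immediate from Definition~\ref{def:3} and the formula~\eqref{eq:20}, but they are precisely what legitimises the ``both sides are maps of commuting $X$-actions'' step on which the whole argument turns.
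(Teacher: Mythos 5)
Your proposal is correct and follows essentially the same route as the paper's proof: derive the constant and product rules directly from the fact that $\mathsf{e}^\partial_X$ and $\Delta^\partial_X$ are by construction maps of commuting $X$-actions, and then establish naturality and each comonoid axiom by observing that both sides are maps of commuting $X$-actions out of the free action $(\ocf X, \mathsf{d}^\partial_X)$ which agree after precomposition with $\zeta_X$, whence they agree by freeness. The extra care you take over the $\boxtimes$-coherence isomorphisms is sensible but already guaranteed by the construction of $\boxtimes$ in Section~\ref{sec:additive-case} as the transport of a known monoidal structure along the isomorphism $\eta_X^\ast$.
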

\begin{proof}
  The constant and product rules are immediate from~\eqref{eq:28} and
  Lemma~\ref{lem:7}. For naturality, we note that each edge of the
  desired naturality squares can be enhanced to a map of commuting
  $X$-actions:
  \begin{equation*}
    \cd[@C-1.5em]{
      (\ocf X, \mathsf{d}^\partial_X) \ar[r]^-{\ocf f} \ar[d]_-{\mathsf{e}_X^\partial} &
      f^\ast(\ocf Y, \mathsf{d}^\partial_Y) \ar[d]^-{\mathsf{e}_Y^\partial} &
      (\ocf X, \mathsf{d}^\partial_X) \ar[r]^-{\ocf f} \ar[d]_-{\mathsf{\Delta}_X^\partial} &
      f^\ast(\ocf Y, \mathsf{d}^\partial_Y) \ar[d]^-{\mathsf{\Delta}_Y^\partial}
      \\
      (I, 0) \ar[r]^-{1_I} & f^\ast(I,0) &
      \sh{l}{1em}(\ocf X, \mathsf{d}^\partial_X) \boxtimes (\ocf X, \mathsf{d}^\partial_X) \ar[r]^-{\ocf f \otimes \ocf f}  &
      \sh{r}{1em}f^\ast\bigl((\ocf Y, \mathsf{d}^\partial_Y) \boxtimes (\ocf Y, \mathsf{d}^\partial_Y)\bigr)\rlap{ .}
    }
  \end{equation*}
  Since $\zeta_X$ exhibits $(\ocf X, \mathsf{d}^\partial_X)$ as the
  free commuting $X$-action on $\oc X$, it thus suffices to show
  commutativity of these squares on precomposition by $\zeta_X$. But
  by naturality of $\zeta$ and commutativity in~\eqref{eq:27}, these
  precompositions yield the naturality square for $\mathsf{e}$ at $f$,
  respectively, the naturality square for $\Delta$ at $f$ postcomposed
  by $\zeta_Y \otimes \zeta_Y$; whence the result.

  For the cocommutative comonoid axioms, we argue similarly. For
  example, the coassociativity square for $\Delta^\partial$ can be
  enhanced to a square
  \begin{equation*}
    \cd[@C+1em]{
      (\ocf X, \mathsf{d}^\partial_X)
      \ar[r]^-{{\mathsf{\Delta}_X^\partial}}
      \ar[d]_-{{\mathsf{\Delta}_X^\partial}} &
      (\ocf X, \mathsf{d}^\partial_X) \boxtimes (\ocf X, \mathsf{d}^\partial_X) \ar[d]_-{{\mathsf{\Delta}_X^\partial} \otimes \ocf X} \\
      (\ocf X, \mathsf{d}^\partial_X) \boxtimes (\ocf X, \mathsf{d}^\partial_X) \ar[r]^-{\ocf X \otimes {\mathsf{\Delta}_X^\partial}} &
      (\ocf X, \mathsf{d}^\partial_X) \boxtimes(\ocf X, \mathsf{d}^\partial_X) \boxtimes (\ocf X, \mathsf{d}^\partial_X)
    }
  \end{equation*}
  of commuting $X$-actions, and will commute since it does so on
  precomposition with $\zeta_X$---for this yields the
  coassociativity square for $\Delta$, postcomposed by $\zeta_X
  \otimes \zeta_X \otimes \zeta_X$. The argument for the remaining
  axioms is identical.
\end{proof}

\subsection{The comonad counit}
\label{sec:third-step}
We now construct the maps
$\varepsilon^\partial_X \colon \ocf X \rightarrow X$  providing
the comonad counit. Arguing as in the preceding section, since
$\mathsf{d}^\partial$ is to satisfy the linear rule, we must
by Lemma~\ref{lem:4} have that the following is a map
of commuting $X$-actions:
\begin{equation}
  \label{eq:29}
  (\mathsf{e}^\partial_X, \varepsilon^\partial_X) \colon (\ocf X, \mathsf{d}^\partial_X) \rightarrow \bigl(I \oplus X, \iota_1 \circ (\pi_0 \otimes X)\bigr)
\end{equation}
Moreover, for $\zeta \colon \oc \rightarrow \ocf$ to be a map of
coalgebra modalities, we must have commutativity of the square to the
left below and hence---given~\eqref{eq:27}---also commutativity of the
square to the right:
\begin{equation}
  \label{eq:6}
  \cd[@C+0.5em]{
    \oc X \ar[d]_-{\zeta_X} \ar[r]^-{\varepsilon_X} & X \ar@{=}[d]_-{} & &
    \oc X \ar[d]_-{\zeta_X} \ar[r]^-{(\mathsf{e}_X, \varepsilon_X)} & I\oplus X \ar@{=}[d]\\
    \ocf X \ar[r]^-{\varepsilon^{\partial}_X} & X&&
    \ocf X \ar[r]^-{(\mathsf{e}_X^\partial,\varepsilon^{\partial}_X)} & I \oplus X\rlap{ .}
  }
\end{equation}
Again, since $\zeta_X$ exhibits $\ocf X$ as the free commuting
$X$-action on $\oc X$, we are thus forced to make:

\begin{Defn}
  \label{def:16}
  We define $\varepsilon_X^\partial \colon \ocf X \rightarrow X$ as
  the unique morphism which, when taken together with
  $\mathsf{e}^\partial_X$, yields a map of commuting $X$-actions as
  in~\eqref{eq:29} and renders commutative the right diagram
  in~\eqref{eq:6}.
\end{Defn}

\begin{Lemma}
  \label{lem:6}
  $\varepsilon^{\partial}$ is a natural
  transformation and satisfies the linear rule for $\mathsf{d}^\partial$.
\end{Lemma}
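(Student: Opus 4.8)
The linear rule will come essentially for free. By Lemma~\ref{lem:5}, $\mathsf{d}^\partial$ satisfies the interchange rule, and by Lemma~\ref{lem:initial-comonoid}, $\mathsf{e}^\partial$ and $\Delta^\partial$ satisfy the constant and product rules and make each $\ocf X$ a cocommutative comonoid; since $\C$ has finite biproducts by standing hypothesis, the equivalent conditions of Lemma~\ref{lem:3}, and hence all the hypotheses of Lemma~\ref{lem:4}, are in place for the data $(\mathsf{d}^\partial, \mathsf{e}^\partial, \Delta^\partial)$. Now by Definition~\ref{def:16} the pair $(\mathsf{e}^\partial_X, \varepsilon^\partial_X)$ is a map of commuting $X$-actions as in~\eqref{eq:29}; so the ``only if'' direction of Lemma~\ref{lem:4} immediately yields the linear rule for $\mathsf{d}^\partial$.

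For naturality of $\varepsilon^\partial$ the plan is to mimic the naturality argument in the proof of Lemma~\ref{lem:initial-comonoid}. Since $\mathsf{e}^\partial$ is already natural, it suffices to show the pair $\Lambda_X \defeq (\mathsf{e}^\partial_X, \varepsilon^\partial_X) \colon \ocf X \rightarrow I \oplus X$ is natural, i.e.\ that $(I \oplus f) \circ \Lambda_X = \Lambda_Y \circ \ocf f$ for every $f \colon X \rightarrow Y$. Writing $\nu_X \defeq \iota_1 \circ (\pi_0 \otimes X) \colon (I \oplus X) \otimes X \rightarrow I \oplus X$ for the nilsquare $X$-action of Section~\ref{sec:linear-rule} and $f^\ast$ for restriction of $Y$-actions along $f$ as in Remark~\ref{rk:4}, I would first check that both sides of this equation are maps of commuting $X$-actions $(\ocf X, \mathsf{d}^\partial_X) \rightarrow f^\ast(I \oplus Y, \nu_Y)$: on the right, because $\ocf f = \oc f \otimes Sf$ is a map of commuting $X$-actions $(\ocf X, \mathsf{d}^\partial_X) \rightarrow f^\ast(\ocf Y, \mathsf{d}^\partial_Y)$ by the naturality~\eqref{eq:32} of $\mathsf{u}^S$ and $\mathsf{d}^S$, and $f^\ast\Lambda_Y$ is a map of commuting $X$-actions since $\Lambda_Y$ is one of commuting $Y$-actions by Definition~\ref{def:16}; on the left, because $\Lambda_X$ is a map of commuting $X$-actions by Definition~\ref{def:16} and $I \oplus f \colon (I \oplus X, \nu_X) \rightarrow f^\ast(I \oplus Y, \nu_Y)$ is easily checked to be one.

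With that in hand, since $\zeta_X$ exhibits $(\ocf X, \mathsf{d}^\partial_X)$ as the free commuting $X$-action on $\oc X$, the equality $(I \oplus f) \circ \Lambda_X = \Lambda_Y \circ \ocf f$ follows as soon as the two sides agree after precomposing with $\zeta_X$. Using the right-hand square of~\eqref{eq:6} (which says $\Lambda_X \circ \zeta_X = (\mathsf{e}_X, \varepsilon_X)$), the naturality of $\zeta$, and the right-hand square of~\eqref{eq:6} at $Y$, this reduces to the identity
\begin{equation*}
  (\mathsf{e}_X,\ f \circ \varepsilon_X) \;=\; (\mathsf{e}_Y \circ \oc f,\ \varepsilon_Y \circ \oc f) \colon \oc X \longrightarrow I \oplus Y\rlap{ ,}
\end{equation*}
whose first component is the naturality of the comonoid counit $\mathsf{e}$ of $\oc$ and whose second component is the naturality of the comonad counit $\varepsilon$ of $\oc$. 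This finishes the naturality proof, and with it the lemma.

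I do not anticipate a genuine obstacle: exactly as in Lemma~\ref{lem:initial-comonoid}, everything is pinned down by the universal property of the free commuting $X$-action, and the only real work is low-level bookkeeping with the biproduct $I \oplus (\thg)$ — checking that $I \oplus f$ and the projections and insertions of $I \oplus Y$ behave correctly with respect to the nilsquare actions, and (if not already recorded where Definition~\ref{def:16} is made) that the map produced from $(\mathsf{e}_X, \varepsilon_X)$ by freeness really has $\mathsf{e}^\partial_X$ as its first component, which one sees by postcomposing with $\pi_0$ and invoking the uniqueness clause of Definition~\ref{def:14}.
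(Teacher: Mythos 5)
Your proof is correct and follows essentially the same route as the paper: the linear rule is read off from Definition~\ref{def:16} together with Lemma~\ref{lem:4}, and naturality is obtained by exhibiting the naturality square for the pair $(\mathsf{e}^\partial, \varepsilon^\partial)$ as a square of commuting $X$-actions and checking commutativity after precomposition with $\zeta_X$ via naturality of $\zeta$, the right square of~\eqref{eq:6}, and naturality of $\mathsf{e}$ and $\varepsilon$. The extra bookkeeping you flag (verifying the hypotheses of Lemma~\ref{lem:4} and that $I \oplus f$ respects the nilsquare actions) is routine and consistent with what the paper leaves implicit.
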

\begin{proof}
  The linear rule is immediate from~\eqref{eq:29} and
  Lemma~\ref{lem:4}. For naturality, we argue as in
  Lemma~\ref{lem:initial-comonoid}. Indeed, we have a square of
  commuting $X$-actions:
  \begin{equation*}
    \cd[@C+0.5em]{
      (\ocf X, \mathsf{d}^\partial_X) \ar[r]^-{\ocf f} \ar[d]_-{(\mathsf{e}^\partial_Y, \varepsilon^\partial_X)} &
      f^\ast(\ocf Y, \mathsf{d}^\partial_Y) \ar[d]^-{(\mathsf{e}^\partial_Y, \varepsilon^\partial_Y)} 
      \\
      (I\oplus X, \iota_1 \circ (\pi_0 \otimes X)) \ar[r]^-{I \oplus f} & f^\ast(I\oplus Y, \iota_1 \circ (\pi_0 \otimes Y)) }
  \end{equation*}
  which commutes on precomposition by $\zeta_X$ using naturality of
  $\zeta$,~\eqref{eq:6}, and naturality of $\mathsf{e}$ and
  $\varepsilon$; whence the result.
\end{proof}

\subsection{The comonad comultiplication}
\label{sec:comon-comult}

Now that we have both $\Delta^\partial$ and
$\varepsilon^\partial$, we can also define the coderiving transformation
$\mathsf{b}^\partial$ as in~\eqref{eq:35}:
\begin{equation*}
    \mathsf{b}^\partial_X \defeq \ocf X \xrightarrow{\Delta^\partial_X} \ocf X \otimes \ocf X \xrightarrow{\ocf X \otimes \varepsilon^\partial_X} \ocf X \otimes X\rlap{ .}
\end{equation*}
We may note right away that:
\begin{Lemma}
  \label{lem:10}
  $\mathsf{b}^\partial$ satisfies the axioms of Lemma~\ref{lem:11},
  Lemma~\ref{lem:12}, and Lemma~\ref{lem:16}(i).
\end{Lemma}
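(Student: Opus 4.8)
The plan is to observe that Lemma~\ref{lem:10} comes essentially for free from the work already done. Inspecting the proofs of Lemmas~\ref{lem:11},~\ref{lem:12} and~\ref{lem:16}(i), one sees that the only properties of $\oc$ they use are: that each $(\oc X, \Delta_X, \mathsf{e}_X)$ is a cocommutative comonoid (supplying coassociativity, cocommutativity and counitality of $\Delta$); that $\mathsf{d}$ satisfies the product rule; that $\mathsf{d}$ satisfies the linear rule; and that $\mathsf{b}$ is \emph{defined} from $\Delta$ and $\varepsilon$ via the formula~\eqref{eq:35}. Crucially, \emph{none} of these proofs invokes the comonad structure of $\oc$ or the chain rule---those ingredients enter only in Lemma~\ref{lem:16}(ii), which is (deliberately) not among the identities claimed here.

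First I would recall that all four of these hypotheses have already been established for $\oc^\partial$: Lemma~\ref{lem:initial-comonoid} shows that each $(\ocf X, \Delta^\partial_X, \mathsf{e}^\partial_X)$ is a cocommutative comonoid and that $\mathsf{d}^\partial$ satisfies the product rule; Lemma~\ref{lem:6} shows that $\varepsilon^\partial$, and hence $\mathsf{d}^\partial$, satisfies the linear rule; and $\mathsf{b}^\partial$ has been defined from $\Delta^\partial$ and $\varepsilon^\partial$ by the very formula~\eqref{eq:35}, just with $\oc$ replaced by $\ocf$.

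The argument is then simply that the proofs of Lemmas~\ref{lem:11},~\ref{lem:12} and~\ref{lem:16}(i) go through verbatim with $\oc$, $\Delta$, $\mathsf{e}$, $\varepsilon$, $\mathsf{d}$, $\mathsf{b}$ replaced throughout by $\ocf$, $\Delta^\partial$, $\mathsf{e}^\partial$, $\varepsilon^\partial$, $\mathsf{d}^\partial$, $\mathsf{b}^\partial$. Concretely: the analogues of Lemma~\ref{lem:11}(i) and Lemma~\ref{lem:12}(i)--(ii) follow from cocommutativity, coassociativity and a counitality identity for $\Delta^\partial$; the analogue of Lemma~\ref{lem:16}(i) follows from a counitality identity for $\Delta^\partial$ together with the definition of $\mathsf{b}^\partial$; and the analogue of Lemma~\ref{lem:11}(ii) follows by postcomposing the product rule for $\mathsf{d}^\partial$ with $\ocf X \otimes \varepsilon^\partial_X$ and then applying the linear rule and a counitality identity for $\Delta^\partial$.

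The only point requiring care---and the closest thing to an obstacle---is to double-check that the cited proofs really do have the dependency structure claimed, i.e., that no hidden use is made of comonad axioms of $\oc$ that have not yet been verified for $\ocf$ at this stage of the construction. Since those proofs are one-line reductions to comonoid identities and the product and linear rules, this check is routine, but I would include a sentence making it explicit so that the reader sees the construction is not circular: $\mathsf{b}^\partial$ is assembled from structure ($\Delta^\partial$, $\mathsf{e}^\partial$, $\varepsilon^\partial$, $\mathsf{d}^\partial$) already known to satisfy exactly the hypotheses of Lemmas~\ref{lem:11},~\ref{lem:12} and~\ref{lem:16}(i), and those lemmas depend on nothing else.
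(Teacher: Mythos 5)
Your proposal is correct and is essentially the paper's own argument: the paper likewise observes that all the cited identities follow from the cocommutative comonoid axioms for $\Delta^\partial$, with Lemma~\ref{lem:11} additionally using the product and linear rules for $\mathsf{d}^\partial$, all of which were established in Lemmas~\ref{lem:initial-comonoid} and~\ref{lem:6}. Your explicit remark that no comonad axioms or chain rule are needed is a fair (and accurate) elaboration of the same dependency check.
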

\begin{proof}
  All of these follow from the cocommutative comonoid axioms for $\Delta^\partial$
  except for Lemma~\ref{lem:11}, which additionally uses the product
  and linear rules for $\mathsf{d}^\partial$; these were verified in
  Lemmas~\ref{lem:initial-comonoid} and \ref{lem:6} respectively.
\end{proof}
Since $\mathsf{d}^\partial$ is also known to satisfy the interchange
rule (Lemma~\ref{lem:5}), we can conclude by Lemma~\ref{lem:15} that,
for any $X \in \C$, the functor $\ocf$ lifts to a functor
\begin{equation*}
  \oc^{\partial X} \defeq (\ocf)^X \colon \ca \rightarrow \ca
\end{equation*}
defined as in~\eqref{eq:23}. Using this fact, we can now construct the
maps $\delta^\partial_X \colon \ocf X \rightarrow X$ providing the
comonad comultiplication.

Indeed, since $\mathsf{d}^\partial$ is to satisfy the chain rule, we
must by Lemma~\ref{lem:7} have that the following is a map of
commuting $X$-actions:
\begin{equation}
  \label{eq:34}
  {\delta^\partial_X} \colon (\ocf X, \mathsf{d}^\partial_X) \rightarrow \oc^{\partial X}(\ocf X, \mathsf{d}^\partial X)\rlap{ .}
\end{equation}
Moreover, for $\zeta \colon \oc \rightarrow \ocf$ to be a morphism of
coalgebra modalities, we must have commutativity of the square:
\begin{equation}
  \label{eq:36}
  \cd[@C+1em]{
  \oc X \ar[d]_-{\zeta_X} \ar[r]^-{ \delta_X} &
  \oc \oc X  \ar[d]^-{ (\zeta \zeta)_X } \\
  \ocf X \ar[r]^-{\delta^{\partial}_X} & \ocf \ocf X \rlap{ .}
  }
\end{equation}
As $\zeta_X$ exhibits $\ocf X$ as the free commuting
$X$-action on $\oc X$, this forces:

\begin{Defn}
  \label{def:17}
  We define $\delta_X^\partial \colon \ocf X \rightarrow X$ as the
  unique map of commuting $X$-actions~\eqref{eq:34} which renders
  commutative~\eqref{eq:36}.
\end{Defn}

By exactly the same argument as in the preceding sections, we have:
\begin{Lemma} $\delta^{\partial}$ is a natural transformation.
\end{Lemma}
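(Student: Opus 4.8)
The plan is to follow, almost word for word, the naturality arguments of Lemmas~\ref{lem:initial-comonoid} and~\ref{lem:6}. Fixing $f \colon X \to Y$ in $\C$, I would have to show that $\ocf \ocf f \circ \delta^\partial_X = \delta^\partial_Y \circ \ocf f$ as maps $\ocf X \to \ocf \ocf Y$. The single new ingredient I would first establish is that the lifting $\oc^{\partial X}$ of Lemma~\ref{lem:15} is compatible with restriction along $f$: for every $Y$-action $(B, \beta)$ one has $\oc^{\partial X}\bigl(f^\ast(B,\beta)\bigr) = f^\ast\bigl(\oc^{\partial Y}(B,\beta)\bigr)$. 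This is a short computation directly from the definition~\eqref{eq:23} of the lifted action together with bifunctoriality of $\otimes$; the analogous compatibilities for $\boxtimes$, for $(I \oplus {\thg})$, and so on, were already used tacitly throughout this section.

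Granting this, each edge of the naturality square enhances to a map of commuting $X$-actions. The map $\ocf f$ underlies a map $(\ocf X, \mathsf{d}^\partial_X) \to f^\ast(\ocf Y, \mathsf{d}^\partial_Y)$, exactly as in Lemma~\ref{lem:initial-comonoid}, using the naturality of $\mathsf{d}^S$ from Remark~\ref{rk:4}; the maps $\delta^\partial_X$ and $\delta^\partial_Y$ are maps of commuting actions by their defining property~\eqref{eq:34}; and, applying Lemma~\ref{lem:15}(ii) to the map of commuting $X$-actions $\ocf f$ and then rewriting the codomain by the restriction-compatibility above, $\ocf \ocf f$ underlies a map $\oc^{\partial X}(\ocf X, \mathsf{d}^\partial_X) \to f^\ast\bigl(\oc^{\partial Y}(\ocf Y, \mathsf{d}^\partial_Y)\bigr)$. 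Since $\zeta_X$ exhibits $(\ocf X, \mathsf{d}^\partial_X)$ as the free commuting $X$-action on $\oc X$, it then suffices to check that the enhanced square commutes after precomposing with $\zeta_X \colon \oc X \to \ocf X$.

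For that, I would compute both composites precomposed by $\zeta_X$. Along the top and down the right, the defining square~\eqref{eq:36} gives $\ocf \ocf f \circ \delta^\partial_X \circ \zeta_X = \ocf \ocf f \circ (\zeta\zeta)_X \circ \delta_X$. Down the left and along the bottom, naturality of $\zeta$ (Lemma~\ref{lem:5}) gives $\ocf f \circ \zeta_X = \zeta_Y \circ \oc f$, and then~\eqref{eq:36} at $Y$ followed by naturality of $\delta$ for the comonad $\oc$ gives $\delta^\partial_Y \circ \ocf f \circ \zeta_X = (\zeta\zeta)_Y \circ \delta_Y \circ \oc f = (\zeta\zeta)_Y \circ \oc\oc f \circ \delta_X$. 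These two expressions coincide because $\zeta\zeta \colon \oc\oc \to \ocf \ocf$, being the horizontal composite of the natural transformation $\zeta$ with itself, is natural, whence $\ocf \ocf f \circ (\zeta\zeta)_X = (\zeta\zeta)_Y \circ \oc\oc f$. So the enhanced square commutes after precomposition with $\zeta_X$, hence it commutes, and in particular $\delta^\partial$ is natural.

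I expect the only step with any real content to be the restriction-compatibility of the lifting $\oc^{\partial X}$ (equivalently, a direct check that $\ocf \ocf f$ carries the lifted $X$-action $\oc^{\partial X}\mathsf{d}^\partial_X$ to the restriction of $\oc^{\partial Y}\mathsf{d}^\partial_Y$ along $f$); once that is in hand, the rest is a purely formal instance of the template used repeatedly in this section, and I would not expect any surprises.
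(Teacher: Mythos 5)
Your proof is correct and is exactly the argument the paper intends: the paper simply writes ``by exactly the same argument as in the preceding sections'' and omits the details, and your fleshing-out — enhancing each edge of the naturality square to a map of commuting $X$-actions and reducing to precomposition with $\zeta_X$, where~\eqref{eq:36}, naturality of $\zeta$, naturality of $\delta$, and naturality of the horizontal composite $\zeta\zeta$ finish the job — matches that template precisely. The one step you rightly single out as having content, the compatibility $\oc^{\partial X}\bigl(f^\ast(B,\beta)\bigr) = f^\ast\bigl(\oc^{\partial Y}(B,\beta)\bigr)$, does indeed follow immediately from~\eqref{eq:23} and bifunctoriality of $\otimes$, so there is no gap.
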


\subsection{The differential modality structure}
\label{sec:diff-modal-struct}
Note that we cannot yet apply Lemma~\ref{lem:7} to conclude that
$\delta^\partial$ satisfies the chain rule for $\mathsf{d}^\partial$,
because we have not yet verified the side-conditions required for the
lemma. We do this next. 

\begin{Lemma}\label{lem:delta-comonoid}
  Each $\delta^{\partial}_X \colon \ocf X \rightarrow \ocf \ocf X$ is
  a comonoid morphism for $(\Delta^\partial, \mathsf{e}^\partial)$.
\end{Lemma}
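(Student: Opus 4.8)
The plan is to follow the template used throughout Section~\ref{sec:constr-free-mono}. We must verify the two comonoid-morphism equations $\mathsf{e}^{\partial}_{\ocf X} \circ \delta^{\partial}_X = \mathsf{e}^{\partial}_X$ and $\Delta^{\partial}_{\ocf X} \circ \delta^{\partial}_X = (\delta^{\partial}_X \otimes \delta^{\partial}_X) \circ \Delta^{\partial}_X$. For each, I would recognise both sides as maps of commuting $X$-actions out of the free commuting $X$-action $(\ocf X, \mathsf{d}^{\partial}_X)$ (whose freeness is witnessed by $\zeta_X$), and then invoke that freeness to reduce the equality to one that holds after precomposition with $\zeta_X$ — at which point it collapses, via the defining diagrams~\eqref{eq:27} and~\eqref{eq:36} and naturality, to the corresponding comonoid-morphism law for $\delta_X$ in the coalgebra modality $\oc$ (Definition~\ref{def:5}).

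First I would record that Lemma~\ref{lem:14} applies to the partially-constructed modality $\ocf$: by Lemmas~\ref{lem:5} and~\ref{lem:10} the data $(\ocf, \mathsf{b}^{\partial}, \mathsf{d}^{\partial})$ sits in the situation of Lemma~\ref{lem:15}, and by Lemmas~\ref{lem:initial-comonoid} and~\ref{lem:10} the transformations $\mathsf{e}^{\partial}$ and $\Delta^{\partial}$ satisfy the constant and product rules with respect to $\mathsf{d}^{\partial}$ together with the identities of Lemma~\ref{lem:12}. Hence $\mathsf{e}^{\partial}_{\ocf X} \colon \oc^{\partial X}(\ocf X, \mathsf{d}^{\partial}_X) \rightarrow (I, 0)$ and $\Delta^{\partial}_{\ocf X} \colon \oc^{\partial X}(\ocf X, \mathsf{d}^{\partial}_X) \rightarrow \oc^{\partial X}(\ocf X, \mathsf{d}^{\partial}_X) \boxtimes \oc^{\partial X}(\ocf X, \mathsf{d}^{\partial}_X)$ are maps of commuting $X$-actions. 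Combining this with the fact that $\delta^{\partial}_X$ is a map of commuting $X$-actions by Definition~\ref{def:17}/\eqref{eq:34}, with~\eqref{eq:28} for $\mathsf{e}^{\partial}_X$ and $\Delta^{\partial}_X$, and with functoriality of $\boxtimes$, both sides of each equation become maps of commuting $X$-actions out of $(\ocf X, \mathsf{d}^{\partial}_X)$ with a common codomain.

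It then remains to check the two equations after precomposing with $\zeta_X$. For the counit equation I would write $(\zeta\zeta)_X = \ocf \zeta_X \circ \zeta_{\oc X}$ in~\eqref{eq:36}, then use naturality of $\mathsf{e}^{\partial}$ and the left-hand square of~\eqref{eq:27} at $\oc X$ to collapse $\mathsf{e}^{\partial}_{\ocf X} \circ \delta^{\partial}_X \circ \zeta_X$ to $\mathsf{e}_{\oc X} \circ \delta_X$, which equals $\mathsf{e}_X$ because $\delta_X$ is a comonoid morphism; and separately $\mathsf{e}^{\partial}_X \circ \zeta_X = \mathsf{e}_X$ by~\eqref{eq:27}. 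For the comultiplication equation the analogous chain — expanding $(\zeta\zeta)_X$ in~\eqref{eq:36}, applying naturality of $\Delta^{\partial}$, the right-hand square of~\eqref{eq:27} at $\oc X$, the law $\Delta_{\oc X} \circ \delta_X = (\delta_X \otimes \delta_X) \circ \Delta_X$ for $\oc$, then~\eqref{eq:36} again and the right-hand square of~\eqref{eq:27} at $X$ — rewrites both $\Delta^{\partial}_{\ocf X} \circ \delta^{\partial}_X \circ \zeta_X$ and $(\delta^{\partial}_X \otimes \delta^{\partial}_X) \circ \Delta^{\partial}_X \circ \zeta_X$ to the common expression $\bigl((\zeta\zeta)_X \otimes (\zeta\zeta)_X\bigr) \circ (\delta_X \otimes \delta_X) \circ \Delta_X$. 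Freeness of $(\ocf X, \mathsf{d}^{\partial}_X)$ then delivers both equations.

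I do not expect a genuine obstacle. The one point requiring care is purely organisational: since $\ocf$ has not yet been shown to be a differential modality (indeed the present lemma is a side-condition needed before the chain rule and comonad axioms can be established), one must be careful to cite only the lemmas of the preceding subsections when invoking Lemmas~\ref{lem:15} and~\ref{lem:14} for $\ocf$. Beyond this bookkeeping, the argument is the now-routine "lift to $\ca$, test on $\zeta_X$, fall back on the axiom for $\oc$" pattern.
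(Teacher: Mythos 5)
Your proposal is correct and follows essentially the same route as the paper's proof: both sides of each comonoid-morphism equation are recognised as maps of commuting $X$-actions out of the free $(\ocf X, \mathsf{d}^\partial_X)$ --- using~\eqref{eq:34}, \eqref{eq:28}, and Lemma~\ref{lem:14} (with preconditions from Lemma~\ref{lem:10}) --- and the equations are then checked after precomposition with $\zeta_X$ via naturality of $\zeta$, \eqref{eq:27}, \eqref{eq:36}, and the comonoid-morphism law for $\delta$. Your explicit unwinding of the $\zeta_X$-precomposition and your bookkeeping about which earlier lemmas may be cited are both accurate.
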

\begin{proof} We have diagrams of commuting $X$-actions
  \begin{equation*}
    \cd[@C-1.6em]{
      {(\ocf X, \mathsf{d}^\partial_X)} \ar[r]^-{\delta^\partial_X} \ar[d]_{\mathsf{e}^\partial_X} &
      \sh{r}{0.5em}{\oc^{\partial X}(\ocf X, \mathsf{d}^\partial_X)} \ar[d]^{\mathsf{e}^\partial_{\ocf X}} & 
      {(\ocf X, \mathsf{d}^\partial_X)} \ar[r]^-{\delta^\partial_X} \ar[d]_{\Delta^\partial_X} &
      {\oc^{\partial X}(\ocf X, \mathsf{d}^\partial_X)} \ar[d]^{\Delta^\partial_{\ocf X}}
      \\
      {(I, 0)} \ar@{=}[r] &
      {(I, 0)} &
      \sh{l}{1.6em}{(\ocf X, \mathsf{d}^\partial_X) \boxtimes (\ocf X, \mathsf{d}^\partial_X)} \ar[r]^-{\delta^\partial_X \otimes \delta^\partial_X} &
      \sh{l}{0.3em}{\oc^{\partial X}(\ocf X, \mathsf{d}^\partial_X) \boxtimes \oc^{\partial X}(\ocf X, \mathsf{d}^\partial_X)}
    }
  \end{equation*}
  where the horizontal maps are maps of $X$-actions by~\eqref{eq:34},
  the left-hand vertical maps in each square are so by~\eqref{eq:28},
  and the right-hand vertical maps in each square are so by
  Lemma~\ref{lem:14}, whose preconditions were verified in
  Lemma~\ref{lem:10}. Moreover, the squares commute on precomposition
  by $\zeta_X$ due to naturality of $\zeta$, \eqref{eq:27},
  \eqref{eq:36}, and the fact that $\delta$ is a comonoid morphism for
  $(\Delta, \mathsf{e})$.
\end{proof}

\begin{Lemma}
  \label{lem:13}
  $(\ocf, \delta^{\partial}, \varepsilon^\partial)$ satisfies the
  counit law $\varepsilon^\partial_{\ocf X} \circ
  \delta^\partial_X = 1_{\ocf X}$ for all $X \in \C$.
\end{Lemma}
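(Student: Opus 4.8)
The plan is to argue exactly as in Lemmas~\ref{lem:initial-comonoid} and~\ref{lem:6}, exploiting that $\zeta_X\colon\oc X\to\ocf X$ exhibits $(\ocf X,\mathsf{d}^\partial_X)$ as the free commuting $X$-action on $\oc X$. Concretely: to prove that two maps out of $\ocf X$ agree, it suffices to exhibit both as maps of commuting $X$-actions with a common target, and then to check that both restrict to the same map along $\zeta_X$.

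First I would verify that both sides of $\varepsilon^\partial_{\ocf X}\circ\delta^\partial_X=1_{\ocf X}$ underlie maps of commuting $X$-actions $(\ocf X,\mathsf{d}^\partial_X)\to(\ocf X,\mathsf{d}^\partial_X)$. For the identity this is trivial. For the composite, the factor $\delta^\partial_X$ is a map of commuting $X$-actions $(\ocf X,\mathsf{d}^\partial_X)\to\oc^{\partial X}(\ocf X,\mathsf{d}^\partial_X)$ by~\eqref{eq:34}; and the factor $\varepsilon^\partial_{\ocf X}$ is a map of commuting $X$-actions $\oc^{\partial X}(\ocf X,\mathsf{d}^\partial_X)\to(\ocf X,\mathsf{d}^\partial_X)$ by Lemma~\ref{lem:9}(i), applied to the commuting $X$-action $(\ocf X,\mathsf{d}^\partial_X)$, whose hypothesis---that $\mathsf{b}^\partial$ satisfies Lemma~\ref{lem:16}(i)---is supplied by Lemma~\ref{lem:10} (the remaining preconditions, i.e.\ those of Lemmas~\ref{lem:15} and~\ref{lem:14}, having already been checked in order to form $\oc^{\partial X}$). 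Hence $\varepsilon^\partial_{\ocf X}\circ\delta^\partial_X$ is a map of commuting $X$-actions of the required shape.

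It then remains to check that the two composites agree after precomposition with $\zeta_X$. On the one side, $1_{\ocf X}\circ\zeta_X=\zeta_X$. On the other side, by~\eqref{eq:36} we have $\delta^\partial_X\circ\zeta_X=(\zeta\zeta)_X\circ\delta_X=\zeta_{\ocf X}\circ\oc(\zeta_X)\circ\delta_X$; postcomposing with $\varepsilon^\partial_{\ocf X}$ and using the left-hand square of~\eqref{eq:6} at the object $\ocf X$ to rewrite $\varepsilon^\partial_{\ocf X}\circ\zeta_{\ocf X}$ as $\varepsilon_{\ocf X}$, this becomes $\varepsilon_{\ocf X}\circ\oc(\zeta_X)\circ\delta_X$; naturality of the comonad counit $\varepsilon$ at $\zeta_X$ then rewrites it as $\zeta_X\circ\varepsilon_{\oc X}\circ\delta_X$, which equals $\zeta_X$ by the comonad counit law $\varepsilon_{\oc X}\circ\delta_X=1_{\oc X}$ for $\oc$. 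Thus both composites restrict to $\zeta_X$ along $\zeta_X$, so by uniqueness in the universal property of $\zeta_X$ they coincide, giving $\varepsilon^\partial_{\ocf X}\circ\delta^\partial_X=1_{\ocf X}$.

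There is no real obstacle here: the computation is short and of the same flavour as the previous lemmas. The one point requiring care is the second step---recognising that $\varepsilon^\partial_{\ocf X}$ is a morphism of commuting $X$-actions for the \emph{lifted} modality $\oc^{\partial X}$, not merely a map in $\C$, since it is precisely this that makes $\varepsilon^\partial_{\ocf X}\circ\delta^\partial_X$ a morphism of commuting $X$-actions to which the freeness property of $\zeta_X$ can be applied.
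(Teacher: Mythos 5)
Your proposal is correct and follows essentially the same route as the paper: exhibit $\varepsilon^\partial_{\ocf X}\circ\delta^\partial_X$ as a map of commuting $X$-actions $(\ocf X,\mathsf{d}^\partial_X)\to(\ocf X,\mathsf{d}^\partial_X)$ using~\eqref{eq:34} and Lemma~\ref{lem:9}(i) (with its hypothesis supplied by Lemma~\ref{lem:10}), then check agreement with the identity after precomposing with $\zeta_X$ via~\eqref{eq:36}, \eqref{eq:6} and the counit law for $\oc$, and conclude by the freeness of $(\ocf X,\mathsf{d}^\partial_X)$ on $\oc X$. The only cosmetic difference is that your explicit computation invokes naturality of $\varepsilon$ where the paper cites naturality of $\zeta$; both rewritings of $(\zeta\zeta)_X$ lead to the same conclusion.
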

\begin{proof}
  Observe that we have a diagram of
  commuting $X$-actions
  \begin{equation*}
    \cd[@C-1em]{
      {(\ocf X, \mathsf{d}^\partial_X)} \ar[rr]^-{\delta^\partial_X} \ar@{=}[dr] & &
      {\oc^{\partial X}(\ocf X, \mathsf{d}^\partial_X)\rlap{ ,}} \ar[dl]^-{\varepsilon^\partial_{\ocf X}} \\ &
      {(\ocf X, \mathsf{d}^\partial_X)}
    }
  \end{equation*}
  where the top edge is a commuting action by~\eqref{eq:34}, and the
  right edge is one by Lemma~\ref{lem:9}(i). Moreover, this triangle
  commutes on precomposition by $\zeta_X$ due to naturality of
  $\zeta$, \eqref{eq:6} and \eqref{eq:36}; whence, by universality of
  $\zeta_X$, the result.
\end{proof}

\begin{Lemma}
  \label{lem:17}
  $\delta^\partial$ satisfies the chain rule for $\mathsf{d}^\partial$.
\end{Lemma}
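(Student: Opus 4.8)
The plan is simply to invoke Lemma~\ref{lem:7}, which states exactly the equivalence we want, provided its hypotheses are met; and all of those hypotheses have by now been established. So the real content of the argument is bookkeeping: checking that the situation of Lemma~\ref{lem:7} obtains for the data $\ocf$, $\mathsf{d}^\partial$, $\mathsf{e}^\partial$, $\Delta^\partial$, $\varepsilon^\partial$, $\delta^\partial$.

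First I would verify that the equivalent conditions of Lemma~\ref{lem:4} hold. Indeed, $\mathsf{d}^\partial$ satisfies the interchange rule by Lemma~\ref{lem:5}, so by Lemma~\ref{lem:8} each $(\ocf X, \mathsf{d}^\partial_X)$ is a commuting $X$-action; the maps $\mathsf{e}^\partial$ and $\Delta^\partial$ endow each $\ocf X$ with cocommutative comonoid structure and satisfy the constant and product rules for $\mathsf{d}^\partial$ by Lemma~\ref{lem:initial-comonoid}, so the equivalent conditions of Lemma~\ref{lem:3} hold; and $\C$ has finite biproducts by our standing hypothesis while $\varepsilon^\partial$ satisfies the linear rule by Lemma~\ref{lem:6}, so the equivalent conditions of Lemma~\ref{lem:4} hold. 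Next I would check the two further hypotheses that Lemma~\ref{lem:7} imposes on the candidate comultiplication: that each $\delta^\partial_X$ is a comonoid morphism for $(\Delta^\partial, \mathsf{e}^\partial)$, which is Lemma~\ref{lem:delta-comonoid}, and that the counit law $\varepsilon^\partial_{\ocf X} \circ \delta^\partial_X = 1_{\ocf X}$ holds, which is Lemma~\ref{lem:13}.

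With these in place, Lemma~\ref{lem:7} yields the equivalence: $\delta^\partial_X \colon (\ocf X, \mathsf{d}^\partial_X) \to \oc^{\partial X}(\ocf X, \mathsf{d}^\partial_X)$ is a map of commuting $X$-actions if and only if $\mathsf{d}^\partial$ satisfies the chain rule. But the left-hand side holds by fiat: Definition~\ref{def:17} \emph{defines} $\delta^\partial_X$ to be such a map of commuting $X$-actions, namely the one of the form~\eqref{eq:34} rendering~\eqref{eq:36} commutative. Hence $\mathsf{d}^\partial$ satisfies the chain rule, as required.

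I do not expect a genuine obstacle here, since the substantive work was distributed over the preceding lemmas; the only point requiring care is that \emph{all} of the numbered side-conditions feeding into Lemma~\ref{lem:7} have actually been discharged before this point, and in particular that the counit law of Lemma~\ref{lem:13} is available, since it is precisely that identity which licenses the rewriting of $(\mathsf{b}^\partial_{\ocf X} \otimes X) \circ (\delta^\partial_X \otimes X)$ inside the proof of Lemma~\ref{lem:7}. So the write-up will be a short paragraph assembling these citations rather than any new calculation.
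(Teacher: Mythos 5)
Your proposal is correct and follows exactly the paper's route: the paper's own proof of this lemma is the one-line ``By Lemma~\ref{lem:7} together with~\eqref{eq:34}'', with the side-conditions (comonoid morphism property and counit law) having been discharged in Lemmas~\ref{lem:delta-comonoid} and~\ref{lem:13} immediately beforehand. Your write-up merely makes the bookkeeping of those hypotheses explicit, which is accurate and harmless.
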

\begin{proof}
  By Lemma~\ref{lem:7} together with~\eqref{eq:34}.
\end{proof}

All that remains to complete our construction is to verify the
other two comonad identities.

\begin{Lemma}\label{lem:initial-comonad}
  $(\ocf, \delta^{\partial}, \varepsilon^{\partial})$ is a comonad.
\end{Lemma}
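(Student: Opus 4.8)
The plan is to verify the two remaining comonad axioms --- the left counit law $\varepsilon^\partial \circ \delta^\partial = 1$, which was handled in Lemma~\ref{lem:13}, is already in hand, so we need the \emph{right} counit law $\ocf \varepsilon^\partial_X \circ \delta^\partial_X = 1_{\ocf X}$ and the coassociativity law $\delta^\partial_{\ocf X} \circ \delta^\partial_X = \ocf \delta^\partial_X \circ \delta^\partial_X$ --- and then assemble everything. The strategy throughout is the same one used in Lemmas~\ref{lem:initial-comonoid}, \ref{lem:6}, \ref{lem:delta-comonoid} and~\ref{lem:13}: exhibit each axiom as an equality between two maps of commuting $X$-actions out of the free commuting $X$-action $(\ocf X, \mathsf{d}^\partial_X)$ on $\oc X$, then check it holds after precomposition with the universal map $\zeta_X$, where it reduces to the corresponding (already-known) comonad axiom for $\oc$ together with the compatibility squares~\eqref{eq:36},~\eqref{eq:6} and~\eqref{eq:27}.

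Concretely, for the right counit law I would first observe that $\ocf \varepsilon^\partial_X \colon \ocf \ocf X \to \ocf X$ underlies a map of commuting $X$-actions $\oc^{\partial X}(\ocf X, \mathsf{d}^\partial_X) \to (\ocf X, \mathsf{d}^\partial_X)$ --- this is an instance of Lemma~\ref{lem:15}(ii) applied to the map of $X$-actions $\varepsilon^\partial_X \colon \oc^{\partial X}(\ocf X) \to (\ocf X, \mathsf{d}^\partial_X)$ coming from Lemma~\ref{lem:9}(i), whose precondition Lemma~\ref{lem:16}(i) was checked in Lemma~\ref{lem:10}. Composing with $\delta^\partial_X$ from~\eqref{eq:34} gives a map of commuting $X$-actions $(\ocf X, \mathsf{d}^\partial_X) \to (\ocf X, \mathsf{d}^\partial_X)$, and precomposing with $\zeta_X$ and using naturality of $\zeta$, \eqref{eq:36}, \eqref{eq:6} and the right counit law for $\oc$ shows it agrees with the identity after $\zeta_X$; freeness then gives $\ocf \varepsilon^\partial_X \circ \delta^\partial_X = 1$.

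For coassociativity, I would set up the square
\[
  \cd[@C+1em]{
    (\ocf X, \mathsf{d}^\partial_X) \ar[r]^-{\delta^\partial_X} \ar[d]_-{\delta^\partial_X} &
    \oc^{\partial X}(\ocf X, \mathsf{d}^\partial_X) \ar[d]^-{\ocf \delta^\partial_X} \\
    \oc^{\partial X}(\ocf X, \mathsf{d}^\partial_X) \ar[r]^-{\delta^\partial_{\ocf X}} &
    \oc^{\partial X}\oc^{\partial X}(\ocf X, \mathsf{d}^\partial_X)
  }
\]
of commuting $X$-actions: the horizontal and left vertical maps are maps of $X$-actions by~\eqref{eq:34} (the bottom one being $\delta^\partial$ at $\ocf X$, which is~\eqref{eq:34} for the object $\ocf X$ lifted to the $X$-action $(\ocf X, \mathsf{d}^\partial_X)$ via Lemma~\ref{lem:15}), and the right vertical is a map of $X$-actions by Lemma~\ref{lem:9}(ii) --- whose precondition, the chain rule, is now available from Lemma~\ref{lem:17}, together with Lemma~\ref{lem:16}(ii) which follows from Lemma~\ref{lem:10} and Lemma~\ref{lem:delta-comonoid}. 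Precomposing the whole square with $\zeta_X$ collapses it, via naturality of $\zeta$ and repeated use of~\eqref{eq:36}, to the coassociativity law for $\delta$, so the square commutes by universality of $\zeta_X$. Finally, combining the right counit law, the left counit law (Lemma~\ref{lem:13}), coassociativity, naturality of $\delta^\partial$ and $\varepsilon^\partial$, and the comonoid-morphism property of $\delta^\partial$ (Lemma~\ref{lem:delta-comonoid}), we conclude $(\ocf, \delta^\partial, \varepsilon^\partial)$ is a comonad, and hence --- assembling with Lemmas~\ref{lem:initial-comonoid}, \ref{lem:6}, \ref{lem:delta-comonoid}, \ref{lem:17} --- a differential modality.

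The main obstacle I anticipate is bookkeeping rather than conceptual: one must be careful that the various lemmas from Section~\ref{sec:lift-mono-diff} are being applied to the correct commuting $X$-action (namely $(\ocf X, \mathsf{d}^\partial_X)$ itself, not merely to $\oc X$ with the trivial action), and that all side-conditions of Lemmas~\ref{lem:9},~\ref{lem:14},~\ref{lem:16} have genuinely been verified by the cumulative work in Lemma~\ref{lem:10} and Lemmas~\ref{lem:delta-comonoid}--\ref{lem:17} before they are invoked --- in particular there is a mild circularity to avoid, since Lemma~\ref{lem:9}(ii) needs the chain rule, which came from Lemma~\ref{lem:7}, which itself only needed $\delta^\partial$ to be a comonoid morphism satisfying the left counit law, both of which were established without coassociativity. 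As long as that dependency order is respected, each step is a routine "diagram of commuting actions, check after $\zeta_X$" argument of exactly the kind already rehearsed several times in this section.
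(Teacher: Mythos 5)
Your overall strategy --- exhibit each remaining axiom as an equation between maps of commuting $X$-actions out of the free action $(\ocf X, \mathsf{d}^\partial_X)$ and check it after precomposition with $\zeta_X$ --- is exactly the paper's, and your treatment of coassociativity is essentially the paper's square~\eqref{eq:37}. (You have, however, attached the justifications to the wrong edges there: it is the \emph{bottom} edge $\delta^\partial_{\ocf X}$ that needs Lemma~\ref{lem:9}(ii), while the \emph{right} edge $\ocf\delta^\partial_X$ is an action map simply because it is the functor $\oc^{\partial X}$ applied to the action map of~\eqref{eq:34}. Both facts are available at that point, so this is only a bookkeeping slip.)

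The genuine gap is in the second counit law. You claim $\ocf\varepsilon^\partial_X$ underlies a map of commuting $X$-actions $\oc^{\partial X}(\ocf X,\mathsf{d}^\partial_X)\to(\ocf X,\mathsf{d}^\partial_X)$ ``by Lemma~\ref{lem:15}(ii) applied to the map coming from Lemma~\ref{lem:9}(i)''. But Lemma~\ref{lem:9}(i) produces the \emph{component at $\ocf X$}, namely $\varepsilon^\partial_{\ocf X}\colon\oc^{\partial X}(\ocf X,\mathsf{d}^\partial_X)\to(\ocf X,\mathsf{d}^\partial_X)$ --- that is the map used for the \emph{first} counit law in Lemma~\ref{lem:13}, not $\ocf\varepsilon^\partial_X$. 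To obtain $\ocf\varepsilon^\partial_X$ as an action map by functoriality of $\oc^{\partial X}$ you would need $\varepsilon^\partial_X\colon\ocf X\to X$ itself to be a map of commuting $X$-actions into some $(X,\xi)$ with $\oc^{\partial X}\xi=\mathsf{d}^\partial_X$; it is not: the linear rule gives $\varepsilon^\partial_X\circ\mathsf{d}^\partial_X=\mathsf{e}^\partial_X\otimes X$, which is not in general of the form $\xi\circ(\varepsilon^\partial_X\otimes X)$ --- only the pair $(\mathsf{e}^\partial_X,\varepsilon^\partial_X)$ into the nilsquare action on $I\oplus X$ is an action map, and the projection onto $X$ is not. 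Consequently the fact that the \emph{composite} $\ocf\varepsilon^\partial_X\circ\delta^\partial_X$ is a map of commuting $X$-actions is not formal: the paper establishes it by an explicit five-region diagram chase combining the counit axiom for $(\Delta^\partial,\mathsf{e}^\partial)$, the chain rule, the linear rule, naturality of $\mathsf{d}^\partial$, and bifunctoriality of $\otimes$. That computation is the real content of this lemma and is missing from your proposal; once it is supplied, the remainder of your argument goes through as you describe.
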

\begin{proof}
  For the second counit law, we may argue as in Lemma~\ref{lem:13} and
  reduce to the problem of showing that the composite map:
  \begin{equation*}
    \ocf\varepsilon^\partial_{X} \circ \delta^\partial_X \colon {(\ocf X, \mathsf{d}^\partial_X)} \rightarrow {(\ocf X, \mathsf{d}^\partial_X)}
  \end{equation*}
  is a map of commuting $X$-actions. Then, as it precomposes with
  $\zeta_X$ to give $\zeta_X$, it must be the identity. Thus, consider
  the following diagram:
  \begin{equation*}
    \cd[@C-0.6em@R-0.4em]{
      \ocf X . X
      \ar[ddd]_-{\mathsf{d}^\partial_X}
      \ar@{=}[rr]
      \ar[dr]_-{\Delta^\partial_X.1} & &
      \ocf X . X
      \ar[r]^-{\delta^\partial_X.1} &
      \ocf \ocf X . X
      \ar[r]^-{\ocf \varepsilon^\partial_X.1} &
      \ocf X . X
      \ar[ddd]^-{\mathsf{d}^\partial_X}\\
      & \ocf X . \ocf X . X
      \ar[ur]|-{1.\mathsf{e}^\partial_X.1}
      \ar[r]_-{\delta^\partial_X.1.1} &
      \ocf \ocf X . \ocf X . X
      \ar[ur]|-{1.\mathsf{e}^\partial_X.1}
      \ar[dr]_-{1.\mathsf{d}^\partial_X}\\
      & & & \ocf \ocf X . \ocf X
      \ar[uu]|-{1.\varepsilon^\partial_X}
      \ar[d]|-{\mathsf{d}^\partial_{\ocf X}}
      \\ 
      \ocf X \ar[rrr]^-{\delta^\partial_X} & & & \ocf \ocf X  \ar[r]^-{\ocf \varepsilon} & \ocf X
    }
  \end{equation*}
  The top-left region is a counit axiom for $(\Delta^\partial,
  \mathsf{e}^\partial)$; the bottom-left region is the chain rule; the
  triangular region is the linear rule; the right region is naturality
  of $\mathsf{d}^\partial$; and the remaining region is
  bifunctoriality of $\otimes$. Thus, the outside commutes as
  required.
  Finally, for the coassociativity law, note that every edge in the
  following diagram is a map of commuting $X$-actions:
  \begin{equation}
    \label{eq:37}
    \cd{
      {(\ocf X, \mathsf{d}^\partial_X)} \ar[r]^-{\delta^\partial_X} \ar[d]_{\delta^\partial_X} &
      {\oc^{\partial X}(\ocf X, \mathsf{d}^\partial_X)} \ar[d]^{\ocf \delta^\partial_X} \\
      {\oc^{\partial X}(\ocf X, \mathsf{d}^\partial_X)} \ar[r]^-{\delta^\partial_{\ocf X}} &
      {\oc^{\partial X}\oc^{\partial X}(\ocf X, \mathsf{d}^\partial_X)}\rlap{ .}
    }
  \end{equation}
  For the top and left edges, this is direct from~\eqref{eq:34}; for
  the right and bottom edge, it follows from~\eqref{eq:34} together with functoriality of
  $\oc^{\partial X}$, respectively 
  Lemma~\ref{lem:9}(ii). Moreover, by naturality of $\zeta$ together
  with~\eqref{eq:36}, the square commutes on precomposition with
  $\zeta_X$; whence it commutes, as desired.
\end{proof}

We have thus proved:

\begin{Prop}
  $(\ocf, \delta^{\partial}, \varepsilon^{\partial},
  \Delta^{\partial}, \mathsf{e}^{\partial}, \mathsf{d}^{\partial} )$
  is a differential modality.
\end{Prop}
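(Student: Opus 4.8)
The plan is to read each clause of Definition~\ref{def:19} off the lemmas already established in this section. First, $\ocf\colon\C\to\C$ is a functor and $\mathsf{d}^\partial$ and $\zeta$ are natural transformations by Lemma~\ref{lem:5}; $\mathsf{e}^\partial$ and $\Delta^\partial$ are natural and each triple $(\ocf X,\Delta^\partial_X,\mathsf{e}^\partial_X)$ is a cocommutative comonoid by Lemma~\ref{lem:initial-comonoid}; $\varepsilon^\partial$ is natural by Lemma~\ref{lem:6}; and $\delta^\partial$ is natural by the final lemma of Section~\ref{sec:comon-comult}. By Lemma~\ref{lem:initial-comonad}, $(\ocf,\delta^\partial,\varepsilon^\partial)$ is a comonad, and by Lemma~\ref{lem:delta-comonoid} each $\delta^\partial_X$ is a comonoid homomorphism for $(\Delta^\partial,\mathsf{e}^\partial)$; together these assert precisely that $\ocf$, equipped with this data, is a coalgebra modality.

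It then remains only to check the five deriving-transformation axioms for $\mathsf{d}^\partial$: the interchange rule is Lemma~\ref{lem:5}, the constant and product rules are part of Lemma~\ref{lem:initial-comonoid}, the linear rule is Lemma~\ref{lem:6}, and the chain rule is Lemma~\ref{lem:17}. (The constant rule is in fact redundant, being derivable from the others, but it has been verified directly in any case.) This exhausts Definition~\ref{def:19}, so $\ocf$ is a differential modality.

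The one point requiring care---and it is already handled above---is the apparent circularity in the fifth step. Lemma~\ref{lem:7}, which ultimately delivers the chain rule for $\delta^\partial$ via Lemma~\ref{lem:17}, carries side conditions: that $\delta^\partial_X$ be a comonoid morphism and satisfy the counit law $\varepsilon^\partial_{\ocf X}\circ\delta^\partial_X = 1_{\ocf X}$, and both of these are themselves ingredients in the comonad verification of Lemma~\ref{lem:initial-comonad}. The loop is broken because $\delta^\partial_X$ was \emph{defined} as the unique map of commuting $X$-actions in~\eqref{eq:34} that is compatible with $\zeta$, so each side condition reduces, via the universal property of $\zeta_X$ (which exhibits $\ocf X$ as the free commuting $X$-action on $\oc X$), to the corresponding property of $\delta$, which holds because $\oc$ is a coalgebra modality. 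Only once Lemmas~\ref{lem:delta-comonoid} and~\ref{lem:13} are in hand may Lemma~\ref{lem:7} be invoked for the chain rule, and then the two remaining comonad identities follow as in Lemma~\ref{lem:initial-comonad}. I expect this bootstrap to be the main obstacle; everything else is direct citation of the preceding lemmas.
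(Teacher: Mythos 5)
Your proposal is correct and follows essentially the same route as the paper: the proposition is simply the accumulation of Lemmas~\ref{lem:5}, \ref{lem:initial-comonoid}, \ref{lem:6}, \ref{lem:delta-comonoid}, \ref{lem:13}, \ref{lem:17} and~\ref{lem:initial-comonad}, cited in the same roles you assign them. Your remark about the ordering needed to discharge the side conditions of Lemma~\ref{lem:7} before invoking it for the chain rule is exactly the point the paper itself flags at the start of Section~\ref{sec:diff-modal-struct}.
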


\subsection{Freeness}
\label{sec:freeness}

To complete this section, we prove that the differential modality of
the preceding proposition is free over $\oc$:

 \begin{Prop} $\ocf$ is the free differential modality on $\oc$. Explicitly: 
 \begin{enumerate}[(i)]
\item $\zeta\colon \oc \rightarrow \ocf$ is a coalgebra modality morphism;
\item For any differential modality $\oc'$ and coalgebra modality morphism $\psi\colon \oc \rightarrow \oc'$, there is a unique differential modality morphism $\psi^\sharp\colon \ocf \rightarrow \oc'$ with
  \begin{equation}
    \label{eq:40}
    \psi^\sharp \circ \zeta = \psi\rlap{ .}
  \end{equation}
\end{enumerate}
 \end{Prop}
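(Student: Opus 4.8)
The statement has two parts. Part (i), that $\zeta$ is a coalgebra modality morphism, is already almost entirely in hand: we built $\mathsf{e}^\partial$, $\Delta^\partial$, $\varepsilon^\partial$ and $\delta^\partial$ precisely so that the relevant squares~\eqref{eq:27},~\eqref{eq:6} and~\eqref{eq:36} commute, so the content of (i) is just to observe that $\zeta$ is a comonad morphism (the $\varepsilon$ and $\delta$ squares are~\eqref{eq:6} and~\eqref{eq:36}) and a comonoid morphism (the $\mathsf{e}$ and $\Delta$ squares are~\eqref{eq:27}). So I would dispatch (i) in a sentence or two by pointing back to those defining diagrams.

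**Existence of $\psi^\sharp$.** The real work is in (ii). Given a differential modality $\oc'$ and a coalgebra modality morphism $\psi\colon\oc\to\oc'$, recall from the motivating discussion (diagram~\eqref{eq:26}) that $\oc' X$ carries the commuting $X$-action $\mathsf{d}'_X$, and that $\zeta_X = \oc X\otimes\mathsf{u}^S_X$ exhibits $(\ocf X,\mathsf{d}^\partial_X)$ as the \emph{free} commuting $X$-action on $\oc X$. Hence there is a unique map of commuting $X$-actions $\psi^\sharp_X\colon(\ocf X,\mathsf{d}^\partial_X)\to(\oc' X,\mathsf{d}'_X)$ with $\psi^\sharp_X\circ\zeta_X=\psi_X$; I would take this as the definition of $\psi^\sharp$. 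That $\psi^\sharp$ is natural follows by the now-familiar argument used repeatedly in Lemmas~\ref{lem:initial-comonoid},~\ref{lem:6}: both legs of the naturality square are maps of commuting $X$-actions out of the free one, and they agree after precomposition with $\zeta_X$ by naturality of $\zeta$, $\psi$, and the $\mathsf{d}$'s. That $\psi^\sharp$ respects the deriving transformations, i.e.\ commutativity of~\eqref{eq:18}, is \emph{exactly} the assertion that $\psi^\sharp_X$ is a map of commuting $X$-actions, which holds by construction.

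**That $\psi^\sharp$ is a coalgebra modality morphism.** This is where the argument needs care, and I expect it to be the main obstacle. I must show each $\psi^\sharp_X$ commutes with $\mathsf{e}$, $\Delta$, $\varepsilon$ and $\delta$. For $\mathsf{e}$ and $\Delta$: one checks that both sides of each square are maps of commuting $X$-actions — using that $\mathsf{e}^\partial_X,\Delta^\partial_X$ are such by~\eqref{eq:28}, that $\mathsf{e}'_X,\Delta'_X$ are such by Lemma~\ref{lem:3} applied to $\oc'$, and that $\psi^\sharp_X$ (and its tensor squares, via the monoidal structure $\boxtimes$) are such by construction — and then verifies the squares after precomposing with $\zeta_X$, where they reduce via~\eqref{eq:27} (and the fact that $\psi$ is a comonoid morphism) to identities. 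The $\varepsilon$ square works the same way, testing against the free nilsquare action $(I\oplus X,\dots)$ of Lemma~\ref{lem:4} and reducing via~\eqref{eq:6}. The subtlest is the $\delta$ square: here both composites $\delta'_X\circ\psi^\sharp_X$ and $(\psi^\sharp\psi^\sharp)_X\circ\delta^\partial_X$ must be exhibited as maps of commuting $X$-actions $(\ocf X,\mathsf{d}^\partial_X)\to\oc^{\partial X}(\oc' X,\mathsf{d}'_X)$ — this uses Lemma~\ref{lem:7} / Lemma~\ref{lem:9} together with functoriality of the lifted $\oc^{\partial X}$ applied to the morphism $\psi^\sharp_X$ — and then one checks agreement after precomposing with $\zeta_X$, where it reduces to the statement that $\psi$ commutes with $\delta$. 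Uniqueness of $\psi^\sharp$ is immediate: any differential modality morphism $\varphi$ with $\varphi\circ\zeta=\psi$ must, being a map of deriving transformations, be a map of commuting $X$-actions $(\ocf X,\mathsf{d}^\partial_X)\to(\oc' X,\mathsf{d}'_X)$ agreeing with $\psi$ on $\oc X$, hence equals $\psi^\sharp_X$ by the universal property of the free commuting $X$-action. I would organise the whole proof around the single recurring move — "test against the free commuting (or nilsquare) action, reduce via the defining diagrams" — since every clause follows that template.
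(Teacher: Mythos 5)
Your proposal is correct and follows essentially the same route as the paper: define $\psi^\sharp_X$ as the unique map of commuting $X$-actions $(\ocf X,\mathsf{d}^\partial_X)\to(\oc' X,\mathsf{d}'_X)$ extending $\psi_X$ along $\zeta_X$ (which yields uniqueness and preservation of $\mathsf{d}$ at once), and then establish naturality and preservation of $\mathsf{e}$, $\Delta$, $\varepsilon$ and $\delta$ by exhibiting each square as a square of commuting $X$-action maps and testing it after precomposition with $\zeta_X$, exactly as the paper does. The only blemish is a notational slip in the $\delta$-square, whose common codomain should be $\oc^{'X}(\oc' X,\mathsf{d}'_X)$ (the lift of $\oc'$, with underlying object $\oc'\oc' X$) rather than $\oc^{\partial X}(\oc' X,\mathsf{d}'_X)$; with that correction your argument matches the paper's.
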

 \begin{proof}
     For (i), $\zeta$ is a coalgebra modality morphism by virtue of 
     commutativity in~\eqref{eq:27}, \eqref{eq:6} and \eqref{eq:36}.
     For (ii), suppose we have a differential modality $\oc'$ and
     coalgebra modality morphism $\psi \colon \oc \rightarrow \oc'$.
     By Lemma~\ref{lem:8}, the deriving transformation of $\oc'$
     endows each $\oc' X$ with a commuting $X$-action
     $(\oc' X, \mathsf{d}'_X)$; and as differential modality
     morphisms preserve deriving transformations, the
     components of any extension $\psi^\sharp$ of $\psi$ must underlie
     maps of commuting $X$-actions:
     \begin{equation}
       \label{eq:39}
       \psi^\sharp_X \colon (\ocf X, \mathsf{d}^\partial_X) \rightarrow (\oc' X, \mathsf{d}'_X)\rlap{ .}
     \end{equation}
     Now, by the universal property of $\ocf X$, a map of the
     form~\eqref{eq:39} is determined by its precomposition with
     $\zeta_X$; but this precomposite is forced by~\eqref{eq:40}.
     Thus, there is \emph{at most} one possible choice for the
     components $\psi^\sharp_X$, given by the unique maps~\eqref{eq:39}
     satisfying~\eqref{eq:40}.

     It remains to show that these components do yield a differential
     modality morphism $\psi^\sharp \colon \ocf \rightarrow \oc'$.
     Clearly, $\psi^\sharp$ preserves $\mathsf{d}$ by~\eqref{eq:39}.
     The remaining conditions that $\psi^\sharp$ must satisfy are
     naturality, and preservation of the structure maps $\mathsf{e}$,
     $\Delta$, $\varepsilon$ and $\delta$. These conditions are
     expressed by the commutativity of the following squares, each of
     whose sides, we note, is a commuting $X$-action map:
     \begin{gather*}
       \cd{
         {(\ocf X, \mathsf{d}^\partial_X)} \ar[r]^-{\ocf f} \ar[d]_{\psi^\sharp_X} &
         {f^\ast(\ocf Y, \mathsf{d}^\partial_Y)} \ar[d]^{\psi^\sharp_Y} &
         {(\ocf X, \mathsf{d}^\partial_X)} \ar[r]^-{(\mathsf{e}^\partial_X, \varepsilon^\partial_X)} \ar[d]_{\psi^\sharp_X} &
         {(I\oplus X, \iota_1 \circ (\pi_0.X))} \ar@{=}[d] \\
         {(\oc' X, \mathsf{d}'_X)} \ar[r]^-{\oc' f} &
         {f^\ast(\oc' Y, \mathsf{d}'_Y)} &
         {(\oc' X, \mathsf{d}'_X)} \ar[r]^-{(\mathsf{e}'_X, \varepsilon'_X)} &
         {(I\oplus X, \iota_1 \circ (\pi_0.X))}
       }\\
       \cd{
         (\ocf X, \mathsf{d}^\partial_X)
         \ar[r]^-{\Delta_X^\partial}
         \ar[d]_-{\psi^\sharp_X} &
         (\ocf X, \mathsf{d}^\partial_X) \boxtimes (\ocf X, \mathsf{d}^\partial_X) \ar[d]_-{\psi^\sharp_X \boxtimes \psi^\sharp_X} &
                  (\ocf X, \mathsf{d}^\partial_X)
         \ar[r]^-{{\mathsf{\delta}_X^\partial}}
         \ar[d]_-{\psi^\sharp_X} &
         \oc^{\partial X}(\ocf X, \mathsf{d}^\partial_X)  \ar[d]_-{(\psi^\sharp \psi^\sharp)_X} \\
         (\oc' X, \mathsf{d}'_X) \ar[r]^-{\Delta_X'} &
         (\oc' X, \mathsf{d}'_X) \boxtimes(\oc' X, \mathsf{d}'_X) &
         (\oc' X, \mathsf{d}'_X) \ar[r]^-{\delta_X'} &
         \oc^{'X}(\oc' X, \mathsf{d}'_X) \rlap{ .}
       }
     \end{gather*}
     Now, if we paste these four squares by, respectively---the
     naturality square for $\zeta$ at $f$; the right square
     of~\eqref{eq:6}; the right square of~\eqref{eq:27};
     and~\eqref{eq:36}---then we obtain the corresponding squares for
     the coalgebra modality map $\psi \colon \oc \rightarrow \oc'$,
     which thus commute. By the universal property of $\ocf X$, it the
     above squares must themselves commute, as required.
\end{proof}

We summarise our results in the first main result of this paper:
\begin{Thm}
  \label{thm:1}
  Let $(\C, \otimes, I)$ be a $k$-linear symmetric monoidal category
  with finite biproducts which admits algebraically-free commutative
  monoids
  \begin{equation*}
    (SX, \, \mathsf{d}^S_X \colon SX \otimes X \rightarrow X, \, \mathsf{u}^S_X
    \colon X \rightarrow SX)\rlap{ .}
  \end{equation*}
  For any coalgebra modality
  $(\oc, \delta, \varepsilon, \Delta, \mathsf{e})$ on $\C$, the free
  differential modality $\ocf$ on $\oc$ exists, with structure
  $(\ocf, \delta^\partial, \varepsilon^\partial, \Delta^\partial,
  \mathsf{e}^\partial, \mathsf{d}^\partial)$ given as follows:
  \begin{itemize}[itemsep=0.25\baselineskip]
  \item The \textbf{functor} $\ocf$ is given by $\ocf X = \oc X
    \otimes SX$ and $\ocf f = \oc f \otimes Sf$, where $Sf$ is the
    unique map rendering commutative the following diagram:
    \begin{equation*}
    \cd[@!@C-1em@-2.5em@R-1em]{
      & I \ar[dl]_-{\mathsf{u}^S_X} \ar[dr]^-{\mathsf{u}^S_Y} \\
      SX \ar[rr]^-{Sf} & & SY \rlap{ .} \\
      SX \otimes X \ar[u]^-{\mathsf{d}^S_X} \ar[rr]^-{Sf \otimes f} & &
      SY \ar[u]_-{\mathsf{d}^S_Y}
    }
  \end{equation*}
  
  \item The \textbf{deriving transformation} maps
    $\mathsf{d}^\partial_X \colon \ocf X \otimes X \rightarrow \ocf X$
    are given by:
    \begin{equation*}
      \oc X \otimes \mathsf{d}^S_X \colon \oc X \otimes SX \otimes X \rightarrow \oc X \otimes SX\rlap{ .}
    \end{equation*}

  \item The \textbf{unit map} $\zeta \colon \oc \rightarrow \ocf$
    exhibiting the freeness has components:
    \begin{equation*}
      \oc X \otimes \mathsf{u}^S_X \colon \oc X \rightarrow \oc X \otimes SX\rlap{ .}
    \end{equation*}

  \item The \textbf{comonoid} structure maps $\mathsf{e}^\partial_X \colon \ocf X
    \rightarrow I$ and $\Delta^\partial_X \colon \ocf X \rightarrow \ocf X
    \otimes \ocf X$ are the unique maps rendering commutative the
    following diagrams:
    \begin{equation*}
      \cd[@C-1em]{
        \oc X \ar[d]_-{\zeta_X} \ar[rr]^-{\mathsf{e}_X} &&
        I \ar@{=}[d] &&
        \oc X \ar[d]_-{\zeta_X} \ar[r]^-{ \Delta_X} &
        \oc X \otimes \oc X \ar[d]^-{ \zeta_X \otimes \zeta_X }    \\
        \ocf X \ar[rr]^-{\mathsf{e}^{\partial}_X} && I &&
        \ocf X \ar[r]^-{\Delta^{\partial}_X} &
        \sh{r}{0.2em}\ocf X \otimes \ocf X}
    \end{equation*}
    and satisfying the constant and product rule for
    $\mathsf{d}^\partial$.

  \item The \textbf{comonad} structure maps
    $\varepsilon^\partial_X \colon \ocf X \rightarrow X$ and
    $\delta^\partial_X \colon \ocf X \rightarrow \ocf \ocf X$ are the unique
    maps rendering commutative the following diagrams:
    \begin{equation*}
  \cd[@C-0.5em]{
    \oc X \ar[d]_-{\zeta_X} \ar[rr]^-{\varepsilon_X} &&
    X\ar@{=}[d] &&
    \oc X \ar[d]_-{\zeta_X} \ar[r]^-{ \delta_X} &
    \oc \oc X \ar[d]^-{ (\zeta\zeta)_X }    \\
    \ocf X \ar[rr]^-{\varepsilon^\partial_X} && X &&
    \ocf X \ar[r]^-{\delta^{\partial}_X} & \ocf \ocf X }
\end{equation*}
  and satisfying the linear and chain rule for $\mathsf{d}^\partial$.
  \end{itemize}
\end{Thm}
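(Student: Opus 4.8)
The plan is to assemble Theorem~\ref{thm:1} from the step-by-step construction carried out in Sections~\ref{sec:first-step}--\ref{sec:freeness}. First, since $\C$ is endowed with algebraically-free commutative monoids, each $X$ comes with data $(SX, \mathsf{u}^S_X, \mathsf{d}^S_X)$ satisfying the universal property~\eqref{eq:2}; by Remark~\ref{rk:4} the assignment $X \mapsto SX$ is then functorial, with $Sf$ the unique map of commuting $X$-actions making the displayed triangle commute---which is precisely the \textbf{functor} clause of the theorem. The definitions of $\ocf$, $\mathsf{d}^\partial$, and $\zeta$ are taken verbatim from Definition~\ref{def:13}, matching the \textbf{deriving transformation} and \textbf{unit map} clauses, and Lemma~\ref{lem:5} records that $\mathsf{d}^\partial$ obeys the interchange rule.

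Second, I would recall Definitions~\ref{def:14}, \ref{def:16} and~\ref{def:17}: these characterise $\mathsf{e}^\partial$, $\Delta^\partial$, $\varepsilon^\partial$ and $\delta^\partial$ exactly by the universal properties appearing in the \textbf{comonoid} and \textbf{comonad} clauses of the theorem, since $\zeta_X$ exhibits $(\ocf X, \mathsf{d}^\partial_X)$ as the free commuting $X$-action on $\oc X$ and Lemmas~\ref{lem:3}, \ref{lem:4}, \ref{lem:7} rephrase the constant/product rules, the linear rule, and the chain rule as conditions on morphisms of commuting actions. Thus each of these four structure maps is forced to be the unique map of (free) commuting $X$-actions that agrees, along $\zeta$, with the corresponding structure map of $\oc$, and the uniqueness clauses of the theorem come for free.

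Third, the axioms are verified by citing the chain of lemmas: Lemma~\ref{lem:initial-comonoid} (naturality, the cocommutative comonoid axioms, and the constant and product rules for $\mathsf{d}^\partial$), Lemma~\ref{lem:6} (the linear rule), the naturality lemma for $\delta^\partial$, Lemma~\ref{lem:delta-comonoid} ($\delta^\partial$ a comonoid morphism), Lemma~\ref{lem:13} (the first counit law), Lemma~\ref{lem:17} (the chain rule), and Lemma~\ref{lem:initial-comonad} (the remaining comonad axioms). Together these yield the Proposition of Section~\ref{sec:diff-modal-struct}, namely that $(\ocf, \delta^\partial, \varepsilon^\partial, \Delta^\partial, \mathsf{e}^\partial, \mathsf{d}^\partial)$ is a differential modality; and the Proposition of Section~\ref{sec:freeness} then shows that $\zeta \colon \oc \to \ocf$ is a coalgebra-modality morphism exhibiting $\ocf$ as free. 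For the latter, commutativity of~\eqref{eq:27}, \eqref{eq:6} and~\eqref{eq:36} makes $\zeta$ a coalgebra-modality morphism, and for any coalgebra-modality morphism $\psi \colon \oc \to \oc'$ into a differential modality the components $\psi^\sharp_X$ are forced to be the unique maps of commuting $X$-actions $(\ocf X, \mathsf{d}^\partial_X) \to (\oc' X, \mathsf{d}'_X)$ restricting to $\psi_X$ along $\zeta_X$; that these assemble into a differential-modality morphism follows by pasting the relevant squares against the data of $\psi$ and reducing along the universal property of $\zeta_X$.

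The step I expect to be the main obstacle is the fifth one: the comonad axioms for $\delta^\partial$, and in particular the chain rule. One cannot invoke Lemma~\ref{lem:7} directly to get the chain rule, because its hypotheses---that $\delta^\partial$ be a comonoid morphism for $(\Delta^\partial, \mathsf{e}^\partial)$ and satisfy the counit law $\varepsilon^\partial_{\ocf X} \circ \delta^\partial_X = 1$---must first be established by hand, by exploiting the defining property of $\zeta_X$ (that $(\ocf X, \mathsf{d}^\partial_X)$ is the free commuting $X$-action on $\oc X$) to reduce each identity to its precomposition with $\zeta_X$, where it follows from the corresponding identity for $\oc$. The coassociativity law is then obtained from the square~\eqref{eq:37}, whose edges are all maps of commuting $X$-actions by the lifting results of Section~\ref{sec:lift-mono-diff}, via the same reduction along $\zeta_X$.
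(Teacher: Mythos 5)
Your proposal is correct and follows essentially the same route as the paper: Theorem~\ref{thm:1} is indeed assembled exactly as you describe, from Definitions~\ref{def:13}, \ref{def:14}, \ref{def:16} and~\ref{def:17} together with the chain of Lemmas~\ref{lem:5}--\ref{lem:initial-comonad} and the freeness proposition of Section~\ref{sec:freeness}. Your observation about the chain rule---that Lemma~\ref{lem:7} cannot be applied until the comonoid-morphism and counit side-conditions on $\delta^\partial$ have been verified by reduction along $\zeta_X$---is precisely the order of argument the paper adopts in Section~\ref{sec:diff-modal-struct}.
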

We now provide more explicit descriptions of the structure maps
$\mathsf{e}^\partial$, $\Delta^\partial$, $\varepsilon^\partial$ and
$\delta^\partial$ in the preceding result. For $\mathsf{e}^\partial$
and $\Delta^\partial$, we recall from Section~\ref{sec:additive-case}
that each commutative monoid $SX$ in $\C$ is a bicommutative bialgebra with comonoid structure determined as
in~\eqref{eq:8}. We write this comonoid structure as
$\mathsf{e}^S_X \colon SX \rightarrow I$ and
$\Delta^S_X \colon SX \rightarrow SX \otimes SX$ to distinguish it
from the structure maps of $\oc$ and $\ocf$.

\begin{Lemma}
  \label{lem:18}
  In the situation of Theorem~\ref{thm:1}, the morphisms
  $\mathsf{e}^\partial_X \colon \ocf X \rightarrow I$ and
  $\Delta^\partial_X \colon \ocf X \rightarrow \ocf X \otimes \ocf X$
  of the free differential modality on $\oc$
  are given by:
  \begin{equation*}
    \begin{aligned}
      \oc X \otimes SX & {\xrightarrow{\mathsf{e}_X \otimes \mathsf{e}^S_X} I} \ \ \ \text{and}\\[5pt]
      \oc X \otimes SX &\smash{\xrightarrow{\Delta_X \otimes
          \Delta^S_X} \oc X \otimes \oc X \otimes SX \otimes SX
        \xrightarrow{1 \otimes \sigma \otimes 1} \oc X \otimes SX
        \otimes \oc X \otimes SX}\rlap{ .}
    \end{aligned}
  \end{equation*}
\end{Lemma}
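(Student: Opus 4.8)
The plan is to read off $\mathsf{e}^\partial_X$ and $\Delta^\partial_X$ from their universal characterisation in Definition~\ref{def:14}: each is the \emph{unique} map of commuting $X$-actions of the form~\eqref{eq:28} making the relevant triangle in~\eqref{eq:27} commute. So it suffices to check that the two explicit formulas in the statement (i) underlie maps of commuting $X$-actions and (ii) satisfy~\eqref{eq:27}; the identifications then follow by uniqueness, with no further computation.

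The first step is to record the two facts about $SX$ that carry the argument. Since $\mathsf{e}^S_X$ and $\Delta^S_X$ are, by their very construction in Section~\ref{sec:additive-case} (see~\eqref{eq:8}), homomorphisms of commutative monoids, they preserve units, so $\mathsf{e}^S_X \circ \mathsf{u}^S_X = 1_I$ and $\Delta^S_X \circ \mathsf{u}^S_X = \mathsf{u}^S_X \otimes \mathsf{u}^S_X$; these dispatch the triangles~\eqref{eq:27}. More importantly, the bialgebra axioms of $SX$ are exactly what make $\mathsf{e}^S_X$ and $\Delta^S_X$ homomorphisms of right $SX$-modules $(SX, \nabla_X) \to (I, \mathsf{e}^S_X)$ and $(SX, \nabla_X) \to (SX, \nabla_X) \boxtimes (SX, \nabla_X)$ respectively — the counit-is-a-monoid-map axiom in the first case, the bialgebra compatibility $\Delta^S(ab) = \Delta^S(a)\Delta^S(b)$ in the second. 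Transporting along the isomorphism $\eta_X^\ast \colon \mod \to \ca$, which by the construction in Section~\ref{sec:additive-case} carries $\boxtimes$ to $\boxtimes$ and the free module $(SX, \nabla_X)$ to $(SX, \mathsf{d}^S_X)$ via~\eqref{eq:3}, gives that $\mathsf{e}^S_X \colon (SX, \mathsf{d}^S_X) \to (I, 0)$ and $\Delta^S_X \colon (SX, \mathsf{d}^S_X) \to (SX, \mathsf{d}^S_X) \boxtimes (SX, \mathsf{d}^S_X)$ are maps of commuting $X$-actions.

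The two verifications then go as follows. For $\mathsf{e}^\partial_X = \mathsf{e}_X \otimes \mathsf{e}^S_X$: precomposition with $\zeta_X = \oc X \otimes \mathsf{u}^S_X$ gives $\mathsf{e}_X$ (the left square of~\eqref{eq:27}), and since $\mathsf{d}^\partial_X = \oc X \otimes \mathsf{d}^S_X$ and $\mathsf{e}^S_X \circ \mathsf{d}^S_X = 0$, bilinearity of $\otimes$ forces $(\mathsf{e}_X \otimes \mathsf{e}^S_X) \circ \mathsf{d}^\partial_X = 0$, i.e.\ $\mathsf{e}_X \otimes \mathsf{e}^S_X$ is a map of commuting $X$-actions into $(I,0)$. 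For $\Delta^\partial_X$: the key observation is that, writing $\oc X$ for the trivial action $(\oc X, 0)$, formula~\eqref{eq:20} identifies $(\ocf X, \mathsf{d}^\partial_X)$ with $\oc X \boxtimes (SX, \mathsf{d}^S_X)$; then, since $\Delta_X \colon (\oc X, 0) \to (\oc X, 0) \boxtimes (\oc X, 0)$ is trivially a map of commuting $X$-actions and $\Delta^S_X$ is one by the above, applying the bifunctor $\boxtimes$ to these maps and post-composing with the middle-four-interchange coherence isomorphism of $\ca$ produces a map of commuting $X$-actions whose underlying $\C$-morphism — because the coherence constraints of $\boxtimes$ on $\ca$ are those of $\C$ (Definition~\ref{def:3}) — is exactly $(1 \otimes \sigma \otimes 1) \circ (\Delta_X \otimes \Delta^S_X)$. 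A short calculation using $\Delta^S_X \circ \mathsf{u}^S_X = \mathsf{u}^S_X \otimes \mathsf{u}^S_X$ shows this map precomposes with $\zeta_X$ to give $(\zeta_X \otimes \zeta_X) \circ \Delta_X$, the right square of~\eqref{eq:27}. Uniqueness then yields the claimed formulas.

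I expect the main obstacle to be the middle step: correctly recognising that the bialgebra structure produced on $SX$ by the additive-case machinery of Section~\ref{sec:additive-case} makes $\mathsf{e}^S_X$ and $\Delta^S_X$ into \emph{module} maps, and confirming that the monoidal structure $\boxtimes$ on $\ca$ is literally the transport of that on $\mod$, so that the bifunctoriality-plus-coherence argument for $\Delta^\partial_X$ is legitimate. Once that is in hand, the rest is bookkeeping with unitors and $k$-bilinearity of $\otimes$.
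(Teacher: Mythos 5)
Your proposal is correct and follows essentially the same route as the paper's proof: both establish that $\mathsf{e}^S_X$ and $\Delta^S_X$ lift to right $SX$-module maps via the bialgebra axioms, transport this along the isomorphism $\eta_X^\ast \colon \mod \rightarrow \ca$ to see they are maps of commuting $X$-actions, use the decomposition $(\ocf X, \mathsf{d}^\partial_X) \cong (\oc X, 0) \boxtimes (SX, \mathsf{d}^S_X)$ together with bifunctoriality of $\boxtimes$ to conclude the stated formulas are maps of commuting $X$-actions, check the triangles~\eqref{eq:27}, and invoke the uniqueness in Definition~\ref{def:14}. The only difference is cosmetic: you verify the $\mathsf{e}^\partial_X$ case by a direct computation with $\mathsf{e}^S_X \circ \mathsf{d}^S_X = 0$ in addition to the transport argument, which is harmless.
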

\begin{proof}
  As we observed in Section~\ref{sec:additive-case}, if $B$ is a
  bialgebra in any symmetric monoidal category $\V$, then the monoidal
  structure of $\V$ lifts to one on the category of right $B$-modules.
  It is not hard to show that, with respect to this structure, the
  comonoid structure maps of $B$ lift to maps of right $B$-modules
  $\Delta^B \colon (B, \nabla^B) \rightarrow (B, \nabla^B)
  \tilde{\otimes} (B, \nabla^B)$ and
  $\mathsf{e}^B \colon (B, \nabla^B) \rightarrow \tilde I$.

  In particular, this is true for the bialgebra $SX$ in $\C$, and so,
  transporting under the monoidal equivalence
  $(\mod, \tilde \otimes) \simeq (\ca, \boxtimes)$, we conclude that
  the following are maps of commuting $X$-actions:
  \begin{equation*}
    \mathsf{e}^S_X \colon (SX, \mathsf{d}^S_X) \rightarrow (I,0)  \qquad \text{and} \qquad  \Delta^S_X \colon (SX, \mathsf{d}^S_X) \rightarrow (SX, \mathsf{d}^S_X) \boxtimes (SX, \mathsf{d}^S_X)\rlap{ .}
  \end{equation*}
  Since $(\ocf X, \mathsf{d}^\partial_X) \cong (X,0) \boxtimes (SX,
  \mathsf{d}^S_X)$, we easily conclude that the maps in the statement of
  this lemma
  are maps of commuting $X$-actions
  \begin{equation*}
    (\ocf X, \mathsf{d}^\partial_X)  \xrightarrow{\ \ } (I,0)  \qquad \text{and} \qquad  (\ocf X, \mathsf{d}^\partial_X) \xrightarrow{\ \ } (\ocf X, \mathsf{d}^\partial_X) \boxtimes (\ocf X, \mathsf{d}^\partial_X)\rlap{ .}
  \end{equation*}
  Moreover, they easily render the squares of~\eqref{eq:27}
  commutative, and so satisfy both of the properties which uniquely
  characterise $\mathsf{e}^\partial_X$ and
  $\Delta^\partial_X$.
\end{proof}

We now consider the maps $\varepsilon^\partial$. For this, we use the
fact, noted in Example~\ref{ex:6}, that $S^\mathrm{op}$ is a monoidal
differential modality on $\C^\mathrm{op}$, whose codereliction is, by
Example~\ref{ex:4} and the proof of Proposition~\ref{prop:3}, the
second component of the unique map of commuting $X$-actions
\begin{equation*}
  \E^S_X \defeq (\mathsf{e}^S_X, \varepsilon^S_X) \colon (SX, \mathsf{d}^S_X) \rightarrow (I\oplus X, \iota_1 \circ (\pi_0 \otimes X))
\end{equation*}
such that $\E_X^S \circ \mathsf{u}^S_X = \iota_0 \colon I \rightarrow I
\oplus X$. In particular, this means that $\varepsilon^S_X$ is characterised
the equalities
\begin{equation}
  \label{eq:55}
  \varepsilon^S_X \circ \mathsf{u}^S_X = 0 \qquad \text{and} \qquad \varepsilon^S_X \circ \mathsf{d}^S_X = \mathsf{e}^S_X \otimes X\rlap{ .}
\end{equation}

\begin{Lemma}
  \label{lem:22}
  In the situation of Theorem~\ref{thm:1}, the map
  $\varepsilon^\partial_X \colon \ocf X \rightarrow X$ is given by
  ${\mathsf{e}_X \otimes \varepsilon^S_X\, +\, \varepsilon_X \otimes
    \mathsf{e}^S_X} \colon \oc X \otimes SX \rightarrow X$.
\end{Lemma}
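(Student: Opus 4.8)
The plan is to exploit the uniqueness built into Definition~\ref{def:16}: there, $\varepsilon^\partial_X$ is defined as the unique map $\ocf X \to X$ such that, together with $\mathsf{e}^\partial_X$, it yields a map of commuting $X$-actions $(\ocf X, \mathsf{d}^\partial_X) \to \bigl(I \oplus X, \iota_1 \circ (\pi_0 \otimes X)\bigr)$ and such that $\varepsilon^\partial_X \circ \zeta_X = \varepsilon_X$. So it suffices to check that the candidate $\varphi \defeq \mathsf{e}_X \otimes \varepsilon^S_X + \varepsilon_X \otimes \mathsf{e}^S_X$ enjoys both of these properties; everything else is bookkeeping.

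First I would check the $\zeta$-compatibility. Precomposing $\varphi$ with $\zeta_X = \oc X \otimes \mathsf{u}^S_X$ and distributing composition over the sum yields $\mathsf{e}_X \otimes (\varepsilon^S_X \circ \mathsf{u}^S_X) + \varepsilon_X \otimes (\mathsf{e}^S_X \circ \mathsf{u}^S_X)$; the first summand vanishes by~\eqref{eq:55}, and the second equals $\varepsilon_X$ since $\mathsf{e}^S_X$ is a commutative monoid homomorphism and hence $\mathsf{e}^S_X \circ \mathsf{u}^S_X = 1_I$. Together with the already-recorded identity $\mathsf{e}^\partial_X \circ \zeta_X = \mathsf{e}_X$ (see~\eqref{eq:27} and Lemma~\ref{lem:18}), this gives $(\mathsf{e}^\partial_X, \varphi) \circ \zeta_X = (\mathsf{e}_X, \varepsilon_X)$ as required.

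Next I would check that $(\mathsf{e}^\partial_X, \varphi)$ is a map of commuting $X$-actions into $\bigl(I \oplus X, \iota_1 \circ (\pi_0 \otimes X)\bigr)$. By Lemma~\ref{lem:4}, once the constant rule $\mathsf{e}^\partial_X \circ \mathsf{d}^\partial_X = 0$ is in hand --- and it is, by Lemma~\ref{lem:initial-comonoid} --- this amounts to the single equation $\varphi \circ \mathsf{d}^\partial_X = \mathsf{e}^\partial_X \otimes X$. Since $\mathsf{d}^\partial_X = \oc X \otimes \mathsf{d}^S_X$, distributing over the sum gives $\mathsf{e}_X \otimes (\varepsilon^S_X \circ \mathsf{d}^S_X) + \varepsilon_X \otimes (\mathsf{e}^S_X \circ \mathsf{d}^S_X)$; the first summand is $\mathsf{e}_X \otimes \mathsf{e}^S_X \otimes X$ by~\eqref{eq:55}, which equals $\mathsf{e}^\partial_X \otimes X$ by Lemma~\ref{lem:18}, while the second summand vanishes because $\mathsf{e}^S_X \circ \mathsf{d}^S_X = 0$ --- a fact that holds since $\mathsf{e}^S_X$ is a map of commuting $X$-actions $(SX, \mathsf{d}^S_X) \to (I, 0)$, as was observed in the proof of Lemma~\ref{lem:18} (and which also follows directly from~\eqref{eq:3},~\eqref{eq:8}, and the fact that $\mathsf{e}^S_X$ is a monoid homomorphism).

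With both properties verified, uniqueness in Definition~\ref{def:16} forces $\varphi = \varepsilon^\partial_X$, proving the lemma. There is no genuine difficulty here: the only thing to be careful about is to invoke the right ingredients --- the characterising equations~\eqref{eq:55} for $\varepsilon^S_X$, the fact that $\mathsf{e}^S_X$ is a monoid homomorphism, and the constant rule for $\mathsf{e}^\partial$ from Lemma~\ref{lem:initial-comonoid} --- rather than to re-derive the $X$-action identities from scratch.
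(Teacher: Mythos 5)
Your proposal is correct and follows essentially the same route as the paper: both arguments appeal to the uniqueness clause of Definition~\ref{def:16}, reduce it to the two equations $h \circ \zeta_X = \varepsilon_X$ and $h \circ \mathsf{d}^\partial_X = \mathsf{e}^\partial_X \otimes X$, and verify them for the candidate by distributing over the sum and invoking~\eqref{eq:55}, the monoid-homomorphism identity $\mathsf{e}^S_X \circ \mathsf{u}^S_X = 1_I$, the fact that $\mathsf{e}^S_X \colon (SX, \mathsf{d}^S_X) \to (I,0)$, and Lemma~\ref{lem:18}. Your additional remarks about Lemma~\ref{lem:4} and the constant rule are just a more explicit unpacking of the same uniqueness statement.
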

\begin{proof}
  By Definition~\ref{def:16}, $\varepsilon^\partial_X$ is the unique
  map $h \colon \ocf X \rightarrow X$ with
  $h \circ \zeta_X = \varepsilon_X$ and
  $ h \circ \mathsf{d}^\partial_X = \mathsf{e}^\partial_X \otimes X$.
  So it suffices to verify these two equalities taking
  $h$ to be $\mathsf{e}_X \otimes \varepsilon^S_X\, +\, \varepsilon_X
  \otimes \mathsf{e}^S_X$. For the first:
  \begin{align*}
    (\mathsf{e}_X
    \otimes \varepsilon^S_X\, +\, \varepsilon_X \otimes
    \mathsf{e}^S_X) \circ (SX \otimes \mathsf{u}^S_X) &=
    \mathsf{e}_X
    \otimes (\varepsilon^S_X \circ \mathsf{u}^S_X)\, +\, \varepsilon_X \otimes
    (\mathsf{e}^S_X \circ \mathsf{u}^S_X) \\
    &= 0 +\, \varepsilon_X = \varepsilon_X
  \end{align*}
  where from the first to the second line we use the 
  left equation in~\eqref{eq:55} and the bialgebra axiom
  $\mathsf{e}^S_X \circ \mathsf{u}^S_X = \mathrm{id}_I$. For the second
  equality we have:
  \begin{align*}
    (\mathsf{e}_X
    \otimes \varepsilon^S_X\, +\, \varepsilon_X \otimes
    \mathsf{e}^S_X) \circ (SX \otimes \mathsf{d}^S_X)  &=
    \mathsf{e}_X
    \otimes (\varepsilon^S_X \circ \mathsf{d}^S_X)\, +\, \varepsilon_X \otimes
    (\mathsf{e}^S_X \circ \mathsf{d}^S_X) \\
    &= \mathsf{e}_X \otimes \mathsf{e}^S_X \otimes X + 0 = \mathsf{e}^\partial_X \otimes X\rlap{ ,}
  \end{align*}
  where from the first to the second line we use the 
  right equation in~\eqref{eq:55} and the fact that $\mathsf{e}_X^S
  \colon (SX, \mathsf{d}^S_X) \rightarrow (I,0)$; and for the equality
  on the second line we use the previous lemma.
\end{proof}
Finally, we have $\delta^\partial$. It does not seem to be possible to
give a closed-form expression for this in terms of the structure maps
of $\oc$ and $S$; thus, in examples, the best we can do is to work
from the definition (Definition \ref{def:17}): thus, we first compute
the $X$-action structure on $\oc^{\partial X}(\ocf X,
\mathsf{d}^\partial X)$, and then find $\delta^\partial_X$ as the
unique map of $X$-actions $(\ocf X,
\mathsf{d}^\partial X) \rightarrow \oc^{\partial X}(\ocf X,
\mathsf{d}^\partial X)$ rendering commutative~\eqref{eq:36}.

\begin{Rk}
  \label{rk:1}
  Under the assumptions of Theorem~\ref{thm:1}, we have shown that the
  forgetful functor
  $U \colon \cat{DiffMod}(\C) \rightarrow \cat{CoalgMod}(\C)$ has a
  left adjoint $F$. It is not hard to see that this functor also
  creates coequalisers for $U$-absolute pairs; whence it is monadic.
  This allows for a more abstract explanation of the fact that
  the cofree cocommutative comonoid comonad $\oc^C$ is a differential
  modality, in a unique way. Indeed, $\oc^C$ is a terminal object in
  the category $\cat{CoalgMod}(\C)$, so bears a unique structure of
  $UF$-algebra, and so has a unique deriving transformation making it
  a differential modality.
\end{Rk}

\section{The free monoidal differential modality on a monoidal
  coalgebra modality}
\label{sec:free-mono-diff}
In this section, we continue to work in a $k$-linear symmetric category
$(\C, \otimes, I)$ which admits algebraically-free commutative monoids
and finite biproducts, and give the second main result of the paper by
constructing the free \emph{monoidal} differential modality on a
\emph{monoidal} coalgebra modality.
In other words, we seek a left adjoint to the forgetful functor
$\cat{MonDiffMod}(\C) \rightarrow \cat{MonCoalgMod}(\C)$.

In the previous section, we found a left adjoint to
$\cat{DiffMod}(\C) \rightarrow \cat{CoalgMod}(\C)$; but since, by
Proposition~\ref{prop:4}, $\cat{MonDiffMod}(\C)$ is a full subcategory of
$\cat{DiffMod}(\C)$ and $\cat{MonCoalgMod}(\C)$ is a full subcategory of
$\cat{CoalgMod}(\C)$, it suffices to show:

\begin{Prop}
  \label{prop:7}
  In the situation of Theorem~\ref{thm:1}, the free differential
  modality $\ocf$ on a coalgebra modality $\oc$ is monoidal whenever
  $\oc$ is so.
\end{Prop}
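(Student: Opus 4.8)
The base category $\C$ has finite biproducts, so by Proposition~\ref{prop:6} it is enough to show that the storage maps~\eqref{eq:11} of the coalgebra modality $\ocf$ are invertible; the plan is to reduce this to the invertibility of the corresponding maps for $\oc$ and for $S$ separately. Write $\mathsf{e}^S_X \colon SX \rightarrow I$, $\Delta^S_X \colon SX \rightarrow SX \otimes SX$ and $\nabla^S_X \colon SX \otimes SX \rightarrow SX$ for the (co)multiplication of the bialgebra $SX$ of Section~\ref{sec:additive-case}, and let
\begin{equation*}
  \chi^S_\top \defeq \mathsf{e}^S_0 \colon S0 \rightarrow I \quad\text{and}\quad \chi^S_{XY} \defeq (S\pi_0 \otimes S\pi_1)\circ\Delta^S_{X\oplus Y} \colon S(X\oplus Y) \rightarrow SX \otimes SY
\end{equation*}
be the analogues of~\eqref{eq:11} built from this comonoid structure. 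Combining the explicit description $\mathsf{e}^\partial_X = \mathsf{e}_X \otimes \mathsf{e}^S_X$ and $\Delta^\partial_X = (1\otimes\sigma\otimes 1)\circ(\Delta_X \otimes \Delta^S_X)$ of Lemma~\ref{lem:18} with $\ocf f = \oc f \otimes Sf$ and the naturality of the symmetry, a routine diagram chase then shows that the storage maps of $\ocf$ are $\chi^\partial_\top = \chi_\top \otimes \chi^S_\top$ and $\chi^\partial_{XY} = (\oc X \otimes \sigma_{\oc Y, SX} \otimes SY)\circ(\chi_{XY} \otimes \chi^S_{XY})$, where $\chi_\top$ and $\chi_{XY}$ are the storage maps~\eqref{eq:11} of $\oc$. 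Since tensor products and composites of isomorphisms are isomorphisms, it therefore suffices to prove that $\chi_\top$, $\chi_{XY}$, $\chi^S_\top$ and $\chi^S_{XY}$ are all invertible.

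For $\chi_\top$ and $\chi_{XY}$ this is immediate from Proposition~\ref{prop:6}, since $\oc$ is a monoidal coalgebra modality by hypothesis. For $\chi^S_\top$ and $\chi^S_{XY}$, the key point is that $S$ is the \emph{free} commutative monoid functor, hence a left adjoint, hence preserves the finite coproducts $0$ and $X\oplus Y$ of $\C$; and, dually to Remark~\ref{rk:5}, coproducts of commutative monoids are computed by the lifted tensor product. Thus $S0$ is the initial commutative monoid $(I, 1_I, 1_I)$, so $\mathsf{e}^S_0 = \chi^S_\top$ --- being by~\eqref{eq:8} the unique commutative monoid homomorphism $S0 \rightarrow (I, 1_I, 1_I)$ --- is an isomorphism; and $S(X\oplus Y)$ is the coproduct of $SX$ and $SY$, so the canonical comparison map $SX\otimes SY \rightarrow S(X\oplus Y)$ is an isomorphism. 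One then checks that this comparison coincides with the costorage map $\nabla^S_{X\oplus Y}\circ(S\iota_0 \otimes S\iota_1)$ of~\eqref{eq:17} (built for $S$), whose inverse is precisely $\chi^S_{XY}$ --- as in the discussion following Proposition~\ref{prop:10}, or equivalently by applying Proposition~\ref{prop:6} to $S^{\mathrm{op}}$, the cofree cocommutative comonoid comonad on $\C^{\mathrm{op}}$, which likewise has finite biproducts. Hence $\chi^S_{XY}$ is invertible, so all the storage maps of $\ocf$ are invertible, and by Proposition~\ref{prop:6} the coalgebra modality $\ocf$ is monoidal, which is the claim.

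The only genuinely delicate step is the factorisation of $\chi^\partial_{XY}$: it rests on the concrete form of the comonoid structure of $\ocf$ from Lemma~\ref{lem:18} and on carefully tracking the symmetry isomorphism that redistributes the four tensor factors of $\oc(X\oplus Y)^{\otimes 2}\otimes S(X\oplus Y)^{\otimes 2}$. Once that identity is in hand, everything else is routine bookkeeping with biproducts and with the universal properties of free and cofree (co)commutative (co)monoids. (We note in passing that, $\ocf$ now being a monoidal coalgebra modality, Proposition~\ref{prop:4} makes $\zeta$ automatically a monoidal coalgebra modality morphism, and the universal property of Section~\ref{sec:freeness} lifts to the monoidal setting for the same reason.)
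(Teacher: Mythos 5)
Your proof is correct and follows essentially the same route as the paper: reduce via Proposition~\ref{prop:6} to invertibility of the storage maps of $\ocf$, factor these through Lemma~\ref{lem:18} as $\chi_\top\otimes\chi^S_\top$ and $(1\otimes\sigma\otimes 1)\circ(\chi_{XY}\otimes\chi^S_{XY})$, and conclude from invertibility of the storage maps of $\oc$ and of $S$. The only cosmetic difference is that you justify the invertibility of $\chi^S_\top$ and $\chi^S_{XY}$ by hand via $S$ preserving finite coproducts, whereas the paper simply dualises the fact that $S$ is a monoidal coalgebra modality on $\C^{\mathrm{op}}$ with costorage inverses as in~\eqref{eq:17} --- an equivalence you yourself note.
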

\begin{proof}
  As noted in Section~\ref{sec:additive-case}, the free commutative
  monoid monad $S$ on $\C$ is an example of a monoidal coalgebra
  modality on $\C^\mathrm{op}$. In particular, this means that the
  storage maps in $\C^\mathrm{op}$ are invertible, with inverses
  given as in~\eqref{eq:17}, by the costorage maps of
  $\C^\mathrm{op}$. Dualising back to $\C$, we see that the storage
  maps $\chi^S_0$ and $\chi^S_{XY}$ for $S$ are given by:
  \begin{equation*}
    S0 \xrightarrow{\!\mathsf{e}^S_0\!} I \text{ and}  \ 
    S(X \oplus Y) \xrightarrow{\!\Delta^S_{X\oplus Y}\!} S(X \oplus Y) \otimes S(X\oplus Y) \xrightarrow{\!S\pi_0 \otimes S\pi_1\!} SX \otimes SY
  \end{equation*}
  and so are invertible in $\C$.
  On the other hand, since $\oc$ is a monoidal coalgebra modality in
  $\C$, \emph{its} storage maps $\chi_0$ and $\chi_{XY}$, given by:
  \begin{equation*}
    \smash{\oc 0 \xrightarrow{\mathsf{e}_0} I \text{ and} } \ 
    \smash{\oc (X \oplus Y) \xrightarrow{\Delta_{X\oplus Y}} \oc(X \oplus Y) \otimes \oc(X\oplus Y) \xrightarrow{\oc\pi_0 \otimes \oc\pi_1} \oc X \otimes \oc Y}
  \end{equation*}
  are also invertible in $\C$. But now by Lemma~\ref{lem:18}, the storage
  maps for $\ocf$ must be:
  \begin{align*}
    \oc 0 \otimes S0 &\mathrel{\smash{\xrightarrow{\mathsf{e}_0 \otimes \mathsf{e}_0^S}}} I \ \ \ \text{ and }\\[-4pt]
    \oc (X \oplus Y) \!\otimes\! S (X \oplus Y) &\xrightarrow{\chi_{X,Y} \otimes \chi^S_{X,Y}} \oc X \!\otimes\! \oc Y \!\otimes\! SX \!\otimes\! SY \xrightarrow{1 \otimes \sigma \otimes 1} \oc X \!\otimes\! SX \!\otimes\! \oc Y \!\otimes\! SY
  \end{align*}
  and so invertible, as desired.
\end{proof}

\begin{Thm}
  \label{thm:2}
  Let $(\C, \otimes, I)$ be a $k$-linear symmetric monoidal category
  with finite biproducts which admits algebraically-free commutative
  monoids
  \begin{equation*}
    (SX, \, \mathsf{d}^S_X \colon SX \otimes X \rightarrow X, \, \mathsf{u}^S_X
    \colon X \rightarrow SX)\rlap{ .}
  \end{equation*}
  For any monoidal coalgebra modality
  $\oc$ on $\C$, the free
  monoidal differential modality $\ocf$ on $\oc$ exists, with
  differential modality structure given as in Theorem~\ref{thm:1}.
\end{Thm}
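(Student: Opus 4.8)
The plan is to obtain this as a purely formal consequence of Theorem~\ref{thm:1} and Proposition~\ref{prop:7}, by restricting the free construction along the full subcategory inclusions recorded in Proposition~\ref{prop:4} and Definition~\ref{def:4}. Recall from there that the forgetful functor $\cat{MonDiffMod}(\C) \rightarrow \cat{MonCoalgMod}(\C)$ is the restriction of $U \colon \cat{DiffMod}(\C) \rightarrow \cat{CoalgMod}(\C)$ along full and faithful, injective-on-objects inclusions; fullness is precisely the statement that, between monoidal (differential or coalgebra) modalities, being a morphism of the monoidal variety is a property, not extra data, of being a morphism of the plain variety.

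First I would apply Theorem~\ref{thm:1} to the given monoidal coalgebra modality $\oc$, producing the free differential modality $\ocf$ together with its unit $\zeta \colon \oc \rightarrow \ocf$, a coalgebra modality morphism. Proposition~\ref{prop:7} then tells us $\ocf$ is in fact a monoidal differential modality, and Proposition~\ref{prop:4} upgrades the coalgebra modality morphism $\zeta$ between the monoidal coalgebra modalities $\oc$ and $\ocf$ to a monoidal coalgebra modality morphism; this gives part (i) of the universal property. For part (ii), given a monoidal differential modality $\oc'$ and a monoidal coalgebra modality morphism $\psi \colon \oc \rightarrow \oc'$, I would view $\psi$ merely as a coalgebra modality morphism and invoke Theorem~\ref{thm:1} to get the unique differential modality morphism $\psi^\sharp \colon \ocf \rightarrow \oc'$ with $\psi^\sharp \circ \zeta = \psi$; since $\ocf$ and $\oc'$ are monoidal, Proposition~\ref{prop:4} makes $\psi^\sharp$ automatically a monoidal differential modality morphism, and it is still the unique such, because any monoidal differential modality morphism extending $\psi$ is in particular a differential modality morphism extending $\psi$.

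There is no real obstacle at this point: the genuine work has already been discharged in Proposition~\ref{prop:7}, where the explicit description of $\Delta^\partial$ from Lemma~\ref{lem:18} is used to exhibit the storage maps of $\ocf$ as tensor products, conjugated by a symmetry, of the invertible storage maps of $\oc$ and of the free commutative monoid modality $S$, hence invertible, so that $\ocf$ is monoidal by Proposition~\ref{prop:6}. The only thing left to check is the bookkeeping that the left adjoint $F$ of $U$ restricts to a left adjoint of the forgetful functor on the monoidal subcategories, which is legitimate exactly because $F$ lands in $\cat{MonDiffMod}(\C)$ (Proposition~\ref{prop:7}) and the two inclusions are full (Proposition~\ref{prop:4}); the structure maps of $\ocf$ are then those already computed in Theorem~\ref{thm:1}.
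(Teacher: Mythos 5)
Your proposal is correct and is essentially the paper's own argument: the paper likewise reduces Theorem~\ref{thm:2} to Proposition~\ref{prop:7} (that $\ocf$ is monoidal when $\oc$ is, proved via invertibility of the storage maps), and then deduces the monoidal universal property formally from Theorem~\ref{thm:1} using the fullness of the inclusions $\cat{MonDiffMod}(\C) \hookrightarrow \cat{DiffMod}(\C)$ and $\cat{MonCoalgMod}(\C) \hookrightarrow \cat{CoalgMod}(\C)$ from Proposition~\ref{prop:4}. You spell out the restriction-of-adjoints bookkeeping slightly more explicitly than the paper does, but the route is the same.
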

\begin{Rk}
  \label{rk:7}
  As in Remark~\ref{rk:1}, when the assumptions of this theorem are satisfied
  the forgetful functor $\cat{MonDiffMod}(\C) \rightarrow
  \cat{MonCoalgMod}(\C)$ will not only have a left adjoint, but will
  be monadic.
\end{Rk}

Since, by Proposition~\ref{prop:9}, a differential modality is a
monoidal differential modality in at most one way, the above result is
enough to determine completely the structure of the monoidal
differential modality $\ocf$; however, as in the previous section, it
is convenient to have, insofar as possible, a direct description of
the additional structure maps associated to a monoidal differential
modality: thus, the algebra structure maps $\mathsf{u}^\partial_X$ and
$\nabla^\partial_X$, the codereliction $\eta^\partial_X$, and the
monoidal constraint maps $m_I^\partial$ and $m^\partial_{XY}$.

\begin{Lemma}
  \label{lem:23}
  In the situation of Theorem~\ref{thm:2}, the morphisms $\mathsf{u}^\partial_X \colon I
\rightarrow \ocf X$ and $\nabla^\partial_X \colon \ocf X \otimes \ocf
X \rightarrow \ocf X$ are given by
\begin{equation*}
    \begin{aligned}
      I  & {\xrightarrow{\mathsf{u}^S_X \otimes \mathsf{u}^S_X} \oc X \otimes SX} \ \ \ \text{and}\\[5pt]
      \oc X \otimes SX
        \otimes \oc X \otimes SX &\smash{\xrightarrow{1 \otimes \sigma \otimes 1} \oc X \otimes \oc X \otimes SX \otimes SX
        \xrightarrow{\nabla_X \otimes
          \nabla^S_X} \oc X \otimes SX}\rlap{ .}
    \end{aligned}
  \end{equation*}
\end{Lemma}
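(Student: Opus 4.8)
The plan is to pin down $\mathsf{u}^\partial_X$ and $\nabla^\partial_X$ using the uniqueness statement of Lemma~\ref{lem:20}. By Theorem~\ref{thm:2} the comonad $\ocf$ is a monoidal coalgebra modality on the $k$-linear category $\C$, so by Proposition~\ref{prop:10} it carries a unique additive bialgebra modality structure, and by Lemma~\ref{lem:20} its structure maps $\mathsf{u}^\partial_X$ and $\nabla^\partial_X$ are the unique families of maps rendering commutative the diagrams~\eqref{eq:16} and the convolution diagrams~\eqref{eq:52} relative to the comonoid counit $\mathsf{e}^\partial$, comultiplication $\Delta^\partial$, and comonad counit $\varepsilon^\partial$ of $\ocf$. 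So it is enough to check that the two maps displayed in the statement, namely $\mathsf{u}^\partial_X = \mathsf{u}_X \otimes \mathsf{u}^S_X$ and $\nabla^\partial_X = (\nabla_X \otimes \nabla^S_X) \circ (1 \otimes \sigma \otimes 1)$, satisfy these four diagrams.

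For this I would assemble the following ingredients: the explicit formulas $\mathsf{e}^\partial_X = \mathsf{e}_X \otimes \mathsf{e}^S_X$ and $\varepsilon^\partial_X = \mathsf{e}_X \otimes \varepsilon^S_X + \varepsilon_X \otimes \mathsf{e}^S_X$ from Lemmas~\ref{lem:18} and~\ref{lem:22} (together with the formula for $\Delta^\partial$); the fact that $\oc$, being a monoidal coalgebra modality, is itself an additive bialgebra modality and so satisfies~\eqref{eq:16} and~\eqref{eq:52}; the bialgebra axioms for the bialgebras $\oc X$ and $SX$, in particular $\mathsf{e}_X \circ \mathsf{u}_X = 1_I$, $\mathsf{e}^S_X \circ \mathsf{u}^S_X = 1_I$ and $\mathsf{e}^S_X \circ \nabla^S_X = \mathsf{e}^S_X \otimes \mathsf{e}^S_X$; the equalities~\eqref{eq:55} characterising $\varepsilon^S$; the identity $\varepsilon^S_X \circ \nabla^S_X = \mathsf{e}^S_X \otimes \varepsilon^S_X + \varepsilon^S_X \otimes \mathsf{e}^S_X$, which is the dual, for the monoidal differential modality $S^{\mathrm{op}}$ on $\C^{\mathrm{op}}$, of the identity $\Delta \circ \eta = \eta \otimes \mathsf{u} + \mathsf{u} \otimes \eta$ (itself a consequence of the product rule and $\Delta \circ \mathsf{u} = \mathsf{u} \otimes \mathsf{u}$, exactly as in the computation after~\eqref{eq:8}); and the ``comonoidal convolution'' identities $S(0) = \mathsf{u}^S \circ \mathsf{e}^S$ and $S(f+g) = \nabla^S \circ (Sf \otimes Sg) \circ \Delta^S$, which are the duals of~\eqref{eq:52} for $S^{\mathrm{op}}$.

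With these in hand the verifications are short. For the left diagram of~\eqref{eq:16}, $\varepsilon^\partial_X \circ (\mathsf{u}_X \otimes \mathsf{u}^S_X) = (\mathsf{e}_X\mathsf{u}_X)\otimes(\varepsilon^S_X\mathsf{u}^S_X) + (\varepsilon_X\mathsf{u}_X)\otimes(\mathsf{e}^S_X\mathsf{u}^S_X) = 1_I\otimes 0 + 0\otimes 1_I = 0$. For the right diagram, expanding $\varepsilon^\partial_X$, $\mathsf{e}^\partial_X$ and $\nabla^\partial_X$ and applying $\mathsf{e}_X\nabla_X = \mathsf{e}_X\otimes\mathsf{e}_X$, $\varepsilon_X\nabla_X = \mathsf{e}_X\otimes\varepsilon_X + \varepsilon_X\otimes\mathsf{e}_X$, $\mathsf{e}^S_X\nabla^S_X = \mathsf{e}^S_X\otimes\mathsf{e}^S_X$ and $\varepsilon^S_X\nabla^S_X = \mathsf{e}^S_X\otimes\varepsilon^S_X + \varepsilon^S_X\otimes\mathsf{e}^S_X$ termwise yields four summands which, after absorbing the $1\otimes\sigma\otimes1$ inside $\nabla^\partial_X$ and regrouping, recombine into $\mathsf{e}^\partial_X\otimes\varepsilon^\partial_X + \varepsilon^\partial_X\otimes\mathsf{e}^\partial_X$. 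For the left diagram of~\eqref{eq:52}, $\ocf(0) = \oc(0)\otimes S(0) = (\mathsf{u}_X\mathsf{e}_Y)\otimes(\mathsf{u}^S_X\mathsf{e}^S_Y) = (\mathsf{u}_X\otimes\mathsf{u}^S_X)\circ(\mathsf{e}_Y\otimes\mathsf{e}^S_Y) = \mathsf{u}^\partial_X\circ\mathsf{e}^\partial_Y$. Finally, for the convolution diagram, $\nabla^\partial_X \circ (\ocf f \otimes \ocf g) \circ \Delta^\partial_Y$ can be rewritten, using naturality of the symmetry to cancel the two copies of $1\otimes\sigma\otimes1$ coming from $\nabla^\partial$ and $\Delta^\partial$, as $(\nabla_X(\oc f\otimes\oc g)\Delta_Y)\otimes(\nabla^S_X(Sf\otimes Sg)\Delta^S_Y) = \oc(f+g)\otimes S(f+g) = \ocf(f+g)$.

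The only real obstacle is bookkeeping: one must keep straight the three layers of structure on $SX$ — its commutative monoid structure $(\mathsf{u}^S,\nabla^S)$, the induced bialgebra/comonoid structure $(\mathsf{e}^S,\Delta^S)$ from Section~\ref{sec:additive-case}, and the codereliction $\varepsilon^S$ arising from viewing $S$ as a monoidal differential modality on $\C^{\mathrm{op}}$ — and match each identity used for $S$ with the correct dualisation of the corresponding axiom for $S^{\mathrm{op}}$. Once that dictionary is fixed, every step reduces to bifunctoriality, the symmetric monoidal axioms, and distributivity of $+$ over $\circ$ and $\otimes$.
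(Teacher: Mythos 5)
Your proof is correct, but it takes a genuinely different route from the paper's. The paper exploits the storage-map presentation: it recalls that for any monoidal coalgebra modality the bialgebra maps are recovered as $\mathsf{u}^\partial_X = \ocf 0 \circ (\chi^\partial_0)^{-1}$ and $\nabla^\partial_X = \ocf\spn{1,1} \circ (\chi^\partial_{XX})^{-1}$, observes (as in the proof of Proposition~\ref{prop:7}) that $\chi^\partial_0 = \chi_0 \otimes \chi^S_0$ and $\chi^\partial_{XX} = (1 \otimes \sigma \otimes 1)\circ(\chi_{XX} \otimes \chi^S_{XX})$, and inverts; you instead invoke the uniqueness clause of Lemma~\ref{lem:20} for the monoidal coalgebra modality $\ocf$ and verify the four diagrams~\eqref{eq:16} and~\eqref{eq:52} directly for the candidate maps. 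Both arguments are legitimate given the material of Section~2.2: yours involves more term-by-term computation but avoids manipulating inverse Seely maps, while the paper's is essentially free once Proposition~\ref{prop:7} has computed $\chi^\partial$. The one auxiliary fact your route genuinely requires and which is nowhere stated in the paper is $\varepsilon^S_X \circ \nabla^S_X = \mathsf{e}^S_X \otimes \varepsilon^S_X + \varepsilon^S_X \otimes \mathsf{e}^S_X$; your justification of it is sound---it is the dual, for the monoidal differential modality $S^{\mathrm{op}}$ on $\C^{\mathrm{op}}$, of the identity $\Delta \circ \eta = \eta \otimes \mathsf{u} + \mathsf{u} \otimes \eta$, which follows from the product rule together with~\eqref{eq:56} and the bialgebra axiom $\Delta \circ \mathsf{u} = \mathsf{u} \otimes \mathsf{u}$---but note that~\eqref{eq:8} is a defining characterisation of $\Delta^S$ rather than a computation, so this derivation should be written out rather than cited. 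The remaining ingredients (the bialgebra axioms for $\oc X$ and $SX$, the diagrams~\eqref{eq:16} and~\eqref{eq:52} for $\oc$ and their duals for $S$, the equalities~\eqref{eq:55}, and the formulas of Lemmas~\ref{lem:18} and~\ref{lem:22}) are all available, and the four verifications go through exactly as you describe.
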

\begin{proof}
  We can define $\mathsf{u}_X^\partial$ and $\nabla_X^\partial$ in
  terms of the inverses of the storage maps:
  \begin{equation}
    \label{eq:44}
    \begin{aligned}
      \mathsf{u}_X^\partial &= I \xrightarrow{(\chi_0^\partial)^{-1}} \ocf 0 \xrightarrow{\ocf 0} \ocf X\\[-4pt]
      \nabla_X^\partial &= \ocf X \otimes \ocf X
      \xrightarrow{(\chi_{XX}^\partial)^{-1}} \ocf (X \oplus X)
      \xrightarrow{\ocf \spn{\mathrm{id}, \mathrm{id}}} \ocf X\rlap{
        .}
    \end{aligned}
  \end{equation}
  But since $\chi_0^\partial = \chi_0 \otimes \chi_0^S$ and
  $\chi_{XX}^\partial = (1 \otimes \sigma \otimes 1) \circ (\chi_{XX}
  \otimes \chi_{XX}^S)$, we have
  $(\chi_0^\partial)^{-1} = \chi_0^{-1} \otimes (\chi_0^S)^{-1}$ and
  $(\chi_{XX}^\partial)^{-1} = \bigl((\chi_{XX})^{-1} \otimes
  (\chi_{XX^S})^{-1}\bigr) \circ (1 \otimes \sigma \otimes 1)$.
  Substituting these into~\eqref{eq:44}, and noting that the maps
  $\mathsf{u}_X$, $\mathsf{u}^S_X$, $\nabla_X$ and $\nabla_X^S$ admit
  corresponding characterisations to~\eqref{eq:44}, the result follows.
\end{proof}
In the next result, we use the unit maps of the free commutative monoid
monad $\eta^S_X \colon X \rightarrow SX$, given as
in~\eqref{eq:19} by
$\eta^S_X \defeq \mathsf{d}^S_X \circ (\mathsf{u}^S_X \otimes X)$; note
that these are equally the dereliction maps of $S$ qua monoidal
coalgebra modality on $\C^\mathrm{op}$.

\begin{Lemma}
  \label{lem:24}
  In the situation of Theorem~\ref{thm:2}, the morphism
  $\eta^\partial_X \colon X \rightarrow \ocf X$ is given by the composite
  \begin{equation*}
    X \xrightarrow{\mathsf{u}_X \otimes \eta^S_X} \oc X \otimes X
  \end{equation*}
\end{Lemma}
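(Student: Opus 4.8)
The plan is to compute $\eta^\partial_X$ directly from the deriving transformation of $\ocf$, using the formula~\eqref{eq:56} which recovers the codereliction associated to a deriving transformation; thus $\eta^\partial_X = \mathsf{d}^\partial_X \circ (\mathsf{u}^\partial_X \otimes X)$. Both ingredients on the right are already available to us in explicit form, so the computation is immediate.

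First I would recall that, by Definition~\ref{def:13}, the deriving transformation of $\ocf$ is $\mathsf{d}^\partial_X = \oc X \otimes \mathsf{d}^S_X \colon \oc X \otimes SX \otimes X \rightarrow \oc X \otimes SX$; and that, by Lemma~\ref{lem:23}, the algebra unit of the monoidal differential modality $\ocf$ is $\mathsf{u}^\partial_X = \mathsf{u}_X \otimes \mathsf{u}^S_X \colon I \rightarrow \oc X \otimes SX$. Substituting these into~\eqref{eq:56} and using only the bifunctoriality of $\otimes$, I would obtain
\begin{equation*}
  \eta^\partial_X \;=\; (\oc X \otimes \mathsf{d}^S_X) \circ (\mathsf{u}_X \otimes \mathsf{u}^S_X \otimes X) \;=\; \mathsf{u}_X \otimes \bigl(\mathsf{d}^S_X \circ (\mathsf{u}^S_X \otimes X)\bigr)\rlap{ ,}
\end{equation*}
and then identify the bracketed composite with $\eta^S_X$ by its defining formula~\eqref{eq:19}. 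This yields $\eta^\partial_X = \mathsf{u}_X \otimes \eta^S_X \colon X \rightarrow \oc X \otimes SX = \ocf X$, as claimed. (The target of the displayed arrow in the statement should accordingly read $\oc X \otimes SX$, not $\oc X \otimes X$.)

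There is no real obstacle: the argument is a single application of bifunctoriality once Lemma~\ref{lem:23} is in hand. The only point requiring care is that the explicit form of $\mathsf{u}^\partial_X$---and indeed the very existence of the bialgebra unit $\mathsf{u}^\partial$ of $\ocf$---relies on $\oc$ being a \emph{monoidal} coalgebra modality, via Proposition~\ref{prop:7}; this is part of the standing hypotheses of Theorem~\ref{thm:2}. If one preferred to avoid invoking the codereliction/deriving-transformation translation of~\eqref{eq:56}, the same identity could instead be verified from the universal property characterising the codereliction, but the route above is the most economical.
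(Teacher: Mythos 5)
Your proposal is correct and is essentially identical to the paper's own (one-line) proof: both compute $\eta^\partial_X$ via~\eqref{eq:56} as $\mathsf{d}^\partial_X \circ (\mathsf{u}^\partial_X \otimes X)$, substitute the explicit forms of $\mathsf{d}^\partial_X$ and $\mathsf{u}^\partial_X$ from Definition~\ref{def:13} and Lemma~\ref{lem:23}, and identify $\mathsf{d}^S_X \circ (\mathsf{u}^S_X \otimes X)$ with $\eta^S_X$ via~\eqref{eq:19}. You are also right that the codomain in the displayed statement should read $\oc X \otimes SX$ rather than $\oc X \otimes X$.
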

\begin{proof}
  This is immediate from~\eqref{eq:56}, \eqref{eq:19}, and the previous lemma.
\end{proof}

Note that this formula is \emph{not} dual to the formula for the
dereliction maps $\varepsilon^\partial_X$ of Lemma~\eqref{lem:22};
indeed, these were given not simply by $\mathsf{e}_X \otimes \varepsilon^S_X$
but rather by the sum
$\mathsf{e}_X \otimes \varepsilon^S_X + \varepsilon_X \otimes
\mathsf{e}^S_X$.

For the nullary monoidal constraint map, we have that $m^\partial_I =
\xi_I \circ m_I$ since
$\xi \colon \oc \rightarrow \ocf$ is a monoidal comonad morphism; and so
\begin{equation}
  \label{eq:58}
  m^\partial_I = I \xrightarrow{m_I \otimes \mathsf{u}_I} \oc I \otimes SI \rlap{ .}
\end{equation}

For the binary monoidal constraint map $m^\partial_{XY}$, there seems
to be no efficient direct description; the best we can do is to
use~\eqref{eq:42}, which, bearing in mind that the inverses of the
storage maps are as in~\eqref{eq:17}, we can express as follows.
First, define the auxiliary map $n_X$ to be:
\begin{equation*}
  \ocf X \otimes \ocf X \xrightarrow{\nabla^\partial_X} \ocf X \xrightarrow{\delta^\partial_X} \ocf \ocf X \xrightarrow{\ocf \Delta^\partial_X} \ocf (\ocf X \otimes \ocf X) \xrightarrow{\ocf(\varepsilon^\partial_X \otimes \varepsilon^\partial_X)} \ocf(X \otimes X)\rlap{ .}
\end{equation*}
Now $m^\partial_{XY}$ is given by the composite
\begin{equation*}
  \ocf X \otimes \ocf Y \!\xrightarrow{\!\ocf\iota_0\otimes\ocf\iota_1\!}\!
  \ocf (X\oplus Y )\otimes \ocf (X\oplus Y) \!\xrightarrow{\!n_{X \oplus Y}\!}\!
  \ocf((X \oplus Y) \otimes (X\oplus Y)) \!\xrightarrow{\!\ocf(\pi_0 \otimes \pi_1)\!}\! \ocf(X \otimes Y)\rlap{ .}
\end{equation*}

\section{The initial monoidal differential modality}
\label{sec:init-mono-diff}
In this section, we give our third main result, which exploits our
construction of the free monoidal differential modality in order to
describe the \emph{initial monoidal differential modality} on a
suitable symmetric monoidal category $\C$. Note that the problem of
describing the initial differential modality (not necessarily
monoidal) is trivial: it is the comonad constant at the zero object.
However, when we add monoidality, things become more interesting.

The assumptions under which we will make our construction are that:
\begin{enumerate}[(a)]
\item $(\C, \otimes, I)$ is a $k$-linear symmetric monoidal category with biproducts;   \label{assumption1}
\item $\C$ admits algebraically-free commutative monoids; \label{assumption2}
\item $\C$ admits all set-indexed coproducts of its unit object $I$,
  and these distribute over the tensor product. \label{assumption3}
\end{enumerate}
Assumption~\ref{assumption3} is just what we need to construct the
initial monoidal coalgebra modality $\oc$ on $\C$;
then~\ref{assumption1} and~\ref{assumption2} allow us to build the
free monoidal differential modality $\ocf$ on this.
Note that, in particular, if $\C$ is $k$-linear symmetric monoidal
closed and cocomplete, then all of these assumptions will hold; this
is immediate for~\ref{assumption1} and~\ref{assumption3}, while
for~\ref{assumption2}, it follows from Lemma~\ref{lem:2} and
Proposition~\ref{prop:2}.

\subsection{The initial monoidal coalgebra modality}
\label{sec:init-mono-coalg}

In a symmetric monoidal category $(\C, \otimes, I)$, recall that a
\emph{point} of an object $X$ is a map $I \to X$, and the ``set of
points of $X$'' is the homset $\C(I,X)$. This gives us a functor
$\C(I, \thg)\colon \C \to \SET$ which is symmetric (lax) monoidal as a functor
$(\C, \otimes) \rightarrow (\SET, \times)$ via the maps
\begin{align*}
  1 & \rightarrow \C(I,I) & \C(I, X) \times \C(I,Y) &\rightarrow \C(I, X \otimes Y)\rlap{ .}\\
  \ast & \mapsto 1_I & (x,y) & \mapsto x \otimes y\rlap{ .}
\end{align*}
The assumption~\ref{assumption3} of all set-indexed coproducts of
$I$ yields a left adjoint $L \colon \SET \rightarrow \C$ to
$\C(I, \thg)$, which on objects sends $X$ to $\bigoplus_{x: I \to X} I$.
The monoidal structure of $\C(I, \thg)$ yields an \emph{op}monoidal
structure on $L$, with nullary comparison map
$n_I \colon \bigoplus_{\ast: I \to 1} I \rightarrow I$ the evident
isomorphism (where $\ast: I \to 1$ is the unique map to the terminal object), and binary comparison map
\begin{equation*}
  n_{X,Y} \colon \bigoplus_{\langle x,y \rangle: I \to X \times Y} \!\!I \longrightarrow \Bigl(\bigoplus_{x: I \to X} I\Bigr) \otimes \Bigl(\bigoplus_{y: I \to Y} I\Bigr)
\end{equation*}
determined by the requirement that
$n_{X,Y} \circ \iota_{x,y} = \iota_x \otimes \iota_{y}$, where $\iota_{(-)}$ is the injection map into the coproduct. Now, the
assumption in~\ref{assumption3} that coproducts of the unit distribute over tensor
product ensures that the maps $n_{X,Y}$ are also invertible. Thus, $L$
is \emph{strong} symmetric monoidal, and so we have an adjunction
\begin{equation}
  \label{eq:41}
  \cd[@C+1em]{
    {(\C, \otimes, I)} \ar@<-4.5pt>[r]_-{\C(I, \thg)} \ar@{<-}@<4.5pt>[r]^-{L} \ar@{}[r]|-{\bot} &
    {(\cat{Set}, \times, 1)} 
  }
\end{equation}
in the $2$-category of symmetric monoidal categories, symmetric
monoidal functors, and monoidal natural transformations.

Like any such adjunction,~\eqref{eq:41} induces a monoidal comonad $P$
(for ``points'') on the symmetric monoidal category $(\C, \otimes, I)$
at the codomain of the left adjoint. However, the fact that the
monoidal category $(\cat{Set}, \times, 1)$ at the other end is
\emph{cartesian} monoidal means that this monoidal comonad is, in
fact, a monoidal coalgebra modality. Indeed, \eqref{eq:41} is a
so-called \emph{linear--non-linear adjunction} and so this follows
from~\cite[Theorem~3]{Benton1995A-mixed}; the main point is that
the cocommutative comonoid structure of each $PX = L(\C(I, X))$ is the
image of the unique cartesian comonoid structure on $\C(I, X)$ under
the comonoid-preserving functor $L$.

\begin{Prop}
  \label{prop:8}
  Let the symmetric monoidal category $(\C, \otimes, I)$ satisfy
  hypothesis~\ref{assumption3}. The monoidal coalgebra modality $P$
  induced by~\eqref{eq:41} is the initial monoidal coalgebra modality
  on $(\C, \otimes, I)$.
\end{Prop}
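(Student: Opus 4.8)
The plan is to verify directly that for every monoidal coalgebra modality $\oc$ on $\C$ there is exactly one monoidal coalgebra modality morphism $P \to \oc$ (we already know that $P$ is a monoidal coalgebra modality, so only the universal property is at issue). The starting observation is that, since $PX = \bigoplus_{x\colon I\to X} I$, a natural transformation $\varphi\colon P\Rightarrow\oc$ is precisely a natural family of maps $\varphi_{X,x}\colon I\to\oc X$ indexed by the points $x\colon I\to X$, and naturality forces $\varphi_{X,x} = \oc x\circ\theta$, where $\theta\defeq\varphi_{I,\mathrm{id}_I}\colon I\to\oc I$ (apply the naturality square of $\varphi$ to the morphism $x\colon I\to X$ and evaluate at the $\mathrm{id}_I$-indexed summand). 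Thus natural transformations $P\Rightarrow\oc$ correspond bijectively to maps $\theta\colon I\to\oc I$, via $\varphi_X\circ\iota_x = \oc x\circ\theta$.

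For existence, I take $\theta = m_I$, so that $\varphi_X\circ\iota_x = \oc x\circ m_I$, and check that the resulting $\varphi$ is a monoidal coalgebra modality morphism. By Proposition~\ref{prop:4} it is enough to check that $\varphi$ is a coalgebra modality morphism, that is, a comonad morphism whose components are comonoid homomorphisms. Each condition is verified summand by summand: precomposing with $\iota_x$ and using naturality of $\varepsilon$, $\delta$, $\mathsf{e}$ and $\Delta$ reduces everything to the component at $I$, where one invokes the monoidal-comonad identities $\varepsilon_I\circ m_I = 1_I$ and $\delta_I\circ m_I = \oc m_I\circ m_I$, together with the facts that $m_I$ is a comonoid homomorphism $\tilde I\to(\oc I,\Delta_I,\mathsf{e}_I)$ and that the cocommutative comonoid structure on $PX = L(\C(I,X))$ is $L$ applied to the cartesian comonoid $\C(I,X)$ in $\Set$. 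The comultiplication axiom of a comonad morphism additionally requires the explicit description of the comultiplication of $P$: namely, $\delta^P_X$ carries the $x$-indexed summand of $PX$ to the summand of $PPX = \bigoplus_{p\colon I\to PX} I$ indexed by the injection $\iota_x\colon I\to PX$, which in turn follows from the fact that the unit of the adjunction $L\dashv\C(I,\thg)$ sends $s\in S$ to the injection $\iota_s\colon I\to LS$.

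For uniqueness, let $\psi\colon P\to\oc$ be any monoidal coalgebra modality morphism. Being in particular a monoidal comonad morphism, $\psi$ commutes with the nullary monoidal constraints, so $\psi_I\circ m^P_I = m_I$. But the nullary constraint $m^P_I\colon I\to PI$ of the monoidal comonad $P = L\circ\C(I,\thg)$ is, modulo the canonical isomorphism $I\cong L(1)$, exactly the injection $\iota_{\mathrm{id}_I}$, since the lax constraint of $\C(I,\thg)$ picks out $\mathrm{id}_I$ and $L$ carries this to the corresponding summand. Hence $\theta = \psi_I\circ\iota_{\mathrm{id}_I} = m_I$, so by the first paragraph $\psi$ coincides with the morphism constructed in the second; this establishes both existence and uniqueness, hence initiality of $P$.

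The only place where real work is needed is the comultiplication condition in the existence step, which is the one point at which the reindexing of coproduct summands under $\delta^P$ and under $P$ applied to a map $I\to PX$ must be tracked carefully; everything else is a routine unwinding of the definitions and of the structure theory of monoidal coalgebra modalities recalled in Section~\ref{sec:mono-coalg-mono}.
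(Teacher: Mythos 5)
Your argument is correct, and it reaches the conclusion by a genuinely different route from the paper. You work entirely at the level of components: since $PX = \bigoplus_{x\colon I \to X} I$ and $P$ acts on a point $x$ of $X$ by reindexing along postcomposition, naturality pins down any transformation $P \Rightarrow \oc$ to the single datum $\theta = \varphi_I \circ \iota_{\mathrm{id}_I} \colon I \to \oc I$ (a Yoneda-style reduction), after which existence is a summand-by-summand check with $\theta = m_I$ and uniqueness drops out of the single equation $\psi_I \circ m^P_I = m_I$ forced by compatibility with the nullary monoidal constraints. The paper instead argues semantically: by Proposition~\ref{prop:4} it suffices to produce a unique monoidal comonad morphism, and these are shown to biject with strong symmetric monoidal factorisations of $L \colon \cat{Set} \to \C$ through $U^\oc \colon \cat{Coalg}(\oc) \to \C$; such a factorisation is then forced on the terminal object by the nullary constraint and propagated to all of $\cat{Set}$ using that $L$ preserves coproducts and $U^\oc$ creates them. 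Both proofs lean on Proposition~\ref{prop:4} and both leave some routine verification implicit; yours has the advantage of producing the explicit formula $\rho_X \circ \iota_x = \oc x \circ m^\oc_I$ for the unique morphism (which the paper only records afterwards), while the paper's factorisation argument is more conceptual and exhibits the auxiliary bijection with liftings of $L$, which is of independent use. The only caveat is that you invoke the explicit descriptions of $\delta^P_X$ and $m^P_I$ (which the paper derives only after the proposition), but you correctly indicate how these follow from the unit of the adjunction $L \dashv \C(I,\thg)$ and from the composite monoidal structure on $P$, so nothing circular occurs.
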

\begin{proof}
  Let $\oc$ be another monoidal coalgebra modality on $\C$; we will show
  that there is a unique morphism of monoidal coalgebra modalities
  $P \rightarrow \oc$; for which, by Proposition~\ref{prop:4}, it
  suffices to exhibit a unique map of monoidal comonads $P \rightarrow
  \oc$.

  Consider the category $\cat{Coalg}(\oc)$ of Eilenberg--Moore
  $\oc$-coalgebras and its forgetful functor
  $U^\oc \colon \cat{Coalg}(\oc) \rightarrow \C$. We begin by showing
  that monoidal comonad morphisms $P \rightarrow \oc$ are in bijection
  with factorisations
  \begin{equation}
    \label{eq:43}
    \cd{
      {(\cat{Set}, \times)} \ar@{-->}[rr]^-{F} \ar[dr]_-{L} & &
      {\mathsf{Coalg}(\oc)} \ar[dl]^-{U^\oc} \\ &
      {\C}
    }
  \end{equation}
  of $L$ through $U^\oc$ via a (necessarily strong) symmetric monoidal
  functor $F$. Indeed, to give $F$ is equally to endow each $L(X)$
  with $\oc$-coalgebra structure, naturally and monoidally in $X$, which
  is equally to give a monoidal natural transformation
  $\varphi \colon L \Rightarrow \oc \circ L$ satisfying counit and
  coassociativity axioms. Transposing under the adjunction
  $(\thg) \circ \C(I, \thg) \dashv (\thg) \circ L$, this is equally to
  give a monoidal natural transformation
  $\bar\varphi \colon L \circ \C(I, \thg) = P \Rightarrow \oc$
  satisfying counit and comultiplication axioms, i.e., a monoidal
  comonad morphism $P \Rightarrow \oc$.

  To conclude the proof, it thus suffices to show there is a
  \emph{unique} monoidal factorisation as in~\eqref{eq:43}. First,
  note that, by commutativity of~\eqref{eq:43}, the nullary constraint
  cell of $F$ must be of the form
  \begin{equation*}
    n_I^{-1} \colon (I, m_I) \rightarrow (L1, q) 
  \end{equation*}
  where $n_I^{-1} \colon I \rightarrow L1$ is the nullary constraint
  cell for $L$, $m_I \colon I \rightarrow \oc I$ is the nullary
  constraint cell for $\oc$, and $q_1$ is the assigned $\oc$-coalgebra
  structure on $F1$. This uniquely determines $q_1$ as the composite:
  \begin{equation*}
    q_1 \defeq L1 \xrightarrow{n_I} I \xrightarrow{m_I} \oc I \xrightarrow{\oc n_I^{-1}} \oc L1\rlap{ .}
  \end{equation*}
  So $F(1) = (L1, q_1)$ is forced. Now, because $L$ preserves coproducts
  and $U^\oc$ creates them strictly, we are further forced to take
  $F(X) = (LX, q_X)$, where $q_X$ is the unique map $LX \rightarrow
  \oc LX$ making the family of maps
  \begin{equation*}
    \bigl(\,(L1, q_1) \xrightarrow{L\iota_x} (LX, q_X)\,\bigr)_{x \in X}
  \end{equation*}
  into a coproduct cocone in $\cat{Coalg}(\oc)$.

  Now, since $U^\oc$ is faithful, the remaining data for $F$ are completely
  determined by the requirement of commutativity in~\eqref{eq:43}. So
  there is at most one factorisation~\eqref{eq:43}, and all that is
  required for existence is to show that maps in the image of $L$
  and the constraint maps of $L$ lift along $U^\oc$. We leave
  this (easy) check to the reader.
\end{proof}

Let us now give an explicit description of the initial monoidal
coalgebra modality $P$. The functor $P\colon \C \to \C$ sends an
object $X$ to the coproduct of copies of $I$ indexed by the points of
$X$:
\begin{align}
    PX \defeq \bigoplus \limits_{x\colon I\to X} I
\end{align}
For every point $x\colon I \to X$, let $\iota_x\colon I \to PX$ be the
associated injection, and note that a map with domain $PX$ is
completely determined by giving its composite with each of these
injections. Using this fact, we may now say that:
\begin{itemize}[itemsep=0.25\baselineskip]
\item On morphisms, $P$ sends $f \colon X \rightarrow Y$ to the unique
  map $Pf \colon PX \rightarrow PY$ such that, for all points $x
  \colon I \rightarrow X$:
  \begin{equation*}
  Pf \circ \iota_x = \iota_{f \circ x}\rlap{ .}
\end{equation*}
\item The comonoid counit $\mathsf{e}^P_X\colon PX \to I$ the unique
  map such that for all points $x
  \colon I \rightarrow X$:
  \begin{equation*}
  \mathsf{e}^P_X \circ \iota_x = 1_I\rlap{ .}
\end{equation*}
\item The comonoid comultiplication $\Delta^P_X\colon PX \to PX
  \otimes PX$ is the unique map such that for all points $x
  \colon I \rightarrow X$: 
  \begin{equation*}
    \Delta^P_X \circ \iota_x = \iota_x \otimes \iota_x\rlap{ .}
  \end{equation*}
\item The comonad counit $\varepsilon^P_X\colon PX \to X$ is the
  unique map such that for all points $x \colon I \rightarrow X$:
  \begin{equation*}
    \varepsilon^P_X \circ \iota_x = x\rlap{ .}
  \end{equation*}
\item For the comonad comultiplication, note that every coproduct injection
  $\iota_x\colon I \to PX$ is a point of $PX$. Thus, we may define
  $\delta^P_X\colon PX \to PPX$ as the unique map such that for all
  points $x \colon I \rightarrow X$:
  \begin{equation}
    \label{eq:38}
    \delta^P_X \circ \iota_x = \iota_{\iota_x}\rlap{ .}
\end{equation}
\item The nullary monoidal constraint $m^P_I \colon I \rightarrow PI$
  is the injection $\iota_{1_I}$ associated to the point $1_I \colon I
  \rightarrow I$ of $I$;
\item The binary monoidal constraint $m^P_{X,Y} \colon PX \otimes PY
  \rightarrow P(X \otimes Y)$ is the unique map such that for all
  pairs of points $x \colon I \rightarrow X$ and $y \colon I
  \rightarrow Y$:
  \begin{equation*}
  m^P_{X,Y} \circ (\iota_x \otimes \iota_y) = \iota_{x \otimes y}\rlap{ .}
\end{equation*}
\item Given another monoidal coalgebra modality $\oc$, the unique monoidal
coalgebra modality morphism $\rho \colon P \rightarrow \oc$ has 
components $\rho_X\colon PX \to \oc X$ determined as the unique maps such
that, for all points $x \colon I \rightarrow X$:
\begin{equation*}
 \rho_X \circ \iota_x = I \xrightarrow{m^\oc_I} \oc I \xrightarrow{\oc x} \oc X\rlap{ .}
\end{equation*}
\end{itemize}

If $(\C, \otimes, I)$ also has finite products, then $P$ is a storage
modality (indeed, the initial storage modality). Note that points of
$X \times Y$ correspond to pairs of points $x\colon I \to X$ and
$y\colon I \to Y$, by taking their pairing
$\langle x, y \rangle\colon I \to X \times Y$, and that the terminal
object $\top$ has a unique point $t \colon I \rightarrow \top$. In
these terms:
\begin{itemize}
\item 
The binary Seely map
$\chi^P_{X,Y}\colon P(X \times Y) \to PX \otimes PY$ and its inverse
 are the
unique maps such that for all points $x$ of $X$ and $y$ of $Y$, we have:
\begin{equation*}
\chi^P_{X,Y} \circ \iota_{\langle x, y \rangle} = \iota_x \otimes \iota_y \quad \text{and} \quad  (\chi^P_{X,Y})^{-1} \circ (\iota_x \otimes \iota_y) = \iota_{\langle x, y \rangle}\rlap{ .}
\end{equation*}
\item The nullary Seely map
  $\chi^P_\top \colon P\top \rightarrow I$ and its inverse
  are the unique maps~with:
  \begin{equation*}
  \chi^P_{\top} \circ \iota_{t} = 1_I \quad \text{and} \quad  (\chi^P_{X,Y})^{-1} \circ 1_I = \iota_t\rlap{ ;}
\end{equation*}
  said another way, 
  $\chi^P_\top \colon P\top \rightarrow I$ is simply $\iota_t^{-1}$.
\end{itemize}

Lastly, suppose that $(\C, \otimes, I)$ is a $k$-linear additive
symmetric monoidal category with finite biproducts. In this
situation, $P$ is an additive bialgebra modality (and again, the
initial one), whose monoid structure is given as follows:
\begin{itemize}
\item The unit $\mathsf{u}^P_X\colon I \to PX$ is the injection
  $\iota_0$ for the zero point $0 \colon I \rightarrow X$.
\item The
multiplication $\nabla^P_X\colon PX \otimes PX \to PX$ is the unique
map such that for all points $x,y$ of $X$:
\[ \nabla^P_X \circ (\iota_x \otimes \iota_y) = \iota_{x+y} \]
where $x+y \colon I \rightarrow X$ is the sum of the points $x$ and $y$.
\end{itemize}

\subsection{The initial monoidal differential modality}
\label{sec:init-diff-modal}

In the $k$-linear context, the initial monoidal coalgebra modality $P$
need not come equipped with a deriving transformation or codereliction.
However, if our base symmetric monoidal category $(\C, \otimes, I)$
satisfies hypotheses \ref{assumption1}--\ref{assumption3} of the
preceding section, then by Theorem~\ref{thm:2}, we may construct the
initial monoidal differential modality $P^\partial$ over $P$; and
since $P$ is the initial monoidal coalgebra modality, and
$(\thg)^\partial$ is a left adjoint, $P^\partial$ will be the initial
monoidal differential modality on $\C$. Our objective in this section
is to describe $P^\partial$ explicitly.

To start with, note that
\begin{equation*}
  P^\partial X = PX \otimes SX = \Bigl(\bigoplus_{x\colon I\to X}\!\!
  I\,\Bigr) \otimes SX\rlap{ .}
\end{equation*}
Since $(\thg) \otimes SX$ preserves coproducts by
assumption, the right-hand side here is isomorphic to
$\bigoplus_{x\colon I\to X} SX$; and clearly, it does no harm to take
$P^\partial X$ to actually be given in this simpler way.
Thus, we take the functor $P^\partial\colon \C \to \C$ to be given by:
\begin{align}
  P^\partial X \defeq  \bigoplus_{x\colon I\to X} SX\rlap{ .}
\end{align}

We now describe the remaining structure of $P^\partial$. Let us write
$\jmath_x\colon SX \to P^\partial X$ for the coproduct coprojection
associated to the point $x\colon I \to X$. Using our description of
$P$ together with Theorem~\ref{thm:1}, Lemma~\ref{lem:18} and
Lemma~\ref{lem:22}, we quickly obtain:
\begin{itemize}[itemsep=0.25\baselineskip]
\item On morphisms, $P^\partial$ sends $f \colon X \rightarrow Y$ to
  the unique map $P^\partial f \colon PX \rightarrow PY$ such that the
  following diagram commutes for all points
  $x \colon I \rightarrow X$:
  \begin{equation*}
\cd{ S X \ar[r]^-{\jmath_x} \ar[d]_-{S f} & P^\partial X \ar@{-->}[d]^-{P^\partial f} \\
        S Y \ar[r]^-{\jmath_{f \circ x}} & P^\partial Y\rlap{ .} } 
  \end{equation*}
\item The deriving transformation $\mathsf{d}^\partial_X\colon
  P^\partial X  \otimes X \to P^\partial X$ is the unique map which
  makes the following diagram commute for all points $x \colon I
  \rightarrow X$: 
  \begin{equation}
    \label{eq:63}
  \cd{
    SX \otimes X \ar[d]_-{\mathsf{d}^S_X}   \ar[r]^-{\jmath_{x} \otimes 1_X}
    & P^\partial X \otimes X  \ar@{-->}[d]^-{\mathsf{d}^\partial_X} \\
    SX \ar[r]_-{\jmath_x} & P^\partial X\rlap{ .}
  }      
\end{equation}
\item The maps $\zeta_X \colon PX \rightarrow P^\partial X$ exhibiting
  the freeness are the unique maps which make the following diagram
  commute for all points $x \colon I
  \rightarrow X$: 
  \begin{equation*}
  \cd{
    I \ar[d]_-{\mathsf{u}^S_X} \ar[r]^-{\iota_{x}}
    & PX  \ar@{-->}[d]^-{\zeta_X} \\
    SX \ar[r]_-{\jmath_x} & P^\partial X\rlap{ .}
  }      
\end{equation*}
\item The comonoid counit $\mathsf{e}^\partial_X\colon P^\partial X \to I$ the unique
  map such that the following diagram commutes for all points $x
  \colon I \rightarrow X$:
  \begin{equation*}
  \cd{ SX \ar[dr]_-{\mathsf{e}^S_X}    \ar[r]^-{\jmath_{x}}  &  P^\partial X  \ar@{-->}[d]^-{\mathsf{e}^\partial_X} \\
& I \rlap{ .}}   
  \end{equation*}
\item The comonoid comultiplication $\Delta^\partial_X\colon
  P^\partial X \to P^\partial X
  \otimes P^\partial X$ is the unique map such that the following
  diagram commutes for all points $x
  \colon I \rightarrow X$:
  \begin{equation}
    \label{eq:61}
    \cd[@C+0.5em]{ SX  \ar[d]_-{\Delta^S_X}   \ar[r]^-{\jmath_{x}}  &   P^\partial X  \ar@{-->}[d]^-{\Delta^\partial_X} \\
SX \otimes SX \ar[r]^-{\jmath_x \otimes \jmath_x} & P^\partial X \otimes P^\partial X  \rlap{ .}}      
  \end{equation}
\item The comonad counit $\varepsilon^\partial_X\colon P^\partial X \to X$ is the
  unique map such that the following diagram commutes for all points $x \colon I \rightarrow X$:
  \begin{equation}
    \label{eq:62}
    \cd{ SX \ar[dr]_-{x\, \circ\, \mathsf{e}^S_X \,+\, \varepsilon^S_X}    \ar[r]^-{\jmath_{x}}  &  P^\partial X  \ar@{-->}[d]^-{\varepsilon^\partial_X} \\
      & X\rlap{ .} } 
  \end{equation}
\end{itemize}

We skip over describing the comonad comultiplication
$\delta^\partial \colon P^\partial \Rightarrow P^\partial P^\partial$
for now, and continue with the following, found using
Lemma~\ref{lem:23}, Lemma~\ref{lem:24} and~\eqref{eq:58}:

\begin{itemize}
\item The monoid multiplication $\nabla^{\partial}_X\colon  P^\partial X \otimes P^\partial X \to P^\partial X$ is the unique map which makes the following diagram commute for all points: 
\begin{gather*}
\cd{ SX \otimes SX \ar[d]_-{\nabla^S_X}   \ar[r]^-{\jmath_{x} \otimes \jmath_y}  &   P^\partial X  \ar@{-->}[d]^-{\nabla^\partial_X} \\
SX \ar[r]_-{\jmath_{x+y}} & P^\partial X \rlap{ .}}
\end{gather*}
\item The monoid unit $\mathsf{u}^\partial_X\colon I \to P^\partial X$ is the
  composite
  \begin{equation*}
    \smash{\mathsf{u}^\partial_X = I \xrightarrow{\mathsf{u}^S_X} SX \xrightarrow{\jmath_0} P^\partial X\rlap{ .}}
  \end{equation*}
\item The nullary monoidality constraint $m^\partial_I$ is the composite:
  \begin{equation*}
    \smash{m^\partial_I = I \xrightarrow{\mathsf{u}^S_X} SI \xrightarrow{\jmath_{1_I}} P^\partial I}\rlap{ .}
  \end{equation*}
\item The codereliction $\eta^\partial_X\colon X \to P^\partial X$
  is the composite
  \begin{equation*}
    \smash{\eta^\partial_X = X \xrightarrow{\eta^S_X} SX \xrightarrow{\jmath_0} P^\partial_X\rlap{ .}}
  \end{equation*}
\end{itemize}

It remains to describe the comonad comultiplication $\delta^\partial$
and the binary monoidality constraints $m^\partial_{XY}$. For the
latter, there seems to be no real improvement possible over the
description at the end of Section~\ref{sec:free-mono-diff}; however,
we can be more explicit about $\delta^\partial$, with a little work.
To begin with, recall that
$\delta^\partial_X \colon \ocf X \rightarrow P^\partial P^\partial X$
is the unique map of commuting $X$-actions
$(P^\partial X, \mathsf{d}^\partial_X) \rightarrow P^{\partial
  X}(P^\partial X, \mathsf{d}^\partial_X)$ which renders commutative
the square to the left in:
\begin{equation*}
  \cd[@C+1em]{
    P X \ar[d]_-{\zeta_X} \ar[r]^-{ \delta_X} &
    PP X  \ar[d]^-{ (\zeta \zeta)_X } & &
    P X \ar[d]_-{\zeta_X} \ar[r]^-{ \zeta_{PX} \circ \delta_X} &
    P^\partial P X  \ar[d]^-{ P^\partial(\zeta_X) } 
    \\
    P^\partial X \ar[r]^-{\delta^{\partial}_X} & P^\partial P^\partial X & &
    P^\partial X \ar[r]^-{\delta^{\partial}_X} & P^\partial P^\partial X \rlap{ ,}
  }
\end{equation*}
which is equally well the square to the right. We claim that, for
every point $x \colon 1 \rightarrow X$, this right-hand square can be
made into the front face of a commuting cube:
\begin{equation}
  \label{eq:60}
  \cd[@!C@C-1.7em@R-0.2em]{
    I \ar[dd]_-{\mathsf{u}^S_X} \ar[rr]^-{m^\partial_I}
    \ar[dr]^-{\iota_x} & &
    P^\partial I \ar[dr]^-{P^\partial \iota_x}
    \ar'[d]_-{P^\partial \mathsf{u}^S_X}[dd] \\
    & P X \ar[dd]_(0.25){\zeta_X} \ar[rr]_(0.3){ \zeta_{PX} \circ \delta_X} &&
    P^\partial P X  \ar[dd]^-{ P^\partial(\zeta_X) } 
    \\
    SX \ar[dr]_-{\jmath_x} \ar'[r][rr]_-{\delta'_X} & &
    P^\partial SX \ar[dr]^-{P^\partial \jmath_x} \\
    & P^\partial X \ar[rr]_-{\delta^{\partial}_X} & & P^\partial P^\partial X\rlap{ .}
  }
\end{equation}
Here, the left face commutes by definition of $\iota_x$, $\jmath_x$
and $\zeta_X$, and the right face commutes as it is $P^\partial$ of the
left face. The top face commutes due to the decomposition
\begin{equation*}
  \cd{
    I \ar[r]^-{m_I} \ar[d]_-{\iota_x} &
    PI \ar[d]_-{P\iota_x} \ar[r]^-{\zeta_I} &
    P^\partial I \ar[d]_-{P^\partial \iota_x} \\
    PX \ar[r]^-{\delta_X} & PPX \ar[r]^-{\zeta_{PX}} & P^\partial X
  }
\end{equation*}
whose left square commutes by definition of the maps involved, and
whose right square is naturality of $\zeta$.

It remains to explain the bottom and rear faces of~\eqref{eq:60}.
These involve a new map
$\delta'_X \colon SX \rightarrow P^\partial SX$, which we define
using algebraic-freeness as the unique map of $X$-actions
$(SX, \mathsf{d}^S_X) \rightarrow P^{\partial X}(SX, \mathsf{d}^S_X)$
rendering the back square commutative. Now the bottom face can be
enhanced to a square
of $X$-actions
\begin{equation*}
  \cd{
    (SX, \mathsf{d}^S_X) \ar[r]^-{\delta'_X} \ar[d]_-{\jmath_x} & P^{\partial X} (SX, \mathsf{d}^S_X) \ar[d]^-{P^\partial \jmath_x} \\
    (P^\partial X, \mathsf{d}^\partial_X) \ar[r]^-{\delta^\partial_X} &
    P^{\partial X}(P^\partial X, \mathsf{d}^\partial_X)
  }
\end{equation*}
which, by freeness of $(SX, \mathsf{d}^S_X)$, will commute if it
does so on precomposition with $\mathsf{u}^S_X$---which is true by
commutativity of the other faces of~\eqref{eq:60}.

Thus, to describe $\delta^\partial_X$ it suffices to describe 
$\delta'_X$. To start with, we may re-express commutativity of the
back face 
of~\eqref{eq:60} as the commutativity of the square
\begin{equation*}
  \cd{
    I \ar[d]_-{\mathsf{u}^S_X} \ar[r]^-{\mathsf{u}^S_{SX}} &
    SSX \ar[d]^-{\jmath_{\mathsf{u}^S_X}} \\
    SX \ar[r]^-{\delta'_X} & \displaystyle\bigoplus_{x \colon I \rightarrow SX} SSX\rlap{ .}
  }
\end{equation*}
So $\delta'_X$ is determined by how the
$X$-action of $P^{\partial X}(SX, \partial^S_X)$ behaves on the
$\mathsf{u}^S_X$-summand. The following lemma describes this in detail.

\begin{Lemma}
  \label{lem:26}
  For any $X \in \C$, the commuting $X$-action of $P^{\partial
    X}(SX, \mathsf{d}^S_X)$ maps the $\mathsf{u}^S_X$-summand to
  itself via the sum of the two morphisms
  \begin{equation}
    \label{eq:64}
    \begin{aligned}
      \mathsf{D}^\flat &= SSX \otimes X \!\xrightarrow{\!\mathsf{b}^S_{SX} \otimes 1\!}\! SSX \otimes SX \otimes X \!\xrightarrow{\!1 \otimes \mathsf{d}^S_X\!}\! SSX \otimes SX \xrightarrow{\!\mathsf{d}^S_{SX}\!} SSX\\
      \mathsf{D}^\sharp &= SSX
      \otimes X \xrightarrow{1 \otimes \eta^S_X} SSX \otimes SX
      \xrightarrow{\mathsf{d}^S_{SX}} SSX\rlap{ .}
    \end{aligned}\!\!\!\!
  \end{equation}
\end{Lemma}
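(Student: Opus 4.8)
The plan is to compute the commuting $X$-action $(P^\partial)^X\mathsf{d}^S_X$ of $P^{\partial X}(SX,\mathsf{d}^S_X)$ directly on the summand $\jmath_{\mathsf{u}^S_X}\colon SSX\to P^\partial SX$. By~\eqref{eq:23} this action is the composite
\[
  P^\partial SX \otimes X \xrightarrow{\mathsf{b}^\partial_{SX}\otimes X} P^\partial SX\otimes SX\otimes X \xrightarrow{P^\partial SX\otimes \mathsf{d}^S_X} P^\partial SX\otimes SX \xrightarrow{\mathsf{d}^\partial_{SX}} P^\partial SX\rlap{ ,}
\]
so everything reduces to understanding $\mathsf{b}^\partial_{SX}\circ\jmath_{\mathsf{u}^S_X}$, after which I substitute into the composite and simplify using the explicit descriptions of $\mathsf{d}^\partial$, $\Delta^\partial$ and $\varepsilon^\partial$ for $P^\partial$, now read at the object $SX$ in place of $X$. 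Throughout I recall that $\mathsf{b}^S_Y \defeq (SY\otimes\varepsilon^S_Y)\circ\Delta^S_Y$, dually to~\eqref{eq:35}.

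First I would expand $\mathsf{b}^\partial_{SX} = (P^\partial SX\otimes\varepsilon^\partial_{SX})\circ\Delta^\partial_{SX}$ from its definition~\eqref{eq:35} and precompose with $\jmath_{\mathsf{u}^S_X}$. Using $\Delta^\partial_{SX}\circ\jmath_{y} = (\jmath_{y}\otimes\jmath_{y})\circ\Delta^S_{SX}$ (the instance of~\eqref{eq:61} at the object $SX$) and $\varepsilon^\partial_{SX}\circ\jmath_{y} = y\circ\mathsf{e}^S_{SX} + \varepsilon^S_{SX}$ (the instance of~\eqref{eq:62}) with $y=\mathsf{u}^S_X$, this splits as a sum of two terms. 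The term involving $\mathsf{u}^S_X\circ\mathsf{e}^S_{SX}$ collapses, by the counit law $(SSX\otimes\mathsf{e}^S_{SX})\circ\Delta^S_{SX}=1_{SSX}$ of the comonoid underlying the bialgebra $SSX$, to $(\jmath_{\mathsf{u}^S_X}\otimes 1_{SX})\circ(1_{SSX}\otimes\mathsf{u}^S_X)$; the term involving $\varepsilon^S_{SX}$ is $(\jmath_{\mathsf{u}^S_X}\otimes 1_{SX})\circ\mathsf{b}^S_{SX}$ by the definition of $\mathsf{b}^S_{SX}$ just recalled. In particular both terms factor through $\jmath_{\mathsf{u}^S_X}\otimes 1_{SX}$, which already shows that the action carries the $\mathsf{u}^S_X$-summand back into itself; this is the only place where a little care is needed, and it rests entirely on the counit law just quoted.

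Finally I would substitute $\mathsf{b}^\partial_{SX}\circ\jmath_{\mathsf{u}^S_X} = (\jmath_{\mathsf{u}^S_X}\otimes 1_{SX})\circ\bigl((1_{SSX}\otimes\mathsf{u}^S_X) + \mathsf{b}^S_{SX}\bigr)$ into the composite above, precomposed with $\jmath_{\mathsf{u}^S_X}\otimes X$, and use distributivity of $+$ over $\circ$ to split into two summands. In each, the first arrow is $\jmath_{\mathsf{u}^S_X}\otimes 1_{SX}$, so $\mathsf{d}^\partial_{SX}\circ(\jmath_{\mathsf{u}^S_X}\otimes 1_{SX}) = \jmath_{\mathsf{u}^S_X}\circ\mathsf{d}^S_{SX}$ (the instance of~\eqref{eq:63}) makes both factor through $\jmath_{\mathsf{u}^S_X}$. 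The summand coming from $1_{SSX}\otimes\mathsf{u}^S_X$ becomes $\jmath_{\mathsf{u}^S_X}\circ\mathsf{d}^S_{SX}\circ(1\otimes\mathsf{d}^S_X)\circ(1\otimes\mathsf{u}^S_X\otimes 1)$, and since $\mathsf{d}^S_X\circ(\mathsf{u}^S_X\otimes X)=\eta^S_X$ by~\eqref{eq:19} this is precisely $\jmath_{\mathsf{u}^S_X}\circ\mathsf{D}^\sharp$; the summand coming from $\mathsf{b}^S_{SX}$ becomes $\jmath_{\mathsf{u}^S_X}\circ\mathsf{d}^S_{SX}\circ(1\otimes\mathsf{d}^S_X)\circ(\mathsf{b}^S_{SX}\otimes 1) = \jmath_{\mathsf{u}^S_X}\circ\mathsf{D}^\flat$. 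Adding, the action on the $\mathsf{u}^S_X$-summand is $\jmath_{\mathsf{u}^S_X}\circ(\mathsf{D}^\flat+\mathsf{D}^\sharp)$, as claimed. Beyond keeping the many tensor factors straight, the argument involves no genuine difficulty; there is no serious obstacle.
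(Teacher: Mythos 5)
Your proof is correct and follows essentially the same route as the paper's: both compute the lifted action via~\eqref{eq:23} and then unpack $\mathsf{b}^\partial$, $\mathsf{d}^\partial$ using the explicit formulas~\eqref{eq:61}, \eqref{eq:62}, \eqref{eq:63}, with the second summand arising from the $\mathsf{u}^S_X\circ\mathsf{e}^S_{SX}$ term of $\varepsilon^\partial_{SX}\circ\jmath_{\mathsf{u}^S_X}$ together with $\eta^S_X=\mathsf{d}^S_X\circ(\mathsf{u}^S_X\otimes X)$. The only cosmetic difference is that the paper first records the action of $P^{\partial X}(A,\alpha)$ for a general commuting $X$-action and then specialises to $(SX,\mathsf{d}^S_X)$ at the point $\mathsf{u}^S_X$, whereas you specialise from the outset.
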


\begin{proof}
  Using~\eqref{eq:23}, \eqref{eq:63}, \eqref{eq:61} and \eqref{eq:62},
  we see that for any commuting $X$-action $(A, \alpha)$, the
  $X$-action structure of $P^{\partial X}(A, \alpha)$ is the map
  \begin{equation*}
    \bigoplus_{a \colon 1 \rightarrow A} SA \otimes X \rightarrow 
    \bigoplus_{a \colon 1 \rightarrow A} SA
  \end{equation*}
  which maps the $a$-summand to itself via the composite morphism
  \begin{equation*}
    SA \otimes X \xrightarrow{\mathsf{b}^S_{A} \otimes 1 + 1 \otimes a \otimes 1} SA \otimes A \otimes X \xrightarrow{1 \otimes \alpha} SA \otimes A \xrightarrow{\mathsf{d}^S_{A}} SA \rlap{ .}
  \end{equation*}
  The result follows on taking $(A, \alpha) = (SA, \mathsf{d}^S_A)$
  and using $\eta^S_X = \mathsf{d}^S_X \circ (\mathsf{u}^S_X \otimes 1)$.
\end{proof}

Putting all of the above together, we obtain:
\begin{Prop}
  \label{prop:11}
  The comonad comultiplication $\delta^\partial_X \colon P^\partial X
  \rightarrow P^\partial P^\partial X$ of the initial monoidal
  differential modality is the unique map which renders commutative
  the following diagram for each point $x \colon I \rightarrow X$: 

\begin{equation*}
\cd[@C+2em]{ SX  \ar[d]_-{\delta^S_X}   \ar[rr]^-{\jmath_{x}}  && P^\partial X  \ar@{-->}[d]^-{\delta^\partial_X} \\
  SSX \ar[r]^-{S\jmath_x} & SP^\partial X \ar[r]^-{\jmath_{ \jmath_x \circ \mathsf{u}^S_X}} & P^\partial P^\partial X\rlap{ ,} }      
\end{equation*}
where $\delta^S_X \colon SX \rightarrow SSX$ is the unique map of
$X$-actions $(SX, \mathsf{d}^S_X) \rightarrow (SSX,
\mathsf{D}^\flat + \mathsf{D}^\sharp)$ such that $\delta^S_X \circ
\mathsf{u}^S_X = \mathsf{u}^S_{SX}$.
\end{Prop}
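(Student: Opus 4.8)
The plan is to deduce this directly from the commuting cube~\eqref{eq:60} together with Lemma~\ref{lem:26}. Recall that from the analysis of~\eqref{eq:60} we already know $\delta^\partial_X \circ \jmath_x = P^\partial\jmath_x \circ \delta'_X$ for each point $x \colon I \to X$, where $\delta'_X \colon SX \to P^\partial SX$ is the unique morphism of commuting $X$-actions $(SX, \mathsf{d}^S_X) \to P^{\partial X}(SX, \mathsf{d}^S_X)$ with $\delta'_X \circ \mathsf{u}^S_X = \jmath_{\mathsf{u}^S_X} \circ \mathsf{u}^S_{SX}$. Since $P^\partial X$ is the coproduct of the copies of $SX$ along the points of $X$, the family of maps $\delta^\partial_X \circ \jmath_x$ determines $\delta^\partial_X$ uniquely; so the whole statement reduces to showing $P^\partial\jmath_x \circ \delta'_X = \jmath_{\jmath_x \circ \mathsf{u}^S_X} \circ S\jmath_x \circ \delta^S_X$, with $\delta^S_X$ as described.

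First I would use Lemma~\ref{lem:26}: it tells us that the commuting $X$-action on $P^{\partial X}(SX, \mathsf{d}^S_X)$ carries the $\mathsf{u}^S_X$-summand $SSX$ into itself, acting there by $\mathsf{D}^\flat + \mathsf{D}^\sharp$. This makes $(SSX, \mathsf{D}^\flat + \mathsf{D}^\sharp)$ into a commuting $X$-action and $\jmath_{\mathsf{u}^S_X}$ into a map of commuting $X$-actions into $P^{\partial X}(SX, \mathsf{d}^S_X)$. Next, since $(SX, \mathsf{d}^S_X)$ is algebraically-free --- in particular, the free commuting $X$-action on $I$ with unit $\mathsf{u}^S_X$ --- there is a unique map of commuting $X$-actions $\delta^S_X \colon (SX, \mathsf{d}^S_X) \to (SSX, \mathsf{D}^\flat + \mathsf{D}^\sharp)$ satisfying $\delta^S_X \circ \mathsf{u}^S_X = \mathsf{u}^S_{SX}$, which is exactly the map named in the statement. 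Then $\jmath_{\mathsf{u}^S_X} \circ \delta^S_X$ is a composite of maps of commuting $X$-actions and restricts along $\mathsf{u}^S_X$ to $\jmath_{\mathsf{u}^S_X} \circ \mathsf{u}^S_{SX}$; by the universal property characterising $\delta'_X$, this forces $\delta'_X = \jmath_{\mathsf{u}^S_X} \circ \delta^S_X$.

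To finish, I would apply the explicit action of $P^\partial$ on morphisms, which gives $P^\partial f \circ \jmath_y = \jmath_{f\circ y} \circ Sf$ for every point $y$; taking $f = \jmath_x$ and $y = \mathsf{u}^S_X$ yields $P^\partial\jmath_x \circ \jmath_{\mathsf{u}^S_X} = \jmath_{\jmath_x \circ \mathsf{u}^S_X} \circ S\jmath_x$. Chaining these equalities gives $\delta^\partial_X \circ \jmath_x = P^\partial\jmath_x \circ \jmath_{\mathsf{u}^S_X} \circ \delta^S_X = \jmath_{\jmath_x \circ \mathsf{u}^S_X} \circ S\jmath_x \circ \delta^S_X$, which is the asserted commutative diagram, and uniqueness of $\delta^\partial_X$ subject to it is the coproduct argument from the first paragraph.

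The step I expect to be the main obstacle is the passage from Lemma~\ref{lem:26} to the assertion that $(SSX, \mathsf{D}^\flat + \mathsf{D}^\sharp)$ is genuinely a commuting $X$-action and that $\jmath_{\mathsf{u}^S_X}$ is a morphism of such: one has to be careful that $\mathsf{D}^\flat + \mathsf{D}^\sharp$ is literally the restriction of the action of $P^{\partial X}(SX, \mathsf{d}^S_X)$ to the $\mathsf{u}^S_X$-summand, and not merely its composite with a coprojection, since this is what licenses the subsequent appeal to the freeness characterisation of $\delta'_X$. The remaining ingredients --- the coproduct-uniqueness argument and the functoriality identity for $P^\partial$ --- are routine and follow the pattern already established for $\mathsf{e}^\partial$, $\Delta^\partial$ and $\varepsilon^\partial$ earlier in this section.
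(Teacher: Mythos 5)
Your argument is correct and is essentially the paper's own: the paper likewise deduces the proposition by combining the bottom face of the cube~\eqref{eq:60} (giving $\delta^\partial_X \circ \jmath_x = P^\partial\jmath_x\circ\delta'_X$), the identification $\delta'_X = \jmath_{\mathsf{u}^S_X}\circ\delta^S_X$ via Lemma~\ref{lem:26} and the freeness of $(SX,\mathsf{d}^S_X)$ as the free commuting $X$-action on $I$, and the functoriality identity $P^\partial\jmath_x\circ\jmath_{\mathsf{u}^S_X} = \jmath_{\jmath_x\circ\mathsf{u}^S_X}\circ S\jmath_x$. The point you flag as a possible obstacle is exactly what Lemma~\ref{lem:26} supplies, so no gap remains.
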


Finally, let us describe explicitly how $P^\partial$ becomes the
initial monoidal differential modality. Given another monoidal
differential modality $\oc$ on $\C$, we define the map
$\rho^\flat_X\colon SX \to \oc X$ as the unique $X$-action morphism
$(SX, \mathsf{d}^S_X) \rightarrow (\oc X, \mathsf{d}^\oc_X)$ which
makes the following triangle commute:
     \begin{equation*}
\xymatrix{
    & I \ar[dl]_-{\mathsf{u}_X} \ar[dr]^-{\mathsf{u}^\oc_X} \\
    SX \ar@{-->}[rr]^-{\exists!\, \rho^\flat_X} && \oc X
  }
 \end{equation*}
 Then the unique differential modality morphism
 $\rho^\sharp \colon P^\partial \rightarrow \oc$ has as its
 $X$-component the unique map such that the following diagram commutes
 for all points $x \colon I \rightarrow X$:
\begin{equation*}\begin{gathered} 
\cd{ SX \ar[dr]_-{\rho^\flat_X}   \ar[r]^-{\iota_{x} }  & P^\partial X  \ar@{-->}[d]^-{\rho^\sharp_X} \\
& \oc X \rlap{ .} }      
\end{gathered}\end{equation*}

\section{Examples}
\label{sec:examples}

In this final section, we describe the initial monoidal differential
modality in some well-known contexts: the category of sets and
relations; the category of $k$-modules over a commutative rig; the
category of super vector spaces; and the category of linear species.

\subsection{Sets and relations}
\label{sec:sets-relations}

Let $\REL$ be the category of sets and relations. We verify
  the assumptions~\ref{assumption1}--\ref{assumption3} of
  Section~\ref{sec:init-mono-diff}:
  \begin{itemize}[itemsep=0.25\baselineskip]
  \item 
  For assumption~\ref{assumption1}: $\REL$ is an $\mathbb{N}$-linear symmetric
  monoidal category with all set-indexed biproducts, where the
  monoidal product $\otimes$ is given by cartesian product, the
  monoidal unit $I$ is a chosen singleton set, and the biproduct is
  given by disjoint union $\bigoplus$.

\item For assumption~\ref{assumption2}: as observed in
  Section~\ref{sec:existence}, $\REL$ inherits the symmetric algebra
  construction from $\SET$, and thus has algebraically-free
  commutative monoids, with the algebraically-free commutative monoid
  on $X$ given by the set of finite multisets of $X$. Since this
  construction underlies the cofree cocommutative comonoid exponential
  of $\REL$, we will denote it by $\oc$ rather than $S$. Thus, for a
  set $X$, we write $\oc X$ for the set of finite multisets (also
  called finite bags) of $X$. We denote a finite multiset as
  $B = \llbracket x_1, \hdots, x_n \rrbracket$, the empty multiset as
  $\llbracket ~ \rrbracket$, and the (disjoint) union of multisets as
  $B_1 + B_2$.

  \item For assumption~\ref{assumption3}: we already observed that $\REL$
  has \emph{all} biproducts, and these are preserved by tensor in each
  variable because $\REL$ is compact closed, so in particular,
  monoidal closed.
  \end{itemize}

  Using assumption~\ref{assumption3}, we can construct the initial
  monoidal coalgebra modality on $\REL$. Note that points
  $I \rightarrow X$ of an object $X$ in $\REL$ are subsets
  $U \subseteq \lbrace \ast \rbrace \times X$, and so correspond
  precisely to subsets of $X$. Therefore, we may identify $PX$ with
  the power-set of $X$; its monoidal coalgebra modality structure is,
  by construction, that induced by the linear--non-linear adjunction
  between $\REL$ and $\SET$\footnote{This ``power-set exponential'' is
  known to the community as a degenerate model of linear logic.}.

  Thus, we discover that the power-set exponential $P$ and the
  multiset exponential $\oc$ both have universal characterisations:
  the former is the initial monoidal coalgebra modality, and the
  latter is the terminal one. $\oc$ is also the terminal monoidal
  differential modality; however:
  \begin{Lemma}
    \label{lem:28}
    The monoidal coalgebra modality $P$ is not a differential
    modality.
  \end{Lemma}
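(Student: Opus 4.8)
The plan is to derive a contradiction from the assumption that the initial monoidal coalgebra modality $P$ on $\REL$ carries a deriving transformation, by pitting the \emph{constant rule} against the \emph{linear rule} of Definition~\ref{def:19}. The only facts about $\REL$ that I will need are that $\REL$ is $\mathbb{N}$-linear with disjoint union as biproduct, so that the zero object is the empty set and the zero morphism between any two objects is the \emph{empty} relation; together with the explicit description of $P$ from Section~\ref{sec:init-mono-coalg}: identifying $I$ with a singleton $\{\ast\}$ and $PX$ with the powerset of $X$, the comonoid counit $\mathsf{e}^P_X \colon PX \to I$ is the \emph{total} relation $PX \times \{\ast\}$, and the comonad counit $\varepsilon^P_X \colon PX \to X$ is the membership relation $\{(U,x) : x \in U\}$.

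First I would suppose, for a contradiction, that $\mathsf{d}_X \colon PX \otimes X \rightarrow PX$ is a deriving transformation making $P$ a differential modality. The constant rule asserts $\mathsf{e}^P_X \circ \mathsf{d}_X = 0$. The left-hand side, being the composite of the relation $\mathsf{d}_X$ with the total relation $\mathsf{e}^P_X$ onto the singleton $I$, is exactly $\{((U,x),\ast) : \exists V,\ ((U,x),V) \in \mathsf{d}_X\}$, which equals the empty relation precisely when $\mathsf{d}_X$ is empty. Hence the constant rule forces $\mathsf{d}_X = \emptyset$ for every $X$.

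Then I would invoke the linear rule, $\varepsilon^P_X \circ \mathsf{d}_X = \lambda_X \circ (\mathsf{e}^P_X \otimes 1_X)$ with $\lambda$ the left unitor of $\REL$. Since $\mathsf{d}_X = \emptyset$, the left-hand side is the empty relation, whereas the right-hand side is the relation $\{((U,x),x) : U \in PX,\ x \in X\}$, which is nonempty as soon as $X \neq \emptyset$ --- already for $X = I$. This contradiction completes the argument. I do not expect a genuine obstacle here; the one point to get right is the identification of $\mathsf{e}^P_X$ with the total relation and of the zero morphism with the empty relation, which together make the constant rule an unusually strong constraint --- so strong as to be incompatible with the linear rule. (One could equally use the product rule, or another axiom forcing $\mathsf{d}$ to be nonempty, once the constant rule has forced it empty; the linear rule is simply the most transparent choice.)
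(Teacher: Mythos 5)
Your proof is correct and follows essentially the same route as the paper's: the constant rule together with the fact that $\mathsf{e}^P_X$ is the total relation forces $\mathsf{d}_X$ to be empty, and the linear rule then yields a contradiction. The only cosmetic difference is that the paper precomposes the linear rule with $\mathsf{u}_X \otimes X$ (using the bialgebra axiom $\mathsf{e}_X \circ \mathsf{u}_X = 1_I$) to contradict $\varepsilon_X \circ \mathsf{d}_X \circ (\mathsf{u}_X \otimes X) = 1_X$, whereas you contradict the linear rule directly, which is if anything slightly more economical.
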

  \begin{proof}
    Suppose the maps $\mathsf{d}_X \colon PX \otimes X \rightarrow PX$
    were the components of a deriving transformation for $P$ in
    $\REL$. Since $\mathsf{e}_X \circ \mathsf{d}_X = 0$, and since
    $\mathsf{e}_X \colon PX \rightarrow I = \{*\}$ is the maximal
    relation, this forces each $\mathsf{d}_X$ to be the empty
    relation. But this contradicts the fact that, by the linear rule
    and bialgebra axioms,
    $\varepsilon_X \circ \mathsf{d}_X \circ (\mathsf{u}_X \otimes X) =
    1_X$.
  \end{proof}
  Instead, we may apply our main result to construct the free
  differential modality over $P$---and this, the initial monoidal
  differential modality $P^\partial$ on $\REL$, is something else, and
  something new.

  First of all, we have $P^\partial X = PX \times \oc X$, so that
  elements of $P^\partial X$ are pairs $(U,B)$ consisting of a subset
  $U \subseteq X$ and a finite multiset $B \in \oc X$. The rest of the
  monoidal differential modality structure can now be read off from
  the descriptions in the previous section, as follows:
  \begin{itemize}[itemsep=0.25\baselineskip]
  \item On morphisms, $P^\partial$ acts as $P \otimes \oc$ does: thus,
    if $R \colon X \rightarrow Y$ is a relation, then $P^\partial R$
    relates $(U, \dbr{b_1, \dots, b_k})$ to
    $(V,\dbr{c_1, \dots, c_\ell})$ just when, firstly, we have
    $V = \{ y \in Y : (x,y) \in R \text{ for some } x \in U\}$ and,
    secondly, there is a bijection $\varphi \colon k \rightarrow \ell$
    such that $(b_i, c_{\varphi(i)}) \in R$ for all $i \in k$.
    
  \item The comonad counit is the relation
    $\varepsilon^\partial_X\colon P^\partial X \rightarrow X$ which
    relates any pair $(U, \llbracket x \rrbracket)$ to $x$, and
    relates the pair $(U, \llbracket ~ \rrbracket)$ to each $u \in U$.

  \item The comonad comultiplication is the relation
    $\delta^\partial_X\colon P^\partial X \to P^\partial P^\partial X$
    which relates a pair $(U,B)$ to all pairs of the form
    \begin{equation*}
      \bigl(\,\{(U, \dbr{\ })\}, \ \dbr{(U, B_1), \dots, (U, B_n)}\,\bigr)
    \end{equation*}
    whose first component is the singleton subset of $P^\partial X$
    containing the element $(U, \dbr{\ })$, and whose second component
    is a multiset of elements of $P^\partial X$ of the form
    $(U, B_i)$, where each $B_i$ is a \emph{non-empty} finite multiset
    and the $B_i$'s are a decomposition of $B$, i.e.,
    $B_1 + \hdots + B_n = B$. Note that this is substantively
    different from the comonad comultiplication of $\oc$, which
    relates a finite multiset $B$ to all finite multisets of (possibly
    empty!) finite multisets which are a decomposition of $B$.

    \item The comonoid comultiplication is the relation
      $\Delta^\partial_X\colon P^\partial X \to P^\partial X \otimes
      P^\partial X$ which relates a pair $(U,B)$ to the pairs of pairs $\bigl((U, B_1),
      (U, B_2)\bigr)$ for which $B_1 + B_2 = B$. 

    \item The comonoid counit is the relation
      $\mathsf{e}^\partial_X\colon P^\partial X \to I$ which, for all
      subsets $U \subseteq X$, relates the pair
      $(U, \llbracket~ \rrbracket)$ to $\ast$.

    \item The deriving transformation is the relation
    $\mathsf{d}^\partial_X\colon P^\partial X \otimes X \to P^\partial X$ induced by the
    function $(U,B,x) \mapsto (U,B+\llbracket x \rrbracket)$. 

    \item The monoid multiplication is the relation
      $\Delta^\partial_X\colon P^\partial X \otimes P^\partial X \to
      P^\partial X$ induced by the function $\left( (U_1, B_1), (U_2,
        B_2) \right) \mapsto ( U_1 \cup U_2, B_1 + B_2)$. 
    \item The monoid unit is the relation $\mathsf{u}^\partial_X\colon
      I \to P^\partial X$ induced by the function $\ast \mapsto
      (\emptyset, \llbracket ~ \rrbracket)$.
    \item The codereliction is the relation $\eta^\partial_X\colon X
      \to P^\partial X$ induced by the function $x \mapsto (\emptyset, \llbracket x \rrbracket)$.
    \item The monoidal unit constraint is the relation $m^\partial_I
      \colon I \rightarrow P^\partial X$ induced by the singleton
      mapping $\ast \rightarrow \{\ast\}$.
    \item The binary monoidal constraint $m^\partial_{XY} \colon
      P^\partial X \otimes P^\partial Y \rightarrow P^\partial(X
      \otimes Y)$ is rather subtle. It relates a pair of pairs $\left(
        (U,\dbr{b_1, \dots, b_k}), (V,\dbr{c_1, \dots, c_\ell}) \right)$ to all pairs of the form $(U \times V,
      D)$, where $D$ is a multiset determined by the following
      conditions:
      \begin{itemize}
      \item There is a subset $I \subseteq \{1, \dots, k\}$ and $J
        \subseteq \{1, \dots, \ell\}$ and a bijection $\varphi \colon
        I \rightarrow J$; 
      \item There are functions $f \colon \{1, \dots, k\} \setminus I
        \rightarrow U$ and $g \colon \{1, \dots, \ell\} \setminus J
        \rightarrow V$;
      \item $D$ is the disjoint union of the singletons $\dbr{(b_i,
          c_{\varphi(i)})}$ for $i \in I$, the singletons
        $\dbr{(b_{i'}, f(i')}$ for $i' \in \{1, \dots, k\} \setminus
        I$, and the singletons
        $\dbr{(g(j'), c_{j'}}$ for $j' \in \{1, \dots, \ell\} \setminus
        J$.
      \end{itemize}
\end{itemize}
It is interesting to note that, by contrast with the cofree
exponential $\oc$ on $\REL$, the monoid multiplication, monoid unit,
and codereliction of $P^\partial$ are not dual to the comonoid
comultiplication, comonoid counit, and comonad counit. In particular,
this means that, unlike $\oc$, the differential modality $P^\partial$
fails to be a \emph{reverse differential modality} in the sense
of~\cite{Cruttwell2022Monoidal}.

\subsection{$k$-modules}
\label{sec:k-modules}

If $k$ is a commutative rig, then $\kmod$ is a cocomplete,
$k$-linear symmetric monoidal closed category, and so satisfies the
assumptions of Section~\ref{sec:init-mono-diff}. So we can construct
the initial monoidal coalgebra modality and the initial monoidal
differential modality on $\kmod$.

For the initial monoidal coalgebra modality, we note that the
monoidal unit in $\kmod$ is $k$ itself, so that points of a
$k$-module $V$ correspond precisely to the elements of $V$. As such,
the initial monoidal coalgebra modality $P$ on $\kmod$ sends a
$k$-module $V$ to the free $k$-module over the underlying set of
$V$.

Turning now to the initial monoidal differential modality
$P^\partial$, we observe that the algebraically-free commutative
monoid on a $k$-module $V$ is given by the well-known symmetric
algebra $\Sym(V)$ over $V$. Therefore, $P^\partial V$ can be described as
the free $\Sym(V)$-module over the underlying set of $V$, or more
explicitly as:
\begin{equation}
  \label{eq:65}
  P^\partial V \defeq \bigoplus \limits_{v \in V} \Sym(V)\rlap{ .}
\end{equation}
In this case, the monoidal differential modality $P^\partial$ was
described in detail in~\cite[Definition~4.8]{Garner2021Cartesian};
though in \emph{loc.~cit.}, it was stated without proof that
$P^\partial$ was initial---a statement which our Theorem~\ref{thm:2}
now justifies.

For any commutative rig $k$, the $P^\partial$ on $\kmod$ is an extremely
natural construction---indeed, as shown
in~\cite[Proposition~4.9]{Garner2021Cartesian}, the co-Kleisli
category of $P^\partial$ is the cofree \emph{cartesian differential
  category} on the category of $k$-modules and arbitrary functions---and it is natural to wonder how it relates to
the terminal monoidal differential modality, i.e., the cofree
cocommutative coalgebra comonad. In general, this seems like a
difficult problem; however, in the special case where $k$ is a field
of characteristic zero, we can establish a direct relationship.

In the special case where $k$ is not only a field of characteristic
zero but also algebraically closed, the counit map
$\varepsilon_M \colon P^\partial V \rightarrow V$ in fact exhibits
$P^\partial V$ as the cofree cocommutative coalgebra on
$V$---see~\cite{Clift2020Cofree, Sweedler1969Hopf}. Thus, in this
case, the terminal (monoidal) differential modality and the initial
monoidal differential modality coincide. When $k$ is a
\emph{non-}algebraically closed field of characteristic zero (for
example, $\mathbb{R}$), $P^\partial$ is not quite the cofree
cocommutative comonoid comonad on $\kmod$---but does nonetheless admit
a direct characterisation. To explain this, let us first clarify some
of the structure of $P^\partial V$ in this case.

For simplicity, let us assume given a basis $\{e_i : i \in I\}$ for
the $k$-vector space $V$; then $\mathrm{Sym}(V)$ can be identified
with the $k$-algebra $k[x_i : i \in I]$ of polynomials in the
indeterminates $(x_i)_{i \in I}$, and $P^\partial V$ has a
basis given given by the vectors
\begin{equation*}
  \iota_v({x_{i_1} \cdots x_{i_k}}) \qquad \text{for $v \in V$ and $\dbr{i_1, \dots, i_k} \in \oc I$,}
\end{equation*}
where $\oc I$ denotes the multisets in $I$, and $\iota_v$ is the
$v$-th direct summand injection.

In these terms, the comonoid structure on
$P^\partial V = \bigoplus_{v \in V} \Sym(V)$ is given on basis
elements by
\begin{gather*}
  \Delta (\iota_v(x_{i_1} \cdots x_{i_k}))  = \bigoplus_{I \subseteq \{1, \dots, k\}} \iota_{v}\bigl(\mathop\Pi_{i \in I} x_{i}\bigr) \otimes
  \iota_v\bigl(\mathop\Pi_{j \notin I} x_{j}\bigr) \text{for $k \geqslant 0$,}\\
  \text{and} \quad 
  \mathsf{e} (\iota_v(x_{i_1} \cdots x_{i_k})) =
  \begin{cases}
    1 & \text{if $k = 0$;}\\
    0 & \text{if $k > 0$.}
  \end{cases}
\end{gather*}
Furthermore, the comonad counit $\varepsilon_V \colon P^\partial V
\rightarrow V$ is defined by
\begin{equation*}
  \varepsilon_V(\iota_v(x_{i_1} \cdots x_{i_k})) =
  \begin{cases}
    v & \text{ if $k = 0$;}\\
    e_{i_1} & \text{ if $k = 1$;}\\
    0 & \text{ otherwise.}
  \end{cases}
\end{equation*}

Let us now show:

\begin{Lemma}
  \label{lem:25}
  Over the field $\mathbb{R}$, the initial monoidal differential
  modality $P^\partial$ is not the cofree cocommutative comonoid
  comonad.
\end{Lemma}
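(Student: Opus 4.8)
The plan is to exhibit a concrete cocommutative coalgebra over $\mathbb{R}$ which admits no coalgebra map from $P^\partial V$ for any $V$, thereby showing $P^\partial$ cannot be the cofree cocommutative comonoid comonad; equivalently, one shows that the counit $\varepsilon_V \colon P^\partial V \rightarrow V$ fails the universal property for some $V$. Concretely, I would take $V = \mathbb{R}$ and consider the two-dimensional cocommutative coalgebra $C$ corresponding, under the equivalence between finite-dimensional cocommutative $\mathbb{R}$-coalgebras and finite-dimensional commutative $\mathbb{R}$-algebras, to the field extension $\mathbb{C}$ of $\mathbb{R}$; that is, $C = \mathbb{C}^*$ with comultiplication dual to the multiplication of $\mathbb{C}$. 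This $C$ is a grouplike-free coalgebra with no nontrivial grouplike elements over $\mathbb{R}$ (its only $\mathbb{R}$-points are $0$, since $\mathbb{C}$ has no $\mathbb{R}$-algebra map to $\mathbb{R}$), yet after base change to $\mathbb{C}$ it splits as a sum of two grouplikes.

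The key steps, in order: first, recall the explicit structure of $P^\partial \mathbb{R}$ from the formulas just given in the text, noting that $\mathrm{Sym}(\mathbb{R}) = \mathbb{R}[x]$, that $P^\partial \mathbb{R} = \bigoplus_{v \in \mathbb{R}} \mathbb{R}[x]$, and that its grouplike elements (i.e.\ the $g$ with $\Delta g = g \otimes g$, $\mathsf{e}(g)=1$) are exactly the degree-zero basis vectors $\iota_v(1)$ for $v \in \mathbb{R}$, with all other summand-directions contributing only ``nilpotent-type'' primitives; this mirrors the structure used in Example~\ref{ex:1}. Second, observe that if $P^\partial$ were the cofree cocommutative comonoid comonad, then for the coalgebra $C$ above, coalgebra maps $C \rightarrow P^\partial \mathbb{R}$ would be in bijection with linear maps $C \rightarrow \mathbb{R}$ via postcomposition with $\varepsilon_{\mathbb{R}}$, hence there would be a nonzero coalgebra map $C \rightarrow P^\partial \mathbb{R}$. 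Third, derive a contradiction: a coalgebra map must send grouplikes to grouplikes; but $C$ has \emph{no} nonzero grouplike elements over $\mathbb{R}$ while $P^\partial \mathbb{R}$ is spanned, as a coalgebra, by its grouplikes together with their associated primitive-type elements — more precisely, every finite-dimensional subcoalgebra of $P^\partial \mathbb{R}$ is pointed (it lies in finitely many summands, each $\iota_v(\mathbb{R}[x])$ being pointed with unique grouplike $\iota_v(1)$), whereas $C \cong \mathbb{C}^*$ is a simple coalgebra that is \emph{not} pointed. Since any coalgebra map has pointed image, the only coalgebra map $C \rightarrow P^\partial \mathbb{R}$ is zero, contradicting the existence of the nonzero one demanded by cofreeness.

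The main obstacle I anticipate is making precise and justifying the claim that every finite-dimensional subcoalgebra of $P^\partial \mathbb{R}$ — or at least of each summand $\mathbb{R}[x]$ with its $P^\partial$-comultiplication — is pointed over $\mathbb{R}$. This requires pinning down the comultiplication on the summand $\iota_v(\mathbb{R}[x])$: from the displayed formula $\Delta(\iota_v(x^k)) = \bigoplus_{S \subseteq \{1,\dots,k\}} \iota_v(x^{|S|}) \otimes \iota_v(x^{k - |S|})$, one sees the $v$-summand is a subcoalgebra isomorphic to the divided-power (equivalently, binomial) coalgebra on one generator, which is well known to be pointed irreducible with unique grouplike $\iota_v(1)$; every element of it is annihilated by some power of ``$\varepsilon$ minus evaluation'', exactly as in Example~\ref{ex:1}. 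Granting the coproduct decomposition $P^\partial \mathbb{R} = \bigoplus_v \iota_v(\mathbb{R}[x])$ is a coalgebra decomposition (no cross terms in $\Delta$, immediate from the formula), and that a coproduct of pointed coalgebras is pointed, the pointedness of all of $P^\partial\mathbb{R}$ follows, and with it the lemma. The remaining bookkeeping — that $\mathbb{C}^*$ is a non-pointed simple coalgebra over $\mathbb{R}$, and that coalgebra morphisms have pointed image (a standard fact, since the image is a subcoalgebra of a pointed one, hence pointed) — is routine.
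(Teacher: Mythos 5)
Your overall strategy is close to the paper's: the paper also uses the two\-/dimensional simple $\mathbb{R}$-coalgebra $C = \mathbb{C}^\ast$ and the fact that simple subcoalgebras of $\bigoplus_v \mathrm{Sym}(V)$ are one\-/dimensional and live in a single summand (Sweedler, Proposition~8.0.3(a)). The difference is the route to contradiction: the paper shows that $C$ admits \emph{no} $P^\partial$-coalgebra structure at all, since a coaction $C \to P^\partial C$ would be a split monomorphism exhibiting a two\-/dimensional simple subcoalgebra of $P^\partial C$; whereas a cofree comonad would have to have \emph{every} cocommutative comonoid among its coalgebras. You instead attack the universal property of $\varepsilon_{\mathbb{R}} \colon P^\partial \mathbb{R} \to \mathbb{R}$ directly. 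Both routes are viable, but yours as written contains an error in the final step.

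The problem is the claim that ``the only coalgebra map $C \to P^\partial \mathbb{R}$ is zero.'' This is false: for each $v \in \mathbb{R}$, the map $c \mapsto \mathsf{e}(c)\, \iota_v(1)$ is a nonzero coalgebra homomorphism. The slip is a dualisation error: simplicity of $C$ means $C$ has no proper nonzero \emph{subcoalgebras}, not that it has no proper nonzero \emph{coideals}; a coalgebra map out of a simple coalgebra need not be injective, and indeed $C$ has the one\-/dimensional quotient coalgebra $C/\mathbb{R}i \cong \mathbb{R}$ (dual to the inclusion $\mathbb{R} \hookrightarrow \mathbb{C}$), through which the maps above factor. Consequently your stated contradiction---``contradicting the existence of the nonzero one demanded by cofreeness''---evaporates, since a nonzero coalgebra map does exist. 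The argument is repairable along your own lines: since every subcoalgebra of the pointed $P^\partial\mathbb{R}$ is pointed and the only quotients of $C$ are $0$, the one\-/dimensional grouplike quotient, and $C$ itself (which is not pointed), every coalgebra map $f \colon C \to P^\partial\mathbb{R}$ has image contained in the span of a single grouplike $\iota_v(1)$, whence $\varepsilon_{\mathbb{R}} \circ f = v\cdot\mathsf{e}$. So the linear map $1 \mapsto 0$, $i \mapsto 1$ admits no lift to a coalgebra map, and cofreeness fails. With that correction your proof goes through; the paper's version avoids this bookkeeping by never having to classify the coalgebra maps out of $C$.
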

\begin{proof}
Let $C$ be the dual
coalgebra to the $\mathbb{R}$-vector space $\mathbb{C}$; thus, $C$
is spanned by vectors $1$ and $i$ satisfying
\begin{equation*}
  \mathsf{e}(1) = 1 \qquad  \mathsf{e}(i) = 0 \qquad \Delta(1) = 1 \otimes 1 - i \otimes i \qquad \Delta(i) = i \otimes 1 + 1 \otimes i\rlap{ .}
\end{equation*}
We claim that $C$ is not a $P^\partial$-coalgebra. Indeed, it is
easy to see that $C$ is simple, i.e., has no proper non-zero
($\mathbb{R}$-)subcoalgebras. Thus, if $C$ were a
$P^\partial$-coalgebra, then its coalgebra structure map
$C \rightarrow P^\partial C$---which is a (split)
monomorphism---would exhibit a simple subcoalgebra of $P^\partial C$
isomorphic to $C$. By~\cite[Proposition~8.0.3(a)]{Sweedler1969Hopf},
this subcoalgebra would have to lie within one of the
$\Sym(C)$-summands of $P^\partial C$. But $\Sym(C)$ has the unique
simple subcoalgebra $k1 \subseteq \mathrm{Sym}(C)$, which is
manifestly not isomorphic to the two-dimensional $C$: a
contradiction.
\end{proof}

So over non-algebraically closed fields of characteristic zero,
$P^\partial V$ need not be a cofree cocommutative coalgebra; however, it
turns out to be cofree among what~\cite{Sweedler1969Hopf} terms
\emph{pointed} cocommutative coalgebras. First of all, a cocommutative
coalgebra $C$ is \emph{pointed irreducible} if it contains exactly one
simple subcoalgebra, and this subcoalgebra is one-dimensional. Note
this subcoalgebra is then spanned by a unique group-like element $g$,
and we have a (vector space) direct sum decomposition
\begin{equation}
  \label{eq:66}
  C = kg \oplus \mathrm{ker}(\mathsf{e})\rlap{ .}
\end{equation}
A cocommutative coalgebra is now called \emph{pointed} if it is a
direct sum of pointed irreducible coalgebras, and we have:

\begin{Prop}
  \label{prop:12}
  If $k$ is a field of characteristic zero, and $V$ is a $k$-vector
  space then $\varepsilon_V \colon P^\partial V \rightarrow V$
  exhibits $P^\partial V$ as the cofree cocommutative pointed
  coalgebra over $V$.
\end{Prop}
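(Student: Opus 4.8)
The plan is to reduce the claimed universal property of $(P^\partial V, \varepsilon_V)$ among \emph{pointed} cocommutative coalgebras to two structural facts about the symmetric-algebra coalgebra $\Sym(V)$, both due in substance to \cite{Sweedler1969Hopf}. First: every pointed cocommutative coalgebra $C$ decomposes canonically as a direct sum $C=\bigoplus_{\gamma\in G(C)}C_\gamma$ of its pointed irreducible components (one for each group-like $\gamma$), and every coalgebra map respects this decomposition -- it sends group-likes to group-likes and restricts to coalgebra maps $C_\gamma\to C'_{g(\gamma)}$. Second: over a field $k$ of characteristic zero, $\Sym(V)$ -- with exactly the coalgebra structure $\Delta^S_V,\mathsf{e}^S_V$ produced by the algebraically-free commutative monoid construction, in which $V=\Sym^1(V)$ is the space of primitives -- is the cofree \emph{pointed irreducible} cocommutative coalgebra on $V$, with couniversal map the projection $\pi_V\colon \ker\mathsf{e}^S_V=\Sym(V)^+\twoheadrightarrow V$; equivalently, for each pointed irreducible cocommutative coalgebra $D$, composition with $\pi_V$ gives a bijection from coalgebra maps $D\to\Sym(V)$ to $\mathrm{Hom}_k(\ker\mathsf{e}_D,V)$.

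Granting these, the argument runs as follows. Since $P^\partial V=\bigoplus_{v\in V}\Sym(V)$ is a direct sum of pointed irreducible coalgebras (each summand having unique simple subcoalgebra $k\cdot\iota_v(1)$), it is pointed cocommutative, its group-likes are exactly the $\iota_v(1)$, and by Lemma~\ref{lem:22} (and the explicit description of $\varepsilon_V$ given above) the counit sends $\iota_v(1)\mapsto v$ while on the augmentation ideal of the $v$-th summand it agrees with $\varepsilon^S_V=\pi_V$. Now fix a pointed cocommutative coalgebra $C$ and a linear map $f\colon C\to V$; we must show there is a unique coalgebra map $g\colon C\to P^\partial V$ with $\varepsilon_V\circ g=f$. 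By the decomposition-and-preservation fact, such a $g$ is the same as a family of coalgebra maps $g_\gamma\colon C_\gamma\to \iota_{v(\gamma)}(\Sym(V))$ for $\gamma\in G(C)$, where $v(\gamma)\in V$ is determined by $g(\gamma)=\iota_{v(\gamma)}(1)$; and evaluating $\varepsilon_V\circ g=f$ at $\gamma$ forces $v(\gamma)=f(\gamma)$.

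It remains to treat a single group-like: put $D=C_\gamma$, $v=f(\gamma)$. As $D$ is pointed irreducible and $\iota_v(\Sym(V))$ has unique group-like $\iota_v(1)$, any coalgebra map $D\to\iota_v(\Sym(V))$ automatically sends $\gamma\mapsto\iota_v(1)$ and so carries $\ker\mathsf{e}_D$ into $\Sym(V)^+$, where the counit $\varepsilon_V$ restricts to $\pi_V$. Since $D=k\gamma\oplus\ker\mathsf{e}_D$ and the condition on $\gamma$ (both sides sending $\gamma\mapsto v$) is automatic, such a $g_\gamma$ satisfies $\varepsilon_V\circ g_\gamma=f|_D$ precisely when $\pi_V\circ(g_\gamma|_{\ker\mathsf{e}_D})=f|_{\ker\mathsf{e}_D}$. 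By the cofree pointed-irreducible property of $\Sym(V)$, there is exactly one coalgebra map $D\to\Sym(V)$ with this property -- the one couniversally classifying $f|_{\ker\mathsf{e}_D}\colon\ker\mathsf{e}_D\to V$ -- and postcomposing with $\iota_v$ gives the unique admissible $g_\gamma$. Assembling the $g_\gamma$ over $\gamma\in G(C)$ yields the required unique $g$, and one checks routinely that this bijection is natural in $C$, so that $(P^\partial V,\varepsilon_V)$ is cofree as claimed.

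The main obstacle is not the bookkeeping above but pinning down and correctly invoking the second structural input: the characteristic-zero statement that $\Sym(V)$ is cofree \emph{among pointed irreducible cocommutative coalgebras}, with couniversal map the degree-one projection. One must be careful that the relevant universal property is phrased via $\ker\mathsf{e}_D$ and \emph{not} via the primitives $P(D)$ (these differ already for $\Sym^{\leq 2}$ of a line), and that the coalgebra structure $\Sym(V)$ inherits from our algebraically-free commutative monoid coincides with Sweedler's standard one -- which it does, by the explicit formulas for $\Delta^S$, $\mathsf{e}^S$ recorded above. Finally, it is worth remarking that when $k$ is additionally algebraically closed every simple cocommutative coalgebra is one-dimensional, so ``pointed'' is no restriction and the proposition recovers the known fact that $P^\partial V$ is then the full cofree cocommutative coalgebra on $V$ (cf.\ \cite{Clift2020Cofree,Sweedler1969Hopf}), consistently with Lemma~\ref{lem:25}, whose coalgebra $C$ is simple of dimension two and hence not pointed.
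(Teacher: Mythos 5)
Your proposal is correct and follows essentially the same route as the paper: reduce to a single pointed irreducible component (using that coalgebra maps send irreducible subcoalgebras into a single summand of $P^\partial V = \bigoplus_v \Sym(V)$), note that $\varepsilon_V$ restricts to the degree-one projection on the augmentation ideal of each summand, and then invoke the cofreeness of $\Sym(V)$ among pointed irreducible cocommutative coalgebras. The one place where you defer to a citation and the paper does actual work is precisely the input you flag as the main obstacle: Sweedler's Theorem~12.2.5 produces an abstract cofree pointed irreducible coalgebra $B(V)$, described by a basis $\{u_B\}$ with $\Delta(u_B) = \sum_{B = B_1 + B_2} u_{B_1} \otimes u_{B_2}$, and one must still identify $B(V) \cong \Sym(V)$ over $V$; the paper does this by exhibiting the divided-power basis $u_{\dbr{i_1,\dots,i_n}} = x_{i_1}\cdots x_{i_n}/q_{\dbr{i_1,\dots,i_n}}$, which is exactly where characteristic zero is used and exactly what fails in positive characteristic (cf.\ Lemma~\ref{lem:27}). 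So your argument is complete modulo supplying that explicit rescaling rather than asserting the cofreeness of $\Sym(V)$ as a known fact.
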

Over an algebraically closed field, \emph{every} cocommutative
coalgebra is pointed by~\cite[Lemma~8.0.1(b)]{Sweedler1969Hopf}, and so
this recovers the characterisation of the cofree cocommutative
coalgebra comonad over a field of characteristic zero.
\begin{proof}
  Let $C$ be a cocommutative pointed
  $k$-coalgebra; we must show that, for any linear map $f \colon C
  \rightarrow V$, there is a unique coalgebra homomorphism $\bar f
  \colon C \rightarrow P^\partial V$ such that $\varepsilon_V \circ
  \bar f = f$.
  We may write $C$ as a coproduct of irreducible cocommutative
  pointed $k$-coalgebras, and using the universal property of
  coproduct, may verify the above condition on each summand
  separately. Thus we may reduce to the case where $C$ is
  pointed irreducible.
  
  In this case, the image of the unique group-like element $g \in C$
  under $\bar f$ must be one of the group-like elements $\iota_v(1)$
  of $P^\partial(V)$. Since $\varepsilon_V$ sends each such element
  to $v \in V$, and since $\varepsilon_V \circ \bar f$ is to equal
  $f$, we must have that
  $\bar f(g) = \iota_{f(g)}(1) \in P^\partial V$. But since $C$ is
  irreducible, its image under
  $\bar f \colon C \rightarrow P^\partial V$ must by
  \cite[Corollary~8.0.9]{Sweedler1969Hopf} be an irreducible
  subcoalgebra of $P^\partial V$, and so must \emph{all} be
  contained within the $f(g)$-summand of $P^\partial V$.

  Thus, we must find a unique coalgebra morphism $\tilde f \colon C
  \rightarrow \mathrm{Sym}(V)$ rendering commutative the diagram to
  the left in:
  \begin{equation*}
    \cd{
      C \ar[drr]_-{f} \ar[r]^-{\tilde f} & \mathrm{Sym}(V) \ar[r]^-{\iota_{f(g)}} & P^\partial V \ar[d]^-{\varepsilon_V} \\
      & & V
    } \qquad
    \quad
    \cd{
      \ker(\mathsf{e}) \ar[drr]_-{\res{f}{\ker(\mathsf{e})}} \ar[r]^-{\res{\tilde f}{\ker(\mathsf{e})}} & \mathrm{Sym}(V) \ar[r]^-{\iota_{f(g)}} & P^\partial V \ar[d]^-{\varepsilon_V} \\
      & & V
    }
  \end{equation*}
  Now, of course, this $\tilde f$ must map $g \in C$ to
  $1 \in \mathrm{Sym}(V)$; thus, via the
  decomposition~\eqref{eq:66}, it suffices to show that $\tilde f$
  is unique such that the diagram to the right above commutes. Note
  that $\res {\smash{\tilde f}} {\ker(\mathsf{e})}$ lands within the
  subspace of constant-free polynomials $\mathrm{Sym}_{>0}(V)$;
  and on this subspace, the composite $\varepsilon_V \circ
  \iota_{f(g)}$ acts identically to the linear map $\varepsilon'
  \colon \mathrm{Sym}(V) \rightarrow V$ given by
  \begin{equation*}
    \varepsilon'(x_{i_1} \cdots x_{i_k}) =
    \begin{cases}
      e_{i_1} & \text{if $k = 1$;} \\
      0 & \text{otherwise.}
    \end{cases}
  \end{equation*}
  Thus, we have reduced to the case of finding a coalgebra
  homomorphism $\tilde f \colon C \to \mathrm{Sym}(V)$ which is
  unique such that the following triangle commutes:
  \begin{equation*}
    \cd[@!C@C-2em]{
      \ker(\mathsf{e}) \ar[dr]_-{\res{f}{\ker(\mathsf{e})}} \ar[rr]^-{\res{\tilde f}{\ker(\mathsf{e})}} & & \mathrm{Sym}(V) \ar[dl]^-{\varepsilon'} \\ 
      & V\rlap{ .}
    }
  \end{equation*}

  Now,~\cite[Theorem~12.2.5]{Sweedler1969Hopf} asserts the existence
  for any $k$-vector space $V$ of a ``cofree'' irreducible pointed
  cocommutative coalgebra $B(V)$: the cofreeness being exhibited by
  a linear map $\pi \colon B(V) \rightarrow V$ with the property
  that, for any cocommutative pointed coalgebra $C$ and any linear
  map $g \colon \mathrm{ker}(\mathsf{e}) \rightarrow V$, there is a
  unique coalgebra map $\tilde f \colon C \rightarrow B(V)$ such
  that
  $\pi \circ \res{\smash{\tilde f}}{\mathrm{ker}(\mathsf{e})} = g$.
  Clearly, if we can exhibit an isomorphism of coalgebras
  $B(V) \cong \mathrm{Sym}(V)$ over $V$, then applying the
  ``cofreeness'' of $B(V)$ to $g \defeq \res f {\ker(\mathsf{e})}$
  will conclude the proof.

  But indeed, $B(V)$ is described as a coalgebra
  by~\cite[Theorem~12.3.2]{Sweedler1969Hopf} and the 
  exercise that precedes it; it has a basis of vectors $\{u_{B}\}_{B \in \oc I}$ with
  coalgebra structure
  \begin{gather*}
    \Delta(u_B) = \bigoplus_{B = B_1 + B_2} u_{B_1} \otimes u_{B_2} \qquad \text{and} \qquad \mathsf{e}(u_B) =     \begin{cases}
      1 & \text{if $B  = \emptyset$;}\\
      0 & \text{if $B \neq \emptyset$,}
    \end{cases}
  \end{gather*}
  and with $\pi(u_B) = e_i$ if $B = \dbr{i}$ and $\pi(u_B) =
  0$ otherwise. It thus suffices to exhibit such a basis for
  $\mathrm{Sym}(V)$. Given
  $B = \dbr{i_1, \dots, i_n} \in \oc I$, let us define
  $q_B$ to be the size of the permutation group
  $\{\sigma \in \mathfrak S_n : (i_1, \dots, i_n) = (i_{\sigma(1)},
  \dots, i_{\sigma(n)})\}$; so, for example,
  $q_{\dbr{i, i, j, j, j, k, k, k, k}} = 2! \times 3! \times 4!$. Then
  taking
  \begin{equation}
    \label{eq:67}
    u_{\dbr{i_1, \dots, i_n}} \defeq \frac {x_{i_1} \cdots x_{i_n}} {q_{\dbr{i_1, \dots, i_n}}} 
  \end{equation}
  yields the required basis.
\end{proof}

Note that this proof does not work over fields of non-zero
characteristic---even algebraically closed ones---because the
vectors in~\eqref{eq:67} are not well-defined. Indeed:

\begin{Lemma}
  \label{lem:27}
  Over the algebraically closed field $\mathbb{Z}_2$, the initial
  monoidal differential modality $P^\partial$ is not the cofree
  cocommutative comonoid comonad.
\end{Lemma}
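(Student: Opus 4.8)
The plan is to argue by contradiction. Suppose $P^\partial$ \emph{were} the cofree cocommutative comonoid comonad $\oc^C$ on $\C$, the category of $\mathbb{Z}_2$-vector spaces. By Examples~\ref{ex:5} and~\ref{ex:3}, $\cat{Coalg}(P^\partial) = \cat{Coalg}(\oc^C)$ is then isomorphic over $\C$ to $\cat{Comon}(\C)$, the category of cocommutative $\mathbb{Z}_2$-coalgebras; under this isomorphism a cocommutative coalgebra $C$ corresponds to the $P^\partial$-coalgebra whose structure map $\gamma_C \colon C \to P^\partial C$ is a \emph{comonoid} morphism with $\varepsilon^\partial_C \circ \gamma_C = 1_C$, hence an injective coalgebra map. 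So every cocommutative $\mathbb{Z}_2$-coalgebra $C$ would embed, as a coalgebra, into $P^\partial C = \bigoplus_{v \in C} \Sym(C)$; I would contradict this with a three-dimensional example.

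Concretely, I would take $C = \Gamma$ to be the truncated divided-power coalgebra with basis $u_0, u_1, u_2$, counit $\mathsf{e}(u_j) = \delta_{0j}$, and comultiplication $\Delta u_j = \sum_{a + b = j} u_a \otimes u_b$. This is a genuine cocommutative coalgebra, and being connected graded with $\deg u_j = j$ it is irreducible, with coradical $\mathbb{Z}_2 u_0$; note that over $\mathbb{Z}_2$ it differs from the polynomial coalgebra $\mathbb{Z}_2[t]/(t^3)$, in which $\Delta(t^2) = t^2 \otimes 1 + 1 \otimes t^2$. Were $\Gamma$ a $P^\partial$-coalgebra, it would embed as a subcoalgebra of $\bigoplus_{v \in \Gamma} \Sym(\Gamma)$; since $\Gamma$ is three-dimensional, each summand $\Sym(\Gamma) \cong \mathbb{Z}_2[x_1, x_2, x_3]$ is connected graded, hence an irreducible pointed coalgebra whose unique group-like is $1$, so the direct sum is pointed with irreducible components the $\Sym(\Gamma)$-summands. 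As an irreducible subcoalgebra of a pointed coalgebra lies in a single irreducible component~\cite{Sweedler1969Hopf}, we would obtain a coalgebra embedding $\phi \colon \Gamma \hookrightarrow \mathbb{Z}_2[x_1, x_2, x_3]$, necessarily with $\phi(u_0) = 1$.

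The hard part — and the crux of the argument — is to show that such a $\phi$ cannot exist over a field of characteristic $2$. Write $\pi = \phi(u_1)$ and $Q = \phi(u_2)$; since $\phi$ preserves counits, $\mathsf{e}(\pi) = 0$, while $\pi \neq 0$ by injectivity, and applying $\phi \otimes \phi$ to $\Delta u_2 = u_2 \otimes u_0 + u_1 \otimes u_1 + u_0 \otimes u_2$ gives $\Delta Q = Q \otimes 1 + \pi \otimes \pi + 1 \otimes Q$. Now pass to lowest total degree: let $\pi'$ be the nonzero homogeneous component of $\pi$ of least degree $m$ — so $m \geq 1$, as $\mathsf{e}(\pi) = 0$ — and $Q_{2m}$ the degree-$2m$ component of $Q$. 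Comparing total-degree-$2m$ parts in $\Delta Q - Q \otimes 1 - 1 \otimes Q = \pi \otimes \pi$, and then bidegree-$(m,m)$ parts, yields that the bidegree-$(m,m)$ component of $\Delta Q_{2m}$ equals $\pi' \otimes \pi'$. But for any monomial $x^{\mathbf{a}}$ of degree $m$, the coefficient of $x^{\mathbf{a}} \otimes x^{\mathbf{a}}$ in $\Delta(x^{2\mathbf{a}})$ is $\prod_l \binom{2 a_l}{a_l}$, which is even since some $a_l \geq 1$ (by Kummer's theorem); hence every such coefficient in $\Delta Q_{2m}$ vanishes, so the coefficient of $x^{\mathbf{a}} \otimes x^{\mathbf{a}}$ in $\pi' \otimes \pi'$ — namely the square of the coefficient of $x^{\mathbf{a}}$ in $\pi'$ — is zero, forcing $\pi' = 0$. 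This contradiction completes the proof. The obstruction is exactly the non-existence of a ``divided square'' of a nonzero counit-free element in a polynomial coalgebra in characteristic $2$, which is the precise reason the vectors in~\eqref{eq:67} fail to be well-defined; note also that the argument already fails at $u_2$, so the three-dimensional $\Gamma$ suffices.
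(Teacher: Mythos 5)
Your proof is correct, but it takes a genuinely different route from the paper's. The paper refutes the \emph{uniqueness} clause of the couniversal property: it takes the two-dimensional coalgebra $C = k\{1,d\}$ with $d$ primitive and exhibits two distinct coalgebra maps $C \rightarrow P^\partial(\mathbb{Z}_2)$ with the same composite with $\varepsilon_{\mathbb{Z}_2}$, the whole point being that $\Delta(x^2) = x^2 \otimes 1 + 2\, x \otimes x + 1 \otimes x^2$ makes $x^2$ primitive in characteristic $2$, so that $\iota_0(x)$ and $\iota_0(x) + \iota_0(x^2)$ are both admissible images of $d$. That argument is a two-line computation needing no structure theory. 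You instead refute the \emph{existence} clause: your truncated divided-power coalgebra $\Gamma$ admits no coalgebra splitting of $\varepsilon^\partial_\Gamma$ at all, because $u_2$ would require a ``divided square'' of the primitive $\phi(u_1)$ inside a polynomial coalgebra, and the coefficient of $x^{\mathbf a} \otimes x^{\mathbf a}$ in $\Delta(x^{2\mathbf a})$ is $\prod_l \binom{2a_l}{a_l} \equiv 0 \pmod 2$. Your reduction to a single summand via Sweedler's irreducible-component theory is exactly the strategy the paper uses for Lemma~\ref{lem:25} over $\mathbb{R}$, so in effect you have adapted that lemma's method to characteristic $2$, where the paper chose a shorter non-uniqueness argument instead. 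All the steps check out (the structure map of a $P^\partial$-coalgebra on a comonoid would indeed be a split-monic comonoid map; $\Gamma$ and the summands $\Sym(\Gamma)$ are connected graded, hence pointed irreducible; the bidegree-$(m,m)$ extraction is legitimate since $\Delta$ preserves total degree and $m \geq 1$ kills the $Q \otimes 1$ and $1 \otimes Q$ terms). What your version buys is a concrete cocommutative coalgebra that is not a $P^\partial$-coalgebra and an explicit witness of the divided-power obstruction---i.e.\ precisely why the basis change~\eqref{eq:67} fails and why the shuffle algebra must replace $\Sym$---which the paper only gestures at in the remark following Lemma~\ref{lem:27}; the cost is reliance on Sweedler's machinery and a three-dimensional rather than two-dimensional example.
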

\begin{proof}
  Let $C$ be the dual of the $\mathbb{Z}_2$-algebra
  $\mathbb{Z}_2[x]/x^2$; so $C$ is generated by vectors $1$ and $d$
  satisfying
  \begin{equation*}
    \mathsf{e}(1) = 1 \qquad  \mathsf{e}(d) = 0 \qquad \Delta(1) = 1 \otimes 1 \qquad \Delta(d) = d \otimes 1 + 1 \otimes d \rlap{ .}
  \end{equation*}
  We claim that there are multiple coalgebra maps
  $C \rightarrow P^\partial(\mathbb{Z}_2)$ which lift the
  zero map $C \rightarrow \mathbb{Z}_2$ along
  $\varepsilon_{\mathbb{Z}_2} \colon P^\partial(\mathbb{Z}_2)
  \rightarrow \mathbb{Z}_2$. Identifying $P^\partial(\mathbb{Z}_2)$
  with $\oplus_{x \in \mathbb{Z}_2} \mathbb{Z}_2[x]$, the natural
  choice for such a coalgebra map $f$ is given by
  \begin{equation*}
    f(1) = \iota_0(1) \quad \text{and} \quad f(d) = \iota_0(x)\rlap{ .}
  \end{equation*}
  However, there is another choice $f'$ given by
  \begin{equation*}
    f'(1) = \iota_0(1) \quad \text{and} \quad f'(d) = \iota_0(x) + \iota_0(x^2)\rlap{ .}
  \end{equation*}
  Of course, since $\varepsilon(\iota_0(x^2)) = 0$, this $f'$ still
  lifts $f$ through $\varepsilon$; the point is to show that it is
  still a coalgebra map. But in $P^\partial(\mathbb{Z}_2)$, we have
  \begin{equation*}
    \Delta(\iota_0(x^2)) = \iota_0(x^2 \otimes 1 + x \otimes x + x \otimes x + 1 \otimes x^2) =\iota_0(x^2) \otimes \iota_0(1) + \iota_0(1) \otimes \iota_0(x^2)
  \end{equation*}
  from which it follows that: 
  \begin{equation*}\Delta(\iota_0(x)+\iota_0(x^2)) =
  (\iota_0(x)+\iota_0(x^2)) \otimes \iota_0(1) + \iota_0(1) \otimes
  (\iota_0(x)+\iota_0(x^2)) \end{equation*}
  as required for $f'$ to be a coalgebra homomorphism.
\end{proof}

Indeed, over a field of non-zero characteristic, the cofree
cocommutative coalgebra on $V$ will involve a direct sum of copies not
of the coalgebra $\mathrm{Sym}(V)$, but rather the so-called shuffle
algebra on $V$, which is a divided-power version of $\mathrm{Sym}(V)$;
see~\cite{Sweedler1969Hopf}. 

Of course, when $k$ is not a field, nor even a ring, we expect that
$P^\partial$ will diverge even further from being the cofree
cocommutative coalgebra modality. 

\subsection{Super vector spaces}
\label{sec:super-vector-spaces}

For our next example, let $k$ be a field and $\SVEC$ the symmetric
monoidal category of super $k$-vector spaces. Recall that, as a
category $\SVEC = \kvec^2$, i.e., a super $k$-vector space $V$ is a
pair of $k$-vector spaces $(V_0, V_1)$, and that the monoidal
structure on
$\SVEC$ has unit $(k, 0)$ and tensor product 
\begin{equation*}
  (V \otimes W)_0 = (V_0 \otimes W_0) \oplus (V_1 \otimes W_1) \qquad 
  (V \otimes W)_1 = (V_0 \otimes W_1) \oplus (V_1 \otimes W_0)\rlap{ .}
\end{equation*}
Moreover, the symmetry map $V \otimes W \rightarrow W \otimes V$ is
given on $0$- and $1$-components by
\begin{align*}
  (V_0 \otimes W_0) \oplus (V_1 \otimes W_1) &\smash{{} \xrightarrow{\sigma \oplus (-\sigma)} W_0 \otimes V_0 \oplus W_1 \otimes V_1} \\ \text{and }
  (V_0 \otimes W_1) \oplus (V_1 \otimes W_0) &\smash{{} \xrightarrow{\spn{\iota_2 \sigma, \iota_1\sigma}} (W_0 \otimes V_1) \oplus (V_0 \otimes W_1)\rlap{ .}}
\end{align*}
With this structure, $\SVEC$ is a cocomplete symmetric monoidal closed
$k$-linear category, and so satisfies the assumptions of
Section~\ref{sec:init-mono-diff}.

This time, since the monoidal unit is $(k,0)$, points
$(k,0) \to (V_0, V_1)$ correspond to elements of $V_0$. So the initial
monoidal coalgebra modality on $\SVEC$ sends a super vector space
$(V_0, V_1)$ to $(P^{\kvec}(V_0), 0)$.
On the other hand, the symmetric algebra construction in $\SVEC$
involves the exterior algebra: we have 
\begin{equation*}
  S(V_0, V_1) = (\mathrm{Sym}(V_0) \otimes E_e V_1, \mathrm{Sym}(V_0) \otimes E_o V_1)
\end{equation*}
where
$E_e$ denotes all even-degree components of the exterior algebra, and
$E_o$ denotes all odd-degree components. Thus, in $\SVEC$, the initial
monoidal differential modality is
given by:
  \begin{equation*}
  P^\partial(V_0, V_1) = \bigoplus \limits_{x \in V_0}\Bigl(  \mathrm{Sym}(V_0) \otimes E_e V_1,  \mathrm{Sym}(V_0) \otimes E_o V_1  \Bigr)
\end{equation*}

\subsection{Linear species}
\label{sec:linear-species}

For our final example, we consider \emph{$k$-linear species} over a commutative
rig $k$, for which our reference is the
compendious~\cite{Aguiar2010Monoidal}. As per Chapter 8 of
\emph{op.~cit.}, a $k$-linear species $\vec p$ is a functor
$\cat{Bij} \rightarrow \kmod$, where $\cat{Bij}$ is the category of
finite sets and bijections, and we follow~\cite{Aguiar2010Monoidal} in 
writing the value of such a $\vec p$ at finite set $I$ as $\vec p[I]$.

The category of $k$-linear species $\SP$ is the presheaf category
$[\cat{Bij}, \kmod]$; it is cocomplete since $\kmod$ is so, and is
symmetric monoidal closed under the \emph{Cauchy tensor product}
\begin{equation*}
  (\vec p \otimes \vec q)[I] = \bigoplus_{J \subseteq I} \vec p[J] \otimes \vec q[I\setminus J]\rlap{ ,}
\end{equation*}
with as unit, the representable species at the empty set
$\varnothing \in \cat{Bij}$---i.e., the species which is $k$
concentrated in degree $\varnothing$. In general, given $A \in \kmod$,
we write $\Delta A$ for the species which is $A$ concentrated in
degree $0$, i.e.,
\begin{equation*}
  (\Delta A)[\varnothing] = A \qquad \text{and} \qquad (\Delta A)[I] = 0 \text{ for $I \neq \varnothing$,}
\end{equation*}
and this gives us a strong monoidal functor $\Delta \colon \kmod
\rightarrow \SP$, which is both left and right adjoint to
the functor $\SP \rightarrow \kmod$ which evaluates at $\varnothing$.

Since $\SP$ satisfies the assumptions of
Section~\ref{sec:init-mono-diff} it admits an initial monoidal
differential modality. Towards describing this, we note that the
(algebraically-)free commutative monoid on a species $\vec p$ is given
by
\begin{equation}
  \label{eq:22}
  S(\vec p)[I] = \bigoplus_{n \in \mathbb{N}} \Bigl(\bigoplus_{f \colon I \rightarrow \{1, \dots, n\}} \vec p[f^{-1}(1)] \otimes \dots \otimes \vec p[f^{-1}(n)]\Bigr) / \mathfrak{S}_n\rlap{ .}
\end{equation}
Now since the monoidal unit in $\SP$ is represented by the empty set,
points $I \rightarrow \vec p$ of a species correspond to elements of
$\vec p[\varnothing]$. Thus, the initial monoidal differential
modality $P^\partial$ on $\SP$ has action
\begin{equation*}
  \vec p \qquad \mapsto \qquad P^\partial(\vec p) = \bigoplus_{v \in \vec p[\varnothing]} S(\vec p)
\end{equation*}
where $S(\vec p)$ is as in~\eqref{eq:22}. The remaining structure can
be described in a similar way to before, and we leave this as an
exercise to the reader.

In the case where $k$ is a field, it is reasonable to ask whether
$P^\partial$ coincides with the cofree cocommutative coalgebra
differential modality. Section~11.5 of~\cite{Aguiar2010Monoidal}
analyses carefully the cofree cocommutative comonoid in $\SP$ over a
\emph{positive} species $\vec p$, i.e., one for which
$\vec p[\emptyset] = 0$, showing that it is given by $S(\vec p)$. For
a general species $\vec p$, we have a direct sum decomposition
\begin{equation*}
  \vec p \cong \vec p^+ \oplus \Delta(\vec p[\emptyset])
\end{equation*}
where $\vec p^+$ is positive. Clearly, then, the cofree cocommutative
comonoid on the first summand is $S(\vec p^+)$; on the other hand,
since $\Delta$ is a strong monoidal right adjoint, it preserves cofree
cocommutative comonoids, so that the cofree cocommutative
comonoid on the second summand is $\Delta C$, for $C$ the cofree
cocommutative $k$-coalgebra on $\vec p[\varnothing]$. It follows using
the Seely isomorphisms that the cofree cocommutative comonoid on $\vec
p$ is $S(\vec p^+) \otimes \Delta C$.

Now, when $k$ is an algebraically-closed field of characteristic zero,
the cofree cocommutative $k$-coalgebra on $\vec p[\varnothing]$ is, as
already discussed, $\bigoplus_{v \in \vec p[\varnothing]}
\mathrm{Sym}(\vec p[\varnothing])$. Thus, in this case, the cofree
cocommutative comonoid on $\vec p$ is given by
\begin{align*}
  S(\vec p^+) \otimes \Delta\Bigl(\bigoplus_{v \in \vec p[\varnothing]} \mathrm{Sym}(\vec p[\varnothing])\Bigr) &\cong
  \bigoplus_{v \in \vec p[\varnothing]} S(\vec p^+) \otimes \Delta( \mathrm{Sym}(\vec p[\varnothing])) \\
  &\cong \bigoplus_{v \in \vec p[\varnothing]} S(\vec p^+) \otimes S(\Delta(\vec p[\varnothing])) \\
  &\cong \bigoplus_{v \in \vec p[\varnothing]} S(\vec p^+ \oplus \Delta(\vec p[\varnothing])) \cong \bigoplus_{v \in \vec p[\varnothing]} S(\vec p)\rlap{ ,}
\end{align*}
i.e., by $P^\partial \vec p$. Thus, in this context, the initial
monoidal differential modality and the terminal monoidal differential
modality again coincide.

In a similar way, when we move away from algebraically closed fields
of characteristic zero, these two monoidal comonads will no longer
coincide. It seems to be an interesting question as to whether there
are general conditions under which we may expect such a coincidence;
but this must await further work. 

\bibliographystyle{acm}
\bibliography{bibdata}

\end{document}